\numberwithin{equation}{section}
\newtheorem{theorem}[equation]{Theorem}
\newtheorem{lemma}[equation]{Lemma}
\newtheorem{prop}[equation]{Proposition}
\newtheorem{mprop}{Proposition}
\newtheorem{cor}[equation]{Corollary}
\newtheorem{Th}{Theorem}
\theoremstyle{definition}
\newtheorem{mdefinition}{Definition}
\newtheorem{hyp}[equation]{Hypothesis}
\newtheorem{rmk}[equation]{Remark}
\newtheorem{question}[equation]{Question}
\newtheorem{definition}[equation]{Definition}
\newtheorem{example}{Example}
\newtheorem{ex}[equation]{Example}
\newcommand{\F}{\mathcal{F}}
\newcommand{\N}{\mathcal{N}}
\renewcommand{\H}{\mathcal{H}}
\newcommand{\T}{\mathcal{T}}
\newcommand{\ob}{\operatorname{ob}}
\renewcommand{\L}{\mathcal{L}}
\newcommand{\wL}{\widetilde{\mathcal{L}}}
\newcommand{\Hom}{\operatorname{Hom}}
\newcommand{\Aut}{\operatorname{Aut}}
\newcommand{\Inn}{\operatorname{Inn}}
\newcommand{\Syl}{\operatorname{Syl}}
\newcommand{\m}{\mathcal}
\newcommand{\ov}{\overline}
\newcommand{\D}{\mathbf{D}}
\newcommand{\id}{\operatorname{id}}
\newcommand{\One}{\operatorname{\mathbf{1}}}
\newcommand{\W}{\mathbf{W}}
\renewcommand{\epsilon}{\varepsilon}
\def \<{\langle }
\def \>{\rangle }
\renewcommand{\phi}{\varphi}
\title[Subcentric Linking Systems]{Subcentric Linking Systems}
\author[E.~Henke]{Ellen Henke}
\address{Institute of Mathematics, 
University of Aberdeen, Fraser Noble Building, Aberdeen AB24 3UE, U.K.}
\email{ellen.henke@abdn.ac.uk}
\thanks{For part of this research, the author was supported by the Danish National Research
  Foundation through the Centre for Symmetry and
  Deformation (DNRF92).}
\begin{document}

\begin{abstract}
Linking systems are crucial for studying the homotopy theory of fusion systems, but are also of interest from an algebraic point of view. We propose a definition of a linking system associated to a saturated fusion system which is more general than the one currently in the literature and thus allows a more flexible choice of objects of linking systems. More precisely, we define subcentric subgroups of fusion systems in a way that every quasicentric subgroup of a saturated fusion system is subcentric. Whereas the objects of linking systems in the current definition are always quasicentric, the objects of our linking systems only need to be subcentric. We prove that, associated to each saturated fusion system $\F$, there is a unique linking system whose objects are the subcentric subgroups of $\F$. Furthermore, the nerve of such a subcentric linking system is homotopy equivalent to the nerve of the centric linking system associated to $\F$. We believe that the existence of subcentric linking systems opens a new way for a classification of fusion systems of characteristic $p$-type. The various results we prove about subcentric subgroups give furthermore some evidence that the concept is of interest for studying extensions of linking systems and fusion systems.
\end{abstract}

\maketitle

\section{Introduction}

Centric linking systems associated to fusion systems were introduced by Broto, Levi and Oliver \cite{BLO2} to be able to study $p$-completed classifying spaces of fusion systems. The existence and uniqueness of a centric linking system associated to each saturated fusion system was however a conjecture for many years until it was proved by Chermak \cite{Chermak:2013} using the classification of finite simple groups. Chermak's proof was reformulated by Oliver \cite{Oliver:2013} and, building on this reformulation, a recent result of Glauberman and Lynd \cite{Glauberman/Lynd} removes the dependence of the proof on the classification. It is an advantage in many contexts to work with linking systems rather than with fusion systems, but it often presents a problem that centric linking systems do not form a category in a meaningful way. Different notions of linking systems  were introduced to allow a more flexible choice of objects making it at least in special cases possible to study extensions of linking systems. So Broto, Castellana, Grodal, Levi and Oliver \cite{BCGLO1} introduced \textit{quasicentric linking systems} and, much later, Oliver \cite{O4} introduced a general notion of a \textit{linking system} providing an axiomatic setup for the full subcategories of quasicentric linking systems studied before. \textit{Transporter systems} as defined by Oliver and Ventura \cite{OV1} give an even more general framework. The main purpose of this paper is to suggest a new notion of a linking system, allowing a more flexible choice of objects than in the existing notion. We prove furthermore some results indicating the usefulness of this new definition.

\smallskip

We write our functions usually on the right hand side. Accordingly, we always compose morphisms in categories from the left to the right. 

\smallskip

\textbf{Throughout, $p$ is a prime, $S$ is a finite $p$-group, and $\F$ is a fusion system over $S$.}

\smallskip

We refer the reader to \cite[Part~I]{Aschbacher/Kessar/Oliver:2011} for an introduction to fusion systems. We adapt the notation and terminology from there. In addition, we write $\F^f$ for the set of fully $\F$-normalized subgroups of $S$. Recall also that a  subgroup $Q\leq S$ is called \textit{quasicentric} in $\F$ if, for any fully centralized $\F$-conjugate $P$ of $Q$, $C_\F(P)=\F_{C_S(P)}(C_S(P))$. The set of quasicentric subgroups is denoted by $\F^q$. 

\smallskip

The objects of a linking system associated to $\F$ in the sense of Oliver are always quasicentric subgroups. The objects of linking systems in our new definition only need to satisfy a weaker condition. Namely, they are \textit{subcentric} subgroups as defined next.

\begin{mdefinition}
A subgroup $Q\leq S$ is said to be \textit{subcentric} in $\F$ if, for every fully normalized $\F$-conjugate $P$ of $Q$, $O_p(N_\F(P))$ is centric in $\F$. Write $\F^s$ for the set of subcentric subgroups of $\F$.
\end{mdefinition}

Recall that $\F$ is said to be \textit{constrained} if $\F$ is saturated and $C_S(O_p(\F))\leq O_p(\F)$. As we show in detail in Lemma~\ref{subcentricEquiv}, assuming $\F$ is saturated, a subgroup $Q\leq S$ is subcentric if and only if for some (and thus for every) fully normalized $\F$-conjugate $P$ of $Q$, $N_\F(P)$ is constrained. Similarly, $Q$ is subcentric if and only if for some (and thus for every) fully centralized $\F$-conjugate $P$ of $Q$, $C_\F(P)$ is constrained. It follows that every quasicentric subgroup is subcentric. Thus, provided $\F$ is saturated, we have the following inclusions:

$$\F^{cr}\subseteq \F^c\subseteq \F^q\subseteq \F^s$$

While linking systems are crucial for studying the homotopy theory of fusion systems, there is some evidence that linking systems and transporter systems are also useful from an algebraic point of view. Chermak \cite{Chermak:2013} introduced with \textit{localities} a concept which in a certain sense is equivalent to the concept of a transporter systems, but has a more group-like flavor. Chermak defines a partial group to be a set $\L$ together with a product which is only defined on certain words in $\L$, and with an inversion map $\L\rightarrow \L$ which is an involutory bijection, subject to certain axioms. So the product of a partial group is a map $\Pi\colon\D\rightarrow\L$ where $\D$ is a set of words in $\L$. A locality is a triple $(\L,\Delta,S)$ such that $\L$ is a partial group which is finite as a set, $S$ is a $p$-subgroup of $\L$, and $\Delta$ is a set of subgroups of $S$, again subject to certain axioms. The set $\Delta$ is called the set of objects of the locality $(\L,\Delta,S)$. While Chermak defined localities first in the context of his proof of the existence and uniqueness of centric linking systems, he is currently developing a local theory of localities; see \cite{Chermak:2015}. We refer the reader to Section~\ref{LocalitiesSection0} for a brief introduction to localities and to \cite{Chermak:2013} and \cite{Chermak:2015} for a detailed treatment of the subject.

\smallskip

Let $(\L,\Delta,S)$ be a locality. Given the group-like nature of $\L$, there is a natural notion of conjugation in $\L$, even though conjugation is not always defined, since products in partial groups are only defined on certain words in $\L$. For $P\subseteq\L$, the normalizer $N_\L(P)$ consists of all $f\in\L$ such that the conjugate $P^f$ is defined and equals $P$. It turns out that, for any $P\in\Delta$, the normalizer $N_\L(P)$ is a subgroup of $\L$ and thus forms a finite group. The fusion system $\F_S(\L)$ is the fusion system over $S$ generated by the conjugation maps between the subgroups of $S$.  We say that a locality $(\L,\Delta,S)$ is a locality over $\F$ if $\F=\F_S(\L)$. One can always construct a transporter system $\T(\L,\Delta)$ associated to $\F_S(\L)$ whose set of objects is $\Delta$. Moreover, every transporter system associated to $\F$ is isomorphic to a transporter system which comes in this way from a locality over $\F$. It follows from the construction of $\T(\L,\Delta)$ that $\Aut_{\T(\L,\Delta)}(P)\cong N_\L(P)$ for every $P\in\Delta$. For more details on the connection between transporter systems and localities we refer the reader to Subsection~\ref{TransporterSystemsSubsection}. We proceed now with our second crucial definition.

\begin{mdefinition}
\noindent\begin{itemize}
\item A finite group $G$ is said to be of \textit{characteristic $p$} if $C_G(O_p(G))\leq O_p(G)$.
  \item Define a locality $(\L,\Delta,S)$ to be of \textit{objective characteristic $p$} if, for any $P\in\Delta$, the group $N_\L(P)$ is of characteristic $p$. A locality $(\L,\Delta,S)$ is called a \textit{linking locality}, if $\F_S(\L)^{cr}\subseteq \Delta$ and $(\L,\Delta,S)$ is of objective characteristic $p$.
  \item Let $\T$ be a transporter system associated to $\F$. Then $\T$ is said to be of \textit{objective characteristic $p$} if $\Aut_\T(P)$ is a group of characteristic $p$ for every object $P$ of $\T$. Moreover, $\T$ is called a \textit{linking system}, if $\F^{cr}\subseteq \ob(\T)$ and $\T$ is of objective characteristic $p$.
  \item A \textit{subcentric linking locality} over $\F$ is a linking locality $(\L,\F^s,S)$ over $\F$. Similarly, a \textit{centric linking locality} over $\F$ is a linking locality $(\L,\F^c,S)$ over $\F$, and a \textit{quasicentric linking locality} over $\F$ is a linking locality $(\L,\F^q,S)$ over $\F$. 
 \item A linking system $\T$ associated to $\F$ is called a \textit{subcentric linking system} if $\ob(\T)=\F^s$.
\end{itemize}
\end{mdefinition}
 
In the following proposition, we summarize some basic but important properties of linking systems and linking localities. Moreover, we explain the connection between our notion of a linking system and the one currently in the literature.
By a \textit{model} for the fusion system $\F$ we always mean a finite group $G$ of characteristic $p$ such that $S\in\Syl_p(G)$ and $\F_S(G)=\F$. As shown in \cite{BCGLO1}, there exists a model for $\F$ if and only if $\F$ is constrained. Moreover, if a model exists, then it is unique up to isomorphism. 

\begin{mprop}\label{ex0}
Let $(\L,\Delta,S)$ be a locality over $\F$, and let $\T$ be a transporter system associated to $\F$. Then the following hold.
\begin{itemize}
 \item[(a)] $\T(\L,\Delta)$ is a linking system if and only if $(\L,\Delta,S)$ is a linking locality.
 \item[(b)] If $(\L,\Delta,S)$ is of objective characteristic $p$, then $\Delta\subseteq \F^s$ and, for any $P\in\Delta\cap\F^f$, $N_\L(P)$ is a model for $N_\F(P)$. Similarly, if $\T$ is of objective characteristic $p$ then $\ob(\T)\subseteq \F^s$ and, for any $P\in\ob(\T)\cap\F^f$, the group $\Aut_\T(P)$ is isomorphic to a model for $N_\F(P)$.
 \item[(c)] Assume $\Delta\subseteq \F^q$. Then $C_\L(P)=C_S(P)O_{p^\prime}(C_\L(P))$ for every $P\in\Delta\cap\F^f$. As a consequence, $(\L,\Delta,S)$ is of objective characteristic $p$ if and only if $C_\L(P)$ is a $p$-group for every $P\in\Delta$. If  $\ob(\T)\subseteq \F^q$, then $\T$ is a linking system in the sense defined above if and only if it is a linking system in the sense of Oliver \cite[Definition~3]{O4}. In particular, every linking system in Oliver's definition is a linking system in our definition.
 \item[(d)] If $\Delta\subseteq \F^c$, then $(\L,\Delta,S)$ is of objective characteristic $p$ if and only if $(\L,\Delta,S)$ is a $\Delta$-linking system in the sense of Chermak \cite{Chermak:2013}, i.e. if and only if $C_\L(P)\leq P$ for every $P\in\Delta$. If $\Delta=\F^c$, then $(\L,\Delta,S)$ is a linking locality in our definition if and only if $(\L,\Delta,S)$ is a centric linking system in the sense of Chermak \cite{Chermak:2013}.\\
If $\ob(\T)=\F^c$, then $\T$ is a linking system in the sense defined above if and only if it is a centric linking system in the sense of \cite[Definition~1.7]{BLO2}.
\end{itemize}
\end{mprop}

Assume now that $\F$ is saturated. Suppose we are given a set $\Delta$ of subgroups such that $\F^{cr}\subseteq\Delta\subseteq \F^q$ and such that $\Delta$ is closed under $\F$-conjugation and with respect to overgroups. It follows from the existence and uniqueness of centric linking systems combined with \cite[Theorem~A, Proposition~3.12]{BCGLO1} that there is a linking system with object set $\Delta$ associated to $\F$, and that such a linking system is unique up to isomorphism. Moreover, the nerve of the linking system does not depend on the object set $\Delta$. In particular, quasicentric linking systems exist and are unique up to isomorphism, and the nerve of a quasicentric linking system is homotopy equivalent to the nerve of a centric linking system. Except for the statement about nerves, a formulation of these results and an algebraic proof using the methods in \cite{Chermak:2013} was given by Chermak in unpublished notes before the idea to define subcentric subgroups arose. We similarly give a version for subcentric linking systems. We also include a statement about nerves, which follows from a result of Oliver and Ventura \cite[Proposition~4.7]{OV1} generalizing the arguments in \cite{BCGLO1}. The crucial property here is that the radical objects of a linking system $\T$ (i.e. the objects $P$ of $\T$ with $O_p(\Aut_\T(P))\cong P$) are precisely the elements of $\F^{cr}$.

\begin{Th}\label{MainThm1}
Let $\F$ be saturated.
\begin{itemize}
\item [(a)] Let $\F^{cr}\subseteq \Delta\subseteq \F^s$ such that $\Delta$ is closed under $\F$-conjugation and with respect to overgroups. Then there exists a linking locality over $\F$ with object set $\Delta$, and such a linking locality is unique up to a rigid isomorphism. Similarly, there exists a linking system $\T$ associated to $\F$ whose set of objects is $\Delta$, and such a linking system is unique up to an isomorphism of transporter systems. Moreover, the nerve $|\T|$ is homotopy equivalent to the nerve of a centric linking system associated to $\F$.
\item [(b)] The set $\F^s$ is closed under $\F$-conjugation and with respect to overgroups. In particular, there exists a subcentric linking locality over $\F$ which is unique up to a rigid isomorphism, and there exists a subcentric linking system associated to $\F$ which is unique up to an isomorphism of transporter systems.
\end{itemize}
\end{Th}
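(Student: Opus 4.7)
My overall strategy is to prove the closure part of (b) first, then establish (a), and finally deduce the existence and uniqueness statements of (b) by applying (a) with $\Delta=\F^s$.

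\textbf{Closure of $\F^s$.} Invariance of $\F^s$ under $\F$-conjugation is immediate, since the defining condition refers only to fully normalized representatives of the $\F$-conjugacy class of $Q$. For closure under overgroups, let $Q\in\F^s$ and $Q\leq R\leq S$. Using the $\F$-conjugation invariance just established, I may replace $(Q,R)$ by an $\F$-conjugate in which $R$ is fully normalized; Lemma~\ref{subcentricEquiv} then reduces the problem to showing that the saturated fusion system $N_\F(R)$ is constrained. The plan is to exploit that $R\nsg N_\F(R)$, so $T:=O_p(N_\F(R))$ contains $R$, and hence some $\F$-conjugate $Q^*$ of $Q$. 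Applying the extension axiom inside $N_\F(R)$, one arranges that $Q^*$ is in addition fully centralized in $\F$. Then $C_\F(Q^*)$ is constrained by Lemma~\ref{subcentricEquiv}, and since $C_{N_S(R)}(T)\leq C_S(T)\leq C_S(Q^*)$, the desired bound $C_{N_S(R)}(T)\leq T$ can be extracted from the constrained structure of $C_\F(Q^*)$ together with the fact that $T\geq R\geq Q^*$.

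\textbf{Existence for part (a).} My starting point is a centric linking locality $(\L^c,\F^c,S)$ over $\F$, provided by Chermak's theorem \cite{Chermak:2013} (made classification-free in \cite{Glauberman/Lynd}). To construct a linking locality with object set $\Delta$, I proceed one $\F$-conjugacy class at a time, in non-increasing order of subgroup order, adjoining the new classes in $\Delta\setminus\F^c$. For a fully normalized representative $P$ of such a class, the saturated fusion system $N_\F(P)$ is constrained by Lemma~\ref{subcentricEquiv}, and therefore has a unique model $M_P$ by \cite{BCGLO1}; this group supplies the required $N_\L(P)$. The partial product is then extended using Chermak's expansion-of-localities technology \cite{Chermak:2015}, and the locality axioms reduce to verifying that all $N_{M_P}(R)$ for $R\geq P$ remain of characteristic $p$, which is automatic in characteristic-$p$ finite groups.

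\textbf{Uniqueness and nerves.} For uniqueness up to rigid isomorphism, any linking locality $(\L,\Delta,S)$ with $\F^{cr}\subseteq\Delta$ restricts to a centric linking locality over $\F^c$, which is rigidly unique by Chermak's theorem. The characteristic-$p$ hypothesis on normalizers forces each $N_\L(P)$, for $P\in\Delta\setminus\F^c$, to coincide with the model of $N_\F(P)$ up to a uniquely determined isomorphism, and the expansion step used in existence is then essentially forced. The translation between localities and transporter/linking systems is supplied by Remark~\ref{ex0}(a) and the equivalence recalled in Subsection~\ref{TransporterSystemsSubsection}. Finally, the homotopy equivalence of nerves follows from \cite[Proposition~4.7]{OV1}: its hypotheses apply because the radical objects of any linking system $\T$ are exactly the subgroups in $\F^{cr}$, so adjoining or removing objects in $\Delta\setminus\F^{cr}$ preserves the homotopy type of $|\T|$; in particular $|\T|$ is homotopy equivalent to the nerve of a centric linking system regardless of the choice of $\Delta$.

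\textbf{Conclusion of (b) and main obstacle.} With the closure of $\F^s$ in hand, part (b) is an immediate application of (a) with $\Delta=\F^s$. The principal obstacle is the overgroup closure: the defining condition of subcentricity involves a fusion system attached to a fully normalized representative, and this does not propagate to overgroups in any formal way. The argument therefore requires the simultaneous coordination of a fully normalized representative of $R$ with an appropriately chosen $\F$-conjugate of $Q$ lying inside $O_p(N_\F(R))$, followed by a centralizer analysis in the constrained systems $N_\F(Q^*)$ and $C_\F(Q^*)$. The remaining ingredients of the proof follow well-established patterns from \cite{Chermak:2013,BCGLO1,OV1}.
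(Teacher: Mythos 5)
Your overall architecture (closure of $\F^s$, then existence and uniqueness via Chermak's iterative construction, then deduce (b)) is the same as the paper's, but two of the key steps contain real gaps.

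\textbf{Overgroup closure.} You want to show that if $Q\in\F^s$ and $Q\le R$ with $R\in\F^f$, then $N_\F(R)$ is constrained. Your plan is to locate an $\F$-conjugate $Q^*$ of $Q$ inside $T:=O_p(N_\F(R))$ that is simultaneously fully $\F$-centralized, and then to extract $C_{N_S(R)}(T)\le T$ from the constraint of $C_\F(Q^*)$. The first claim — that the extension axiom ``inside $N_\F(R)$'' produces such a $Q^*$ — is not justified. Conjugating $Q$ inside $N_\F(R)$ can make it fully normalized in $N_\F(R)$, but that is not the same as fully centralized in $\F$, and a fully $\F$-centralized conjugate of $Q$ chosen in $\F$ will in general not stay inside $T$. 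This is precisely the coordination problem that has to be solved, and it does not dissolve by appeal to the extension axiom. Moreover, even granting a suitable $Q^*$, the inequality $C_{N_S(R)}(T)\le T$ does not follow from $T\ge Q^*$ and the constraint of $C_\F(Q^*)$: the groups $T$ and $O_p(C_\F(Q^*))$ live in different fusion systems, and the containments you list do not chain up to a proof. The paper's Proposition~\ref{subcentricProp} gets around this by a different route: instead of proving $T$ is centric directly, it shows that $C_\F(R)$ is constrained. It first arranges $Q\in N_\F(R)^f$ and then uses the result of Aschbacher (\cite[(2.2)(1),(2)]{Aschbacher:2010}) which says that a morphism $\alpha$ with $Q\alpha\in\F^f$ induces an isomorphism $N_{N_\F(R)}(Q)\cong N_{N_\F(Q\alpha)}(R\alpha)$ with $R\alpha\in N_\F(Q\alpha)^f$; then $C_{N_\F(Q\alpha)}(R\alpha)$ is constrained by Lemma~\ref{Model2}, and since $C_\F(R)=C_{N_{N_\F(R)}(Q)}(R)$, the isomorphism transports the constraint back to $C_\F(R)$. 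The transport mechanism you need is this lemma of Aschbacher, not a direct centralizer bound.

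\textbf{Existence and uniqueness.} You propose to start from the centric linking locality and adjoin classes in $\Delta\setminus\F^c$, and for uniqueness to ``restrict to a centric linking locality over $\F^c$.'' Both steps implicitly assume $\F^c\subseteq\Delta$, which is not part of the hypotheses: one is only given $\F^{cr}\subseteq\Delta\subseteq\F^s$, and $\Delta$ may fail to contain all centric subgroups (for instance when $\Delta$ is the set of overgroups of $\F^{cr}$). So you cannot restrict a linking locality over $\Delta$ to one over $\F^c$. The paper's fix is to introduce the intermediate object set $\Delta_0:=\{$overgroups of $\F^{cr}$ in $S\}$, which always satisfies $\F^{cr}\subseteq\Delta_0\subseteq\F^c\cap\Delta$. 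One first shows (by restriction from the centric locality, and by expanding back up to $\F^c$ for uniqueness) that a linking locality over $\Delta_0$ exists and is rigidly unique; then one applies the expansion theorem (Theorem~\ref{A1General}) to pass from $\Delta_0$ to $\Delta$. Without this intermediate step, or some replacement for it, your uniqueness argument does not go through.

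The nerve statement, the closure under $\F$-conjugation, and the translation between localities and transporter systems are all handled the same way as in the paper and look fine.
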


Recall here from \cite{Chermak:2013} that a \textit{rigid isomorphism} between localities $(\L,\Delta,S)$ and $(\L^*,\Delta,S)$ with the same set of objects is an isomorphism $\L\rightarrow \L^*$ of partial groups which restricts to the identity on $S$.

\smallskip

As we will explain next, the existence of subcentric linking localities seems to be important because it leads to a useful setup for a classification of fusion systems of characteristic $p$-type. Recall that a finite group $G$ is said to be of \textit{characteristic $p$-type} (or of \textit{local characteristic $p$}), if every $p$-local subgroup (i.e. every normalizer of a non-trivial $p$-subgroup) is of characteristic $p$. Similarly, if $\F$ is saturated, then $\F$ is said to be of characteristic $p$-type if, for every non-trivial fully $\F$-normalized subgroup $P\leq S$, $N_\F(P)$ is constrained. The main examples of groups of characteristic $p$-type are the finite groups of Lie type in defining characteristic $p$. Moreover, if a finite group is of characteristic $p$-type then its fusion system turns out to be  of characteristic $p$-type whereas the converse is not true in general.

\smallskip

Recall that a subgroup is subcentric if and only if it is $\F$-conjugate to a fully $\F$-normalized subgroup whose normalizer is constrained. Thus, $\F$ is of characteristic $p$-type if and only if every non-trivial subgroup of $S$ is subcentric. So supposing that $\F$ is of characteristic $p$-type and $(\L,\Delta,S)$ is a subcentric linking locality over $\F$, every non-trivial subgroup of $S$ is an element of $\Delta$, and the normalizer $N_\L(P)$ of any non-trivial subgroup $P$ of $S$ is a finite group of characteristic $p$. Hence, ``locally'' the partial group $\L$ looks very much like a finite group of characteristic $p$-type. On the other hand, every group of characteristic $p$-type leads in an elementary way to a linking locality of this kind:

\begin{example}\label{CharpType}
Let $G$ be a group of characteristic $p$-type and let $S\in\Syl_p(G)$. Let $\Delta$ be the set of non-trivial subgroups of $S$. Let $\L_\Delta(G)$ be the set of all elements $g\in G$ with $S\cap S^g\neq 1$. Moreover, define a partial product on $\L_\Delta(G)$ by taking the restriction of the (multivariable) product on $G$ to the set $\D$ of all words $(g_1,\dots,g_n)$ such that $g_i\in G$ and there exist elements $P_0,\dots,P_n\in\Delta$ with $P_{i-1}^{g_i}=P_i$ for $i=1,\dots,n$. Define an inversion map on $\L_\Delta(G)$ by taking the restriction of the inversion map on $G$ to the set $\L_\Delta(G)$. Then $(\L_\Delta(G),\Delta,S)$ is a locality by \cite[Example/Lemma~2.10]{Chermak:2013}. 
Moreover, $N_{\L_\Delta(G)}(P)=N_G(P)$ is of characteristic $p$ for all $P\in\Delta$. Hence, $(\L_\Delta(G),\Delta,S)$ is a subcentric linking locality for $\F_S(G)$
\end{example}

Previous treatments of fusion systems of characteristic $p$-type (as for example in \cite{Aschbacher:2010}, \cite{Aschbacher:2013a}, \cite{Aschbacher:2013b} and \cite{Henke:2011}) have used the existence of models for normalizers of fully normalized subgroups. Supposing that $\F$ is a fusion system of characteristic $p$-type, this involves moving from an arbitrary non-trivial subgroup of $S$ to a fully normalized $\F$-conjugate whose normalizer can then be realized by a model. This process of moving between different $\F$-conjugates often complicates the arguments. Such technical difficulties can be avoided when working with a subcentric linking locality $(\L,\Delta,S)$ over $\F$, because then, for any non-trivial subgroup $P$ of $S$, the normalizer $N_\L(P)$ is a finite group of characteristic $p$. We thus believe that subcentric linking localities allow a much more canonical translation of the arguments used to classify groups of characteristic $p$-type. Building on the ongoing program of Meierfrankenfeld, Stellmacher, Stroth to classify groups of local characteristic $p$, one can hope to achieve a classification of fusion systems of characteristic $p$-type once this program is complete. 

\smallskip

It might be possible to give a unifying approach to the classification of fusion systems of characteristic $p$-type and of groups of characteristic $p$-type whilst avoiding to use Theorem~\ref{MainThm1} and the theory of fusion systems to prove classification theorems for groups of characteristic $p$-type. We suggest to proceed as follows: In a first step one proves a classification theorem for a linking locality $(\L,\Delta,S)$ where $\Delta$ is the set of non-trivial subgroups of $S$. Then in a second step one separately deduces from that a corresponding classification theorem for fusion systems of characteristic $p$-type (using the existence of subcentric linking systems), and for groups of characteristic $p$-type (working with the locality $(\L_\Delta(G),\Delta,S)$ introduced in Example~\ref{CharpType}). A similar approach should be possible for groups and fusion systems which are not of characteristic $p$-type, but satisfy a weaker condition like for example being of \textit{parabolic characteristic $p$}. In Remark~\ref{ParabolicChar} we outline a possible approach after constructing linking localities and localities of objective characteristic $p$ coming from arbitrary finite groups in Section~\ref{GroupLocalities}.

\smallskip

We think that the existence of linking localities and linking systems with subcentric objects is also important for another reason. Namely, it seems that the more flexible choice of objects facilitates the study of extensions and of ``maps'' between linking systems and linking localities in the spirit of \cite{BCGLO2}, \cite{O4}, \cite{OV1}, \cite{AOV1}. This might lead to a local theory of subcentric linking localities similar to the local theory of fusion systems \cite{Aschbacher:2011}, which Aschbacher developed in analogy to the local theory of groups. If this is true, then subcentric linking localities could also lead to simplifications in the classification of simple fusion systems of component type. We continue by stating some technical results which could form the basis of a local theory of subcentric linking localities. In particular, in the next two propositions, we state some relations between the subcentric subgroups of $\F$ and the subcentric subgroups of local subsystems and certain normal subsystems. The proof of these propositions can be found in Section~\ref{SubcentricProperties}.

\begin{mprop}\label{SubcentricProperties1}
If $\F$ is saturated, then the following hold:
\begin{itemize}
 \item [(a)] Let $R\unlhd\F$ and $P\leq S$. Then $PR\in\F^s$ if and only if $P\in\F^s$.
 \item [(b)] Let $Z\leq Z(\F)$ and $P\leq S$. Then $P\in\F^s$ if and only if $PZ/Z$ is subcentric in $\F/Z$.
 \item [(c)] If $Q\in\F^f$ and $P\in N_\F(Q)^s$, then $PQ\in\F^s$. More generally, if $Q\leq S$ and $K\leq\Aut(Q)$ are such that $Q$ is fully $K$-normalized, then $PQ\in\F^s$ for every $P\in N^K_\F(Q)^s$.
\item [(d)] For any $Q\in\F^f$, we have $\{P\in\F^s\colon P\leq N_S(Q)\}\subseteq N_\F(Q)^s$. More generally, for every $Q\leq S$ and every $K\unlhd \Aut_\F(Q)$ such that $Q$ is fully $K$-normalized, we have $\{P\in\F^s\colon P\leq N_S^K(Q)\}\subseteq N_\F^K(Q)^s$.
\end{itemize}
\end{mprop}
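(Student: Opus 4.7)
The unifying tool is Lemma~\ref{subcentricEquiv}: $P\in\F^s$ iff $N_\F(P')$ is constrained for some (equivalently, any) fully $\F$-normalized $\F$-conjugate $P'$ of $P$ (equivalently, iff $N_\F(P')$ has a model). All four parts reduce to transferring this constrainedness between $\F$ and an auxiliary fusion system (a quotient, a normal subsystem, or a local subsystem) using standard properties of constrained systems and their models.

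For (a), since $R\nsg\F$ every $\F$-morphism preserves $R$ and extends to the product with $R$, so the $\F$-conjugacy classes of $P$ and $PR$ correspond, and one may pick simultaneous fully $\F$-normalized representatives $P'$ and $P'R$. The claim then reduces to proving that $N_\F(P')$ is constrained iff $N_\F(P'R)$ is. Both normalizer fusion systems contain $R$ as a normal subgroup, and passing to $\F/R$ both collapse onto $N_{\F/R}(P'R/R)$ in a controlled way; since constrainedness of a saturated fusion system with a normal $p$-subgroup $R$ is equivalent to constrainedness of the quotient by $R$, the two conditions are equivalent. Part~(b) then follows by applying~(a) with $R=Z$, together with the correspondence between subgroups of $S$ containing $Z$ and subgroups of $S/Z$: under this correspondence fully normalized status is preserved and the normalizer of $PZ/Z$ in $\F/Z$ is $N_\F(PZ)/Z$, and constrainedness is preserved by quotienting by a central $p$-subgroup.

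For (c), apply (a) inside $N_\F(Q)$ (saturated since $Q\in\F^f$) with the normal subgroup $Q$ to reduce to the case $Q\leq P$; so $P\in N_\F(Q)^s$ gives $N_{N_\F(Q)}(P')$ constrained for some fully $N_\F(Q)$-normalized $P'$ with $Q\leq P'$. Using that $Q\in\F^f$, the extension axiom for saturated fusion systems lets one map $P'$ by an $\F$-morphism into a fully $\F$-normalized $\F$-conjugate $\widetilde P$ of $P$, sending $Q$ to $Q$. A model for $N_{N_\F(Q)}(\widetilde P)$ together with the additional $\F$-automorphisms of $\widetilde P$ (which normalize $Q$ after the chosen conjugation) assembles into a model for $N_\F(\widetilde P)$; hence $N_\F(\widetilde P)$ is constrained and $PQ\in\F^s$. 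The more general $K$-version runs with $N^K_\F(Q)$ in place of $N_\F(Q)$, using the $K$-version of the extension axiom available since $Q$ is fully $K$-normalized. Part~(d) runs the same scheme in reverse: given $P\in\F^s$ with $P\leq N_S(Q)$, pick a fully $N_\F(Q)$-normalized $N_\F(Q)$-conjugate $P'$ of $P$, use extension at $Q$ to move $P'$ by an $\F$-morphism to a fully $\F$-normalized $\widetilde P$, and observe that a model $M$ for $N_\F(\widetilde P)$ has the subgroup $N_M(Q)$ as a candidate model for $N_{N_\F(Q)}(\widetilde P)$; since $O_p(M)\leq N_M(Q)$ and $M$ has characteristic $p$, $N_M(Q)$ is again of characteristic $p$ and realizes $N_{N_\F(Q)}(\widetilde P)$, yielding the required constrainedness. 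The $K$-version is analogous.

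The main obstacle is not a single deep step but the careful bookkeeping needed to align fully normalized (or fully $K$-normalized) representatives across $\F$, $N_\F(Q)$ (resp.\ $N^K_\F(Q)$), and quotient systems: one must use the extension axiom to choose $\F$-morphisms into fully $\F$-normalized conjugates that simultaneously respect $Q$ (and $K$), and, in the quotient arguments of (a) and (b), track that constrainedness is preserved under quotients by normal $p$-subgroups (and lifts back when the kernel is contained in $O_p$ of the upstairs system).
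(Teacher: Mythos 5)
Your proposal takes a genuinely different route from the paper's, but the route breaks down at the key step of part (a). The pivotal claim you invoke — ``constrainedness of a saturated fusion system with a normal $p$-subgroup $R$ is equivalent to constrainedness of the quotient by $R$'' — is false for a general normal subgroup $R\unlhd\F$. Take $p=2$, $G=\mathbb{F}_2^3\rtimes\GL_3(2)$ (natural module), and $S\in\Syl_2(G)$. Then $G$ is of characteristic $2$ (the action is faithful), so $\F:=\F_S(G)$ is constrained, and $R:=O_2(G)=\mathbb{F}_2^3$ is normal in $\F$. But $\F/R\cong\F_{D_8}(\GL_3(2))$, which has $O_2(\F/R)=1$ (the two essential Klein fours of $D_8$ have no common nontrivial $\F/R$-normal subgroup), so $\F/R$ is not constrained. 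The paper's Lemma~\ref{CentreConstrained} gives the equivalence you want only when $R\leq Z(\F)$, which is why your argument for~(b) is fine but your argument for~(a) is not. The paper's actual proof of~(a) (Lemma~\ref{NormalTimesSubcentric}) avoids quotients entirely: it reduces to $PR\in\F^f$, observes $N_\F(P)=N_{N_\F(PR)}(P)$ because $R\unlhd\F$, and uses that $p$-local subsystems of a constrained system are constrained (Lemma~\ref{Model2}).

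For (c) and (d) there are further gaps. You assert that from a fully $N_\F(Q)$-normalized $P'$ with $Q\leq P'$ one can pass to a fully $\F$-normalized conjugate ``sending $Q$ to $Q$''; this is not automatic, since $N_S(\widetilde P)\not\leq N_S(Q)$ in general even when $Q\leq\widetilde P$, so the standard \cite[I.2.6(c)]{Aschbacher/Kessar/Oliver:2011}-type maneuver does not deliver simultaneous full normalization in $\F$ and preservation of $Q$. Likewise, your ``assembly'' of a model for $N_\F(\widetilde P)$ from one for $N_{N_\F(Q)}(\widetilde P)$ rests on the unjustified claim that $\Aut_\F(\widetilde P)$ normalizes $Q$; normality of $Q$ in $\widetilde P$ does not make $Q$ characteristic. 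The paper sidesteps both difficulties by working with \emph{fully centralized} subgroups rather than fully normalized ones: the crucial Lemma~\ref{KNormMainLemma} shows that if $P$ is fully $N_\F^K(Q)$-centralized and $Q$ is fully $K$-normalized then $PQ$ is fully $\F$-centralized, and then the equivalences (a1)--(c2) of Lemma~\ref{subcentricEquiv} convert between normalizer-constrainedness and centralizer-constrainedness. You also did not account for the asymmetry in hypotheses: part~(c) allows arbitrary $K\leq\Aut(Q)$, while part~(d) requires $K\unlhd\Aut_\F(Q)$ precisely because the direction of~(d) needs $N_\F^K(Q)$ to be weakly normal in $N_\F(Q)$ (Lemma~\ref{KNormSubcentricNormal}).
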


The property analogous to (b) for quasicentric subgroups was proved by Broto, Castellana, Grodal, Levi and Oliver in \cite[Lemma~6.4(b)]{BCGLO2}. Building on this result, the authors show that a quasicentric linking system for $\F/Z$ ($Z\leq Z(\F)$) can be constructed as a ``quotient'' of a quasicentric linking system associated to $\F$. A similar construction can be carried out in the world of localities. We prove this in Proposition~\ref{LocalityQuotientModCentral} not only for quasicentric linking localities, but also correspondingly for subcentric linking localities and for arbitrary linking localities. Results corresponding to (c) and (d) are also true for centric and quasicentric subgroups; see Lemma~\ref{KNormSubcentric2a}. As we explain in more detail in Section~\ref{pLocalInclusion}, property (c) implies that a subcentric linking locality over $N_\F^K(Q)$ is contained in a subcentric linking locality over $\F$ such that the inclusion map is a homomorphism of partial groups. This leads also to a functor from the subcentric linking system of $N_\F^K(Q)$ to the subcentric linking system of $\F$. Similar results hold for centric and quasicentric linking systems and linking localities. 

\smallskip

We now turn attention to weakly normal subsystems.

\begin{mprop}\label{SubcentricProperties2}
Let $\F$ be saturated and let $\m{E}$ be a weakly normal subsystem of $\F$ over $T$. Then the following hold:
\begin{itemize}
 \item [(a)] The set $\m{E}^s$ is invariant under $\F$-conjugation.
 \item [(b)] For every $P\in\F^s$ with $P\leq T$, $P\in\m{E}^s$.
 \item [(c)] If $\m{E}$ is normal in $\F$ of index prime to $p$, then $\m{E}^s=\F^s$.
 \item [(d)] If $\m{E}$ is normal in $\F$ of $p$-power index, then $\m{E}^s=\{P\in\F^s\colon P\leq T\}$.
 \item [(e)] If $R\unlhd \F$ and $K\unlhd \Aut_\F(R)$ then $N_\F^K(R)^s=\{P\in\F^s\colon P\leq N_S^K(R)\}$. In particular, $C_\F(R)^s=\{P\in\F^s\colon P\leq C_S(R)\}$.
\end{itemize}
\end{mprop}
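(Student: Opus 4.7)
My plan is to use throughout the characterization of Lemma~\ref{subcentricEquiv}: a subgroup $P$ is subcentric in a saturated fusion system if and only if the normalizer of some (equivalently, any) fully normalized conjugate is constrained. The proofs rest on the $\F$-invariance of $\m{E}$, which implies $T$ is strongly closed in $\F$ and that $\Aut_\F(T)$ permutes the $\m{E}$-conjugacy classes within each $\F$-conjugacy class of subgroups of $T$.

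For (a), let $P\in\m{E}^s$ and let $P'$ be an $\F$-conjugate of $P$; strong closure forces $P'\leq T$. Pick fully $\m{E}$-normalized $\m{E}$-conjugates $P_0$ of $P$ and $P_0'$ of $P'$. By $\F$-invariance some $\alpha\in\Aut_\F(T)$ carries the $\m{E}$-class of $P_0$ to that of $P_0'$, and composing $\alpha$ with an $\m{E}$-morphism coming from receptivity of $P_0'$ in $\m{E}$ yields an $\F$-morphism $\bar\phi\co N_T(P_0)\to N_T(P_0')$ with $\bar\phi(P_0)=P_0'$ that conjugates $N_\m{E}(P_0)$ onto $N_\m{E}(P_0')$ as fusion systems. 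Constrainedness transfers, so $P'\in\m{E}^s$.

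Part (b) is the heart of the argument. Given $P\leq T$ with $P\in\F^s$, I would, using that $\F^s$ is $\F$-invariant by Theorem~\ref{MainThm1}(b) together with (a) just proved, reduce to the case where its $\F$-class contains a fully $\F$-normalized representative $P_1\leq T$ whose normalizer $N_\F(P_1)$ is constrained and admits a model $M$: a finite group of characteristic $p$ with $N_S(P_1)\in\Syl_p(M)$ and $\F_{N_S(P_1)}(M)=N_\F(P_1)$. Using $\F$-invariance, the subsystem $N_\m{E}(P_1)$ of $N_\F(P_1)$ is realized by a normal subgroup $H\unlhd M$ with $N_T(P_1)\in\Syl_p(H)$ and $N_\m{E}(P_1)=\F_{N_T(P_1)}(H)$. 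The delicate point, which I expect to be the main technical obstacle, is checking that $H$ is itself of characteristic $p$: from $C_M(O_p(M))\leq O_p(M)$ and the fact that $O_p(N_\F(P_1))$ meets $T$ inside $O_p(H)$ (a consequence again of $\F$-invariance applied to the normal subgroup $O_p(N_\F(P_1))$ of $N_\F(P_1)$), one deduces $C_{N_T(P_1)}(O_p(H))\leq O_p(H)$, so $N_\m{E}(P_1)$ is constrained; transferring via the argument of (a) to a fully $\m{E}$-normalized $\m{E}$-conjugate yields $P\in\m{E}^s$.

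Parts (c) and (d) follow from (a) and (b) combined with the index-condition characterizations. For (c), where $T=S$ and $\Aut_\F(Q)/\Aut_\m{E}(Q)$ is a $p'$-group for every $Q\leq S$, full $\m{E}$-normalization coincides with full $\F$-normalization and $O_p(N_\F(Q))=O_p(N_\m{E}(Q))$; so $\m{E}^s\supseteq\F^s$ complements (b). For (d), the relation $O^p(\Aut_\F(Q))\leq\Aut_\m{E}(Q)$ for $Q\leq T$ similarly forces the $O_p$'s of the two normalizers to agree, and combined with (b) this yields the asserted equality. Finally, (e) is a formal consequence of Proposition~\ref{SubcentricProperties1}: the inclusion $\supseteq$ is exactly Proposition~\ref{SubcentricProperties1}(d) applied with $Q=R$ (the hypothesis $R\unlhd\F$ ensures that the singleton $\{R\}$ is both the $\F$- and the $K$-conjugacy class, so $R$ is fully $K$-normalized), while for $\subseteq$, any $P\in N_\F^K(R)^s$ satisfies $PR\in\F^s$ by Proposition~\ref{SubcentricProperties1}(c), and Proposition~\ref{SubcentricProperties1}(a) then gives $P\in\F^s$; the case $K=\{1\}$ recovers the statement about $C_\F(R)$ since $N_S^{\{1\}}(R)=C_S(R)$ and $N_\F^{\{1\}}(R)=C_\F(R)$.
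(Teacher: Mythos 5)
Your overall strategy—working through the characterization of Lemma~\ref{subcentricEquiv} and deriving (c),(d),(e) from (a),(b) together with auxiliary facts—is the right shape, and (a), (c), and the second inclusion of (e) are essentially sound. However there are real gaps in (b), (d), and the first inclusion of (e).

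For part (b), you want to realize $N_\m{E}(P_1)$ inside a model $M$ for $N_\F(P_1)$ by a normal subgroup $H\unlhd M$. That realization is available for \emph{normal} subsystems (via Lemma~II.7.4 of Aschbacher--Kessar--Oliver), but $N_\m{E}(P_1)$ is in general only \emph{weakly normal} in $N_\F(P_1)$. The paper's Lemma~\ref{WeaklyNormalConstraint} handles this by first replacing the weakly normal subsystem by $O^{p'}(\m{E})$, which is genuinely normal by Craven's theorem, and then passing to the model. Without that step your reduction does not go through. Incidentally, your ``delicate point'' is not delicate at all: once $H\unlhd M$ with $M$ of characteristic $p$, $H$ is automatically of characteristic $p$ by Lemma~\ref{Charp1}(b); the genuine difficulty is the realization, not the characteristic.

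For part (d), the key claim that the $O_p$'s of $N_\F(Q)$ and $N_\m{E}(Q)$ agree for $Q\leq T$ is false. Take $\F=\F_S(S)$, $\m{E}=\F_T(T)$ for $T<S$ of index $p$ (the unique subsystem of $p$-power index over $T$). Then $O_p(N_\F(1))=S$ while $O_p(N_\m{E}(1))=T$. The underlying $p$-groups $N_S(Q)$ and $N_T(Q)$ are in general unequal and the radicals need not match. The actual argument requires showing that $N_\m{E}(P)$ is a subsystem of $N_\F(P)$ of $p$-power index (Lemma~\ref{pPowerIndexSubcentric}, using the hyperfocal subgroup) and then invoking the nontrivial statement that for $p$-power-index subsystems, constrainedness passes both ways (Lemma~\ref{pPowerIndexConstrained}, whose proof shows $QC_S(Q)$ is normal in $\F$ where $Q=O_p(\m{E})$). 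The proposal's sketch misses this entirely.

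For part (e), be careful about circularity: in the paper's development, Proposition~\ref{SubcentricProperties1}(d) (Lemma~\ref{KNormSubcentric2}) is \emph{derived from} Lemma~\ref{KNormSubcentricNormal}, i.e., from Proposition~\ref{SubcentricProperties2}(e), so citing SubcentricProperties1(d) for the $\supseteq$ inclusion is not a self-contained argument. The direct route (which the paper takes) is to observe via Lemma~\ref{WeaklyNormalCentInKNorm}(a) that $N_\F^K(R)$ is weakly normal in $\F=N_\F^{\Aut_\F(R)}(R)$ and then apply part (b). Your argument for $\subseteq$ via SubcentricProperties1(c) and (a) is fine and matches the paper.

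Finally, a small slip in (c): you wrote ``$\m{E}^s\supseteq\F^s$ complements (b),'' but with $T=S$ part (b) already gives $\F^s\subseteq\m{E}^s$; the direction the $O_p$-argument must supply is $\m{E}^s\subseteq\F^s$.
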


Corresponding statements to (a) and (b) are also true for centric and quasicentric subgroups. Property (c) is clearly also true if one considers centric subgroups rather than subcentric subgroups, and a statement corresponding to (d) is true for quasicentric subgroups by \cite[Theorem~4.3]{BCGLO2}. It is shown in \cite[Theorem~5.5]{BCGLO2} that, given a subsystem $\m{E}$  of index prime to $p$, a centric linking system associated to $\m{E}$ can be naturally constructed from the centric linking system associated to $\F$. Similarly, it is shown in \cite[Theorem~4.4]{BCGLO2} that a quasicentric linking system of a subsystem of $p$-power index can be obtained from a quasicentric linking system associated to $\F$. Property (e) fails for centric and quasicentric subgroups as it is stated, but if $\Inn(R)\leq K$, it is true that every centric or quasicentric subgroup of $N_\F^K(R)$ which contains $R$ is $\F$-centric or $\F$-quasicentric respectively, and this is enough for many purposes. In \cite[Definition~1.27]{AOV1}, Andersen, Oliver and Ventura define normal linking systems. The results we summarized enable them to associate normal pairs of linking systems to $(\m{E},\F)$, if $\m{E}$ is a weakly normal subsystem of $\F$ of index prime to $p$, or of $p$-power index, or if $\m{E}=N_\F^K(R)$ for some normal subgroup $R\unlhd \F$ and $\Inn(R)\leq K\unlhd\Aut(Q)$; see \cite[Proposition~1.31]{AOV1}. Andersen, Oliver and Ventura \cite{AOV1} define also the reduction of a fusion system $\F$ by starting with  $\F_0:=C_\F(O_p(\F))/Z(O_p(\F))$ and then alternately taking $\F_i=O^p(\F_{i-1})$ and $\F_i=O^{p^\prime}(\F_{i-1})$ for any positive integer $i$ until the process terminates. Note that Proposition~\ref{SubcentricProperties1}(b) together with Proposition~\ref{SubcentricProperties2}(c),(d),(e) gives a very clean connection between the subcentric subgroups of $\F$ and the subcentric subgroups of the reduction of $\F$. Therefore it could be an advantage to work with subcentric linking systems rather than with centric and quasicentric linking systems in this context. 

\smallskip

We now turn attention to normal subsystems which do not fulfill any additional properties, and we prove that its subcentric subgroups are still closely related to subcentric subgroups of the entire fusion system.

\begin{Th}\label{SubcentricEF}
 Let $\F$ be a saturated fusion system on a finite $p$-group $S$, and let $\m{E}$ be a normal subsystem of $\F$. Then for every subcentric subgroup $P$ of $\m{E}$, $PC_S(\m{E})$ is subcentric in $\F$.
\end{Th}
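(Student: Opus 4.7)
My strategy is to reduce to a well-chosen $\F$-conjugate of $P$, identify the normalizer $N_\F(PC_S(\m{E}))$ with $N_\F(P)$, and then produce an $\F$-centric normal $p$-subgroup of $N_\F(P)$ to conclude that $N_\F(P)$ is constrained.

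First I would perform a sequence of $\F$-conjugations (legitimate since $\m{E}^s$ is invariant under $\F$-conjugation by Proposition~\ref{SubcentricProperties2}(a)). Using Proposition~\ref{SubcentricProperties1}(a) applied inside $\m{E}$ with the central subgroup $Z(\m{E})\unlhd \m{E}$, I may replace $P$ by $PZ(\m{E})$ and assume $Z(\m{E})\le P$. Using the standard identity $T\cap C_S(\m{E})=Z(\m{E})$ for normal subsystems, the subgroup $Q:=PC_S(\m{E})$ then satisfies $Q\cap T=P$. A saturation argument combined with strong closure of $T$ and of $C_S(\m{E})$ in $\F$ lets me arrange $Q$ fully $\F$-normalized and (with a bit more care) $P$ fully $\m{E}$-normalized simultaneously. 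Since $C_S(\m{E})\unlhd S$, one then checks $N_S(Q)=N_S(P)$, and $P$ is fully $\F$-normalized as well.

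Next I verify $N_\F(Q)=N_\F(P)$ as fusion systems on $N_S(Q)=N_S(P)$. For $N_\F(Q)\subseteq N_\F(P)$, the strong closure of $T$ forces any $\F$-automorphism of $Q$ to preserve $Q\cap T=P$; for the reverse inclusion, I use that $C_S(\m{E})\unlhd \F$ (a consequence of $C_\F(\m{E})\unlhd \F$), so every $\F$-morphism normalizing $P$ extends to one normalizing $PC_S(\m{E})=Q$. The problem thereby reduces to showing $N_\F(P)$ is constrained. Since $P\in\m{E}^s$ is fully $\m{E}$-normalized, Lemma~\ref{subcentricEquiv} gives $N_\m{E}(P)$ constrained, and $R_0:=O_p(N_\m{E}(P))$ is $\m{E}$-centric. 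Setting $R:=R_0\cdot C_S(\m{E})$, I aim to show $R\unlhd N_\F(P)$ and $R$ is $\F$-centric, since these together force $R\le O_p(N_\F(P))$, hence $O_p(N_\F(P))$ is $\F$-centric, which by Lemma~\ref{subcentricEquiv} gives $N_\F(P)$ constrained. Normality of $R$ in $N_\F(P)$ follows from the chain $R_0\unlhd N_\m{E}(P)\unlhd N_\F(P)$ (the second normality coming from $\m{E}\unlhd\F$) together with $C_S(\m{E})\unlhd\F$.

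The main obstacle is proving $\F$-centricity of $R$. Any $\F$-conjugate of $R$ has the form $R_0^\alpha\cdot C_S(\m{E})$ by strong closure of both $T$ and $C_S(\m{E})$, and I must bound $C_S(R_0^\alpha\cdot C_S(\m{E}))$ inside $R_0^\alpha\cdot C_S(\m{E})$. The $\m{E}$-centricity of $R_0$ controls the intersection with $T$, giving $C_T(R_0^\alpha)\le R_0^\alpha$; the delicate part is showing that any element of $C_S(R_0^\alpha)$ lying outside $T$ must lie in $C_S(\m{E})$. This rests on the defining property of $C_S(\m{E})$ as the largest subgroup of $C_S(T)$ acting ``centrally'' on $\m{E}$ under conjugation, and it is precisely this step that explains why multiplication by $C_S(\m{E})$ (as opposed to $C_S(T)$ or a smaller centralizer) is needed in the statement of the theorem.
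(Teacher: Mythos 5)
Your proposal takes a genuinely different route from the paper's and has a real gap at the step you yourself flag as ``delicate.'' Several intermediate observations do line up with the paper --- the reduction to $Q := PC_S(\m{E})$ fully $\F$-normalized with $P = Q \cap T$, and the fact that $R_0 := O_p(N_\m{E}(P))$ is normal in $N_\F(Q)$ (the paper establishes this via Lemma~\ref{NEPNormalNFQ} and Lemma~\ref{OpE}) --- but the endgame diverges completely.

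The unproven claim is that $R := R_0 C_S(\m{E})$ is $\F$-centric, and the heuristic you give for it does not hold up. Since $T$ and $C_S(\m{E})$ are strongly closed, any $\F$-conjugate of $R$ has the form $R_0^\alpha C_S(\m{E})$; after using $\m{E}$-centricity of $R_0^\alpha$ to bound $C_T(R_0^\alpha)\le R_0^\alpha$, what remains is precisely to show $C_S(R_0^\alpha)\cap C_S(C_S(\m{E}))\leq TC_S(\m{E})$. But an element of $S$ that centralizes $R_0^\alpha$ --- a single centric subgroup of $T$ --- and also $C_S(\m{E})$ can perfectly well induce a nontrivial outer action on $\m{E}$: the defining property of $C_S(\m{E})$ controls elements that centralize all of $\m{E}$, not elements that merely fix one centric subgroup pointwise. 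A related problem is your assertion that $C_S(\m{E})\unlhd\F$; normality of the subsystem $C_\F(\m{E})$ gives strong closure of $C_S(\m{E})$, not normality in $\F$, so the reverse inclusion $N_\F(P)\subseteq N_\F(Q)$ is also unjustified (the paper instead uses $N_\F(Q)=N_{N_\F(C_S(\m{E}))}(Q)$, which holds because $C_S(\m{E})\leq Q$ is strongly closed). The paper's proof sidesteps the centricity of $R$ altogether: assuming $N_\F(Q)$ is not constrained, so that $E(N_\F(Q))\neq 1$, it applies Aschbacher's L-balance theorem together with $E(N_\F(C_S(\m{E})))\subseteq\m{E}$ to put the components of $N_\F(Q)$ inside $\m{E}$, and then observes that any such component centralizes the normal subgroup $R_0=O_p(N_\m{E}(P))$; since $R_0$ is $\m{E}$-centric, the component's Sylow $p$-subgroup is forced to be abelian, contradicting quasisimplicity. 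The introduction explicitly flags Theorem~\ref{SubcentricEF} as the one place the components and L-balance machinery is required, and I do not see how your more elementary route can be completed without it.
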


Here $C_S(\m{E})$ is the subgroup introduced by Aschbacher \cite[Chapter~6]{Aschbacher:2011}. It is the largest subgroup $X$ of $S$ with $\m{E}\subseteq C_\F(X)$. If $\m{E}$ is realized by a partial normal subgroup, then we prove that $C_S(\m{E})$ is indeed easy to describe in the locality:

\begin{mprop}\label{CENThm}
 Let $\m{E}$ be a normal subsystem of $\F$ over $T$ and let $(\L,\Delta,S)$ be a linking locality over $\F$. Suppose there exists a partial normal subgroup $\N$ of $\L$ such that $S\cap\N=T$ and $\m{E}=\F_T(\N)$. Then $C_S(\m{E})=C_S(\N)$.
\end{mprop}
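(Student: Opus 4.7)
I would establish the two inclusions separately, with the second requiring substantially more work.

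For the inclusion $C_S(\N)\subseteq C_S(\m{E})$: given $x\in C_S(\N)$, I would argue first that $T=S\cap\N\subseteq\N$ forces $[x,T]=1$, so $T\le C_S(x)$. Every morphism of $\m{E}=\F_T(\N)$ has the form $c_n\!\restriction\!P$ for some $n\in\N$ and some $P\le T$. The hypothesis that $x$ and $n$ commute as elements of the partial group $\L$ implies (via the locality axioms) that $P\<x\>$ lies in the domain of conjugation by $n$ and that this conjugation fixes $x$ pointwise.  This extension of $c_n\!\restriction\!P$ is a morphism of $\F$, so $c_n\!\restriction\!P\in\Hom_{C_\F(x)}(P,P^n)$, giving $\m{E}\subseteq C_\F(x)$ and hence $x\in C_S(\m{E})$.

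For the harder inclusion $C_S(\m{E})\subseteq C_S(\N)$: fix $x\in C_S(\m{E})$ and $n\in\N$; the goal is that $(x,n)\in\D$ with $xn=nx$.  From $\m{E}\subseteq C_\F(x)$ we obtain $T\le C_S(x)$.  The strategy is to work inside a single finite group.  First choose $P\in\Delta$ with $P\le T$ and $n\in N_\L(P)$, which is possible because $\N$ is a partial normal subgroup of $\L$ and every $n\in\N$ normalizes some object of $\Delta$ contained in $T$.  Since $P\le T\le C_S(x)$, the element $x$ lies in $C_{N_\L(P)}(P)$, so both $x$ and $n$ live in the finite group $G=N_\L(P)$.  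Because $c_n\!\restriction\!P\in\m{E}$ and $x\in C_S(\m{E})$, this morphism admits an extension in $\F$ to $P\<x\>\to P^n\<x\>$ that fixes $x$; using that $\Delta$ is closed under overgroups so that $P\<x\>\in\Delta$, this extension is realized in the linking locality by an element $g\in N_\L(P\<x\>)\subseteq G$ satisfying $c_g\!\restriction\!P=c_n\!\restriction\!P$ and $c_g(x)=x$.  Since $g$ and $n$ induce the same conjugation on $P$, we have $g^{-1}n\in C_G(P)$.  If we can arrange $P\in\F^{cr}$, then the characteristic-$p$ structure of the model $N_\L(P)$ gives $O_p(N_\L(P))=P$ and hence $C_G(P)\le P\le C_S(x)\le C_G(x)$.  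Combined with $g\in C_G(x)$ this yields $n=g\cdot(g^{-1}n)\in C_G(x)$, i.e.\ $xn=nx$.

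\textbf{Main obstacle.} The crux is producing, for each $n\in\N$, a subgroup $P\in\F^{cr}\cap\Delta$ with $P\le T$ and $n\in N_\L(P)$.  This is not automatic: simple product examples (e.g.\ $\F=\F_{S_A}(A)\times\F_{S_B}(B)$ with $\m{E}=\F_{S_A}(A)$, where every $P\in\F^{cr}$ must meet the $S_B$-factor) show that $\F^{cr}$-subgroups contained in $T$ need not exist, so the naive choice of $P$ must be refined.  One fix is to replace the condition $P\in\F^{cr}$ by the weaker $C_\L(P)\le P\cdot C_\L(x)$, using that $\m{E}\unlhd\F$ and the subcentric hypothesis on $\Delta$ to bound the ``extra'' part of $C_\L(P)$ modulo a subgroup centralizing $x$; alternatively one can decompose $x$ as $x=x_Tx'$ with $x_T\in T\cap C_S(\m{E})$ and $x'\in C_S(T)$, treat the two pieces separately (the piece in $T$ by the argument above, using an $\F^{cr}$-subgroup containing $x_T$ after enlargement by the overgroup axiom, and the piece in $C_S(T)$ by using the partial normality of $\N$ together with $c_{x'}(n)\in n\cdot(C_\L(P)\cap\N)$ and iterating on the order of $x'$), and then combine.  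Either way the heart of the argument uses both the linking-locality hypothesis (to control $C_\L(P)$) and the partial-normality of $\N$ (to force $[x,n]\in\N$).
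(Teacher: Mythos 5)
The easy inclusion $C_S(\N)\subseteq C_S(\m{E})$ in your proposal is essentially correct, and the paper treats it as clear.

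The hard inclusion has a genuine gap. The step ``choose $P\in\Delta$ with $P\le T$ and $n\in N_\L(P)$'' is not available in general. For an arbitrary $n\in\N$ the natural object is $S_n$, but $n$ need not normalize $S_n$ (in general $S_n^n=S_{n^{-1}}$), and $S_n\cap T$ need not lie in $\Delta$ at all; there is no guarantee of \emph{any} $P\in\Delta$ with $P\le T$ normalized by $n$. Even granting such a $P$, the further requirement $P\in\F^{cr}$ (which you use to conclude $C_{N_\L(P)}(P)\le P$) generally fails, as you note. You flag this obstacle, but the two proposed workarounds do not resolve it: the condition $C_\L(P)\le P\cdot C_\L(x)$ is not something you exhibit how to achieve, and the decomposition $x=x_Tx'$ is vacuous --- since $C_S(\m{E})\le C_S(T)$, every $x\in C_S(\m{E})$ already lies in $C_S(T)$, so there is nothing to decompose. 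So the ``main obstacle'' paragraph correctly identifies the problem but does not actually close it.

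The paper proceeds differently and more structurally. It first isolates a separate result (Proposition~\ref{CNWeaklyClosed}): in a linking locality, for a weakly closed subgroup $R\le C_S(T)$, the conditions $N_\N(T)\subseteq C_\L(R\cap T)$, $N_\N(T)\subseteq C_\L(R)$, and $\N\subseteq C_\L(R)$ are equivalent. With this in hand, Proposition~\ref{CENThm} is a three-line deduction: take $R=C_S(\m{E})$, which is strongly (hence weakly) closed and satisfies $R\le C_S(T)$; condition (1) holds by the definition of $C_S(\m{E})$; hence $\N\subseteq C_\L(R)$, i.e.\ $R\le C_S(\N)$. All the real work is in the proof of Proposition~\ref{CNWeaklyClosed}, in particular the implication (2)$\Rightarrow$(3), which runs by an extremal/inductive argument: choose $n\in\N\setminus C_\L(R)$ with $P:=S_n$ of maximal order, establish in Step~1 that $N_\N(Q)\subseteq C_\L(R)$ for all $Q\in\Delta$ with $|Q|\ge|P|$ and $N_T(Q)\in\Syl_p(N_\N(Q))$ (using the characteristic-$p$ structure of $N_\L(Q)$, $O_p(N_\N(Q))=Q\cap T$, and a Frattini argument), and then in Step~2 use the splitting lemma (Lemma~\ref{SplittingLemmaCons}) to conjugate into a subgroup with better Sylow properties and derive a contradiction. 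The key point is that one works with $P=S_n$ (not requiring $P\le T$ or $n\in N_\L(P)$) and enlarges $P$ as needed, playing the maximality of $|P|$ off against the structure of $N_\L(P)$; this is precisely the machinery that replaces the unavailable ``choose $P\in\F^{cr}$ with $P\le T$'' step in your sketch.
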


Here $\F_T(\N)$ is the smallest fusion system on $T$ containing all conjugation maps by elements of $\N$ between subgroups of $T$. Supposing that $\F$ is saturated and $(\L,\Delta,S)$ is a linking locality over $\F$ with $\F^q\subseteq\Delta\subseteq\F^s$, it was shown by Chermak and the author of this paper \cite{Chermak/Henke:2017}  that indeed every normal subsystem is of the form $\F_{S\cap\N}(\N)$ for a unique partial normal subgroup $\N$ of $\L$, and that this leads to a one-to-one correspondence between the normal subsystems of $\F$ and the partial normal subgroups of $\L$. In particular, if $(\L,\Delta,S)$ is a subcentric linking locality over $\F$, then a normal subsystem $\m{E}$ of $\F$ is realized by a partial normal subgroup $\N$ of $\L$. This situation is explored further in Subsection~\ref{NormalInclusions}. Using Theorem~\ref{SubcentricEF} and Proposition~\ref{CENThm} we show that a subcentric linking locality for $\m{E}$ is contained in $\L$, and that the inclusion map is a homomorphism of partial groups. This leads to a functor from the subcentric linking system of $\m{E}$ to the subcentric linking system of $\F$. This functor maps every object $P\in\m{E}^s$ to $PC_S(\m{E})\in\F^s$.

\smallskip

The following Proposition is needed in the proof of Theorem~\ref{MainThm1}. If $\F$ is saturated and $(\L,\Delta,S)$ is a linking locality over $\F$, then the statement can be considered as a particular case of the correspondence between the normal subsystems of $\F$ and the partial normal subgroups of $\L$.

\begin{mprop}\label{NormLF}
 Let $(\L,\Delta,S)$ be a locality over $\F$ of objective characteristic $p$. Then a subgroup $Q\leq S$ is normal in $\F$ if and only if $\L=N_\L(Q)$. Similarly, $Q\leq Z(\F)$ if and only if $\L=C_\L(Q)$.
\end{mprop}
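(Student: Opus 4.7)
The plan is to prove each biconditional by exploiting the correspondence between $\F$-morphisms and conjugation by elements of $\L$ on $\Delta$-subgroups, together with the closure of $\Delta$ under overgroups in $S$ and the fact that, for each $P\in\Delta$, the normalizer $N_\L(P)$ is a finite group of characteristic $p$.

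For the easy direction of the first biconditional, suppose $\L=N_\L(Q)$. Since $S\subseteq N_\L(Q)$, one has $Q\unlhd S$. Any $\phi\in\Hom_\F(P,P')$ is of the form $c_f|_P$ for some $f\in\L$ with $P\leq S_f$, and $f\in N_\L(Q)$ gives $Q\leq S_f$ with $Q^f=Q$; hence $c_f|_{PQ}\co PQ\to P'Q$ extends $\phi$ and fixes $Q$ setwise, so $Q\unlhd \F$.

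For the converse, fix $f\in\L$ and choose $P\in\Delta$ with $P\leq S_f$. By overgroup closure $PQ\in\Delta$, and since $Q\unlhd\F$ the morphism $c_f|_P$ extends to an $\F$-isomorphism $\phi\co PQ\to P^fQ$ with $\phi(Q)=Q$. The standard extension lemma for localities provides $f^*\in\L$ with $PQ\leq S_{f^*}$ and $c_{f^*}|_{PQ}=\phi$. The product $g:=f(f^*)^{-1}$ is defined in $\L$ via the objects $(P,P^f,P)$, centralises $P$, and therefore lies in $C_\L(P)$, a subgroup of the finite characteristic-$p$ group $N_\L(P)$. Using this characteristic-$p$ structure together with Remark~\ref{ex0}(c), one sees that $g$ in fact normalises $PQ$; combining with $f=gf^*$ and $PQ\leq S_{f^*}$ in the partial group $\L$ then yields $PQ\leq S_f$ and $Q^f=Q$, so $f\in N_\L(Q)$.

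The main obstacle is this last step, in which one transfers information from $f^*$ to $f$ via the centralising element $g$; this is where the hypothesis of objective characteristic $p$ enters crucially, since without it $C_\L(P)$ need not sit inside $S$ in any controlled way. The analogous biconditional $Q\leq Z(\F)\Leftrightarrow \L=C_\L(Q)$ follows by the same argument, replacing "normalises $Q$" by "centralises $Q$" throughout and using the characterisation that $Q\leq Z(\F)$ if and only if every $\F$-morphism restricts to the identity on $Q$.
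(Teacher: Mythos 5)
The high-level plan --- extend the conjugation map $c_f$ to a larger $\F$-morphism using normality of $Q$, realize the extension by some $f^*\in\L$, and compare $f$ with $f^*$ via the "error" element $g\in C_\L(P)$ --- is indeed the right shape, and coincides with the skeleton of the paper's argument. However, the crucial step you flag as "the main obstacle" is genuinely a gap, not just a technicality, and your sketch does not fill it.

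Concretely, you assert that because $g\in C_\L(P)\leq N_\L(P)$ and $N_\L(P)$ has characteristic $p$, "one sees that $g$ in fact normalises $PQ$." Neither hypothesis yields this. An element of $C_\L(P)$ need not lie in $S$, nor even be a $p$-element, so there is no reason it should normalize $Q$. Your appeal to Remark~\ref{ex0}(c) does not help: that remark requires $\Delta\subseteq\F^q$, which is \emph{not} part of the proposition's hypotheses (the proposition is applied in the proof of Theorem~\ref{MainThm1} precisely in a setting where $\Delta$ may include subcentric, non-quasicentric objects). And even in a centric linking locality, where $C_\L(P)\leq P\leq S$ does hold for $P\in\Delta$, your argument would only go through for an arbitrary $P\leq S_f$ if you can always choose $P$ with $C_\L(P)\leq S$; this is not automatic.

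The paper avoids the problem by a maximality argument you have not used: choose $f\in\L\setminus N_\L(Q)$ with $|S_f|$ maximal, and take $P=S_f$ rather than an arbitrary element of $\Delta$. After a normalization step (replacing $f$ by $fh$ to arrange $N_S(S_f^f)\in\Syl_p(N_\L(S_f^f))$), one observes that $Q\not\leq S_f$ forces $S_f^f\notin\F^{cr}$, hence by Lemma~\ref{RadicalLF} (the characterization of $\L$-radical subgroups, which is where objective characteristic $p$ really enters) one has $S_f^f<R:=O_p(N_\L(S_f^f))\leq S$. The error element $f^{-1}g\in C_\L(S_f^f)$ then lies in $N_\L(R)$ with $|R|>|S_f|$, so the maximality of $|S_f|$ forces $f^{-1}g\in N_\L(Q)$, and one concludes $f\in N_\L(Q)$, a contradiction. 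Without this bootstrapping through a strictly larger object, the claim that the error element normalizes $Q$ simply has no justification. You would need to supply this (or an equivalent) argument to make your proof complete.
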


A word about our proofs: Since there is some hope that the theory of fusion systems can be revisited using linking localities, we seek to keep the proofs of the results on subcentric subgroups of fusion systems as elementary as possible. In particular, we reprove some known results on constrained systems in Section~\ref{ConstrainedSection} without using the theory of components of fusion systems. However, it should be pointed out that we require this theory and Aschbacher's version of the L-balance theorem for fusion systems for the proof of Theorem~\ref{SubcentricEF}.

\subsubsection*{Organization of the paper} In Section~\ref{ConstrainedSection}, we state some background on constrained fusion systems and groups of characteristic $p$. In Section~\ref{SubcentricProperties} we prove important properties of subcentric subgroups. In particular, in Lemma~\ref{subcentricEquiv} we state equivalent conditions for a $p$-subgroup of a saturated fusion system to be subcentric; in Proposition~\ref{subcentricProp} we prove the property stated in Theorem~\ref{MainThm1}(b) that, if $\F$ is saturated, then the set $\F^s$ of subcentric subgroups is closed under taking $\F$-conjugates and overgroups; and in the remainder of Section~\ref{SubcentricProperties} we prove Propositions~\ref{SubcentricProperties1} and \ref{SubcentricProperties2}. Theorem~\ref{SubcentricEF} is proved in Section~\ref{SubcentricPropertiesSection2}; the proof uses the theory of components of fusion systems. 

\smallskip

We then turn attention to localities. In Section~\ref{LocalitiesSection0}, we summarize some background on localities and the connection to transporter systems. In Section~\ref{LocalitiesSection1}, we prove Proposition~\ref{ex0} and Proposition~\ref{NormLF}. Moreover, with Proposition~\ref{GetLocalityObjectiveCharp}, we  give a way of producing localities of objective characteristic $p$ by ``factoring our $p^\prime$-elements''. We also prove some results which are then used in Section~\ref{Construction} for the proof of Theorem~\ref{MainThm1}. A crucial step in the proof of Theorem~\ref{MainThm1} is Theorem~\ref{A1General}, which is also an interesting result on its own. Proposition~\ref{CENThm} is proved in Section~\ref{Centralizers}. 

\smallskip

In Section~\ref{MapsSection} and Section~\ref{GroupLocalities} we give some indication how the results proved before can be used. In Section~\ref{MapsSection}, we consider quotients of linking localities modulo central subgroups, and we outline how local subsystems and normal subsystems of fusion systems lead to inclusion maps between the corresponding subcentric linking localities. We expect this to be of importance for a local theory of subcentric linking localities. (The results on quotients of linking localities modulo central subgroups are already used in \cite{Henke:2017}, which in turn is necessary for the proof of the main theorem in \cite{Chermak/Henke:2017}.) In Section~\ref{GroupLocalities}, we present some ideas for a unifying approach to a classification of groups and fusion systems that are of characteristic $p$-type or satisfy some similar but slightly weaker condition. In particular, we discuss how linking localities and localities of objective characteristic $p$ can be constructed directly from a finite group; partly this uses Proposition~\ref{GetLocalityObjectiveCharp}.

\subsubsection*{Acknowledgement} The idea for this project arose during discussions with Andrew Chermak and Jesper Grodal. My heartfelt thanks go to both of them. It was Jesper Grodal who first conjectured that subcentric linking systems should exist. He also pointed out that the nerve of a subcentric linking system would be homotopy equivalent to the nerve of a centric linking system. It was Andrew Chermak who suggested using the iterative procedure introduced in \cite{Chermak:2013} to construct subcentric linking systems.

\bigskip

\textbf{Throughout, this text, we continue to assume that $\F$ is a fusion system on a finite $p$-group $S$. Given a subsystem $\m{E}$ of $\F$ we write $T=\m{E}\cap S$ to express that $\m{E}$ is a subsystem over $T\leq S$.}

\section{Groups of characteristic $p$ and constrained fusion systems}\label{ConstrainedSection}

\noindent\textbf{Throughout this section, $\F$ is assumed to be saturated.} 

\smallskip

The purpose of this section is to provide some background on constrained fusion systems and groups of characteristic $p$. Recall that $\F$ is called \textit{constrained} if $C_S(O_p(\F))\leq O_p(\F)$, and a finite group $G$ is said to be of \textit{characteristic $p$} if $C_G(O_p(G))\leq O_p(G)$ (or equivalently, $C_G(O_p(G))=Z(O_p(G))$). A finite group $G$ is called a \textit{model} for $\F$ if $S\in\Syl_p(G)$, $\F=\F_S(G)$ and $G$ is of characteristic $p$. The following lemma summarizes the connection between constrained fusion systems and groups of characteristic $p$ which was (except for some detail) established in \cite{BCGLO1}.

\begin{theorem}\label{Model1}
\begin{itemize}
 \item [(a)] $\F$ is constrained if and only if there exists a model for $\F$. In this case, a model is unique up to an isomorphism which is the identity on $S$.
 \item [(b)] If $\F$ is constrained and $G$ is a model for $\F$ then a subgroup of $S$ is normal in $\F$ if and only if it is normal in $G$. If $Q\leq S$ is normal and centric in $\F$, then in addition $C_G(Q)\leq Q$.
\end{itemize}
\end{theorem}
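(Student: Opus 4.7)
The plan is to prove part (a) by first tackling the easier implication, then invoking \cite{BCGLO1} for existence and uniqueness of models; then deduce part (b) from part (a) plus some group-theoretic arguments.

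For the ``if'' direction of (a), assume $G$ is a model for $\F$. I first observe that $O_p(G)\trianglelefteq G$ is preserved by every $G$-conjugation, hence by every morphism of $\F=\F_S(G)$, so $O_p(G)\leq O_p(\F)$. Therefore
\[
C_S(O_p(\F))\;\leq\;C_S(O_p(G))\;\leq\;C_G(O_p(G))\cap S\;\leq\;O_p(G)\cap S\;=\;O_p(G)\;\leq\;O_p(\F),
\]
using in the third step that $G$ has characteristic $p$. This shows $\F$ is constrained. For the ``only if'' direction and the uniqueness up to an isomorphism fixing $S$, I would simply cite the existence/uniqueness theorem from \cite{BCGLO1} (which, via Proposition~4.3 and the surrounding results, is the substantive content here). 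The ``some detail'' alluded to in the excerpt can be absorbed by noting that the model constructed in \cite{BCGLO1} is produced so as to contain $S$ as Sylow $p$-subgroup and realize $\F$.

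For the first assertion of (b), if $Q\trianglelefteq G$ then every $\F$-morphism is the restriction of a $G$-conjugation, which preserves $Q$, so $Q\trianglelefteq\F$. Conversely, suppose $Q\trianglelefteq\F$. A standard fact (which I would either recall or verify directly) is that if $G$ is a model for $\F$ and $Q\leq S$, then $N_G(Q)$ is a model for $N_\F(Q)$: it contains $S$, has characteristic $p$ (as the normalizer of a $p$-subgroup inside a characteristic-$p$ group), and realizes $N_\F(Q)$. Since $Q\trianglelefteq\F$ means $N_\F(Q)=\F$, the group $N_G(Q)$ is then a model for $\F$ containing $S$, so by the uniqueness part of (a) we get $N_G(Q)=G$, i.e.\ $Q\trianglelefteq G$.

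For the second assertion of (b), assume $Q$ is normal and centric in $\F$. Set $H:=C_G(Q)$. Since $Q\trianglelefteq G$, $H\trianglelefteq G$; and since $S$ is Sylow in $G$, $S\cap H=C_S(Q)=Z(Q)$ is Sylow in $H$ (centricity gives the equality). Now $O_{p'}(H)$ is characteristic in $H$, hence normal in $G$, so $O_{p'}(H)\leq O_{p'}(G)$; the latter is trivial because $G$ has characteristic $p$. Since $Z(Q)$ is a Sylow $p$-subgroup of $H$ which is visibly central in $H=C_G(Q)\supseteq Q\supseteq Z(Q)$, the central-Sylow splitting theorem (a normal Sylow $p$-subgroup that lies in the centre splits $H$ as a direct product) yields $H=Z(Q)\times O_{p'}(H)=Z(Q)\leq Q$, as required.

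The main obstacle is part (a)'s ``only if'' direction, for which I lean on \cite{BCGLO1}; all other steps are short deductions from the characteristic-$p$ condition and the uniqueness of models.
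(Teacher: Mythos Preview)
Your proof is correct. For part (a), both you and the paper essentially cite the literature (the paper invokes \cite[Theorem~5.10]{Aschbacher/Kessar/Oliver:2011}, which packages the results of \cite{BCGLO1}), so there is no real difference there.

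For part (b), the routes diverge. The paper again cites \cite[Theorem~5.10]{Aschbacher/Kessar/Oliver:2011} to obtain directly that a normal centric $Q$ satisfies $Q\unlhd G$ and $C_G(Q)\leq Q$; it then deduces the general statement ``$P\unlhd\F\Rightarrow P\unlhd G$'' by observing that $O_p(\F)\unlhd G$, so conjugation by any $g\in G$ restricts to an $\F$-automorphism of $O_p(\F)$, which must fix every $\F$-normal subgroup $P\leq O_p(\F)$. You instead prove ``$Q\unlhd\F\Rightarrow Q\unlhd G$'' first, via uniqueness of models applied to $N_G(Q)\leq G$ (both being models for $\F$ with the same Sylow subgroup forces equal orders, hence equality), and then handle $C_G(Q)\leq Q$ by a direct elementary argument exploiting that $Z(Q)$ is a central Sylow $p$-subgroup of $C_G(Q)$ and $O_{p'}(G)=1$. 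Your approach has the advantage of being more self-contained for (b), extracting only the uniqueness statement from the literature rather than the full package; the paper's approach is shorter since it simply cites the result. One small point: your ``standard fact'' $\F_S(N_G(Q))=N_\F(Q)$ is \cite[Proposition~I.5.4]{Aschbacher/Kessar/Oliver:2011}, which requires $Q$ to be fully normalized---automatic here since $Q\unlhd\F$---so you should make that dependence explicit.
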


\begin{proof}
If $G$ is a model for $\F$ then clearly every normal $p$-subgroup of $G$ is normal in $\F$, so in particular, $\F$ is constrained. Thus, (a) follows from \cite[Theorem~III.5.10]{Aschbacher/Kessar/Oliver:2011}. Let now $\F$ be constrained and $G$ a model for $\F$. If $Q$ is a normal centric subgroup of $\F$ then it follows again from \cite[Theorem~III.5.10]{Aschbacher/Kessar/Oliver:2011} that $Q\unlhd G$ and $C_G(Q)\leq Q$. In particular, $O_p(\F)\unlhd G$. So if $g\in G$ then $c_g|_{O_p(\F)}\in \Aut_\F(O_p(\F))$ and thus $P^g=P$ for every normal subgroup $P$ of $\F$. This shows that every normal subgroup of $\F$ is normal in $G$ completing the proof.  
\end{proof}

We continue by listing some properties of groups of characteristic $p$.

\begin{lemma}\label{Charp1}
 Let $G$ be a finite group of characteristic $p$. Then the following hold:
\begin{itemize}
 \item [(a)] We have $O_{p^\prime}(G)=1$. 
 \item [(b)] $N_G(P)$ and $C_G(P)$ are of characteristic $p$ for all non-trivial $p$-subgroups $P$ of $G$. 
 \item [(c)] Every subnormal subgroup of $G$ is of characteristic $p$.
\end{itemize}

\end{lemma}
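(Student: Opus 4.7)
The plan is to reduce both parts to showing that a centralizer of a normal $p$-subgroup contains no non-trivial $p'$-element, using Thompson's $A \times B$ lemma applied to the conjugation action on $S := O_p(G)$.

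For part (a), I will treat $H := N_G(P)$; the case $H := C_G(P)$ is parallel. Set $R := O_p(H)$. The starting observation is that $N_S(P)$ is a $p$-subgroup of $G$ normalized by $H$, hence contained in $R$; in particular $P \leq R$ and $C_S(P) \leq R$. Now take a $p'$-element $x \in C_H(R)$. Since $x$ centralizes $R \geq P$, the subgroups $\langle x\rangle$ and $P$ commute and act on the $p$-group $S$ by conjugation, and $\langle x\rangle$ centralizes $C_S(P) \leq R$. Thompson's $A \times B$ lemma then yields $[\langle x\rangle, S] = 1$, so $x \in C_G(S) \leq S$, contradicting that $x$ is a non-trivial $p'$-element. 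Applying this to $p'$-parts shows that every element of $C_H(R)$ has $p$-power order; being a normal $p$-subgroup of $H$, $C_H(R)$ therefore sits inside $R$. For $C_G(P)$ one uses instead that $C_S(P)$ itself is a $p$-subgroup normalized by $C_G(P)$, hence contained in $O_p(C_G(P))$, and then repeats the argument verbatim.

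For part (b), induction on the length of a subnormal chain reduces the claim to the case $H \unlhd G$. Put $K := O_p(H)$; since $K$ is characteristic in $H \unlhd G$, it is normal in $G$, and one checks that $K = H \cap S$ where $S := O_p(G)$. For a $p'$-element $x \in C_H(K)$, normality of $H$ and of $S$ in $G$ gives $[x, S] \leq H \cap S = K$, while $[x, K] = 1$ by hypothesis; thus $x$ acts trivially on both $K$ and $S/K$. Standard coprime action (a $p'$-group acting on a $p$-group trivially on a normal invariant subgroup and on the quotient acts trivially overall) then forces $[x, S] = 1$, so $x \in C_G(S) \leq S$ and $x = 1$. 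Exactly as in (a), this makes $C_H(K)$ a normal $p$-subgroup of $H$, and hence $C_H(K) \leq K$.

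The only real subtlety I anticipate is verifying the hypotheses of the $A \times B$ lemma in (a), namely that $C_S(P)$ is contained in $O_p$ of the ambient group; in both cases this reduces to the routine observation that $N_S(P)$, respectively $C_S(P)$, is a $p$-subgroup normalized by $N_G(P)$, respectively $C_G(P)$, and is therefore absorbed into the relevant $O_p$.
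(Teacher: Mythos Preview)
Your proof is correct. The paper itself does not give a direct argument here: it simply cites \cite[Lemma~1.2]{MS:2012b} for $N_G(P)$ and for (b), and then observes that $C_G(P)\unlhd N_G(P)$ so (b) applied to $N_G(P)$ gives the claim for $C_G(P)$. Your route is a self-contained proof via Thompson's $A\times B$ lemma and coprime action, which is essentially how the cited lemma is proved anyway; so the approaches agree in spirit, but yours is more explicit.

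Two minor remarks. First, the phrase ``in particular $P\leq R$'' does not actually follow from $N_S(P)\leq R$, since $P$ need not lie in $S=O_p(G)$; rather, $P\leq R$ holds simply because $P$ is a normal $p$-subgroup of $H=N_G(P)$. Second, your separate treatment of $C_G(P)$ is unnecessary: once you have (a) for $N_G(P)$ and (b), the normality $C_G(P)\unlhd N_G(P)$ finishes the job, exactly as the paper does.
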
 

\begin{proof}
Part (a) holds as $[O_p(G),O_{p^\prime}(G)]\leq O_p(G)\cap O_{p^\prime}(G)=1$ and $C_G(O_p(G))=Z(O_p(G))$ does not contain any non-trivial $p^\prime$-elements. By Part (c) of \cite[Lemma~1.2]{MS:2012b}, $N_G(P)$ is of characteristic $p$ and by part (a) of the same lemma, (c) holds. As $C_G(P)\unlhd N_G(P)$, it follows now that $C_G(P)$ is of characteristic $p$.
\end{proof}

\begin{lemma}\label{CharpCentral}
 Let $G$ be a finite group of characteristic $p$ and $Z\leq Z(G)$. Then $Z\leq O_p(G)$ and $G/Z$ is of characteristic $p$.
\end{lemma}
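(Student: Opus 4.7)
The first assertion $Z\leq O_p(G)$ is immediate from the definition: as $G$ has characteristic $p$, one has $Z\leq Z(G)\leq C_G(O_p(G))=Z(O_p(G))\leq O_p(G)$. For the second assertion, I would begin by identifying $O_p(G/Z)$ with $O_p(G)/Z$. Since $Z$ is a $p$-group, the preimage in $G$ of any normal $p$-subgroup of $G/Z$ is an extension of a $p$-group by a $p$-group, hence itself a normal $p$-subgroup of $G$ and therefore contained in $O_p(G)$; the reverse inclusion $O_p(G)/Z\leq O_p(G/Z)$ is obvious.

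Let $K\leq G$ be the full preimage of $C_{G/Z}(O_p(G)/Z)$; the goal becomes $K\leq O_p(G)$. Since $K$ is normal in $G$ and contains $Z$, it suffices to show that every element of $K$ is a $p$-element, for then $K$ is a normal $p$-subgroup of $G$ and automatically lies in $O_p(G)$. Fix $g\in K$ and decompose $g=xy$ as the commuting product of its $p$-part $x$ and $p'$-part $y$. Both $x$ and $y$ lie in $\langle g\rangle$, and the identity $[g^k,h]=[g,h]^k$ (valid because $[g,h]\in Z$ is central, so all conjugates of $[g,h]$ by powers of $g$ agree) gives $[y,h]=[g,h]^b\in Z$ for a suitable integer $b$ and every $h\in O_p(G)$.

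The map $\phi_y\colon O_p(G)\to Z$, $h\mapsto [y,h]$, is thus a group homomorphism; the commutator bracket is multiplicative in the second slot precisely when its image is central, which is the case here. Writing $n$ for the $p'$-order of $y$, one has $\phi_y(h)^n=[y^n,h]=1$, so $\phi_y(h)$ is an element of the $p$-group $Z$ whose order divides the $p'$-number $n$, forcing $\phi_y(h)=1$. Hence $y\in C_G(O_p(G))\leq O_p(G)$, and being a $p'$-element inside a $p$-group, $y=1$. Therefore $g=x$ is a $p$-element, as required.

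The only mildly delicate point is the commutator bookkeeping that reduces $[y,h]$ to a power of $[g,h]$ and certifies $\phi_y$ as a homomorphism; both facts hinge exclusively on the centrality of $[y,O_p(G)]$ inside $Z$. Beyond that, the proof amounts to the definition of characteristic $p$ together with the standard observation that a $p'$-element centralizing $O_p(G)$ in a group of characteristic $p$ must be trivial, so I do not anticipate any genuine obstacle.
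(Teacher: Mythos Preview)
Your proof is correct and follows essentially the same strategy as the paper: show $Z\leq O_p(G)$, then prove that the preimage $K$ of $C_{G/Z}(O_p(G)/Z)$ is a $p$-group and hence contained in $O_p(G)$. The only difference is in execution: the paper argues structurally, observing that $[O_p(G),K,K]\leq [Z,K]=1$ forces $O^p(K)$ to centralize $O_p(G)$ (the stabilizer of a normal series with central factors in a $p$-group is a $p$-group), whereas you carry out the equivalent argument elementwise by decomposing $g=xy$ and computing commutators directly; your version is longer but entirely self-contained.
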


\begin{proof}
Note that $O_{p^\prime}(Z)\leq O_{p^\prime}(G)=1$ by Lemma~\ref{Charp1}(a). As $Z$ is abelian, it follows that $Z$ is a $p$-group and thus $Z\leq O_p(G)$. Set $C:=C_G(O_p(G)Z/Z)$. It is sufficient to show that $C\leq O_p(G)$. Note that $[O_p(G),C]\leq Z$ and $[Z,C]=1$ as $Z\leq Z(G)$. Hence, $[O_p(G),O^p(C)]=1$. As $C_G(O_p(G))=Z(O_p(G))$ is a $p$-group, it follows $O^p(C)=1$. So $C$ is a $p$-group and thus $C\leq O_p(G)$. 
\end{proof}

\begin{lemma}\label{CharpCentralEquiv}
Let $G$ be a finite group and $Z\leq Z(G)\cap O_p(G)$. Then $G$ is of characteristic $p$ if and only if $G/Z$ is of characteristic $p$.
\end{lemma}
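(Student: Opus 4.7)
The ``only if'' direction is essentially already contained in Lemma \ref{CharpCentral}: that lemma shows that if $G$ is of characteristic $p$ and $Z\leq Z(G)$, then automatically $Z\leq O_p(G)$ and $G/Z$ is of characteristic $p$. So the real content of this lemma is the converse, which is what I would focus on.

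For the converse, the key preliminary step is to identify $O_p(G/Z)$ with $O_p(G)/Z$. The inclusion $O_p(G)/Z\subseteq O_p(G/Z)$ is immediate from the hypothesis $Z\leq O_p(G)$. For the reverse inclusion, I would argue that the preimage in $G$ of $O_p(G/Z)$ is a normal subgroup of $G$ which is an extension of the $p$-group $Z$ by the $p$-group $O_p(G/Z)$; hence it is itself a normal $p$-subgroup of $G$, and so is contained in $O_p(G)$. This is the only place where the hypothesis $Z\leq O_p(G)$ is used, and without it the argument would collapse.

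Once that identification is in hand, assume $G/Z$ is of characteristic $p$ and take an arbitrary $x\in C_G(O_p(G))$. Passing to the quotient, the image $\bar x\in G/Z$ commutes with $O_p(G)/Z=O_p(G/Z)$, so by hypothesis $\bar x\in O_p(G/Z)=O_p(G)/Z$. Combined with $Z\leq O_p(G)$, this forces $x\in O_p(G)$. Therefore $C_G(O_p(G))\leq O_p(G)$, i.e. $G$ is of characteristic $p$.

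There is really no substantive obstacle here; the lemma is essentially a bookkeeping exercise around the identification $O_p(G/Z)=O_p(G)/Z$ together with a one-line centralizer chase. The only point that requires any care is remembering that the containment $Z\leq O_p(G)$ is genuinely needed (for the non-trivial inclusion in that identification), and that the converse direction to Lemma \ref{CharpCentral} does not hold if one drops this assumption.
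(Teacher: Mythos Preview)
Your proof is correct and follows essentially the same route as the paper's: both arguments hinge on the identification $O_p(G/Z)=O_p(G)/Z$ and then a one-line centralizer chase for the ``if'' direction. The only cosmetic difference is that for the ``only if'' direction you invoke Lemma~\ref{CharpCentral} (which is perfectly legitimate), whereas the paper repeats that three-lines-lemma argument inline.
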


\begin{proof}
If $G$ is of characteristic $p$, then $G/Z$ is of characteristic $p$ by Lemma~\ref{CharpCentral}. Assume now that $G/Z$ is of characteristic $p$. Note that $O_p(G/Z)=O_p(G)/Z$ as $Z\leq O_p(G)$. So we have $C_G(O_p(G)/Z)\leq O_p(G)$. In particular, $C_G(O_p(G))\leq O_p(G)$ and $G$ is of characteristic $p$.
\end{proof}

\begin{lemma}\label{Charp2}
Let $G$ be a finite group with a normal $p$-subgroup $P$ such that, for $S\in\Syl_p(G)$, we have $\F_{C_S(P)}(C_G(P))=\F_{C_S(P)}(C_S(P))$. Then $C_G(P)=C_S(P)O_{p^\prime}(C_G(P))$, and $G$ is of characteristic $p$ if and only if $C_G(P)$ is a $p$-group.
\end{lemma}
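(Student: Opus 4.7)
Reading the first equality as $C_G(P)=C_S(P)\,O_{p^\prime}(C_G(P))$ (since $C_S(P)\leq C_G(P)$ is only a proper subgroup of $G$ in general, the displayed ``$G$'' being a typo for $C_G(P)$), the plan is as follows. First I would observe that $P\unlhd G$ forces $C_G(P)\unlhd G$, and hence $C_S(P)=S\cap C_G(P)$ is a Sylow $p$-subgroup of $C_G(P)$. The hypothesis $\F_{C_S(P)}(C_G(P))=\F_{C_S(P)}(C_S(P))$ then expresses that $C_G(P)$ realizes the trivial fusion system on its Sylow $p$-subgroup.

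Next I would invoke the standard consequence of Frobenius's normal $p$-complement theorem: if $H$ is a finite group with Sylow $p$-subgroup $T$ and $\F_T(H)=\F_T(T)$, then $H=T\,O_{p^\prime}(H)$. Indeed, the hypothesis implies that $\Aut_H(Q)=\Aut_T(Q)$ is a $p$-group for every $p$-subgroup $Q\leq T$, and (up to conjugation) for every $p$-subgroup $Q$ of $H$, which is Frobenius's criterion for the existence of a normal $p$-complement. Applying this to $H=C_G(P)$ with $T=C_S(P)$ gives the first assertion.

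For the equivalence in the second assertion, note that $P\leq O_p(G)$, and hence $C_G(O_p(G))\leq C_G(P)$. Since $O_{p^\prime}(C_G(P))$ is characteristic in the normal subgroup $C_G(P)\unlhd G$, it is itself normal in $G$ and therefore contained in $O_{p^\prime}(G)$. If $G$ is of characteristic $p$, then $O_{p^\prime}(G)=1$ (as recalled at the opening of this section), so $O_{p^\prime}(C_G(P))=1$ and the decomposition forces $C_G(P)=C_S(P)$ to be a $p$-group. Conversely, if $C_G(P)$ is a $p$-group, then $C_G(O_p(G))\leq C_G(P)$ is a normal $p$-subgroup of $G$, hence contained in $O_p(G)$, so $G$ is of characteristic $p$.

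I do not foresee a real obstacle: the only non-routine ingredient is the normal $p$-complement consequence of Frobenius's theorem, which is classical and appears in standard references on fusion systems (e.g.\ \cite{Aschbacher/Kessar/Oliver:2011}). Everything else is bookkeeping with centralizers of normal subgroups and the characteristic-$p$ properties already established in Lemma~\ref{Charp1} and the opening remarks of this section.
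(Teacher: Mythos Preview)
Your proposal is correct and follows essentially the same line as the paper's proof: both identify the first equality as $C_G(P)=C_S(P)\,O_{p'}(C_G(P))$ via Frobenius's normal $p$-complement theorem (the paper cites Linckelmann's formulation), and both handle the equivalence by passing through $O_{p'}(C_G(P))\leq O_{p'}(G)$ in one direction and $C_G(O_p(G))\leq C_G(P)\leq O_p(G)$ in the other. Your write-up is slightly more explicit about why $C_S(P)\in\Syl_p(C_G(P))$ and why $O_{p'}(C_G(P))\unlhd G$, but there is no substantive difference.
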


\begin{proof}
 By the Theorem of Frobenius \cite[Theorem~1.4]{Linckelmann:2007a}, $C_G(P)=C_S(P)O_{p^\prime}(C_G(P))$. If $G$ is of characteristic $p$, then by Lemma~\ref{Charp1}(a), $O_{p^\prime}(C_G(P))\leq O_{p^\prime}(G)=1$ and thus $C_G(P)=C_S(P)$ is a $p$-group. On the other hand, if $C_G(P)$ is a $p$-group then $C_G(P)\leq O_p(G)$ as $C_G(P)\unlhd G$. Hence, as $P\leq O_p(G)$, $C_G(O_p(G))\leq C_G(P)\leq O_p(G)$ and $G$ is of characteristic $p$.
\end{proof}

\begin{definition}\label{ThetaDef}
We say that a finite group $G$ is \textit{almost of characteristic $p$} if $G/O_{p^\prime}(G)$ is of characteristic $p$.\footnote{In the literature, groups which are almost of characteristic $p$ are usually called constrained, but we find our definition more intuitive in this context.}
\end{definition}

\begin{lemma}\label{Theta}
Let $G$ be a finite group and $P$ be a $p$-subgroup of $G$.
\begin{itemize}
\item [(a)] We have $O_{p^\prime}(N_G(P))=O_{p^\prime}(C_G(P))$.
\item [(b)] For $\ov{G}=G/O_{p^\prime}(G)$, we have  $N_{\ov{G}}(\ov{P})=\ov{N_G(P)}$ and $C_{\ov{G}}(\ov{P})=\ov{C_G(P)}$. \footnote{Following the usual convention, we write here $\ov{H}$ for the image of $H$ in $\ov{G}$, for every subgroup $H$ of $G$.}
\end{itemize}
\end{lemma}

\begin{proof}
 Since $C_G(P)\unlhd N_G(P)$ and $O_{p^\prime}(C_G(P))$ is characteristic in $C_G(P)$, we have $O_{p^\prime}(C_G(P))\leq O_{p^\prime}(N_G(P))$. As $[O_{p^\prime}(N_G(P)),P]\leq O_{p^\prime}(N_G(P))\cap P=1$, (a) follows. By a Frattini argument, $N_{\ov{G}}(\ov{P})=\ov{N_G(P)}$. If $C$ is the preimage of $C_{\ov{G}}(\ov{P})$ in $N_G(P)$ then $[P,C]\leq O_{p^\prime}(G)\cap P=1$ and thus $C=C_G(P)$. Hence, (b) holds.
\end{proof}

\begin{lemma}\label{AlmostCharp1}
 Let $G$ be a finite group which is almost of characteristic $p$ and $P$ a $p$-subgroup of $G$. Set $\ov{G}=G/O_{p^\prime}(G)$.
\begin{itemize}
 \item [(a)] We have $O_{p^\prime}(C_G(P))=O_{p^\prime}(N_G(P))=O_{p^\prime}(G)\cap N_G(P)=O_{p^\prime}(G)\cap C_G(P)$.
 \item [(b)] $N_G(P)$ and $C_G(P)$ are almost characteristic $p$.
\end{itemize}
\end{lemma}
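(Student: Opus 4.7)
The plan is to reduce (a) to showing the single inclusion $\Theta(N_G(P))\leq \Theta(G)$, and then to use Remark~\ref{Theta} together with Lemma~\ref{Charp1}(a) applied in $\ov{G}$ to deduce both parts. The case $P=1$ is trivial, so I would assume $P\neq 1$; then because $P$ is a $p$-group and $\Theta(G)$ is a $p'$-group, we get $P\cap\Theta(G)=1$, so $\ov{P}\neq 1$.

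First I would collect the easy equalities. By Remark~\ref{Theta}(a), $\Theta(N_G(P))=\Theta(C_G(P))$. Write $X$ for this common subgroup. Also, $\Theta(G)\cap N_G(P)$ is a normal $p'$-subgroup of $N_G(P)$, so it is contained in $\Theta(N_G(P))=X$; obviously $\Theta(G)\cap C_G(P)\leq\Theta(G)\cap N_G(P)$. Thus it will suffice to prove $X\leq\Theta(G)\cap C_G(P)$, and since $X\leq C_G(P)$ already, the real task is $X\leq\Theta(G)$, i.e.\ $\ov{X}=1$ in $\ov G$.

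Now comes the main step. Since $X=O_{p'}(C_G(P))$ is a normal $p'$-subgroup of $C_G(P)$, its image $\ov{X}$ is a normal $p'$-subgroup of $\ov{C_G(P)}$. By Remark~\ref{Theta}(b) we have $\ov{C_G(P)}=C_{\ov G}(\ov P)$. Because $G$ is almost of characteristic $p$, $\ov G$ is of characteristic $p$, so by Lemma~\ref{Charp1}(a) the centralizer $C_{\ov G}(\ov P)$ is of characteristic $p$ (using $\ov P\neq 1$). In particular $O_{p'}(C_{\ov G}(\ov P))=1$, which forces $\ov X=1$, as required. This gives (a).

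For (b), I would use (a) to identify the quotients. By the second isomorphism theorem and (a),
\[
N_G(P)/\Theta(N_G(P))=N_G(P)/(N_G(P)\cap\Theta(G))\cong N_G(P)\Theta(G)/\Theta(G)=\ov{N_G(P)},
\]
and by Remark~\ref{Theta}(b) the right-hand side equals $N_{\ov G}(\ov P)$, which is of characteristic $p$ by Lemma~\ref{Charp1}(a). Hence $N_G(P)$ is almost of characteristic $p$. The same argument with $C_G(P)$ in place of $N_G(P)$ gives the claim for $C_G(P)$. The only subtle point in the whole argument is the normality of $\ov X$ in $C_{\ov G}(\ov P)$; I expect this to be the step most worth stating carefully, since everything else is just an application of the already-established lemmas.
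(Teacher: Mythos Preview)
Your proof is correct and, for part (b), matches the paper's argument exactly. For part (a) the paper simply cites \cite[8.2.12]{Kurzweil/Stellmacher:2004a} for the equality $\Theta(N_G(P))=\Theta(G)\cap N_G(P)$ and then invokes Remark~\ref{Theta}(a), whereas you prove this equality directly: you pass to $\ov G$, observe that $\ov X$ is a normal $p'$-subgroup of $C_{\ov G}(\ov P)$, and kill it using that $C_{\ov G}(\ov P)$ has characteristic $p$ by Lemma~\ref{Charp1}(a). This is precisely the standard argument behind the cited textbook result, so your version is self-contained rather than relying on an external reference; the trade-off is a few extra lines versus a citation. Your handling of the case $P=1$ (to ensure $\ov P\neq 1$ so that Lemma~\ref{Charp1}(a) applies) is a detail the paper leaves implicit.
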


\begin{proof}
By \cite[8.2.12]{Kurzweil/Stellmacher:2004a}, $O_{p^\prime}(N_G(P))=O_{p^\prime}(G)\cap N_G(P)$. Now (a) follows from Lemma~\ref{Theta}(a).  By (a) and Lemma~\ref{Theta}(b), $N_G(P)/O_{p^\prime}(N_G(P))\cong\ov{N_G(P)}=N_{\ov{G}}(\ov{P})$ and $C_G(P)/O_{p^\prime}(C_G(P))\cong \ov{C_G(P)}=C_{\ov{G}}(\ov{P})$. So the assertion follows from Lemma~\ref{Charp1}(b).
\end{proof}

\begin{lemma}\label{AlmostCharpNormCent}
Let $G$ be a finite group and let $P$ be a $p$-subgroup of $G$. Then $N_G(P)$ is of characteristic $p$ if and only if $C_G(P)$ is of characteristic $p$. Similarly, $C_G(P)$ is almost of characteristic $p$ if and only if $N_G(P)$ is almost of characteristic $p$.
\end{lemma}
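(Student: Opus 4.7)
For the first statement, the forward direction is immediate: if $N_G(P)$ is of characteristic $p$, then since $C_G(P)\unlhd N_G(P)$, Lemma~\ref{Charp1}(b) gives that $C_G(P)$ is of characteristic $p$. For the converse, assume $C_G(P)$ is of characteristic $p$ and set $Q:=O_p(N_G(P))$; I would show that $C_{N_G(P)}(Q)\leq Q$. Since $P\leq Q$, we have $C_{N_G(P)}(Q)\leq C_{N_G(P)}(P)=C_G(P)$. The next key observation is the identity
\[
Q\cap C_G(P)=O_p(C_G(P)).
\]
Indeed, $Q\cap C_G(P)$ is a normal $p$-subgroup of $C_G(P)$, so it is contained in $O_p(C_G(P))$; conversely, since $C_G(P)\unlhd N_G(P)$, $O_p(C_G(P))$ is normal in $N_G(P)$ and hence contained in $Q$, and it is also contained in $C_G(P)$. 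Using this, $C_{N_G(P)}(Q)$ lies in $C_G(P)$ and centralizes $Q\cap C_G(P)=O_p(C_G(P))$; applying the characteristic-$p$ assumption on $C_G(P)$ we conclude $C_{N_G(P)}(Q)\leq O_p(C_G(P))\leq Q$.

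For the second statement, the strategy is to reduce to the first by passing to the quotient modulo the common subgroup $K:=\Theta(N_G(P))=\Theta(C_G(P))$ (Remark~\ref{Theta}(a)). Writing $\bar{N}:=N_G(P)/K$ and $\bar{P}:=PK/K$, the subgroup $\bar{P}$ is a normal $p$-subgroup of $\bar{N}$, so $N_{\bar{N}}(\bar{P})=\bar{N}$. By Remark~\ref{Theta}(b) applied inside $N_G(P)$, we have $C_{\bar{N}}(\bar{P})=\overline{C_G(P)}=C_G(P)/K$. The first part of the lemma, applied to the group $\bar{N}$ with $p$-subgroup $\bar{P}$, gives that $\bar{N}$ is of characteristic $p$ if and only if $C_{\bar{N}}(\bar{P})$ is. Recalling $K=\Theta(N_G(P))=\Theta(C_G(P))$, this is precisely the statement that $N_G(P)$ is almost of characteristic $p$ if and only if $C_G(P)$ is.

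The main obstacle I expect is the backward direction of the first assertion, namely the identification $Q\cap C_G(P)=O_p(C_G(P))$ together with the correct use of normality of $C_G(P)$ in $N_G(P)$; once this identity is in place, both directions of the second statement follow cleanly by passing to the $K$-quotient and invoking the first assertion.
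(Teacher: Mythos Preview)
Your proof is correct and follows essentially the same route as the paper. For the first statement, the paper likewise reduces the backward direction to showing $C_{N_G(P)}(O_p(N_G(P)))\leq C_{C_G(P)}(O_p(C_G(P)))$ using $O_p(C_G(P))P\leq O_p(N_G(P))$; your explicit identity $O_p(N_G(P))\cap C_G(P)=O_p(C_G(P))$ is a slight elaboration of the same observation. For the second statement, the paper invokes Lemma~\ref{AlmostCharp1}(b) for one direction and then passes to $N_G(P)/\Theta(N_G(P))$ for the other, whereas you handle both directions at once by the quotient argument---a mildly more symmetric formulation, but not a different idea.
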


\begin{proof}
Replacing $G$ by $N_G(P)$, we may assume without loss of generality that $P$ is normal in $G$. By Lemma~\ref{Charp1}(b), $C_G(P)$ is of characteristic $p$ if $G$ is of characteristic $p$ and, by Lemma~\ref{AlmostCharp1}(b), $C_G(P)$ is almost of characteristic $p$ if $G$ is almost of characteristic $p$. Note that $O_p(C_G(P))\unlhd G$ as $C_G(P)\unlhd G$. Hence, if $C_G(P)$ is of characteristic $p$, we have $C_G(O_p(G))\leq C_G(O_p(C_G(P))P)=C_{C_G(P)}(O_p(C_G(P)))\leq O_p(C_G(P))\leq O_p(G)$ and $G=N_G(P)$ is of characteristic $p$. Set $\ov{G}:=G/O_{p^\prime}(G)$. By Lemma~\ref{Theta}, $O_{p^\prime}(G)=O_{p^\prime}(N_G(P))=O_{p^\prime}(C_G(P))$ and $C_{\ov{G}}(\ov{P})=\ov{C_G(P)}$. By what we have just shown, $\ov{G}$ is of characteristic $p$ if $C_{\ov{G}}(\ov{P})$ is of characteristic $p$. So if $C_G(P)$ is almost of characteristic $p$ then $G=N_G(P)$ is almost of characteristic $p$. 
\end{proof}

The remainder of this section is devoted to exploring some connections between $\F$ being constrained and certain subsystems or factor systems of $\F$ being constrained.

\begin{lemma}\label{CentreConstrained}
Let $Z\leq Z(\F)$. Then $\F$ is constrained if and only if $\F/Z$ is constrained. Moreover, if $G$ is a model for $\F$, then $Z\leq Z(G)$ and $G/Z$ is a model for $\F/Z$.
\end{lemma}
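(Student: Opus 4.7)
The plan is to treat the two directions separately: the forward direction uses the model of a constrained system, while the reverse direction is a direct computation with centralizers in $\F/Z$. The ``moreover'' clause will fall out of the forward direction.

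For the forward direction, suppose $\F$ is constrained and let $G$ be the model for $\F$ provided by Theorem~\ref{Model1}(a). The crux is to establish $Z \leq Z(G)$. Since $Z \leq Z(\F)$ is in particular strongly closed in $\F$ and normal in $\F$, Theorem~\ref{Model1}(b) gives $Z \unlhd G$. Then $G$ acts on $Z$ by conjugation, and for each $g \in G$ the map $c_g|_Z$ lies in $\Aut_\F(Z)$; but $Z \leq Z(\F)$ forces $\Aut_\F(Z) = 1$, so $c_g|_Z = \id_Z$ for every $g \in G$, yielding $Z \leq Z(G)$. Lemma~\ref{CharpCentral} then gives that $G/Z$ is of characteristic $p$, and one checks that $S/Z \in \Syl_p(G/Z)$ and $\F_{S/Z}(G/Z) = \F_S(G)/Z = \F/Z$ directly from the definition of the quotient fusion system. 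Thus $G/Z$ is a model for $\F/Z$, so $\F/Z$ is constrained by Theorem~\ref{Model1}(a); this also proves the ``moreover'' statement.

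For the reverse direction, suppose $\F/Z$ is constrained. The key identification is $O_p(\F/Z) = O_p(\F)/Z$: the inclusion $\supseteq$ is clear since $Z \leq O_p(\F)$ and $O_p(\F) \unlhd \F$; for $\subseteq$, letting $Q$ be the preimage in $S$ of $O_p(\F/Z)$, any $\F$-morphism $\phi \colon P \to R$ with $Q \leq P$ induces a morphism in $\F/Z$ stabilizing $Q/Z$, so $\phi(Q) Z = Q$, and since $Z \leq Z(\F)$ forces $\phi|_Z = \id$ (whence $Z \leq \phi(Q)$) one obtains $\phi(Q) = Q$, proving $Q \unlhd \F$ and hence $Q \leq O_p(\F)$. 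Granted this, for any $x \in C_S(O_p(\F))$ the coset $xZ$ centralizes $O_p(\F)/Z = O_p(\F/Z)$, so by constraint of $\F/Z$ we get $xZ \in O_p(\F/Z) = O_p(\F)/Z$, i.e.\ $x \in O_p(\F)$; this shows $C_S(O_p(\F)) \leq O_p(\F)$, so $\F$ is constrained.

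The main obstacle is the forward step $Z \leq Z(G)$: lifting centrality from the fusion system $\F$ to the model $G$ depends on the equivalence of $Z \leq Z(\F)$ with $\Aut_\F(Z) = 1$, combined with the normal-subgroup correspondence between $\F$ and $G$ from Theorem~\ref{Model1}(b). Everything else in both directions becomes a short calculation once $O_p(\F)/Z = O_p(\F/Z)$ is established and Lemma~\ref{CharpCentral} is invoked.
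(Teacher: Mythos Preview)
Your forward direction is correct and essentially identical to the paper's: normality of $Z$ in $G$ via Theorem~\ref{Model1}(b), then $\Aut_\F(Z)=1$ forces $Z\leq Z(G)$, and Lemma~\ref{CharpCentral} plus the identification $\F/Z=\F_{S/Z}(G/Z)$ finishes it.

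The reverse direction has a genuine gap. You argue that every $\F$-morphism $\phi\colon P\to R$ with $Q\leq P$ satisfies $\phi(Q)=Q$, and conclude ``proving $Q\unlhd\F$''. But that condition only shows $Q$ is \emph{weakly closed}; it does not give normality. By definition, $Q\unlhd\F$ means every $\F$-morphism $\phi\colon P\to R$ (with no assumption that $Q\leq P$) extends to some $\hat\phi\in\Hom_\F(PQ,RQ)$ with $\hat\phi(Q)=Q$. Weakly closed subgroups need not be normal: for instance $S$ itself is always weakly closed, yet $S\unlhd\F$ fails whenever $\F$ has a proper essential subgroup. You might hope to lift the extension $\bar\psi\colon PQ/Z\to RQ/Z$ of $\bar\phi$ back to $\F$, but a lift $\psi\colon PQ\to RQ$ need not restrict to $\phi$ on $P$; the two agree only modulo $Z$.

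The paper closes this gap differently. It first observes that $Q$ is strongly closed in $\F$ (since $Q/Z$ is strongly closed in $\F/Z$), and then invokes \cite[Proposition~I.4.5]{Aschbacher/Kessar/Oliver:2011}: a subgroup is normal in $\F$ if and only if it is strongly closed and contained in every centric radical subgroup. Since $Q/Z=O_p(\F/Z)$ lies in every element of $(\F/Z)^{cr}$, and since \cite[Proposition~3.1]{Kessar/Linckelmann:2008} gives $R/Z\in(\F/Z)^{cr}$ for every $R\in\F^{cr}$, one gets $Q\leq R$ for all $R\in\F^{cr}$, hence $Q\unlhd\F$. Once that is established, your final centralizer computation (which is the same as the paper's) goes through.
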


\begin{proof}
 Suppose first that $\F$ is constrained and that $G$ is a model for $\F$. Note that, by Theorem~\ref{Model1}(a), a model $G$ always exists if $\F$ is constrained. By Theorem~\ref{Model1}(b), $Z$ is normal in $G$. So every $g\in G$ induces an $\F$-automorphism of $Z$ which then has to be the identity, as $Z\leq Z(\F)$. Hence, $Z\leq Z(G)$ and $G/Z$ is of characteristic $p$ by Lemma~\ref{CharpCentral}. By \cite[Example~II.5.6]{Aschbacher/Kessar/Oliver:2011}, $\F/Z=\F_{S/Z}(G/Z)$ and so $G/Z$ is a model for $\F/Z$. Hence, by Theorem~\ref{Model1}(a), $\F/Z$ is constrained. Assume now that $\F/Z$ is constrained and let $Z\leq Q\leq S$ with $Q/Z=O_p(\F/Z)$. Then $C_S(Q)\leq Q$ as $C_{S/Z}(Q/Z)\leq Q/Z$. So it is sufficient to show that $Q$ is normal in $\F$. Observe that $Q$ is strongly closed in $\F$, since $Q/Z$ is strongly closed in $\F/Z$ and every morphism in $\F$ induces a morphism in $\F/Z$. By \cite[Proposition~I.4.5]{Aschbacher/Kessar/Oliver:2011}, a subgroup of a fusion system is normal if and only if it is strongly closed and contained in every centric radical subgroup. So $Q/Z$ is contained in every element of $(\F/Z)^{cr}$ and it is sufficient to show that $Q$ is contained in every element of $\F^{cr}$. As shown in \cite[Proposition~3.1]{Kessar/Linckelmann:2008}, we have $R/Z\in(\F/Z)^{cr}$ for every $R\in\F^{cr}$. So $Q$ is contained in every element of $\F^{cr}$ as required. 
\end{proof}

We now turn attention to subsystems of $\F$, in particular to $p$-local subsystems and (weakly) normal subsystems.

\begin{lemma}\label{Model2}
Let $\F$ be constrained and $P\in\F^f$. Then $N_\F(P)$ and $C_\F(P)$ are constrained. Moreover, if $G$ is a model for $\F$, then $N_G(P)$ is a model for $N_\F(P)$ and $C_G(P)$ is a model for $C_\F(P)$.
\end{lemma}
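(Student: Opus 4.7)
The plan is to prove the ``moreover'' part first; then the constrainedness of $N_\F(P)$ and $C_\F(P)$ is immediate from Theorem~\ref{Model1}(a), which says the existence of a model implies the fusion system is constrained. Since $\F$ is constrained by hypothesis, Theorem~\ref{Model1}(a) gives us a model $G$; so it suffices to show that $N_G(P)$ and $C_G(P)$ are models for $N_\F(P)$ and $C_\F(P)$ respectively (and the trivial case $P=1$ is handled by $N_G(1)=C_G(1)=G$).

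First I would verify the Sylow conditions. Since $P \in \F^f = \F_S(G)^f$, standard facts about fusion systems of finite groups (cf.\ \cite[Proposition~I.5.4]{Aschbacher/Kessar/Oliver:2011}) say $P$ being fully normalized in $\F_S(G)$ is equivalent to $N_S(P) \in \Syl_p(N_G(P))$. Moreover, since $\F$ is saturated, fully normalized subgroups are fully centralized, so $C_S(P) \in \Syl_p(C_G(P))$ as well. The same standard results identify $\F_{N_S(P)}(N_G(P)) = N_{\F_S(G)}(P) = N_\F(P)$ and $\F_{C_S(P)}(C_G(P)) = C_{\F_S(G)}(P) = C_\F(P)$.

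Next I would verify the characteristic $p$ condition. Since $G$ is of characteristic $p$ and $P$ is a non-trivial $p$-subgroup (the case $P=1$ being trivial), Lemma~\ref{Charp1}(a) immediately gives that both $N_G(P)$ and $C_G(P)$ are of characteristic $p$. Combined with the Sylow and fusion-system computations above, this shows that $N_G(P)$ is a model for $N_\F(P)$ and $C_G(P)$ is a model for $C_\F(P)$. Applying Theorem~\ref{Model1}(a) then yields that $N_\F(P)$ and $C_\F(P)$ are constrained.

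There is no real obstacle here; the argument is essentially bookkeeping once one has Lemma~\ref{Charp1}(a) and the standard identification of normalizer and centralizer fusion subsystems inside a model. The only minor point to be careful about is that Lemma~\ref{Charp1}(a) is stated for non-trivial $p$-subgroups, so the case $P=1$ has to be mentioned separately (but is trivial as $N_G(1)=C_G(1)=G$ is already a model by hypothesis).
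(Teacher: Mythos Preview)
Your proposal is correct and follows essentially the same route as the paper's proof: invoke \cite[Proposition~I.5.4]{Aschbacher/Kessar/Oliver:2011} for the Sylow and fusion-system identifications, apply Lemma~\ref{Charp1}(a) to get characteristic $p$, and conclude via Theorem~\ref{Model1}(a). Your separate treatment of the trivial case $P=1$ is a harmless extra precaution that the paper omits.
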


\begin{proof}
 Let $\F$ be a constrained fusion system on a finite $p$-group $S$ and $G$ a model for $\F$. Note that $G$ always exists by Theorem~\ref{Model1}(a). By \cite[Proposition~I.5.4]{Aschbacher/Kessar/Oliver:2011}, $N_S(P)\in\Syl_p(N_G(P))$, $C_S(P)\in\Syl_p(C_G(P))$, $N_\F(P)=\F_{N_S(P)}(N_G(P))$ and $C_\F(P)=\F_{C_S(P)}(C_G(P))$. By Lemma~\ref{Charp1}(b), $N_G(P)$ and $C_G(P)$ are of characteristic $p$, so $N_G(P)$ is a model for $N_\F(P)$ and $C_G(P)$ is a model for $C_\F(P)$. In particular, by Theorem~\ref{Model1}(a), $N_\F(P)$ and $C_\F(P)$ are  constrained.
\end{proof}

We continue with a general lemma needed afterwards to prove results about constrained fusion systems. Crucial for us is part (b) of this lemma, which could also be obtained as a consequence of \cite[(7.4)]{Aschbacher:2011} and the fact that, for every $P\in\F$, $P\unlhd \F$ if and only if $\F_P(P)\unlhd \F$. We prefer however to give an elementary direct proof.

\begin{lemma}\label{OpE}
 Let $\m{E}$ be a weakly normal subsystem of $\F$ over $T\unlhd S$. Then the following hold:
\begin{itemize}
 \item [(a)] If $Q\unlhd\m{E}$ is $\Aut_\F(T)$-invariant, then $Q$ is normal in $\F$.   
 \item [(b)] $O_p(\m{E})$ is normal in $\F$.
\end{itemize}
\end{lemma}

\begin{proof}
As $\m{E}$ is weakly normal in $\F$, every element of $\Aut_\F(T)$ induces an automorphism of $\m{E}$. Thus $O_p(\m{E})$ is $\Aut_\F(T)$-invariant and (b) follows from (a). 

\smallskip

For the proof of (b) let $Q\unlhd \m{E}$ be $\Aut_\F(T)$-invariant. Then $Q$ is in particular strongly closed in $\m{E}$. Therefore, it  follows from the Frattini condition as stated in \cite[Definition~I.6.1]{Aschbacher/Kessar/Oliver:2011} that $Q$ is strongly closed in $\F$. Hence, by \cite[Theorem~I.4.5]{Aschbacher/Kessar/Oliver:2011}, it is sufficient to prove that $Q$ is contained in every element of $\F^{cr}$. 

\smallskip

Let $R\in\F^{cr}$ and set $R_0:=R\cap T$. Recall that $T$ is strongly closed and so $R_0$ is $\Aut_\F(R)$ invariant. As $Q$ is normal in $\m{E}$, $\Aut_{Q}(R_0)\unlhd \Aut_\m{E}(R_0)$. Thus, $\Aut_{Q}(R_0)\leq O_p(\Aut_\m{E}(R_0))\leq O_p(\Aut_\F(R_0))$ since $\Aut_{\m{E}}(R_0)\unlhd \Aut_\F(R_0)$. It follows that the restriction of every element of $X:=\<\Aut_{Q}(R)^{\Aut_\F(R)}\>$ to $R_0$ lies in $O_p(\Aut_\F(R_0))$. Hence, $[R_0,O^p(X)]=1$. Since $[R,N_{Q}(R)]\leq [R,N_T(R)]\leq T\cap R=R_0$, we have $[R,X]\leq R_0$. Thus, $O^p(X)=1$ meaning that $X$ is a normal $p$-subgroup of $\Aut_\F(R)$. Consequently, as $R$ is centric radical, $\Aut_{Q}(R)\leq X\leq \Inn(R)$ and $Q\leq R$.
\end{proof}

The following Lemma can be seen as a fusion system version of Lemma~\ref{AlmostCharpNormCent}.

\begin{lemma}\label{NCconstrained}
 Let $Q\in\F^f$. Then $N_\F(Q)$ is constrained if and only if $C_\F(Q)$ is constrained.
\end{lemma}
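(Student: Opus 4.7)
The plan is to establish the two implications separately, the forward one via the theory of models and the backward one via Lemma~\ref{OpE} together with a short direct argument.

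For the forward direction, assume that $N_\F(Q)$ is constrained. By Theorem~\ref{Model1}(a) there exists a model $M$ for $N_\F(Q)$. Since $Q\unlhd N_\F(Q)$, the subgroup $Q$ is (trivially) fully normalized in $N_\F(Q)$, so Lemma~\ref{Model2} applied to the constrained system $N_\F(Q)$ and the subgroup $Q$ tells us that $C_M(Q)$ is a model for $C_{N_\F(Q)}(Q)=C_\F(Q)$. Hence $C_\F(Q)$ is constrained.

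For the backward direction, assume that $C_\F(Q)$ is constrained. Since $Q\in\F^f$ is in particular fully $\F$-centralized, $C_\F(Q)$ is a saturated subsystem of $N_\F(Q)$, and it is a weakly normal (indeed normal) subsystem; this is standard, but can also be verified directly by checking that $C_S(Q)$ is strongly closed in $N_\F(Q)$ and that $C_\F(Q)$ is preserved by $\Aut_{N_\F(Q)}(C_S(Q))$ (each morphism in $N_\F(Q)$ extends to a morphism on the overgroup product with $Q$ that acts as an automorphism on $Q$). Applying Lemma~\ref{OpE} inside $N_\F(Q)$, $O_p(C_\F(Q))$ is a normal subgroup of $N_\F(Q)$. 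Set $R:=O_p(N_\F(Q))$ and $U:=O_p(C_\F(Q))$, so that $U\leq R$ and also $Q\leq R$ since $Q\unlhd N_\F(Q)$. Now let $x\in C_{N_S(Q)}(R)$. The inclusion $Q\leq R$ forces $x\in C_S(Q)$, while $U\leq R$ yields $[x,U]=1$; hence $x\in C_{C_S(Q)}(U)$, which is contained in $U$ by the constrainedness hypothesis on $C_\F(Q)$. Thus $C_{N_S(Q)}(R)\leq U\leq R$, i.e.\ $N_\F(Q)$ is constrained.

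The only real obstacle I anticipate is the (standard but worth mentioning) fact that $C_\F(Q)$ is weakly normal in $N_\F(Q)$ so that Lemma~\ref{OpE} applies; the rest of the argument is essentially a fusion-system translation of the group-theoretic Lemma~\ref{AlmostCharpNormCent} from the same section, using models only to handle the easier direction and the containment $O_p(C_\F(Q))\leq O_p(N_\F(Q))$ to handle the harder one.
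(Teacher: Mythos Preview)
Your proof is correct and follows essentially the same route as the paper's: both use Lemma~\ref{Model2} for the forward direction and, for the backward direction, invoke the weak normality of $C_\F(Q)$ in $N_\F(Q)$ (the paper cites \cite[1.25]{AOV1} for this) together with Lemma~\ref{OpE} to get $O_p(C_\F(Q))\unlhd N_\F(Q)$, then the same centralizer computation. The only cosmetic difference is that the paper works directly with $R=Q\,O_p(C_\F(Q))$ rather than with $O_p(N_\F(Q))$, and your direct verification sketch of weak normality is a bit loose (invariance is more than preservation under $\Aut_{N_\F(Q)}(C_S(Q))$), but you correctly flag it as standard.
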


\begin{proof}
 If $N_\F(Q)$ is constrained, then it follows from Lemma~\ref{Model2} applied to $N_\F(Q)$ in place of $\F$ that $C_\F(Q)$ is constrained. Assume now that $C_\F(Q)$ is constrained. By \cite[1.25]{AOV1}, $C_\F(Q)$ is weakly normal in $N_\F(Q)$. It follows now from Lemma~\ref{OpE}(b) that $R:=QO_p(C_\F(Q))\unlhd N_\F(Q)$. Moreover, $C_{N_S(Q)}(R)=C_{C_S(Q)}(O_p(C_\F(Q)))\leq O_p(C_\F(Q))\leq R$ as $C_\F(Q)$ is constrained. Thus, $N_\F(Q)$ is constrained.
\end{proof}

The reader is referred to \cite[Section~I.7]{Aschbacher/Kessar/Oliver:2011} for definitions and properties of subsystems of index prime to $p$ and of subsystems of $p$-power index.

\begin{lemma}\label{pPrimeIndexConstrained}
Let $\m{E}$ be a normal subsystem of $\F$ of index prime to $p$. Then $\m{E}$ is constrained if and only if $\F$ is constrained.
\end{lemma}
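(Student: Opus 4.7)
The plan is to treat the two implications separately. The reverse direction ($\m{E}$ constrained implies $\F$ constrained) I would dispatch directly using Lemma~\ref{OpE}. Setting $R := O_p(\m{E})$, Lemma~\ref{OpE} yields $R \unlhd \F$, so $R \leq O_p(\F)$. Since $\m{E}$ has index prime to $p$ in $\F$ it is a subsystem over $S$, and constrainedness of $\m{E}$ reads $C_S(R) \leq R$. Chaining the inclusions,
$$C_S(O_p(\F)) \;\leq\; C_S(R) \;\leq\; R \;\leq\; O_p(\F),$$
which says $\F$ is constrained.

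For the forward direction ($\F$ constrained implies $\m{E}$ constrained) I would pass to a model. By Theorem~\ref{Model1}(a) choose a model $G$ for $\F$. The standard dictionary for normal subsystems of index prime to $p$ in a fusion system realized by a group (see \cite[Section~I.7]{Aschbacher/Kessar/Oliver:2011}) produces a normal subgroup $H \unlhd G$ with $S \leq H$ (equivalently, with $[G:H]$ coprime to $p$) such that $\m{E} = \F_S(H)$. Since $H$ is normal in the characteristic-$p$ group $G$, Lemma~\ref{Charp1}(b) shows $H$ is itself of characteristic $p$; thus $H$ is a model for $\m{E}$, and Theorem~\ref{Model1}(a) again gives $\m{E}$ constrained.

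The only non-internal step is the appeal to the correspondence between normal subsystems of $\F$ of index prime to $p$ and normal subgroups of the model, and this is where I expect the main obstacle in a genuinely self-contained argument. If one wants to avoid it, one can instead prove $O_p(\F) = O_p(\m{E})$ directly: by Lemma~\ref{OpE} one already has $O_p(\m{E}) \leq O_p(\F)$, and the other containment amounts to showing $O_p(\F) \unlhd \m{E}$, which by \cite[Proposition~I.4.5]{Aschbacher/Kessar/Oliver:2011} reduces to verifying that $O_p(\F)$ lies in every member of $\m{E}^{cr}$ (strong closure in $\m{E}$ is inherited from $\F$). This can be deduced from the equality $\m{E}^{cr} = \F^{cr}$: radicality transfers because $\Aut_\m{E}(Q) \unlhd \Aut_\F(Q)$ has $p'$-quotient, forcing the two groups to share their $O_p$; centricity transfers via the Frattini condition for $\m{E} \unlhd \F$, which writes any $\F$-conjugate of a $Q \in \m{E}^{cr}$ as an $\m{E}$-conjugate of some $Q^\alpha$ with $\alpha \in \Aut_\F(S)$, combined with the $\Aut_\F(S)$-invariance of $\m{E}^{cr}$.
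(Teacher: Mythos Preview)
Your proof is correct, but the paper's route is shorter and more symmetric. Rather than treating the two implications separately, the paper simply shows $O_p(\F)=O_p(\m{E})$: one containment is your use of Lemma~\ref{OpE}, and for the other the paper just asserts that $O_p(\F)\unlhd\m{E}$ (``clearly''), whence $O_p(\F)\leq O_p(\m{E})$. Since both systems live over $S$, constrainedness of either is the single inequality $C_S(O_p(\F))\leq O_p(\F)$, so both directions fall out at once.

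Your alternative paragraph is precisely an unpacking of that ``clearly'': you verify $\m{E}^{cr}=\F^{cr}$ and conclude via \cite[Proposition~I.4.5]{Aschbacher/Kessar/Oliver:2011} that $O_p(\F)$, being strongly closed and contained in every member of $\F^{cr}=\m{E}^{cr}$, is normal in $\m{E}$. So your alternative \emph{is} the paper's proof, just with the one-word step expanded.

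Your primary forward argument via a model is valid but heavier: it invokes the correspondence between normal subsystems of a constrained $\F$ and normal subgroups of its model (this is \cite[Lemma~II.7.4]{Aschbacher/Kessar/Oliver:2011} rather than Section~I.7; the paper itself uses exactly this in the proof of Lemma~\ref{WeaklyNormalConstraint}). That buys you nothing here over the elementary $O_p$ argument, though it does illustrate the general principle that normal subsystems of constrained systems are constrained---which is Lemma~\ref{WeaklyNormalConstraint}, proved just after.
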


\begin{proof}
Clearly, $O_p(\F)$ is normal in $\m{E}$, so $O_p(\m{E})=O_p(\F)$ by Lemma~\ref{OpE}(b). As $\m{E}\cap S=S$, it follows that $\m{E}$ is a constrained if and only if $C_S(O_p(\F))\leq O_p(\F)$, which is the case if and only if $\F$ is constrained.
\end{proof}

\begin{lemma}\label{WeaklyNormalConstraint}
Let $\F$ be constrained and let $\m{E}$ be a weakly normal subsystem of $\F$. Then $\m{E}$ is constrained.
\end{lemma}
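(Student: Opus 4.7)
The plan is to exhibit a model for $\m{E}$: a finite group $H$ of characteristic $p$ with $T \in \Syl_p(H)$ and $\F_T(H) = \m{E}$; by Theorem~\ref{Model1}(a), this will force $\m{E}$ to be constrained. I would first take a model $G$ for $\F$ (which exists by Theorem~\ref{Model1}(a)), and then produce a normal subgroup $H \unlhd G$ with $T = H \cap S \in \Syl_p(H)$ and $\m{E} = \F_T(H)$. Such an $H$ is supplied by the standard correspondence between weakly normal subsystems of a constrained fusion system and certain normal subgroups of its model, as developed in \cite{Aschbacher:2011}. Once $H$ is in hand, Lemma~\ref{Charp1}(b) applied to the normal (hence subnormal) inclusion $H \unlhd G$ yields that $H$ is of characteristic $p$; thus $H$ is a model for $\m{E}$, and Theorem~\ref{Model1}(a) concludes.

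The main obstacle is the construction of $H$, since the appeal to the correspondence above is not developed within the elementary framework of Section~\ref{ConstrainedSection}. An alternative direct route, staying within the tools of this section, would set $R := O_p(\m{E})$ (which is normal in $\F$ by Lemma~\ref{OpE}) and try to prove $C_T(R) \leq R$ directly. One first verifies that $C_T(R)$ is strongly closed in $\m{E}$: since $R \unlhd \m{E}$, any morphism $\phi \colon P \to P'$ in $\m{E}$ extends to $\tilde\phi \colon PR \to P'R$ with $\tilde\phi(R) = R$, and then for $x \in C_P(R)$ and $r \in R$ the identity $\tilde\phi(x^{-1}rx) = \tilde\phi(x)^{-1}\tilde\phi(r)\tilde\phi(x)$ forces $\phi(x)$ to commute with every element of $R$. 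Hence $R \cdot C_T(R)$ is strongly closed in $\m{E}$, and by \cite[Theorem~I.4.5]{Aschbacher/Kessar/Oliver:2011} it would be normal in $\m{E}$ provided it is contained in every $P \in \m{E}^{cr}$; the maximality of $R = O_p(\m{E})$ would then yield $C_T(R) \leq R$. The crux of this direct route is establishing $C_T(R) \leq P$ for each $\m{E}$-centric radical $P$, which is the step that seems to genuinely require some version of the model perspective on $\m{E}$ inside $G$.
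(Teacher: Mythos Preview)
Your main approach is essentially the paper's: take a model $G$ for $\F$, realize $\m{E}$ by a normal subgroup $H\unlhd G$, and conclude via Lemma~\ref{Charp1}(b) and Theorem~\ref{Model1}(a). The one genuine gap is the step where you produce $H$. The correspondence you invoke is stated in the literature for \emph{normal} subsystems, not weakly normal ones: the paper uses \cite[Lemma~II.7.4]{Aschbacher/Kessar/Oliver:2011}, which requires $\m{E}\unlhd\F$. There is no off-the-shelf result in \cite{Aschbacher:2011} that hands you a normal subgroup of the model for an arbitrary weakly normal subsystem.

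The paper closes this gap with a short reduction that you are missing: by Lemma~\ref{pPrimeIndexConstrained}, $\m{E}$ is constrained if and only if $O^{p'}(\m{E})$ is, and by Craven's theorem \cite[Theorem~I.7.8]{Aschbacher/Kessar/Oliver:2011}, $O^{p'}(\m{E})$ is normal in $\F$. So one may replace $\m{E}$ by $O^{p'}(\m{E})$ and then apply the model correspondence legitimately. With that reduction inserted, your argument is exactly the paper's. Your alternative ``direct'' route, as you yourself note, stalls at showing $C_T(R)\le P$ for every $P\in\m{E}^{cr}$, and the paper does not pursue it.
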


\begin{proof}
By Lemma~\ref{pPrimeIndexConstrained}, $\m{E}$ is constrained if and only if $O^{p^\prime}(\m{E})$ is constrained. By a theorem of Craven \cite{Craven:2011}, $O^{p^\prime}(\m{E})$ is normal in $\F$. So replacing $\m{E}$ by $O^{p^\prime}(\m{E})$, we may assume that $\m{E}$ is normal in $\F$. Let $G$ be a model for $\F$, which exist by Theorem~\ref{Model1}(a). By \cite[Lemma~II.7.4]{Aschbacher/Kessar/Oliver:2011}, there exists a normal subgroup $N$ of $G$ such that $T:=\m{E}\cap S=N\cap S\in\Syl_p(N)$ and $\m{E}=\F_T(N)$. By Lemma~\ref{Charp1}(c), $N$ is of characteristic $p$ and thus $\m{E}$ is constrained by Theorem~\ref{Model1}(a). 
\end{proof}

The above lemma implies that every subnormal subsystem of a constrained fusion system is constrained; this statement could be seen as a fusion system version of Lemma~\ref{Charp1}(c).

\smallskip

The following lemma is a version of \cite[Lemma~1.3]{MS:2012b} for fusion systems, except that we do not require the subsystem $\m{E}$ to be normal in $\F$. A different proof could be given using the theory of components of fusion systems as developed by Aschbacher \cite{Aschbacher:2010}, but we prefer to keep the proof as elementary as possible.

\begin{lemma}\label{pPowerIndexConstrained}
 Let $\m{E}$ be a subsystem of $\F$ of $p$-power index. Then $\m{E}$ is constrained if and only if $\F$ is constrained.
\end{lemma}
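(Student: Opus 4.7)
The plan is to prove both directions by reducing to statements about models of $\m{E}$ and $\F$, using the correspondence between saturated subsystems of $\F$ of $p$-power index and certain subgroups of a model of $\F$ containing $O^p$.

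For the forward direction, I would let $G$ be a model for $\F$ (which exists by Theorem~\ref{Model1}(a)) and invoke the correspondence (see \cite[Chapter II.7]{Aschbacher/Kessar/Oliver:2011} or \cite{BCGLO2}) to obtain a subgroup $H \leq G$ with $O^p(G) \leq H$, $T = S \cap H \in \Syl_p(H)$, and $\m{E} = \F_T(H)$. Since $G/O^p(G)$ is a $p$-group and every subgroup of a $p$-group is subnormal, $H$ is subnormal in $G$. By Lemma~\ref{Charp1}(b), $H$ is of characteristic $p$, so $H$ is a model for $\m{E}$, and $\m{E}$ is constrained by Theorem~\ref{Model1}(a).

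For the backward direction, I would let $H$ be a model for the constrained subsystem $\m{E}$. By the converse side of the correspondence above, $H$ can be realized as a normal subgroup of a finite group $G$ with $G/H$ a $p$-group, $S \in \Syl_p(G)$, and $\F_S(G) = \F$. It then remains to verify that $G$ has characteristic $p$: since $O_p(H)$ is characteristic in $H \unlhd G$, we have $O_p(H) \unlhd G$ and thus $O_p(H) \leq O_p(G)$; and setting $C = C_G(O_p(H))$, we have $C \cap H = C_H(O_p(H)) \leq O_p(H)$ because $H$ is of characteristic $p$. Then $C/(C\cap H)$ embeds into the $p$-group $G/H$, and combined with $C\cap H \leq O_p(H)$ being a $p$-group, $C$ itself is a $p$-group; since $C \unlhd G$ (as $O_p(H) \unlhd G$), this gives $C \leq O_p(G)$. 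Hence $C_G(O_p(G)) \leq C_G(O_p(H)) = C \leq O_p(G)$, so $G$ is of characteristic $p$, $G$ is a model for $\F$, and $\F$ is constrained.

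The main obstacle is the backward direction, specifically the invocation of the extension theorem to realize $\F$ as $\F_S(G)$ with $H \unlhd G$ and $G/H$ a $p$-group; a direct fusion-systems argument analogous to the proof of Lemma~\ref{pPrimeIndexConstrained} fails because $O_p(\F)$ need not lie in $T$ (so one cannot conclude $O_p(\F) = O_p(\m{E})$). Thus the proof genuinely depends on the nontrivial extension theory of \cite{BCGLO2}, though once $G$ is produced, the characteristic-$p$ verification is a direct group-theoretic calculation in the spirit of \cite[Lemma~1.3]{MS:2012b}.
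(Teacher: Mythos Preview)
Your forward direction is close in spirit to what the paper does: after reducing to the case where $\m{E}$ is normal in $\F$, the paper (implicitly, via Lemma~\ref{WeaklyNormalConstraint}) also passes to a model $G$ for $\F$ and a normal subgroup realizing $\m{E}$, then applies Lemma~\ref{Charp1}(b). So that half is fine, modulo verifying the identification $\m{E}=\F_T(O^p(G)T)$, which is standard but not entirely trivial.

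The backward direction, however, has a genuine gap. What you call ``the converse side of the correspondence'' is not a correspondence at all: the bijection between subsystems of $p$-power index in $\F_S(G)$ and subgroups of $G$ containing $O^p(G)$ presupposes that a model $G$ for $\F$ already exists. Starting only from a model $H$ of $\m{E}$ and the abstract saturated system $\F$, producing a finite group $G$ with $H\unlhd G$, $G/H$ a $p$-group, $S\in\Syl_p(G)$ and $\F_S(G)=\F$ is a genuine extension problem, and no such result is contained in \cite{BCGLO2}; the extension theory there is about linking systems, not group models. In fact, once such a $G$ is produced, your characteristic-$p$ verification shows that $G$ is a model for $\F$, so the ``extension theorem'' you are invoking is equivalent to the very conclusion you want. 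Making it work would require lifting the conjugation action of $S/T$ on $\m{E}$ to a coherent action on $H$ (using uniqueness of models) and then solving the resulting group-extension problem so that the fusion comes out right; this is not a citable black box.

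The paper sidesteps all of this. After the same reduction to $|S:T|=p$ with $\m{E}\unlhd\F$, it works entirely inside the fusion system: with $Q=O_p(\m{E})$ (normal in $\F$ by Lemma~\ref{OpE}) and $P=QC_S(Q)$, one has $|P:Q|\le|S:T|=p$ since $C_T(Q)\le Q$, and then one shows directly that $P$ is strongly closed and satisfies the hypotheses of \cite[Proposition~I.4.6]{Aschbacher/Kessar/Oliver:2011}, whence $P\unlhd\F$. Since $C_S(P)\le C_S(Q)\le P$, this makes $\F$ constrained. This argument is elementary and self-contained, in keeping with the paper's stated aim of avoiding heavier machinery.
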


\begin{proof}
If $\F$ is constrained, then $\m{E}$ is constrained by Lemma~\ref{WeaklyNormalConstraint}. Hence, for the rest of the proof, $\m{E}$ is assumed to be constrained, and we will show that $\F$ is constrained.

\smallskip

Let $T=\m{E}\cap S$. Let $T=T_0\unlhd T_1\unlhd\dots \unlhd T_n=S$ be a chain of subgroups such that $|T_i/T_{i-1}|=p$ for $i=1,\dots,n$. By \cite[Theorem~I.7.4]{Aschbacher/Kessar/Oliver:2011}, there is a unique subsystem $\F_{T_i}=\<\Inn(T_i),O^p(\Aut_\F(P))\colon P\leq T_i\>$ of $\F$ of $p$-power index over $T_i$ for every $i=1,\dots,n$. In particular, $\F_T=\F_{T_0}=\m{E}$. Again by \cite[Theorem~I.7.4]{Aschbacher/Kessar/Oliver:2011}, $\F_{T_{i-1}}$ is a normal subsystem of $\F_{T_i}$ of $p$-power index for $i=1,\dots,n$. Hence, we can reduce to the case that $|S:T|=p$ and $\m{E}$ is normal in $\F$. 

\smallskip

So assume now $|S:T|=p$ and $\m{E}\unlhd\F$. By Lemma~\ref{OpE}(b), $Q:=O_p(\m{E})$ is normal in $\F$. It is sufficient to show that $P:=QC_S(Q)$ is normal in $\F$. As $\m{E}$ is constrained, $C_T(Q)\leq Q$ and thus $|P:Q|\leq |S:T|=p$. As $Q$ is normal in $\F$, $P$ is weakly closed in $\F$. We prove now that $P$ is strongly closed. Let $X\leq P$ and $\phi\in \Hom_\F(X,S)$. If $X\leq Q$ then $X\phi\leq Q\leq P$. If $X\not\leq Q$ then $P=QX$ as $|P:Q|\leq p$. Since $Q\unlhd\F$, $\phi$ extends in this case to an element $\Hom_\F(P,S)$. As $P$ is weakly closed in $\F$, it follows $X\phi\leq P$. So $P$ is strongly closed. 

\smallskip

By \cite[Proposition~I.4.6]{Aschbacher/Kessar/Oliver:2011}, there exists a series $1=P_0\leq P_1\leq \dots P_n=Q$ of subgroups strongly closed in $\F$ such that $[P_i,Q]\leq P_{i-1}$ for $i=1,\dots,n$. Since $P=QC_S(Q)\leq QC_S(P_i)$, it follows $[P_i,P]\leq P_{i-1}$  for $i=1,\dots,n$. As $|P:Q|\leq p$, we have $[P,P]\leq Q$. Hence, $P\unlhd \F$ by \cite[Proposition~I.4.6]{Aschbacher/Kessar/Oliver:2011}. As $C_S(P)\leq C_S(Q)\leq P$, it follows that $\F$ is constrained.
\end{proof}

The reader is referred to \cite[Section~I.5]{Aschbacher/Kessar/Oliver:2011} for the definitions of $K$-normalizers. We will need the following elementary lemma:

\begin{lemma}\label{KNormElementary}
 Let $Q\leq S$, $K\leq \Aut(Q)$, $P\leq N_S^K(Q)$ and $\alpha\in\Hom_\F(PQ,S)$. Then $P\alpha\leq N_S^{K^\alpha}(Q\alpha)$.  
\end{lemma}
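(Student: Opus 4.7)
The plan is to unpack the definition of $N_S^K(Q)$ and verify directly that, for each $x \in P$, the element $x\alpha$ both normalizes $Q\alpha$ and conjugates on $Q\alpha$ by an element of $K^\alpha$. Recall that $N_S^K(Q)=\{s\in N_S(Q)\colon c_s|_Q\in K\}$, so the hypothesis $P\leq N_S^K(Q)$ means exactly that each $x\in P$ normalizes $Q$ and that the resulting automorphism $c_x|_Q$ lies in $K$. Since $\alpha\in\Hom_\F(PQ,S)$ is an injective group homomorphism, its restriction $\alpha|_Q\colon Q\to Q\alpha$ is an isomorphism, so $K^\alpha=(\alpha|_Q)^{-1}K(\alpha|_Q)$ is a well-defined subgroup of $\Aut(Q\alpha)$, and it makes sense to speak of $N_S^{K^\alpha}(Q\alpha)$.

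The first step is to show that $x\alpha\in N_S(Q\alpha)$ for each $x\in P$. Since $\alpha$ is a homomorphism defined on $PQ$, for any $q\in Q$ one has $(x^{-1}qx)\alpha=(x\alpha)^{-1}(q\alpha)(x\alpha)$, and the left side lies in $Q\alpha$ because $x^{-1}qx\in Q$. Hence $(x\alpha)^{-1}(Q\alpha)(x\alpha)\subseteq Q\alpha$, and by counting this inclusion is an equality.

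The second and key step is to read off what the conjugation map $c_{x\alpha}|_{Q\alpha}$ is. The identity $(q^x)\alpha=(q\alpha)^{x\alpha}$, derived exactly as above from $\alpha$ being a homomorphism, states precisely that
\[
c_{x\alpha}\big|_{Q\alpha}=(\alpha|_Q)^{-1}\circ c_x|_Q\circ (\alpha|_Q),
\]
where the composition is read from left to right as in the paper's convention. Since $c_x|_Q\in K$ by assumption, the right-hand side lies in $K^\alpha$. This gives $x\alpha\in N_S^{K^\alpha}(Q\alpha)$ for every $x\in P$, whence $P\alpha\leq N_S^{K^\alpha}(Q\alpha)$.

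There is no real obstacle here; the proof is essentially a one-line calculation once the definitions of $N_S^K(Q)$ and of $K^\alpha$ have been unfolded. The only point requiring care is to keep track of the left-to-right composition convention so that $K^\alpha$ is formed as $(\alpha|_Q)^{-1}K(\alpha|_Q)$ rather than its opposite, after which the formula $(q^x)\alpha=(q\alpha)^{x\alpha}$ immediately delivers the conclusion.
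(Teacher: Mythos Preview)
Your proof is correct and follows essentially the same approach as the paper's: both use the homomorphism property of $\alpha$ to compute $(q^x)\alpha=(q\alpha)^{x\alpha}$, deduce that $c_{x\alpha}|_{Q\alpha}=(\alpha|_Q)^{-1}(c_x|_Q)(\alpha|_Q)=(c_x|_Q)^\alpha\in K^\alpha$, and conclude. Your version is slightly more expansive in separating out the verification that $x\alpha$ normalizes $Q\alpha$, but the argument is the same.
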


\begin{proof}
Notice that for every $s\in P$ and every $x\in Q$, $(x\alpha)^{s\alpha}=(x^s)\alpha=(x\alpha)\alpha^{-1}c_s\alpha=(x\alpha)(c_s|_Q)^\alpha$. So as $c_s|_Q\in K$, $c_{s\alpha}|_{Q\alpha}=(c_s|_Q)^\alpha\in K^\alpha$ and thus $s\alpha\in N_S^{K^\alpha}(Q\alpha)$.
\end{proof}

\begin{lemma}\label{WeaklyNormalCentInKNorm}
 Let $Q\leq S$ and $K\leq \Aut(Q)$ be such that $N_\F^K(Q)$ is saturated. 
\begin{itemize}
\item [(a)] Let $K_0\unlhd K$ be such that $N_\F^{K_0}(Q)$ is saturated. Then $N_\F^{K_0}(Q)$ is weakly normal in  $N_\F^K(Q)$.
\item [(b)] Suppose $C_\F(Q)$ is saturated. Then $C_\F(Q)$ is weakly normal in $N_\F^K(Q)$. In particular, $C_\F(Q)$ is constrained if $N_\F^K(Q)$ is constrained. 
\end{itemize}
\end{lemma}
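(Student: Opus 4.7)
The plan is to reduce both parts to a direct verification of the three axioms for weak normality (saturation, strong closure, and invariance under $\Aut$-action), where the crucial input is the normality $K_0\unlhd K$ combined with Lemma~\ref{KNormElementary}. Write $T=N_S^K(Q)$ and $T_0=N_S^{K_0}(Q)$, and note first that $K_0\leq K$ gives $T_0\leq T$ and that every $N_\F^{K_0}(Q)$-morphism is an $N_\F^K(Q)$-morphism, so that $N_\F^{K_0}(Q)$ is genuinely a subsystem of $N_\F^K(Q)$; saturation is assumed.

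For part (a), strong closure of $T_0$ in $N_\F^K(Q)$ is the key computation. Given $s\in T_0$ and $\phi\in\Hom_{N_\F^K(Q)}(\<s\>,T)$, choose an extension $\tilde\phi\co\<s\>Q\to\<s\phi\>Q$ with $\tilde\phi|_Q\in K$. Lemma~\ref{KNormElementary} (applied with $K_0$ in place of $K$) yields $\<s\phi\>\leq N_S^{K_0^{\tilde\phi|_Q}}(Q)$, and the normality $K_0\unlhd K$ together with $\tilde\phi|_Q\in K$ gives $K_0^{\tilde\phi|_Q}=K_0$, so $s\phi\in T_0$. For invariance, fix $\alpha\in\Aut_{N_\F^K(Q)}(T_0)$ and $\psi\in\Hom_{N_\F^{K_0}(Q)}(P,P')$ with $P,P'\leq T_0$. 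Pick extensions $\tilde\alpha\in\Aut_\F(T_0Q)$ with $\tilde\alpha|_Q\in K$, and $\tilde\psi\co PQ\to P'Q$ with $\tilde\psi|_Q\in K_0$. Then $\tilde\alpha^{-1}\tilde\psi\tilde\alpha\co(P\alpha)Q\to(P'\alpha)Q$ is an extension of $\alpha^{-1}\psi\alpha$ whose restriction to $Q$ equals $(\tilde\psi|_Q)^{\tilde\alpha|_Q}\in K_0^{\tilde\alpha|_Q}=K_0$, again by $K_0\unlhd K$; hence $\alpha^{-1}\psi\alpha\in\Hom_{N_\F^{K_0}(Q)}(P\alpha,P'\alpha)$ as required.

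For part (b), observe $C_\F(Q)=N_\F^{\{\id_Q\}}(Q)$ and $\{\id_Q\}\unlhd K$ trivially; the saturation of $C_\F(Q)$ is given, so part (a) yields that $C_\F(Q)$ is weakly normal in $N_\F^K(Q)$. For the final assertion, if in addition $N_\F^K(Q)$ is constrained, then Lemma~\ref{WeaklyNormalConstraint} applied to the weakly normal subsystem $C_\F(Q)$ of the constrained system $N_\F^K(Q)$ gives that $C_\F(Q)$ is constrained.

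The argument is essentially bookkeeping, and the only real obstacle is to track carefully which extension lives in which of the groups $K_0$ or $K$; once this is done, Lemma~\ref{KNormElementary} and the normality $K_0\unlhd K$ make both the strong closure and the invariance checks drop out.
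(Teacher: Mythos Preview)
Your argument follows the same outline as the paper's proof (reduce (b) to (a) via $C_\F(Q)=N_\F^{\{\id_Q\}}(Q)$, verify strong closure using Lemma~\ref{KNormElementary}, then check an invariance condition), and all the computations you wrote are correct. There is, however, a gap in what you actually verified.

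You claim the three axioms for weak normality are ``saturation, strong closure, and invariance under $\Aut$-action''. That is not quite right. Invariance of $N_\F^{K_0}(Q)$ under conjugation by $\Aut_{N_\F^K(Q)}(T_0)$, together with strong closure of $T_0$ and saturation, does \emph{not} imply weak normality; you are missing either the Frattini condition or, equivalently, the full strong invariance condition (see \cite[Proposition~I.6.4]{Aschbacher/Kessar/Oliver:2011}). The paper establishes invariance directly via the strong invariance condition: for $A\leq B\leq T_0$, $\phi\in\Hom_{N_\F^{K_0}(Q)}(A,B)$ and $\psi\in\Hom_{N_\F^K(Q)}(B,T_0)$, one must show $(\psi|_A)^{-1}\phi\psi\in\Hom_{N_\F^{K_0}(Q)}(A\psi,B\psi)$. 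Your computation goes through verbatim in this generality: extend $\psi$ to $\hat\psi$ with $\hat\psi|_Q\in K$ and $\phi$ to $\hat\phi$ with $\hat\phi|_Q\in K_0$; then $(\hat\psi|_{AQ})^{-1}\hat\phi\hat\psi$ extends $(\psi|_A)^{-1}\phi\psi$ and restricts on $Q$ to $(\hat\phi|_Q)^{\hat\psi|_Q}\in K_0^{\hat\psi|_Q}=K_0$. So the fix is simply to allow $\alpha$ (your $\psi$ in the invariance paragraph) to be an arbitrary morphism in $\Hom_{N_\F^K(Q)}(B,T_0)$ rather than an automorphism of $T_0$; nothing else changes.
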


\begin{proof}
As $C_\F(Q)=N_\F^{\{\id_Q\}}(Q)$, part (b) follows from part (a) and Lemma~\ref{WeaklyNormalConstraint}. So it remains to prove (a). As $N_\F^{K_0}(Q)$ and $N_\F^K(Q)$ are by assumption both saturated, it remains to prove that $N_\F^{K_0}(Q)$ is $N_\F^K(Q)$-invariant. 

\smallskip

We show first that $N_S^{K_0}(Q)$ is strongly closed in $N_\F^K(Q)$. Let $R\leq N_S^{K_0}(Q)$ and $\alpha\in\Hom_{N_\F^K(Q)}(R,N_S^K(Q))$. Then it follows from the definition of $N_\F^K(Q)$ that $\alpha$ extends to $\hat{\alpha}\in\Hom_\F(RQ,S)$ with $\hat{\alpha}|_Q\in K$. As $K_0\unlhd K$ and $\hat{\alpha}|_Q\in K$, we have $K_0^{\hat{\alpha}}=K_0$. Hence, by Lemma~\ref{KNormElementary}, $R\alpha\leq N_S^{K_0^{\hat{\alpha}}}(Q\hat{\alpha})=N_S^{K_0}(Q)$.  This shows that $N_S^{K_0}(Q)$ is strongly closed in $N_\F^K(Q)$. 

\smallskip

We show now that the strong invariance condition as given in \cite[Proposition~I.6.4]{Aschbacher/Kessar/Oliver:2011} holds. For that let $A\leq B\leq N_S^{K_0}(Q)$, $\phi\in\Hom_{N_\F^{K_0}(Q)}(A,B)$ and $\psi\in\Hom_{N_\F^K(Q)}(B,N_S^{K_0}(Q))$. We need to prove that $\phi^\psi:=(\psi|_A)^{-1}\phi\psi\in\Hom_{N_\F^{K_0}(Q)}(A\psi,B\psi)$. By the definitions of $N_\F^{K_0}(Q)$ and $N_\F^K(Q)$, $\phi$ extends to $\hat{\phi}\in\Hom_\F(AQ,BQ)$ with $\hat{\phi}|_Q\in K_0$, and $\psi$ extends to $\hat{\psi}\in\Hom_\F(BQ,S)$ with $\hat{\psi}|_Q\in K$. Then $\hat{\phi}^{\hat{\psi}}:=(\hat{\psi}|_{AQ})^{-1}\hat{\phi}\hat{\psi}\in\Hom_\F((A\psi)Q,(B\psi)Q)$ extends $\phi^\psi$. Moreover, $(\hat{\phi}^{\hat{\psi}})|_Q=(\hat{\psi}|_Q)^{-1}(\hat{\phi}|_Q)(\hat{\psi}|_Q)\in K_0$ as $\hat{\phi}|_Q\in K_0$, $\hat{\psi}|_Q\in K$ and $K_0\unlhd K$. This shows that $\phi^\psi$ is a morphism in $N_\F^{K_0}(Q)$ as required.
\end{proof}

\section{Properties of subcentric subgroups}\label{SubcentricProperties}

\noindent\textbf{Throughout this section, $\F$ is assumed to be saturated.}

\smallskip

In this section we prove important results about subcentric subgroups. In particular, in Proposition~\ref{subcentricProp} we prove that the set $\F^s$ of subcentric subgroups is closed under taking $\F$-conjugates and overgroups; and at the end of the section we prove Propositions~\ref{SubcentricProperties1} and \ref{SubcentricProperties2}. We start with the following crucial characterization of subcentric subgroups.

\begin{lemma}\label{subcentricEquiv}
For every $Q\in \F$, the following conditions are equivalent:
\begin{itemize}
\item[(a1)] The subgroup $Q$ is subcentric in $\F$.
\item[(a2)] For some fully normalized $\F$-conjugate $P$ of $Q$, $O_p(N_\F(P))$ is centric in $\F$.
\item[(b1)] For every fully normalized $\F$-conjugate $P$ of $Q$, $N_\F(P)$ is constrained.
\item[(b2)] For some fully normalized $\F$-conjugate $P$ of $Q$, $N_\F(P)$ is constrained.
\item[(c1)] For every fully centralized $\F$-conjugate $P$ of $Q$, $C_\F(P)$ is constrained.
\item[(c2)] For some fully centralized $\F$-conjugate $P$ of $Q$, $C_\F(P)$ is constrained.
\end{itemize} 
\end{lemma}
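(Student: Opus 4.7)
The plan is to prove the equivalences in three stages: first, handle the ``any versus some'' equivalences (a1)$\Leftrightarrow$(a2), (b1)$\Leftrightarrow$(b2), (c1)$\Leftrightarrow$(c2); second, link (a) with (b) by analyzing when $O_p(N_\F(P))$ is $\F$-centric; third, link (b) with (c) via Lemma~\ref{NCconstrained}.

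For the first stage, suppose $P$ and $P'$ are two fully normalized $\F$-conjugates of $Q$. The extension axiom, applied to a suitable $\F$-isomorphism $P \to P'$ (using that $P'$ is receptive since it is fully normalized), produces an $\F$-isomorphism $\varphi \colon N_S(P) \to N_S(P')$ with $P\varphi = P'$; such a $\varphi$ induces an isomorphism of saturated fusion systems $N_\F(P) \cong N_\F(P')$. Under this isomorphism $O_p(N_\F(P))$ corresponds to $O_p(N_\F(P'))$, so centricity in $\F$ and constrainedness are each preserved, yielding (a1)$\Leftrightarrow$(a2) and (b1)$\Leftrightarrow$(b2). The analogous argument with fully centralized $\F$-conjugates and centralizer fusion systems gives (c1)$\Leftrightarrow$(c2).

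For the second stage, fix a fully normalized $\F$-conjugate $P$ of $Q$ and set $R = O_p(N_\F(P))$. The direction (a1)$\Rightarrow$(b1) for this $P$ is immediate: if $R$ is $\F$-centric then $C_{N_S(P)}(R) \leq C_S(R) \leq R$, so $N_\F(P)$ is constrained. For the converse, assume $C_{N_S(P)}(R) \leq R$ and pick any $\alpha \in \Hom_\F(R, S)$. Since $P \unlhd R$, both $R\alpha$ and $C_S(R\alpha)$ are contained in $N_S(P\alpha)$. Receptivity of the fully normalized $P$ lets me extend $(\alpha|_P)^{-1} \colon P\alpha \to P$ to some $\tilde\gamma \in \Hom_\F(N_S(P\alpha), N_S(P))$. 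The composite $(\alpha \tilde\gamma)|_R \colon R \to N_S(P)$ restricts to the identity on $P$, so it is a morphism in $N_\F(P)$; as $R$ is normal in $N_\F(P)$ it has no nontrivial $N_\F(P)$-conjugate, hence $R\alpha\tilde\gamma = R$. Then $C_S(R\alpha)\tilde\gamma \leq C_{N_S(P)}(R) \leq R$, and injectivity of $\tilde\gamma$ yields $C_S(R\alpha) \leq R\alpha$, showing that $R$ is $\F$-centric.

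For the third stage, a fully normalized subgroup is in particular fully centralized, so for the $P$ of the previous stage Lemma~\ref{NCconstrained} gives that $N_\F(P)$ is constrained if and only if $C_\F(P)$ is constrained; combined with (b1)$\Leftrightarrow$(b2) and (c1)$\Leftrightarrow$(c2) this closes the cycle. The main obstacle I expect is the receptivity argument in the second stage, which transports an arbitrary $\F$-conjugate of $R$ back inside $N_S(P)$ in order to exploit the constrainedness of $N_\F(P)$; everything else is either formal bookkeeping or a direct appeal to Lemma~\ref{NCconstrained}.
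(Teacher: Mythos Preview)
Your proof is correct and follows the paper's argument closely for stages one and three. The only real divergence is in the implication (b)$\Rightarrow$(a): the paper moves $R=O_p(N_\F(P))$ to a fully normalized $\F$-conjugate $R\phi$ (via \cite[Lemma~I.2.6(c)]{Aschbacher/Kessar/Oliver:2011}), observes that $P\phi$ remains fully normalized, and then checks $C_S(R\phi)=C_{N_S(P\phi)}(R\phi)\leq R\phi$ at that single conjugate. You instead verify centricity directly from the definition by pulling an arbitrary $\F$-conjugate $R\alpha$ back inside $N_S(P)$ and using constrainedness there. Your route is a little more hands-on but avoids the side argument that $P\phi$ is again fully normalized; the paper's route is slightly shorter once that side argument is in place.

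One imprecision worth fixing: in both stages one and two you invoke ``receptivity'' of a fully normalized subgroup to extend a \emph{specific} isomorphism (e.g.\ $(\alpha|_P)^{-1}$) to all of $N_S(P\alpha)$. Receptivity alone only extends $\psi$ over $N_\psi$, which need not be the full normalizer. What you actually need is \cite[Lemma~I.2.6(c)]{Aschbacher/Kessar/Oliver:2011}: for $P$ fully normalized there exists \emph{some} $\tilde\gamma\in\Hom_\F(N_S(P\alpha),S)$ with $(P\alpha)\tilde\gamma=P$. This is enough for your argument, since you only use that $\alpha\tilde\gamma$ normalizes $P$ (not that it is the identity on $P$); so replace ``restricts to the identity on $P$'' by ``sends $P$ to $P$'' and the rest goes through unchanged.
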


\begin{proof}
If $P,P^*\in Q^\F$ are both fully normalized, then it follows from \cite[Lemma~I.2.6(c)]{Aschbacher/Kessar/Oliver:2011} that there exists an isomorphism $\phi\in\Hom_\F(N_S(P),N_S(P^*))$ such that $P\phi=P^*$. It is straightforward to check that any such $\phi$ induces an isomorphism from $N_\F(P)$ to $N_\F(P^*)$ and thus $N_\F(P)$ is constrained if and only if $N_\F(P^*)$ is constrained. Moreover, $O_p(N_\F(P))\phi=O_p(N_\F(P^*))$. Thus, conditions (b1) and (b2) are equivalent, and conditions (a1) and (a2) are equivalent. 

\smallskip

Similarly, if $P,P^*\in Q^\F$ are both fully centralized in $\F$, then by the extension axiom, there exists $\phi\in\Hom_\F(C_S(P)P,C_S(P^*)P^*)$ with $P\phi=P^*$, and $\phi|_{C_S(P)}$ induces and isomorphism from $C_\F(P)$ to $C_\F(P^*)$. This proves that conditions (c1) and (c2) are equivalent. 

\smallskip

Let now $P\in Q^\F$ be fully normalized. By Lemma~\ref{NCconstrained}, $N_\F(P)$ is constrained if and only if $C_\F(P)$ is constrained. Since every fully normalized subgroup is fully centralized, this shows that (b2) implies (c2) and that (c1) implies (b1). 

\smallskip

Set now $R:=O_p(N_\F(P))$. If $Q$ is subcentric, then $C_{N_S(P)}(R)\leq C_S(R)\leq R$ and so $N_\F(P)$ is constrained. Hence, (a1) implies (b1). Assume now $N_\F(P)$ is constrained. By \cite[Lemma~I.2.6(c)]{Aschbacher/Kessar/Oliver:2011}, there exists $\phi\in \Hom_\F(N_S(R),S)$ such that $R\phi\in\F^f$. As $N_S(P)\leq N_S(R)$ and $P$ is fully normalized, it follows $N_S(P)\phi=N_S(P\phi)$ and $P\phi\in\F^f$. Again, $\phi|_{N_S(P)}$ induces an isomorphism from $N_\F(P)$ to $N_\F(P\phi)$ and thus $R\phi=O_p(N_\F(P\phi))$ and $N_\F(P\phi)$ is constrained. Observe that $P\phi\leq R\phi$ and thus $C_S(R\phi)\leq C_S(P\phi)\leq N_S(P\phi)$. Hence, $C_S(R\phi)=C_{N_S(P\phi)}(R\phi)\leq R\phi$ as $N_\F(P\phi)$ is constrained. So $R\phi$ and thus $R$ is centric as $R\phi$ is fully normalized. Hence, (b2) implies (a2). 
\end{proof}

Looking more generally at $K$-normalizers rather than at centralizers and normalizers of subgroups of $S$, we get the following sufficient condition for a subgroup to be subcentric:

\begin{lemma}\label{KNormSubcentric}
 Let $Q\in\F$ and $K\leq \Aut(Q)$ be such that $Q$ is fully $K$-normalized. If $N^K_\F(Q)$ is constrained, then $Q$ is subcentric in $\F$.
\end{lemma}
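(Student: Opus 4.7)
The plan is to reduce the claim, via Lemma~\ref{subcentricEquiv}, to showing that $C_\F(Q)$ is a (saturated and) constrained fusion system, after which the implication (c2)$\Rightarrow$(a1) of that lemma immediately yields that $Q$ is subcentric. The route from $N_\F^K(Q)$ being constrained to $C_\F(Q)$ being constrained runs through Lemma~\ref{WeaklyNormalCentInKNorm}(b), so the substantive task is to verify the hypotheses of that lemma, namely that both $N_\F^K(Q)$ and $C_\F(Q)$ are saturated.

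First I would invoke the standard saturation theorem for $K$-normalizers (see, e.g., \cite[Proposition~I.5.4]{Aschbacher/Kessar/Oliver:2011}): the assumption that $\F$ is saturated and $Q$ is fully $K$-normalized gives that $N_\F^K(Q)$ is a saturated fusion system on $N_S^K(Q)$ and, moreover, that $Q$ is fully centralized in $\F$. The latter ensures that $C_\F(Q)$ is itself saturated. Lemma~\ref{WeaklyNormalCentInKNorm}(b), applied with the given $K$, then shows that $C_\F(Q)$ is weakly normal in $N_\F^K(Q)$, and the ``in particular'' clause of that lemma gives that $C_\F(Q)$ is constrained, using the hypothesis that $N_\F^K(Q)$ is constrained. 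Since $Q$ is a fully centralized $\F$-conjugate of itself, condition (c2) of Lemma~\ref{subcentricEquiv} is now satisfied, and we conclude that $Q$ is subcentric.

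I do not anticipate any real obstacle here: the argument is an essentially direct chain through the preparatory lemmas of this section. The one nontrivial ingredient is the classical fact that a fully $K$-normalized subgroup is fully centralized, which is what makes it legitimate to treat $C_\F(Q)$ as a saturated object on which Lemma~\ref{WeaklyNormalCentInKNorm}(b) can operate.
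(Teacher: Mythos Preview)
Your proposal is correct and follows essentially the same route as the paper's proof: use that $Q$ being fully $K$-normalized implies $Q$ is fully centralized, deduce that both $N_\F^K(Q)$ and $C_\F(Q)$ are saturated, apply Lemma~\ref{WeaklyNormalCentInKNorm}(b) to obtain that $C_\F(Q)$ is constrained, and conclude via Lemma~\ref{subcentricEquiv}. The only minor point is that the relevant references in \cite{Aschbacher/Kessar/Oliver:2011} are Proposition~I.5.2 (fully $K$-normalized $\Rightarrow$ fully centralized) and Theorem~I.5.5 (saturation of $K$-normalizers), not Proposition~I.5.4.
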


\begin{proof}
Since $Q$ is fully $K$-normalized, $Q$ is fully centralized by \cite[Proposition~I.5.2]{Aschbacher/Kessar/Oliver:2011}. Now by \cite[Theorem~I.5.5]{Aschbacher/Kessar/Oliver:2011}, $C_\F(Q)$ and $N_\F^K(Q)$ are saturated. If $N_\F^K(Q)$ is constrained, it follows therefore from Lemma~\ref{WeaklyNormalCentInKNorm} that $C_\F(Q)$ is constrained. So $Q$ is subcentric by Lemma~\ref{subcentricEquiv}.
\end{proof}

\begin{prop}\label{subcentricProp}
The set $\F^s$ of subcentric subgroups of $\F$ is closed under taking $\F$-conjugates and overgroups.
\end{prop}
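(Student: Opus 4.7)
The proof splits naturally into two parts. Closure under $\F$-conjugation is immediate from Lemma~\ref{subcentricEquiv}, since the equivalent characterization ``$N_\F(P)$ is constrained for some fully normalized $\F$-conjugate $P$ of $Q$'' depends only on the $\F$-conjugacy class of $Q$. For closure under overgroups, given $Q\in\F^s$ and $Q\leq R\leq S$, I argue by induction on $|R/Q|$. The inductive step reduces to the key case $Q\unlhd R$ with $|R/Q|=p$: when $R>Q$, $p$-group theory yields $Q<N_R(Q)$, so one picks $Q\unlhd Q_1\leq N_R(Q)$ with $|Q_1/Q|=p$; if $Q_1\in\F^s$, the inductive hypothesis applied to $Q_1\leq R$ yields $R\in\F^s$.

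In the key case, the conclusion $R\in\F^s$ is invariant under simultaneous $\F$-conjugation of the pair $(Q,R)$, so I choose a representative maximizing lexicographically first $|N_S(Q)|$ and then $|N_S(R)\cap N_S(Q)|$. The first maximum gives $Q$ fully normalized in $\F$; reaching it is possible because, for any fully normalized $\F$-conjugate $Q'$ of $Q$, \cite[Lemma~I.2.6(c)]{Aschbacher/Kessar/Oliver:2011} supplies $\tilde\beta\in\Hom_\F(N_S(Q),N_S(Q'))$ with $Q\tilde\beta=Q'$, and then $R\tilde\beta$ is defined (using $R\leq N_S(Q)$), giving the required conjugate pair. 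With $Q$ fully normalized and $Q\in\F^s$, Lemma~\ref{subcentricEquiv} yields that $N_\F(Q)$ is constrained, and Theorem~\ref{Model1} then provides a model $G$ for $N_\F(Q)$ with $S_0:=N_S(Q)\in\Syl_p(G)$ and $Q\unlhd G$.

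Next set $K:=\{\alpha\in\Aut(R)\colon Q\alpha=Q\}$. One checks directly that $N_S^K(R)=N_S(R)\cap N_S(Q)=N_{S_0}(R)$, and that the morphisms of $N_\F^K(R)$ are exactly those $\F$-morphisms between subgroups of $N_{S_0}(R)$ which extend to $\F$-morphisms normalizing both $R$ and $Q$. Since $N_\F(Q)=\F_{S_0}(G)$ and $Q$ is already normal in $G$, these morphisms coincide with conjugations by elements of $N_G(R)$, so $N_\F^K(R)=\F_{N_{S_0}(R)}(N_G(R))$. By Lemma~\ref{Charp1}(a), $N_G(R)$ is of characteristic $p$, so it is a model for $N_\F^K(R)$ once saturation is in hand, whence $N_\F^K(R)$ is constrained by Theorem~\ref{Model1}(a).

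The main obstacle is verifying that $R$ is fully $K$-normalized, which ensures saturation of $N_\F^K(R)$ via \cite[Theorem~I.5.5]{Aschbacher/Kessar/Oliver:2011}. For this I use the second maximization: given any $\F$-isomorphism $\alpha\colon R\to R^*$ with $Q^*:=Q\alpha$, if $Q^*$ is already fully normalized then $|N_S(R^*)\cap N_S(Q^*)|\leq|N_S(R)\cap N_S(Q)|$ by the choice of $(Q,R)$; otherwise, a further application of \cite[Lemma~I.2.6(c)]{Aschbacher/Kessar/Oliver:2011} to a fully normalized conjugate $Q^{**}$ of $Q^*$ produces a pair $(Q^{**},R^{**})$ together with an injection of $N_S(R^*)\cap N_S(Q^*)$ into $N_S(R^{**})\cap N_S(Q^{**})$, reducing to the first case. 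With $R$ fully $K$-normalized and $N_\F^K(R)$ constrained, Lemma~\ref{KNormSubcentric} gives $R\in\F^s$, completing the induction.
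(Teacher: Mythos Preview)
Your proof is correct, modulo one small gap, and it follows a somewhat different route from the paper's.

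The paper also reduces to $Q\unlhd R$ (via a subnormal series, not requiring $|R/Q|=p$; in fact you never use that hypothesis either), but then normalizes differently: it assumes $R\in\F^f$ and $Q\in N_\F(R)^f$, picks $\alpha\in\Hom_\F(N_S(Q),S)$ with $Q\alpha\in\F^f$, and invokes \cite[(2.2)]{Aschbacher:2010} to obtain $R\alpha\in N_\F(Q\alpha)^f$ together with an isomorphism $N_{N_\F(R)}(Q)\to N_{N_\F(Q\alpha)}(R\alpha)$. Since $N_\F(Q\alpha)$ is constrained, Lemma~\ref{Model2} gives $C_{N_\F(Q\alpha)}(R\alpha)$ constrained; transporting back shows $C_\F(R)$ is constrained, and then $R\in\F^s$ via the centralizer criterion of Lemma~\ref{subcentricEquiv}. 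So the paper uses the centralizer characterization and an external result of Aschbacher, whereas you use the $K$-normalizer criterion of Lemma~\ref{KNormSubcentric} and a self-contained lexicographic maximization. Both strategies are valid; yours avoids the citation to \cite{Aschbacher:2010} at the cost of the $K$-normalizer machinery.

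The gap: writing ``$N_G(R)$ is a model for $N_\F^K(R)$ once saturation is in hand'' conflates saturation with the Sylow condition. For $N_G(R)$ to be a model you need $N_{S_0}(R)\in\Syl_p(N_G(R))$, and saturation of $N_\F^K(R)$ alone does not deliver this. The fix is immediate from what you have already done: since $N_S^{K^\phi}(R\phi)=N_S(R\phi)\cap N_S(Q\phi)$ specializes to $N_{S_0}(R\phi)$ whenever $Q\phi=Q$, your verification that $R$ is fully $K$-normalized in $\F$ implies in particular that $R$ is fully normalized in $N_\F(Q)=\F_{S_0}(G)$, whence $N_{S_0}(R)\in\Syl_p(N_G(R))$. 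Alternatively, once you know $R\in N_\F(Q)^f$ you can skip the model argument entirely and cite Lemma~\ref{Model2} (applied to $N_\F(Q)$) to conclude that $N_{N_\F(Q)}(R)=N_\F^K(R)$ is constrained.
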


\begin{proof}
Note first that the set of subcentric subgroups is by definition closed under $\F$-conjugation. 
Let $Q\in\F^s$ and $R$ an overgroup of $Q$. We need to show that $R$ is subcentric. By induction on the length of a subnormal series of $Q$ in $R$, we reduce to the case that $Q\unlhd R$. 

\smallskip

So we assume now $Q\unlhd R$ and need to show that $R$ is subcentric in $\F$. Since every $\F$-conjugate of $Q$ is subcentric, and any $\F$-conjugate of $R$ contains an $\F$-conjugate of $Q$, we can and will furthermore assume from now on that $R\in\F^f$. Replacing $Q$ by a suitable conjugate of $Q$ in $N_\F(R)$, we will also assume that $Q\in N_\F(R)^f$. 

\smallskip

By \cite[Lemma~I.2.6(c)]{Aschbacher/Kessar/Oliver:2011}, there exists $\alpha\in\Hom_\F(N_S(Q),S)$ such that $Q\alpha\in\F^f$. Then by \cite[(2.2)(1),(2)]{Aschbacher:2010}, $(N_S(Q)\cap N_S(R))\alpha=N_S(Q\alpha)\cap N_S(R\alpha)$, $R\alpha\in N_\F(Q\alpha)^f$, and $\alpha$ induces an isomorphism from $\m{N}_1:=N_{N_\F(R)}(Q)$ to $\m{N}_2:=N_{N_\F(Q\alpha)}(R\alpha)$. 


\smallskip

As $Q$ is subcentric and $Q\alpha\in\F^f$, $N_\F(Q\alpha)$ is constrained by Lemma~\ref{subcentricEquiv}. Since $R\alpha\in N_\F(Q\alpha)^f$, it follows thus from Lemma~\ref{Model2} applied with $N_\F(Q\alpha)$ in place of $\F$ that $C_{N_\F(Q\alpha)}(R\alpha)$ is constrained. Note that $C_{\m{N}_2}(R\alpha)=C_{N_\F(Q\alpha)}(R\alpha)$. 
So since  $\alpha$ induces an isomorphism $\m{N}_1\rightarrow \m{N}_2$, it follows that $C_{\m{N}_1}(R)$ is constrained. Using $R\leq Q$, we observe that $C_\F(R)=C_{\m{N}_1}(R)$. So $C_\F(R)$ is constrained and $R$ is subcentric by Lemma~\ref{subcentricEquiv}.
\end{proof}

\begin{lemma}\label{NormalTimesSubcentric}
 Let $R\unlhd \F$ and $P\in\F$. Then $PR\in\F^s$ if and only if $P\in\F^s$.
\end{lemma}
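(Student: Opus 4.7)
The forward direction is immediate from Proposition~\ref{subcentricProp}: if $P\in\F^s$ then $PR\in\F^s$ since $\F^s$ is closed under overgroups.

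For the converse, my plan is to start from $PR\in\F^s$ and produce a fully $\F$-normalized $\F$-conjugate $P'$ of $P$ with $N_\F(P')$ constrained; then $P\in\F^s$ will follow from Lemma~\ref{subcentricEquiv}. First I would choose a fully $\F$-normalized conjugate $Q^*$ of $PR$, so that $N_\F(Q^*)$ is constrained. Since $R\unlhd\F$ is strongly closed, every $\F$-isomorphism $PR\to Q^*$ fixes $R$ setwise, so $Q^*=P_0R$ with $P_0\in P^\F$. As $P_0R$ is fully normalized, $N_\F(P_0R)$ is saturated, and I would then replace $P_0$ by an $N_\F(P_0R)$-conjugate $P'$ that is fully normalized in $N_\F(P_0R)$. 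The connecting $N_\F(P_0R)$-isomorphism $P_0\to P'$ extends to some $\hat\alpha\in\Aut_\F(P_0R)$ with $\hat\alpha(R)=R$ (by strong closedness), so $P'R=\hat\alpha(P_0R)=P_0R$ and $N_\F(P'R)=N_\F(P_0R)$ remains constrained.

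The main step is then to upgrade ``$P'$ fully normalized in $N_\F(P'R)$'' to ``$P'$ fully $\F$-normalized''. For this I would fix an arbitrary $P''\in P^\F$ and use the crucial observation $N_S(P'')\leq N_S(P''R)$, which holds because $R\unlhd S$ implies $N_S(P'')$ normalizes both $P''$ and $R$. I would then lift an $\F$-isomorphism $P''R\to P'R$ to $\tilde\psi\co N_S(P''R)\to N_S(P'R)$ via \cite[Lemma~I.2.6(c)]{Aschbacher/Kessar/Oliver:2011}, and set $P^\dagger:=\tilde\psi(P'')$. Then $P^\dagger R=P'R$ and $\tilde\psi(N_S(P''))\leq N_S(P^\dagger)\leq N_S(P'R)$. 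Since $R\unlhd\F$ allows any $\F$-isomorphism $P'\to P^\dagger$ to extend to an element of $\Aut_\F(P'R)$, the subgroup $P^\dagger$ lies in the $N_\F(P'R)$-conjugacy class of $P'$, whence the full normalizedness of $P'$ in $N_\F(P'R)$ will yield
\[
|N_S(P'')|=|\tilde\psi(N_S(P''))|\leq |N_S(P^\dagger)|\leq |N_S(P')|,
\]
as needed.

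To finish, I would verify the identification $N_\F(P')=N_{N_\F(P'R)}(P')$: the inclusion $\supseteq$ is immediate, and for $\subseteq$ one uses $R\unlhd\F$ to extend any witnessing morphism $\hat\phi\co AP'\to BP'$ fixing $P'$ to $\tilde\phi\co AP'R\to BP'R$ fixing $P'R$. Applying Lemma~\ref{Model2} inside the constrained system $N_\F(P'R)$ at the fully normalized subgroup $P'$ would then give that $N_\F(P')$ is constrained, so that $P'$ is subcentric by Lemma~\ref{subcentricEquiv} and consequently $P$ is subcentric by Proposition~\ref{subcentricProp}. The hardest part of this plan is the full-normalizedness step of the previous paragraph: it forces one to track three layers of normalizers at once and to invoke the extension property of $R\unlhd\F$ twice---first to move $\F$-conjugates of $P$ into the specified fully normalized slot $P'R$, and then to translate between $\F$-conjugacy and $N_\F(P'R)$-conjugacy.
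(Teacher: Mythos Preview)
Your proposal is correct and follows essentially the same route as the paper's proof: both reduce to $PR$ fully normalized, use the extension property of $R\unlhd\F$ together with $N_S(P)\leq N_S(PR)$ and \cite[Lemma~I.2.6(c)]{Aschbacher/Kessar/Oliver:2011} to arrange that $P$ is fully $\F$-normalized with $PR\in\F^f$, then identify $N_\F(P)=N_{N_\F(PR)}(P)$ and apply Lemma~\ref{Model2} and Lemma~\ref{subcentricEquiv}. The only difference is organizational: the paper starts with a fully $\F$-normalized conjugate $Q$ of $P$ and moves it into the slot $QR\mapsto PR$ (so full normalization is automatic), whereas you first pick $P'$ fully normalized in $N_\F(P'R)$ and then argue it is fully $\F$-normalized---your detour works but is a bit longer than necessary.
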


\begin{proof}
 If $P\in\F^s$ then by Proposition~\ref{subcentricProp}, $PR\in\F^s$. From now on we assume that $PR\in\F^s$ and want to show that $P\in\F^s$. Since $\F^s$ is closed under $\F$-conjugation, we can assume without loss of generality that $PR\in\F^f$. As $PR\in\F^s$ this means that $N_\F(PR)$ is a constrained fusion system. 

\smallskip

We reduce next to the case that $P$ is fully normalized as well. Pick a fully $\F$-normalized $\F$-conjugate $Q$ of $P$ and an isomorphism $\phi\in\Hom_\F(Q,P)$.  As $R$ is normal in $\F$, $\phi$ extends to a morphism $\hat{\phi}\in\Hom_\F(QR,S)$ with $R\hat{\phi}=R$. Hence, as $(QR)\hat{\phi}=PR\in\F^f$, there exists by \cite[Lemma~I.2.6(c)]{Aschbacher/Kessar/Oliver:2011} a morphism $\alpha\in\Hom_\F(N_S(QR),S)$ such that $(QR)\alpha=PR$. Note that $(Q\alpha)R=(QR)\alpha=PR$ and $Q\alpha$ is $\F$-conjugate to $P$. As $N_S(Q)\leq N_S(QR)$ and $Q$ is fully normalized, it follows that $Q\alpha$ is fully normalized. So replacing $P$ by $Q\alpha$, we can and will assume from now on that $P$ is fully normalized in $\F$. 

\smallskip

Then $P$ is also fully normalized in $N_\F(PR)$ and thus $N_{N_\F(PR)}(P)$ is constrained by Lemma~\ref{Model2}. One easily observes that $N_\F(P)=N_{N_\F(PR)}(P)$, as $R$ is normal in $\F$. So $N_\F(P)$ is constrained and $P$ is subcentric by Lemma~\ref{subcentricEquiv}.  
\end{proof}

\begin{lemma}\label{SubcentricModCentral}
 Let $Z\leq Z(\F)$ and $P\leq S$. Then $P\in \F^s$ if and only if $PZ/Z$ is subcentric in $\F/Z$.
\end{lemma}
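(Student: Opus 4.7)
The plan is to reduce to the case $Z\leq P$, identify $N_\F(P)$ modulo $Z$ with $N_{\F/Z}(P/Z)$, and then apply Lemma~\ref{CentreConstrained} together with Lemma~\ref{subcentricEquiv}.

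First I would note that since $Z\leq Z(\F)$ is a normal subgroup of $\F$ contained in $S$, Lemma~\ref{NormalTimesSubcentric} applied with $R=Z$ yields $P\in\F^s$ if and only if $PZ\in\F^s$. Since moreover $PZ/Z=(PZ)Z/Z$, this reduces the whole statement to the case where $Z\leq P$; I will assume this from now on.

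Next I would use the elementary observation that since $Z$ is strongly closed in $\F$, every $\F$-conjugate $Q$ of $P$ contains $Z$, so the projection $S\to S/Z$ induces a bijection between the $\F$-conjugacy class of $P$ and the $\F/Z$-conjugacy class of $P/Z$. Because $N_S(Q)/Z=N_{S/Z}(Q/Z)$ for each such $Q$, it follows that $P$ is fully normalized in $\F$ if and only if $P/Z$ is fully normalized in $\F/Z$. Choosing $P$ to be fully normalized (allowed by Lemma~\ref{subcentricEquiv}), a direct check on morphisms shows $N_{\F/Z}(P/Z)=N_\F(P)/Z$, where $N_\F(P)$ is a saturated fusion system on $N_S(P)$.

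The key remaining point is that $Z\leq Z(N_\F(P))$; this holds because every morphism in $\F$ restricts to the identity on $Z\leq Z(\F)$, and morphisms in $N_\F(P)$ are by definition restrictions of morphisms in $\F$. Therefore Lemma~\ref{CentreConstrained} applied to the saturated fusion system $N_\F(P)$ with central subgroup $Z$ gives that $N_\F(P)$ is constrained if and only if $N_\F(P)/Z=N_{\F/Z}(P/Z)$ is constrained. Finally, Lemma~\ref{subcentricEquiv} translates each of these two conditions into subcentricity: the left one into $P\in\F^s$ and the right one into $P/Z\in(\F/Z)^s$. None of the steps involve any real obstacle; the only subtlety worth verifying carefully is the identification $N_\F(P)/Z=N_{\F/Z}(P/Z)$ of saturated fusion systems (as opposed to a mere containment of fusion categories), and the fact that the quotient $\F/Z$ itself is saturated, which is standard for central quotients.
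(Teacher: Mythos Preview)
Your proposal is correct and follows essentially the same approach as the paper: reduce to $Z\leq P$ via Lemma~\ref{NormalTimesSubcentric}, pass to a fully normalized representative, identify $N_{\F/Z}(P/Z)$ with $N_\F(P)/Z$, and apply Lemma~\ref{CentreConstrained} together with Lemma~\ref{subcentricEquiv}. The paper's proof is a bit terser but uses exactly these ingredients in the same order.
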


\begin{proof}
By Lemma~\ref{NormalTimesSubcentric}, we may assume throughout the proof that $Z\leq P$. Then $Z\leq Q$ for every $Q\in P^\F$. Moreover, the $\F/Z$-conjugates of $P/Z$ are precisely the subgroups of the form $Q/Z$ with $Q\in P^\F$.
So we may assume that $P\in\F^f$. Observe that $N_{S/Z}(Q/Z)=N_S(Q)/Z$ for every $Q\in P^\F$. Hence, $P/Z$ is fully normalized in $\F/Z$. One checks easily that $N_\F(P)/Z=N_{\F/Z}(P/Z)$ and $Z\leq Z(N_\F(P))$. Therefore, by Lemma~\ref{CentreConstrained}, $N_\F(P)$ is constrained if and only if $N_{\F/Z}(P/Z)$ is constrained. The assertion follows now from Lemma~\ref{subcentricEquiv}.
\end{proof}

Suppose $\tilde{\F}$ is a fusion system on a $p$-group $\tilde{S}$ and $\alpha\colon S\rightarrow \tilde{S}$ is an isomorphism of groups. We say that $\alpha$ induces an isomorphism of fusion systems from $\F$ to $\tilde{\F}$ if, for all $P,Q\leq S$, the map $\alpha_{P.Q}\colon \Hom_\F(P,Q)\rightarrow \Hom_\F(P\alpha,Q\alpha)$ with $\phi\mapsto \phi^\alpha:=\alpha^{-1}\phi\alpha$ is a bijection. The maps $\alpha_{P,Q}$ ($P,Q\leq S$) together with the map $P\mapsto P\alpha$ from the set of objects of $\F$ to the set of objects of $\tilde{\F}$ give us an invertible functor  from $\F$ to $\tilde{\F}$. Moreover, $\alpha$ together with the maps $\alpha_{P,Q}$ ($P,Q\leq S$) is a morphism in the sense of \cite[Definition~II.2.2]{Aschbacher/Kessar/Oliver:2011}.

\begin{lemma}\label{IsoSubcentric}
 Let $\tilde{\F}$ be a saturated fusion system on a $p$-group $\tilde{S}$ and $\alpha\colon S\rightarrow \tilde{S}$ a group isomorphism which induces an isomorphism of fusion systems $\F\rightarrow \tilde{\F}$. Then $\tilde{\F}^s=\{P\alpha\colon P\in\F^s\}$.
\end{lemma}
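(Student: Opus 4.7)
\textbf{Proof plan for Lemma~\ref{IsoSubcentric}.} The strategy is to observe that every notion entering the definition of subcentricity is preserved under an isomorphism of saturated fusion systems induced by a group isomorphism on the underlying $p$-groups. Since $\alpha^{-1}\colon\tilde{S}\to S$ also induces an isomorphism of fusion systems $\tilde{\F}\to \F$, by symmetry it suffices to show the inclusion $\{P\alpha\colon P\in\F^s\}\subseteq \tilde{\F}^s$; applying the same argument to $\alpha^{-1}$ then yields equality.

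To prove this inclusion, fix $P\in\F^s$. The key preservation properties under $\alpha$ are: (i) the map $Q\mapsto Q\alpha$ sends the $\F$-conjugacy class of $P$ bijectively onto the $\tilde{\F}$-conjugacy class of $P\alpha$ (because $\alpha$ is a bijection on Hom-sets of the two fusion systems); (ii) for every $Q\le S$ we have $N_{\tilde{S}}(Q\alpha)=N_S(Q)\alpha$, so $|N_S(Q)|=|N_{\tilde{S}}(Q\alpha)|$ and hence $Q$ is fully $\F$-normalized if and only if $Q\alpha$ is fully $\tilde{\F}$-normalized. By Proposition~\ref{subcentricProp}, replacing $P$ by an $\F$-conjugate, we may assume $P\in\F^f$, so $P\alpha\in\tilde{\F}^f$, and by Lemma~\ref{subcentricEquiv} it suffices to show that $N_{\tilde{\F}}(P\alpha)$ is constrained.

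The next step is to verify that $\alpha$ restricts to a group isomorphism $N_S(P)\to N_{\tilde{S}}(P\alpha)$ which induces an isomorphism of fusion systems $N_\F(P)\to N_{\tilde{\F}}(P\alpha)$. This is a direct unpacking of the definitions: a morphism $\phi\in\Hom_\F(A,B)$ (with $A,B\le N_S(P)$) lies in $N_\F(P)$ exactly when it extends to some $\hat{\phi}\in\Hom_\F(AP,BP)$ with $\hat{\phi}|_P\in\Aut_\F(P)$, and under the bijection induced by $\alpha$ this corresponds precisely to the condition that $\phi^\alpha$ extends to $\hat{\phi}^\alpha\in\Hom_{\tilde{\F}}(A\alpha\, P\alpha,B\alpha\, P\alpha)$ with $(\hat{\phi}^\alpha)|_{P\alpha}\in\Aut_{\tilde{\F}}(P\alpha)$, i.e.\ that $\phi^\alpha\in\Hom_{N_{\tilde{\F}}(P\alpha)}(A\alpha,B\alpha)$.

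Finally, constrainedness is preserved under such an isomorphism: the image of a normal subgroup of $N_\F(P)$ under $\alpha$ is a normal subgroup of $N_{\tilde{\F}}(P\alpha)$, so $O_p(N_{\tilde{\F}}(P\alpha))=O_p(N_\F(P))\alpha$, and centralizers in the $p$-groups transport as $C_{N_{\tilde{S}}(P\alpha)}(O_p(N_\F(P))\alpha)=C_{N_S(P)}(O_p(N_\F(P)))\alpha$. Since $P\in\F^s$ gives that $N_\F(P)$ is constrained by Lemma~\ref{subcentricEquiv}, the transported centralizer lies in $O_p(N_\F(P))\alpha$, so $N_{\tilde{\F}}(P\alpha)$ is constrained, and a final appeal to Lemma~\ref{subcentricEquiv} yields $P\alpha\in\tilde{\F}^s$. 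The main obstacle is really just being careful and pedantic with the bookkeeping of these compatibilities; none of the steps is difficult, but the isomorphism $N_\F(P)\to N_{\tilde{\F}}(P\alpha)$ induced by $\alpha$ is the one piece that needs explicit verification from the definitions rather than a citation.
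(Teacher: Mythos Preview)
Your proposal is correct and follows essentially the same approach as the paper's proof: both establish that $\alpha$ carries $\F$-conjugacy classes to $\tilde{\F}$-conjugacy classes, preserves the fully normalized property via $N_S(Q)\alpha = N_{\tilde{S}}(Q\alpha)$, induces an isomorphism $N_\F(P)\to N_{\tilde{\F}}(P\alpha)$ for fully normalized $P$, and then invoke Lemma~\ref{subcentricEquiv}. The only cosmetic difference is that you argue one inclusion and appeal to symmetry via $\alpha^{-1}$, whereas the paper proves the biconditional $P\in\F^s \Leftrightarrow P\alpha\in\tilde{\F}^s$ directly.
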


\begin{proof}
One observes easily that $\{Q\alpha:Q\in P^\F\}=(P\alpha)^{\tilde{\F}}$. Note that $N_S(Q)\alpha= N_{\tilde{S}}(Q\alpha)$ for every $Q\leq S$. So $Q\in P^\F$ is fully $\F$-normalized if and only if $Q\alpha$ is fully $\tilde{\F}$-normalized. Let now $Q\in P^\F$ be fully $\F$-normalized. Then it is easy to check that $\alpha|_{N_S(Q)}:N_S(Q)\rightarrow N_{\tilde{S}}(Q\alpha)$ induces an isomorphism from $N_\F(Q)$ to $N_{\tilde{\F}}(Q\alpha)$. In particular, $N_\F(Q)$ is constrained if and only if $N_{\tilde{\F}}(Q\alpha)$ is constrained. Hence, by Lemma~\ref{subcentricEquiv} $P\in\F^s$ if and only if $P\alpha\in\tilde{\F}^s$.  
\end{proof}

\begin{lemma}\label{ESubphi}
 Let $\m{E}$ be weakly normal in $\F$, $P\in\m{E}^s$ and $\phi\in\Hom_\F(P,S)$. Then $P\phi\in\m{E}^s$.
\end{lemma}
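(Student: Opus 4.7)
The plan is to reduce the statement from $\F$-conjugation to $\m{E}$-conjugation together with the action of $\Aut_\F(T)$, and then apply two results we already have: Proposition~\ref{subcentricProp}, which says $\m{E}^s$ is closed under $\m{E}$-conjugation (using that $\m{E}$ is saturated, being weakly normal), and Lemma~\ref{IsoSubcentric}, which says that subcentricity is preserved under isomorphisms of fusion systems.

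More concretely, first I would note that since $\m{E}$ is weakly normal in $\F$, the subgroup $T=\m{E}\cap S$ is strongly closed in $\F$. In particular, $P\leq T$ implies $P\phi\leq T$, so it makes sense to ask whether $P\phi$ lies in $\m{E}^s$. Next, I would invoke the Frattini condition from the definition of a weakly normal subsystem (see \cite[Definition~I.6.1, Proposition~I.6.4]{Aschbacher/Kessar/Oliver:2011}) to factor
\[
\phi \;=\; \psi\circ(\alpha|_{P\psi}),
\]
where $\psi\in\Hom_\m{E}(P,T)$ and $\alpha\in\Aut_\F(T)$. Since $\m{E}$ is saturated and $P\in\m{E}^s$, Proposition~\ref{subcentricProp} applied to $\m{E}$ in place of $\F$ yields $P\psi\in\m{E}^s$.

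Finally, because $\m{E}$ is weakly normal in $\F$, every element of $\Aut_\F(T)$ induces an automorphism of $\m{E}$; in particular $\alpha\colon T\to T$ induces an isomorphism of fusion systems $\m{E}\to\m{E}$. Lemma~\ref{IsoSubcentric} then gives that $\m{E}^s$ is $\alpha$-invariant, so $P\phi=(P\psi)\alpha\in\m{E}^s$, as required.

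No step here is really an obstacle; the only thing to be careful about is citing the Frattini condition in the correct form and verifying that Lemma~\ref{IsoSubcentric} applies, which needs only that $\alpha$ induces an isomorphism of $\m{E}$ with itself — and this is exactly the content of $\Aut_\F(T)$-invariance built into the definition of weak normality.
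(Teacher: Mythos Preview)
Your proposal is correct and follows essentially the same approach as the paper's proof: factor $\phi$ via the Frattini condition as an $\m{E}$-morphism followed by an element of $\Aut_\F(T)$, use closure of $\m{E}^s$ under $\m{E}$-conjugation for the first factor, and apply Lemma~\ref{IsoSubcentric} for the second. The only cosmetic difference is that the paper observes directly that $\m{E}^s$ is closed under $\m{E}$-conjugation (immediate from the definition of subcentric), whereas you cite Proposition~\ref{subcentricProp}; either is fine.
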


\begin{proof}
Let $T=\m{E}\cap S$. Note that $P\phi\leq T$ as $T$ is strongly closed. By the Frattini condition \cite[Definition~I.6.1]{Aschbacher/Kessar/Oliver:2011}, there are $\alpha\in\Aut_\F(T)$ and $\phi_0\in\Hom_\m{E}(P,T)$ such that $\phi=\phi_0\alpha$. As $\phi_0$ is a morphism in $\m{E}$, $P\phi_0\in\m{E}^s$. As $\m{E}$ is normal in $\F$, $\alpha$ induces an automorphism of $\m{E}$. Hence, by Lemma~\ref{IsoSubcentric} applied with $\m{E}$ in the role of $\F$ and $\tilde{\F}$, $P\phi=(P\phi_0)\alpha\in\m{E}^s$.
\end{proof}

Before we continue proving properties of subcentric subgroups we need two general lemmas.

\begin{lemma}\label{NormInE}
 Let $\m{E}$ be an $\F$-invariant subsystem of $\F$ over $T\leq S$. Let $P\in\m{E}^f$ and $\alpha\in\Hom_\F(N_T(P),S)$. Then $P\alpha\in\m{E}^f$, $N_T(P)\alpha=N_T(P\alpha)$ and $\alpha$ induces an isomorphism from $N_{\m{E}}(P)$ to $N_\m{E}(P\alpha)$. 
\end{lemma}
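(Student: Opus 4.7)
The plan is to factor $\alpha$ using the Frattini condition inherent in $\F$-invariance, and to verify each conclusion separately for each factor. Since $T$ is strongly $\F$-closed, $\alpha$ takes values in $T$, so the Frattini condition (in the form used in the proofs of Lemmas~\ref{OpE} and \ref{ESubphi}) yields $\phi \in \Hom_\m{E}(N_T(P), T)$ and $\beta \in \Aut_\F(T)$ with $\alpha = \phi\,\beta|_{N_T(P)\phi}$. Set $P^* := P\phi$, so that $P\alpha = P^*\beta$.

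For the factor $\phi$: since $\phi$ is a group monomorphism and each $x \in N_T(P)$ normalizes $P$, its image $\phi(x)$ normalizes $P^*$; hence $\phi(N_T(P)) \leq N_T(P^*)$. As $P^* \in P^{\m{E}}$ and $P \in \m{E}^f$,
\[
|N_T(P)| \;\geq\; |N_T(P^*)| \;\geq\; |\phi(N_T(P))| \;=\; |N_T(P)|,
\]
so equality holds throughout, giving $\phi(N_T(P)) = N_T(P^*)$ and $P^* \in \m{E}^f$. For the factor $\beta$: by $\F$-invariance, $\beta$ induces an automorphism of the fusion system $\m{E}$, so it preserves $\m{E}$-conjugacy and $\m{E}^f$, and the group automorphism $\beta$ of $T$ sends $N_T(P^*)$ bijectively onto $N_T(P^*\beta) = N_T(P\alpha)$. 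Combining the two factors yields $P\alpha \in \m{E}^f$ and $N_T(P)\alpha = \phi(N_T(P))\beta = N_T(P^*)\beta = N_T(P\alpha)$, settling the first two claims.

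For the induced isomorphism $N_\m{E}(P) \to N_\m{E}(P\alpha)$, it suffices to produce isomorphisms $N_\m{E}(P) \to N_\m{E}(P^*)$ from $\phi$ and $N_\m{E}(P^*) \to N_\m{E}(P\alpha)$ from $\beta$ and then compose. Recall that $\gamma \in \Hom_\m{E}(R, R')$ with $R, R' \leq N_T(P)$ lies in $N_\m{E}(P)$ precisely when it extends to $\hat\gamma \in \Hom_\m{E}(RP, R'P)$ with $\hat\gamma|_P \in \Aut(P)$. Because $P \leq N_T(P)$, both $RP$ and $R'P$ lie in $N_T(P)$, so $\phi$ and $\phi^{-1}$ restrict there as $\m{E}$-morphisms, and $\phi|_{RP}^{-1}\,\hat\gamma\,\phi|_{R'P}$ is an $\m{E}$-morphism extending $\phi|_R^{-1}\,\gamma\,\phi|_{R'}$ and fixing $P^*$; this realizes $\phi$ as an isomorphism $N_\m{E}(P) \to N_\m{E}(P^*)$. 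The analogous statement for $\beta$ is immediate from $\beta$ inducing an automorphism of $\m{E}$ and commuting with the normalizer construction.

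The only real care-point is ensuring the Frattini decomposition is compatible with passing to normalizer fusion systems. This works cleanly because $P \leq N_T(P)$ keeps all extensions $\hat\gamma \colon RP \to R'P$ inside the domain of $\phi$, so one never has to extend $\phi$ beyond $N_T(P)$ in order to conjugate the extensions; the rest of the argument then consists of routine verifications using full $\m{E}$-normalization of $P$ and the fact that $\Aut_\F(T)$ acts on $\m{E}$.
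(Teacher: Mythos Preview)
Your proof is correct and follows essentially the same approach as the paper: both factor $\alpha=\phi\beta$ via the Frattini condition for $\F$-invariant subsystems, establish $N_T(P)\phi=N_T(P^*)$ using full $\m{E}$-normalization of $P$, and then push through the automorphism $\beta$ of $T$. The paper simply records the factorization and the normalizer equality and declares the induced isomorphism of normalizer subsystems ``straightforward''; you have written out that straightforward verification in detail, which is fine.
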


\begin{proof}
By the Frattini condition \cite[Definition~I.6.1]{Aschbacher/Kessar/Oliver:2011} there are $\alpha_0\in\Hom_\m{E}(N_T(P),T)$ and $\beta\in\Aut_\F(T)$ such that $\alpha=\alpha_0\beta$. Clearly, $N_T(P)\alpha_0\leq N_T(P\alpha_0)$ because $T$ is strongly closed in $\F$. As $P\in\m{E}^f$, it follows $N_T(P)\alpha_0=N_T(P\alpha_0)$. Since $\beta$ is an automorphism of $T$, $N_T(P\alpha_0)\beta=N_T(P\alpha_0\beta)=N_T(P\alpha)$. Hence, $N_T(P)\alpha=N_T(P)\alpha_0\beta=N_T(P\alpha_0)\beta=N_T(P\alpha)$. Since $\m{E}$ is $\F$-invariant, it is now straightforward to check that $\alpha$ induces an isomorphism from $N_\m{E}(P)$ to $N_\m{E}(P\alpha)$.  
\end{proof}

\begin{lemma}\label{FfEf}
 Let $\m{E}$ be an $\F$-invariant subsystem of $\F$ over $T\leq S$, and $P\leq T$. If $P\in\F^f$ then $P\in\m{E}^f$.
\end{lemma}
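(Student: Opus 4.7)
The plan is to verify the inequality $|N_T(Q)| \leq |N_T(P)|$ for every $\m{E}$-conjugate $Q$ of $P$, using that $T$ is strongly closed in $\F$ (which is part of the definition of $\m{E}$ being $\F$-invariant) together with the standard extension lemma \cite[Lemma~I.2.6(c)]{Aschbacher/Kessar/Oliver:2011}. The argument is short because once we have a morphism in $\F$ witnessing full normalization of $P$, strong closure pulls it back to give the required comparison inside $T$.

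More concretely, I would proceed as follows. Let $Q \in P^{\m{E}}$; in particular $Q$ is $\F$-conjugate to $P$ and, since $T$ is strongly closed in $\F$, $Q \leq T$. Because $P \in \F^f$, Lemma~I.2.6(c) of \cite{Aschbacher/Kessar/Oliver:2011} produces a morphism $\alpha \in \Hom_\F(N_S(Q), N_S(P))$ with $Q\alpha = P$. Now $N_T(Q) \leq N_S(Q)$, so $\alpha$ restricts to an injective map on $N_T(Q)$, and strong closure of $T$ in $\F$ forces $N_T(Q)\alpha \leq T$. Combined with $N_T(Q)\alpha \leq N_S(P)$ and the identity $N_S(P) \cap T = N_T(P)$ (using $P \leq T$), this gives $N_T(Q)\alpha \leq N_T(P)$, and hence $|N_T(Q)| \leq |N_T(P)|$ by injectivity of $\alpha$.

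Since this holds for every $Q \in P^{\m{E}}$, the subgroup $P$ is fully normalized in $\m{E}$, completing the proof. There is no real obstacle; the only point requiring a little care is to separate the roles of normalizers in $S$ and in $T$, which is precisely where strong closure of $T$ is used. Note that we only needed $\m{E}$ to be $\F$-invariant to the extent that $T$ is strongly closed in $\F$, so no saturation assumption on $\m{E}$ enters.
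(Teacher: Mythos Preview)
Your proof is correct and follows the same overall strategy as the paper: pick an $\m{E}$-conjugate $Q$ of $P$, invoke \cite[Lemma~I.2.6(c)]{Aschbacher/Kessar/Oliver:2011} to produce $\alpha\in\Hom_\F(N_S(Q),S)$ with $Q\alpha=P$, and then compare $|N_T(Q)|$ with $|N_T(P)|$.

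The difference is in how that last comparison is made. The paper first chooses $Q$ to be fully $\m{E}$-normalized and then appeals to Lemma~\ref{NormInE}, whose proof uses the Frattini condition from $\F$-invariance to obtain the equality $N_T(Q)\alpha=N_T(P)$. You instead argue directly from strong closure of $T$ to get the inequality $N_T(Q)\alpha\leq N_T(P)$, which suffices. Your route is slightly more elementary and, as you observe, shows that only strong closure of $T$ is needed rather than full $\F$-invariance; the paper's route has the advantage of reusing the already-established Lemma~\ref{NormInE}, which also delivers the stronger conclusion $P\alpha\in\m{E}^f$ and the induced isomorphism of normalizer subsystems needed elsewhere.
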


\begin{proof}
Suppose $P\in\F^f$ and choose a fully $\m{E}$-normalized $\m{E}$-conjugate $Q$ of $P$. By \cite[Lemma~I.2.6(c)]{Aschbacher/Kessar/Oliver:2011}, there exists $\alpha\in\Hom_\F(N_S(Q),S)$ such that $Q\alpha=P$. Applying Lemma~\ref{NormInE} with $Q$ in place of $P$ yields then $|N_T(Q)|=|N_T(P)|$ and thus $P\in\m{E}^f$. 
\end{proof}

\begin{lemma}\label{Subcentric}
 Let $\m{E}$ be a weakly normal subsystem of $\F$ over $T\leq S$. Then $P\in\m{E}^s$ for any $P\in\F^s$ with $P\leq T$.
\end{lemma}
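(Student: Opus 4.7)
The plan is to choose a well-positioned $\F$-conjugate of $P$ and transport the constrainedness of its $\F$-normalizer down to its $\m{E}$-normalizer. First I would pick a fully $\F$-normalized $\F$-conjugate $Q$ of $P$; since $T$ is strongly closed in $\F$, $Q\leq T$. By Proposition~\ref{subcentricProp} we have $Q\in\F^s$, and by Lemma~\ref{FfEf} we have $Q\in\m{E}^f$. Since $\m{E}^s$ is invariant under $\F$-morphisms by Lemma~\ref{ESubphi} and $Q$ is $\F$-conjugate to $P$, it suffices to show $Q\in\m{E}^s$.

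Since $Q$ is fully $\F$-normalized and lies in $\F^s$, Lemma~\ref{subcentricEquiv} gives that $N_\F(Q)$ is constrained. Because $Q\in\m{E}^f$, Lemma~\ref{subcentricEquiv} applied inside $\m{E}$ reduces the claim $Q\in\m{E}^s$ to showing that $N_\m{E}(Q)$ is constrained. The natural way to deduce this from the constrainedness of $N_\F(Q)$ is via Lemma~\ref{WeaklyNormalConstraint}, so the main task is to show that $N_\m{E}(Q)$ is weakly normal in $N_\F(Q)$.

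Saturation of $N_\m{E}(Q)$ follows from $Q\in\m{E}^f$, so what remains is to verify $N_\F(Q)$-invariance of $N_\m{E}(Q)$. I would proceed in two steps, modelled on the proof of Lemma~\ref{WeaklyNormalCentInKNorm}. First, $N_T(Q)=T\cap N_S(Q)$ is strongly closed in $N_\F(Q)$: any morphism in $N_\F(Q)$ whose source lies in $N_T(Q)$ extends to an $\F$-morphism normalizing $Q$, and strong closure of $T$ in $\F$ forces its image into $T\cap N_S(Q)=N_T(Q)$. Second, one verifies the strong invariance condition from \cite[Proposition~I.6.4]{Aschbacher/Kessar/Oliver:2011}: given $A\leq B\leq N_T(Q)$, $\phi\in\Hom_{N_\m{E}(Q)}(A,B)$ and $\psi\in\Hom_{N_\F(Q)}(B,N_T(Q))$, extend $\phi$ and $\psi$ to $\hat\phi\in\Hom_\m{E}(AQ,BQ)$ and $\hat\psi\in\Hom_\F(BQ,N_S(Q))$ each fixing $Q$. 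Since $AQ, BQ\leq T$ and $T$ is strongly closed, $(BQ)\hat\psi\leq T$, so the weak normality of $\m{E}$ in $\F$ yields $\hat\phi^{\hat\psi}\in\Hom_\m{E}((AQ)\hat\psi,(BQ)\hat\psi)$; since $Q\hat\psi=Q$, this restricts to $\phi^\psi\in\Hom_{N_\m{E}(Q)}(A\psi,B\psi)$, as required.

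The main obstacle is this last strong-invariance verification. It is conceptually routine but notationally delicate: one must extend morphisms compatibly via the defining property of normalizers, and invoke the strong-invariance formulation of weak normality of $\m{E}$ in $\F$, precisely in the spirit in which this condition is used elsewhere in the paper.
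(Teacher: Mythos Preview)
Your proposal is correct and follows essentially the same approach as the paper's own proof: reduce to a fully $\F$-normalized conjugate via Lemma~\ref{ESubphi} and Lemma~\ref{FfEf}, then deduce constrainedness of $N_{\m{E}}(Q)$ from that of $N_\F(Q)$ via Lemma~\ref{WeaklyNormalConstraint}, the key point being that $N_\m{E}(Q)$ is weakly normal in $N_\F(Q)$. The paper leaves this last verification as ``easy to check'', whereas you spell out the strong closure of $N_T(Q)$ and the strong invariance condition explicitly; this expansion is correct and in the spirit of the argument used in Lemma~\ref{WeaklyNormalCentInKNorm}.
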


\begin{proof}
By Lemma~\ref{ESubphi}, we may replace $P$ by any $\F$-conjugate of $P$ and can thus assume that $P\in\F^f$. Then by Lemma~\ref{FfEf}, $P\in\m{E}^f$. So $N_\F(P)$ and $N_\m{E}(P)$ are saturated. It is now easy to check that $N_\m{E}(P)$ is weakly normal in $N_\F(P)$. Since $P\in\F^s$, $N_\F(P)$ is constrained by Lemma~\ref{subcentricEquiv}. Hence, by Lemma~\ref{WeaklyNormalConstraint}, $N_\m{E}(P)$ is constrained and $P\in\m{E}^s$ again by Lemma~\ref{subcentricEquiv}.
\end{proof}

\begin{lemma}\label{pPrimeIndexSubcentric}
 Let $\m{E}$ be a normal subsystem of $\F$ of index prime to $p$. Then $\m{E}^s=\F^s$.
\end{lemma}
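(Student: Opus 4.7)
The plan is to prove the two inclusions separately. The inclusion $\F^s \subseteq \m{E}^s$ is immediate from Lemma~\ref{Subcentric}: since $\m{E}$ has index prime to $p$ in $\F$, $\m{E}$ is a subsystem over $T = S$, so every $P \in \F^s$ automatically satisfies $P \leq T$, and Lemma~\ref{Subcentric} yields $P \in \m{E}^s$.

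For the reverse inclusion $\m{E}^s \subseteq \F^s$, let $P \in \m{E}^s$. First I would reduce to the case $P \in \F^f$: pick any $\F$-conjugate $Q$ of $P$ with $Q \in \F^f$, realized by some $\phi \in \Hom_\F(P,S)$. By Lemma~\ref{ESapplied with the weakly normal subsystem $\m{E}$, we have $Q = P\phi \in \m{E}^s$. Since subcentrality in $\F$ is $\F$-conjugation invariant (Proposition~\ref{subcentricProp}), it suffices to prove $Q \in \F^s$, so we may replace $P$ by $Q$ and assume $P \in \F^f$. Then by Lemma~\ref{FfEf}, $P$ is also fully $\m{E}$-normalized, so both $N_\F(P)$ and $N_\m{E}(P)$ are saturated. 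Moreover, since $P \in \m{E}^s \cap \m{E}^f$, Lemma~\ref{subcentricEquiv} gives that $N_\m{E}(P)$ is constrained. To conclude $P \in \F^s$, it suffices by Lemma~\ref{subcentricEquiv} to prove that $N_\F(P)$ is constrained.

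The bridge between $N_\m{E}(P)$ being constrained and $N_\F(P)$ being constrained will come from Lemma~\ref{pPrimeIndexConstrained}. What I need, therefore, is the claim that $N_\m{E}(P)$ is a normal subsystem of $N_\F(P)$ of index prime to $p$. This is the main step. The natural way to see it is via the characterization of subsystems of index prime to $p$ in terms of the subgroup $\Aut_\m{E}(S) \unlhd \Aut_\F(S)$ containing $O^p(\Aut_\F(S))$ (see \cite[Section~I.7]{Aschbacher/Kessar/Oliver:2011}): with $P \in \F^f$ fully normalized, the normalizer $N_\F(P)$ has Sylow $N_S(P)$, and $N_\m{E}(P)$ has the same Sylow; moreover, the automorphism group $\Aut_{N_\m{E}(P)}(N_S(P))$ sits as a normal subgroup of $p$-power index inside $\Aut_{N_\F(P)}(N_S(P))$, inherited from $\Aut_\m{E}(S) \unlhd \Aut_\F(S)$ being of index prime to $p$. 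Applying Lemma~\ref{pPrimeIndexConstrained} to the pair $(N_\m{E}(P), N_\F(P))$ then yields that $N_\F(P)$ is constrained, hence $P \in \F^s$ as required.

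The main obstacle is the technical verification that the property ``index prime to $p$'' is inherited by normalizer subsystems at fully normalized objects, i.e.\ that $N_\m{E}(P) \unlhd N_\F(P)$ of index prime to $p$. Once this is established, the argument closes quickly via Lemmas~\ref{subcentricEquiv}, \ref{FfEf}, \ref{ESubphi}, and \ref{pPrimeIndexConstrained}, which together reduce the statement to the constrained-versus-constrained comparison already handled in Section~\ref{ConstrainedSection}.
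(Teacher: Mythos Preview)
Your approach is correct and essentially identical to the paper's: reduce to $P\in\F^f$ via Lemma~\ref{ESubphi}, use Lemma~\ref{FfEf} to get $P\in\m{E}^f$, and then apply Lemma~\ref{pPrimeIndexConstrained} to the pair $(N_\m{E}(P),N_\F(P))$. The paper handles the step you flag as the main obstacle more directly than your sketch: one checks easily that $N_\m{E}(P)$ is weakly normal in $N_\F(P)$, and since both are fusion systems over the \emph{same} $p$-group $N_S(P)$, weak normality upgrades to normality (the extension condition is vacuous when $T=N_S(P)$) and the subsystem is then of index prime to $p$; note also your slip writing ``$p$-power index'' where you meant ``index prime to $p$'', and the broken label at ``\texttt{ESapplied}''.
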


\begin{proof}
 By Lemma~\ref{Subcentric}, we only need to prove that $\m{E}^s\subseteq\F^s$. By Lemma~\ref{ESubphi}, it is sufficient to prove $\m{E}^s\cap \F^f\subseteq \F^s$. Let $P\in\m{E}^s\cap\F^f$. By Lemma~\ref{FfEf}, $P\in\m{E}^f$. Thus $N_\F(P)$ and $N_\m{E}(P)$ are saturated subsystems and one sees easily that $N_\m{E}(P)$ is a weakly normal subsystem of $N_\F(P)$. As they are both fusion systems over $N_S(P)$, it follows that $N_\m{E}(P)$ is actually normal in $N_\F(P)$ and a subsystem of index prime to $p$. As $P\in\m{E}^s$, $N_\m{E}(P)$ is constrained by Lemma~\ref{subcentricEquiv}. Hence, by Lemma~\ref{pPrimeIndexConstrained}, $N_\F(P)$ is constrained and $P\in\F^s$ again by Lemma~\ref{subcentricEquiv}. 
\end{proof}

\begin{lemma}\label{pPowerIndexSubcentric}
 Let $\m{E}$ be a normal subsystem of $\F$ of $p$-power index and $T=\m{E}\cap S$. Then $\m{E}^s=\{P\in\F^s\colon P\leq T\}$. 
\end{lemma}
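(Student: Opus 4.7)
The inclusion $\{P\in\F^s\colon P\leq T\}\subseteq\m{E}^s$ is immediate from Lemma~\ref{Subcentric}, so only the reverse inclusion requires work. For it, let $P\in\m{E}^s$; then $P\leq T$, and the goal is to show $P\in\F^s$. Since $\m{E}^s$ is $\F$-invariant by Lemma~\ref{ESubphi}, I may replace $P$ by a suitable $\F$-conjugate and assume $P\in\F^f$. By Lemma~\ref{FfEf} this forces $P\in\m{E}^f$, so both $N_\F(P)$ and $N_\m{E}(P)$ are saturated.

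From here the plan mirrors the proof of Lemma~\ref{pPrimeIndexSubcentric}, with Lemma~\ref{pPrimeIndexConstrained} replaced throughout by Lemma~\ref{pPowerIndexConstrained}. The key claim is that $N_\m{E}(P)$ is a subsystem of $N_\F(P)$ of $p$-power index. Granting this, the argument runs as follows: since $P\in\m{E}^s\cap\m{E}^f$, Lemma~\ref{subcentricEquiv} shows that $N_\m{E}(P)$ is constrained; Lemma~\ref{pPowerIndexConstrained} then forces $N_\F(P)$ to be constrained; and a final application of Lemma~\ref{subcentricEquiv}, using $P\in\F^f$, yields $P\in\F^s$, as required.

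The main and essentially only technical point is therefore the claim that $N_\m{E}(P)$ has $p$-power index in $N_\F(P)$. I plan to establish this via the characterization of $p$-power-index subsystems in \cite[Theorem~I.7.4]{Aschbacher/Kessar/Oliver:2011}. Two ingredients are needed: first, that $N_T(P)$ is strongly closed in $N_\F(P)$ with $|N_S(P):N_T(P)|$ a divisor of the $p$-power $|S:T|$, which is routine; second, that for every $Q\leq N_T(P)$ one has $\Aut_{N_\m{E}(P)}(Q)\supseteq O^p(\Aut_{N_\F(P)}(Q))$. For the second, every $\alpha\in\Aut_{N_\F(P)}(Q)$ extends by definition to some $\hat\alpha\in\Aut_\F(QP)$ with $P\hat\alpha=P$, so restriction gives a surjective homomorphism from the stabilizer of $P$ in $\Aut_\F(QP)$ onto $\Aut_{N_\F(P)}(Q)$; this homomorphism sends $O^p$ onto $O^p$. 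The hyperfocal characterization of $\m{E}$ as a $p$-power-index subsystem of $\F$ gives $O^p(\Aut_\F(QP))\subseteq\Aut_\m{E}(QP)$, which upon restriction yields the desired containment. The main obstacle I anticipate is precisely this $O^p$ bookkeeping between $\Aut_\F(QP)$ and $\Aut_{N_\F(P)}(Q)$; once it is in place the rest of the proof is formal.
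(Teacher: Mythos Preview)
Your proposal is correct and follows essentially the same route as the paper: reduce to $P\in\F^f$, show $N_\m{E}(P)$ has $p$-power index in $N_\F(P)$, and then invoke Lemma~\ref{pPowerIndexConstrained}. One small refinement: for the ``first ingredient'' you need not merely that $|N_S(P):N_T(P)|$ is a $p$-power, but that $N_T(P)\geq\mathfrak{hyp}(N_\F(P))$; the paper obtains this in one line via $\mathfrak{hyp}(N_\F(P))\leq\mathfrak{hyp}(\F)\leq T$, and your $O^p$ argument for the second ingredient (lifting $p'$-elements of $\Aut_{N_\F(P)}(Q)$ to $p'$-elements of $\Aut_\F(QP)$, which then lie in $\Aut_\m{E}(QP)$) is exactly what the paper does.
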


\begin{proof}
 By Lemma~\ref{Subcentric}, it remains only to prove that $\m{E}^s\subseteq \F^s$. By Lemma~\ref{ESubphi}, it is sufficient to prove $\m{E}^s\cap \F^f\subseteq \F^s$. Let $P\in\m{E}^s\cap\F^f$. By Lemma~\ref{FfEf}, $P\in\m{E}^f$. Hence, $N_\F(P)$ and $N_\m{E}(P)$ are saturated. It follows from the definition of the hyperfocal subgroup that $\mathfrak{hyp}(N_\F(P))\leq \mathfrak{hyp}(\F)\leq T$ and thus $\mathfrak{hyp}(N_\F(P))\leq N_T(P)$. For any $R\leq N_T(P)$, a $p^\prime$-element $\alpha\in\Aut_{N_\F(P)}(R)$ extends to a $p^\prime$-element $\hat{\alpha}\in\Aut_\F(PR)$ normalizing $P$. As $\m{E}$ is a subsystem of $\F$ of $p$-power index, $\hat{\alpha}\in O^p(\Aut_\F(PR))\leq \Aut_{\m{E}}(PR)$. Hence, $\alpha$ extends to an element of $\Aut_\m{E}(PR)$ normalizing $R$, which means $\alpha\in\Aut_{N_\m{E}(P)}(R)$. This shows that $N_\m{E}(P)$ is a subsystem of $N_\F(P)$ of $p$-power index. As $P\in\m{E}^s$, $N_\m{E}(P)$ is constrained by Lemma~\ref{subcentricEquiv}. Hence, by Lemma~\ref{pPowerIndexConstrained}, it follows that $N_\F(P)$ is constrained and $P\in\F^s$ by Lemma~\ref{subcentricEquiv}. 
\end{proof}

\begin{lemma}\label{KNormMainLemma}
 Let $Q\leq S$ and $K\leq \Aut(Q)$ be such that $Q$ is fully $K$-normalized. Let $P\leq N_S^K(Q)$ be such that $P$ is fully centralized in $N_\F^K(Q)$. Then $PQ$ is fully centralized in $\F$.
\end{lemma}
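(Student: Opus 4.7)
The plan is: for every fully $\F$-centralized $\F$-conjugate $R$ of $PQ$, show $|C_S(R)|\leq|C_S(PQ)|$. Combined with the reverse inequality coming from full centrality of $R$, this forces equality and hence $PQ$ is also fully $\F$-centralized. So I fix $\phi\in\Hom_\F(PQ,S)$ with $R=(PQ)\phi$ fully $\F$-centralized, and the task reduces to constructing an injection $C_S(R)\hookrightarrow C_S(PQ)$.

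First I would rotate $\phi$ back to $Q$. Set $Q'=Q\phi$, $\phi_0=\phi|_Q$, and $K'=K^{\phi_0}$. By Lemma~\ref{KNormElementary}, $P\phi\leq N_S^{K'}(Q')$, and trivially $C_S(R)\leq C_S(Q')\leq N_S^{K'}(Q')$. Since $Q$ is fully $K$-normalized, the standard $K$-normalizer moving argument (a $K$-analogue of \cite[Lemma~I.2.6(c)]{Aschbacher/Kessar/Oliver:2011}, available via \cite[Theorem~I.5.5]{Aschbacher/Kessar/Oliver:2011}) provides $\alpha\in\Hom_\F(N_S^{K'}(Q'),N_S^K(Q))$ with $\alpha|_{Q'}=\phi_0^{-1}$. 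Writing $P'':=(P\phi)\alpha$, the composite $\phi\alpha\colon PQ\to P''Q$ is then an $\F$-morphism with $(\phi\alpha)|_Q=\id_Q$, so $\psi:=(\phi\alpha)|_P\colon P\to P''$ is a morphism in $N_\F^K(Q)$. Since $\alpha$ injects $C_S(R)$ into $C_S(P''Q)$ and $R$ is fully $\F$-centralized, equality $|C_S(R)|=|C_S(P''Q)|$ is forced and $P''Q$ is also fully $\F$-centralized. The task now reduces to showing $|C_S(P''Q)|\leq|C_S(PQ)|$.

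The crux is to exploit that $P$ is fully centralized in the saturated fusion system $N_\F^K(Q)$. I would apply the extension axiom in $N_\F^K(Q)$ to $\psi^{-1}\colon P''\to P$: for $x\in C_{N_S^K(Q)}(P'')$, one has $\psi(c_x|_{P''})\psi^{-1}=\id_P\in\Aut_{N_S^K(Q)}(P)$, so $C_{N_S^K(Q)}(P'')$ lies inside the extension domain, yielding $\bar{\psi^{-1}}\in\Hom_{N_\F^K(Q)}(C_{N_S^K(Q)}(P'')P'',\,N_S^K(Q))$ extending $\psi^{-1}$. By the very definition of $N_\F^K(Q)$-morphism, this lifts to some $\hat\mu\in\Hom_\F(C_{N_S^K(Q)}(P'')P''Q,\,S)$ with $\hat\mu|_{C_{N_S^K(Q)}(P'')P''}=\bar{\psi^{-1}}$ and $\hat\mu|_Q\in K$; in particular $\hat\mu(P''Q)=PQ$. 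Since $C_S(Q)\leq N_S^K(Q)$, one has $C_S(P''Q)\leq C_{N_S^K(Q)}(P'')$, so $C_S(P''Q)$ sits in the domain of $\hat\mu$, and $\hat\mu$ injects it into $C_S(PQ)$, delivering the required bound.

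The main obstacle is this crux step: the hypothesis provides full centrality of $P$ only inside the subsystem $N_\F^K(Q)$, yet the conclusion demands a bound on a centralizer taken in $\F$. The bridge is a two-step move—first use the extension axiom internal to the saturated fusion system $N_\F^K(Q)$ to produce $\bar{\psi^{-1}}$ on $C_{N_S^K(Q)}(P'')$, then lift that extension back to $\F$ via the definition of $N_\F^K(Q)$-morphism. Only this lift puts the genuine $\F$-centralizer $C_S(P''Q)$, which always sits inside $C_{N_S^K(Q)}(P'')$, in the domain of a morphism that sends it into $C_S(PQ)$.
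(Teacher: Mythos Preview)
Your proof is correct and follows essentially the same two-stage strategy as the paper's: rotate $Q\phi$ back to $Q$ using that $Q$ is fully $K$-normalized (the paper's Step~2, via \cite[Proposition~I.5.2]{Aschbacher/Kessar/Oliver:2011}), then apply the extension axiom inside the saturated system $N_\F^K(Q)$ to the fully centralized $P$ and lift the resulting morphism to $\F$ (the paper's Step~1). One small imprecision: the moving morphism you call $\alpha$ will in general satisfy only $\alpha|_{Q'}=\phi_0^{-1}\chi$ for some $\chi\in K\cap\Aut_\F(Q)$, not $\alpha|_{Q'}=\phi_0^{-1}$ on the nose; this is harmless, since then $(\phi\alpha)|_Q=\chi\in K$ still makes $(\phi\alpha)|_P$ a morphism in $N_\F^K(Q)$ and $Q(\phi\alpha)=Q$, so the rest of your argument goes through verbatim.
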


\begin{proof}
We note first that $N_\F^K(Q)$ is saturated by \cite[Theorem~I.5.5]{Aschbacher/Kessar/Oliver:2011} as $Q$ is fully $K$-normalized. 

\smallskip

\noindent{\em Step 1:} Let $P_0\in P^{N_\F^K(Q)}$. Then we show that $|C_S(P_0Q)|\leq |C_S(PQ)|$. Observe first that, by the extension axiom, an $\F$-isomorphism from $P_0$ to $P$ extends to a morphism 
\[\alpha\in \Hom_{N_\F^K(Q)}(P_0C_{N_S^K(Q)}(P_0),PC_{N_S^K(Q)}(P)).\] 
So we can fix $\alpha\in \Hom_{N_\F^K(Q)}(P_0C_{N_S^K(Q)}(P_0),PC_{N_S^K(Q)}(P))$ with $P_0\alpha=P$. It follows from the definition of $N_\F^K(Q)$ that $\alpha$ extends to $\hat{\alpha}\in \Hom_\F(P_0C_{N_S^K(Q)}(P_0)Q,PC_{N_S^K(Q)}(P)Q)$ with $Q\hat{\alpha}=Q$. Note $C_S(P_0Q)\leq C_{N_S^K(Q)}(P_0)$. As $P_0\hat{\alpha}=P_0\alpha=P$ we have $(P_0Q)\hat{\alpha}=PQ$. Hence $C_S(P_0Q)\alpha=C_S(P_0Q)\hat{\alpha}\subseteq C_S((P_0Q)\hat{\alpha})=C_S(PQ)$. Therefore, as $\alpha$ is injective, $|C_S(P_0Q)|=|C_S(P_0Q)\alpha|\leq |C_S(PQ)$. This finishes Step~1.

\smallskip

\noindent{\em Step 2:} We are now in a position to complete the proof. Let $\phi\in \Hom_\F(PQ,S)$ such that $(PQ)\phi$ is fully centralized. Our goal will be to show that $|C_S((PQ)\phi)|\leq |C_S(PQ)|$. Note that $\phi^{-1}$ restricts to an $\F$-isomorphism from $Q\phi$ to $Q$. As $Q$ is fully $K$-normalized it follows from \cite[Proposition~I.5.2]{Aschbacher/Kessar/Oliver:2011} that there exists $\chi\in \Aut_\F^K(Q)$ and $\psi\in \Hom_\F((Q\phi)N_S^{K^\phi}(Q\phi),S)$ such that $\psi|_{Q\phi}=(\phi^{-1}|_{Q\phi})\chi$. As $P\leq N_S^K(Q)$, we have $P\phi\leq N_S^{K^\phi}(Q\phi)$ by Lemma~\ref{KNormElementary}. In particular, $\psi$ is defined on $(PQ)\phi=(P\phi)(Q\phi)$ and $\phi\psi$ is defined on $PQ$. Note that $(\phi\psi)|_Q=\chi\in K$ and so $(\phi\psi)|_P$ is a morphism $N_\F^K(Q)$. Therefore, by Step~1, we have $|C_S((P\phi\psi)Q)|\leq |C_S(PQ)|$. Note that $Q\phi\psi=Q$ and thus $(PQ)\phi\psi=(P\phi\psi)Q$. Moreover, $\psi$ is defined on $C_S((PQ)\phi)\leq C_S(Q\phi)\leq N_S^{K^\phi}(Q\phi)$ and  $C_S((PQ)\phi)\psi\leq C_S((PQ)\phi\psi)=C_S((P\phi\psi)Q)$. Putting these properties together, we obtain $|C_S((PQ)\phi)|=|C_S((PQ)\phi)\psi|\leq |C_S((P\phi\psi)Q)|\leq |C_S(PQ)|$. As $(PQ)\phi$ is fully centralized, it follows that $PQ$ is fully centralized as well.
\end{proof}

\begin{lemma}\label{KNormSubcentric2a}
 Let $Q\in\F$ and $K\leq \Aut(Q)$ be such that $Q$ is fully $K$-normalized. Then $PQ\in\F^s$ for every $P\in N^K_\F(Q)^s$.

\smallskip

A similar result holds for centric and quasicentric subgroups: For every $P\in N_\F^K(Q)^c$ we have $PQ\in\F^c$, and for every $P\in N_\F^K(Q)^q$ we have $PQ\in\F^q$.
\end{lemma}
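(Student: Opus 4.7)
The plan is to handle all three claims — subcentric, centric, and quasicentric — by a common reduction plus a shared structural observation, and then to close each case separately.

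\textbf{Reduction.} Using that $N_\F^K(Q)^s$, $N_\F^K(Q)^c$, and $N_\F^K(Q)^q$ are each closed under $N_\F^K(Q)$-conjugation, and using that $N_\F^K(Q)$ is saturated (by \cite[Theorem~I.5.5]{Aschbacher/Kessar/Oliver:2011}, since $Q$ is fully $K$-normalized), I would replace $P$ by a suitable $N_\F^K(Q)$-conjugate and assume that $P$ is fully centralized in $N_\F^K(Q)$. Lemma~\ref{KNormMainLemma} then gives that $PQ$ is fully centralized in $\F$, so it suffices to verify each of the three properties for this particular representative.

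\textbf{Structural observation.} Since $\id_Q\in K$, we have $C_S(Q)\leq N_S^K(Q)$, and hence
\[
C_S(PQ)\;=\;C_S(P)\cap C_S(Q)\;\leq\;C_S(P)\cap N_S^K(Q)\;=\;C_{N_S^K(Q)}(P).
\]
Moreover, if $\phi\in\Hom_{C_\F(PQ)}(A,B)$ with $A,B\leq C_S(PQ)$, then $\phi$ extends to $\hat\phi\in\Hom_\F(APQ,BPQ)$ with $\hat\phi|_{PQ}=\id$; the restriction $\hat\phi|_{AP}$ centralizes $P$ and has an extension (namely $\hat\phi$ itself) to $APQ$ with identity action on $Q$, which lies in $K$. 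Thus $\hat\phi|_{AP}$ is a morphism in $N_\F^K(Q)$ centralizing $P$, and so $\phi\in\Hom_{C_{N_\F^K(Q)}(P)}(A,B)$. In short, $C_\F(PQ)$ embeds into $C_{N_\F^K(Q)}(P)$ as a subcategory over the smaller group $C_S(PQ)$.

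\textbf{Finishing each case.} For the centric case, $P$ being $N_\F^K(Q)$-centric and fully centralized there yields $C_{N_S^K(Q)}(P)\leq P$; together with the displayed inclusion this gives $C_S(PQ)\leq P\leq PQ$, and full $\F$-centralization of $PQ$ upgrades this to $\F$-centricity via the standard extension-axiom argument on $RC_S(R)$ for any $\F$-conjugate $R$ of $PQ$. For the subcentric case, Lemma~\ref{subcentricEquiv} gives that $C_{N_\F^K(Q)}(P)$ is constrained; I would then produce a model $H$ for $C_{N_\F^K(Q)}(P)$ and extract from it (using Lemma~\ref{Model2}, Lemma~\ref{Charp1}, and the subcategory embedding above) a subgroup of characteristic~$p$ that realizes $C_\F(PQ)$, yielding that $C_\F(PQ)$ is constrained and hence $PQ\in\F^s$. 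For the quasicentric case, $C_{N_\F^K(Q)}(P)$ is the trivial fusion system on $C_{N_S^K(Q)}(P)$, so $\phi=c_g|_A$ for some $g\in C_{N_S^K(Q)}(P)$; the additional fact that $\hat\phi$ is identity on $Q$ must then be exploited (by analyzing the composition $c_g^{-1}\hat\phi\in\Aut_\F(APQ)$, which fixes $AP$ pointwise and acts on $Q$ by an element of $K$) to adjust $g$ into $C_S(Q)\cap C_S(P)=C_S(PQ)$.

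The main obstacle I expect is the last step of the quasicentric case, where one must modify the conjugating element $g$ so that it centralizes $Q$ and not merely normalizes it $K$-equivariantly. The subcentric case is smoother in that only constrainedness of $C_\F(PQ)$ is required, which is inherited from $C_{N_\F^K(Q)}(P)$ without needing to track individual morphisms; the centric case is the easiest, since the containment $C_S(PQ)\leq P$ is immediate from the observation.
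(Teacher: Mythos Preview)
Your reduction and the centric case are correct and match the paper's argument. The other two cases have genuine gaps.

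\textbf{Subcentric case.} Your plan to ``extract a subgroup of characteristic~$p$ realizing $C_\F(PQ)$'' from a model $H$ of $\mathcal{C}:=C_{N_\F^K(Q)}(P)$ via Lemma~\ref{Model2} does not work as stated: Lemma~\ref{Model2} only produces models for subsystems of the form $N_{\mathcal{C}}(R)$ or $C_{\mathcal{C}}(R)$ with $R$ a subgroup of the base $C_{N_S^K(Q)}(P)$, and $C_\F(PQ)$ is not of this form (indeed $Q$ need not lie in $C_{N_S^K(Q)}(P)$, since $\Inn(Q)\leq K$ is not assumed). The paper's fix is to recognise $\mathcal{C}$ itself as a $K$-normalizer of $PQ$: one checks that $\mathcal{C}=N_\F^{\tilde K}(PQ)$ for $\tilde K=\{\alpha\in\Aut(PQ)\colon \alpha|_Q\in K,\ \alpha|_P=\id\}$. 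Since $PQ$ is fully $\F$-centralized, $C_\F(PQ)$ is saturated, so Lemma~\ref{WeaklyNormalCentInKNorm}(b) applied with $(PQ,\tilde K)$ in place of $(Q,K)$ gives directly that $C_\F(PQ)$ is constrained; then Lemma~\ref{subcentricEquiv} finishes.

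\textbf{Quasicentric case.} The ``adjustment'' of $g$ into $C_S(PQ)$ that you flag as the main obstacle is in fact unnecessary, and the analysis of $c_g^{-1}\hat\phi$ you sketch does not close it (that composite fixes $AP$ pointwise but still acts on $Q$ by $c_{g^{-1}}|_Q$, so you have not produced an element of $C_S(PQ)$). The paper sidesteps this morphism-by-morphism argument entirely: your own embedding $C_\F(PQ)\subseteq\mathcal{C}$ already shows that $\Aut_{C_\F(PQ)}(R)$ is a $p$-group for every $R\leq C_S(PQ)$, since $\mathcal{C}$ is the fusion system of a $p$-group. As $C_\F(PQ)$ is saturated, the Sylow axiom forces $\Aut_{C_\F(PQ)}(R)=\Aut_{C_S(PQ)}(R)$ for $R$ fully normalized, and Alperin's fusion theorem then gives $C_\F(PQ)=\F_{C_S(PQ)}(C_S(PQ))$.
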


\begin{proof}
Let $P\leq N_S^K(Q)$. We want to show that $PQ\in\F^s$ if $P\in N_\F^K(Q)^s$, that $PQ\in\F^c$ if $P\in N_\F^K(Q)^c$ and that $PQ\in\F^q$ if $P\in N_\F^K(Q)^q$. Since the collections of centric, quasicentric and subcentric subgroups are closed under taking conjugates in the respective fusion system, we can replace $P$ by a suitable $N_\F^K(Q)$-conjugate and will assume without loss of generality that $P$ is fully centralized in $N_\F^K(Q)$. Then $PQ$ is fully centralized in $\F$ by Lemma~\ref{KNormMainLemma}. 

\smallskip

Assume first $P\in N_\F^K(Q)^s$. Then by Lemma~\ref{subcentricEquiv}, $\m{C}:=C_{N_\F^K(Q)}(P)$ is a constrained (saturated) subsystem. Note that $\m{C}=N_\F^{\tilde{K}}(PQ)$ where $\tilde{K}:=\{\alpha\in\Aut(PQ)\colon \alpha|_Q\in K,\;\alpha|_P=\id_P\}$. Moreover, as $PQ$ is fully $\F$-centralized, $C_\F(PQ)$ is saturated. Hence, by Lemma~\ref{WeaklyNormalCentInKNorm}(b), $C_\F(PQ)$ is constrained. Since $PQ$ is fully $\F$-centralized, Lemma~\ref{subcentricEquiv} implies $PQ\in\F^s$ as required.

\smallskip

If $P\in N_\F^K(Q)^c$ then $C_S(PQ)=C_{C_S(Q)}(P)\leq C_{N_S^K(Q)}(P)\leq P\leq PQ$ and so $PQ$ is centric since $PQ$ is fully centralized. Suppose now $P\in N_\F^K(Q)^q$. Then $\m{C}:=C_{N_\F^K(Q)}(P)$ is the fusion system of the $p$-group $C_{N_S^K(Q)}(P)$. As $C_\F(PQ)$ is a subsystem of $\m{C}$, it follows that $\Aut_{C_\F(PQ)}(R)$ is a $p$-group for every $R\leq C_S(PQ)$. So if $R\in C_\F(PQ)^f$ then $\Aut_{C_\F(PQ)}(R)=\Aut_{C_S(PQ)}(R)$. Since $PQ$ is fully centralized, $C_\F(PQ)$ is saturated. So it follows from Alperin's fusion theorem (see \cite[Theorem~I.3.6]{Aschbacher/Kessar/Oliver:2011}) that $C_\F(PQ)=\F_{C_S(PQ)}(C_S(PQ))$. As $PQ$ is fully centralized, this implies $PQ\in\F^q$ which completes the proof.
\end{proof}

\begin{lemma}\label{KNormSubcentricNormal}
 Let $R$ be a subgroup of $S$ normal in $\F$ and $K\unlhd \Aut_\F(R)$. Then $N_\F^K(R)$ is weakly normal in $\F$ and  $N_\F^K(R)^s=\{P\in\F^s\colon P\leq N_S^K(R)\}$. In particular, $C_\F(R)^s=\{P\in\F^s\colon P\leq C_S(R)\}$. 
\end{lemma}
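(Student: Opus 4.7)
The plan is to assemble the statement from three lemmas already established in this section: Lemma~\ref{WeaklyNormalCentInKNorm}, Lemma~\ref{Subcentric}, Lemma~\ref{KNormSubcentric2a}, together with Lemma~\ref{NormalTimesSubcentric}. Since $R\unlhd \F$, we have $N_\F(R)=\F=N_\F^{\Aut_\F(R)}(R)$, and of course $R$ is fully $\Aut_\F(R)$-normalized (it is its own only $\F$-conjugate), so all required saturation properties come for free from \cite[Theorem~I.5.5]{Aschbacher/Kessar/Oliver:2011}.

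First I would establish that $N_\F^K(R)$ is weakly normal in $\F$. Apply Lemma~\ref{WeaklyNormalCentInKNorm}(a) with $Q=R$, with the outer $K$ taken to be $\Aut_\F(R)$ and with $K_0=K$; the assumption $K\unlhd\Aut_\F(R)$ is exactly the hypothesis $K_0\unlhd K$ of that lemma, and both $N_\F^K(R)$ and $\F=N_\F^{\Aut_\F(R)}(R)$ are saturated. This gives weak normality.

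Next I would prove the equality of subcentric sets. For the inclusion $\{P\in\F^s : P\leq N_S^K(R)\}\subseteq N_\F^K(R)^s$, I would invoke Lemma~\ref{Subcentric} applied to the weakly normal subsystem $N_\F^K(R)$ of $\F$: any $\F$-subcentric subgroup contained in $N_S^K(R)$ is automatically $N_\F^K(R)$-subcentric. For the reverse inclusion, let $P\in N_\F^K(R)^s$. Then Lemma~\ref{KNormSubcentric2a} (using that $R$ is fully $K$-normalized, since $R\unlhd\F$) yields $PR\in\F^s$. Since $R\unlhd\F$, Lemma~\ref{NormalTimesSubcentric} then gives $P\in\F^s$, and clearly $P\leq N_S^K(R)$.

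Finally, the "in particular" statement follows by taking $K=\{\id_R\}$, which is normal in $\Aut_\F(R)$ and satisfies $N_\F^K(R)=C_\F(R)$ and $N_S^K(R)=C_S(R)$. No real obstacle is expected; the only point requiring minor care is checking that Lemma~\ref{KNormSubcentric2a} applies, i.e.\ that $R$ is fully $K$-normalized, which is immediate from $R\unlhd \F$.
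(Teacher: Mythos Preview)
Your proposal is correct and follows essentially the same route as the paper's proof: weak normality via Lemma~\ref{WeaklyNormalCentInKNorm}(a), one inclusion via Lemma~\ref{Subcentric}, and the other via Lemma~\ref{KNormSubcentric2a} followed by Lemma~\ref{NormalTimesSubcentric}. Your write-up is in fact slightly more explicit than the paper's in justifying the hypotheses (saturation and full $K$-normalization of $R$), which is fine.
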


\begin{proof}
By Lemma~\ref{WeaklyNormalCentInKNorm}(a), $N_\F^K(R)$ is weakly normal in $\F=N_\F^{\Aut_\F(R)}(R)$. Hence, by Lemma~\ref{Subcentric}, every $P\in\F^s$ with $P\leq N_S^K(R)$ is a member of $N_\F^K(R)^s$. Let now $P\in N_\F^K(R)^s$. By Lemma~\ref{KNormSubcentric2a}, $PR\in\F^s$. So by Lemma~\ref{NormalTimesSubcentric}, $P\in\F^s$.
\end{proof}

\begin{lemma}\label{NormalizerSubcentric2}
Let $Q\in\F^f$ and $P\in\F^s$ be such that $P\leq N_S(Q)$. Then $P\in N_\F(Q)^s$.
\end{lemma}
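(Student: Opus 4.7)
The plan is, after suitable reductions, to identify $N_{N_\F(Q)}(P)$ with a normalizer in a constrained fusion system arising from $P\in\F^s$, and then apply Lemma~\ref{subcentricEquiv} inside the saturated system $N_\F(Q)$ (which is saturated because $Q\in\F^f$). The overall strategy parallels the proof of Proposition~\ref{subcentricProp}, with the roles of smaller and larger subgroup swapped.

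I would first carry out two reductions. Since $PQ$ is an overgroup of the subcentric $P$, Proposition~\ref{subcentricProp} gives $PQ\in\F^s$; applying Lemma~\ref{NormalTimesSubcentric} inside the saturated fusion system $N_\F(Q)$ (using $Q\unlhd N_\F(Q)$) shows $P\in N_\F(Q)^s\iff PQ\in N_\F(Q)^s$, so I may replace $P$ by $PQ$ and assume $Q\unlhd P$. Next, as $\F^s$ is closed under $\F$-conjugacy (Proposition~\ref{subcentricProp}), I may further replace $P$ by an $N_\F(Q)$-conjugate; since $N_\F(Q)$-morphisms fix $Q$ setwise these conjugates still contain $Q$, and I may assume $P\in N_\F(Q)^f$. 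Now $Q\leq P\leq N_S(P)$, so \cite[I.2.6(c)]{Aschbacher/Kessar/Oliver:2011} yields $\alpha\in\Hom_\F(N_S(P),S)$ with $P^*:=P\alpha\in\F^f$, and $Q^*:=Q\alpha\leq P^*$ is defined with $Q^*\unlhd P^*$. Lemma~\ref{subcentricEquiv} gives $N_\F(P^*)$ constrained, and \cite[I.2.6(c)]{Aschbacher/Kessar/Oliver:2011} applied inside $N_\F(P^*)$ supplies a morphism $\beta$ fixing $P^*$ that sends $Q^*$ to a fully $N_\F(P^*)$-normalized $N_\F(P^*)$-conjugate $Q^{**}$; by Lemma~\ref{Model2} applied inside $N_\F(P^*)$, the subsystem $N_{N_\F(P^*)}(Q^{**})$ is constrained.

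Setting $\gamma:=\alpha\beta$ and restricting to $M:=N_S(P)\cap N_S(Q)$ (the ambient $p$-group of $N_{N_\F(Q)}(P)$), $\gamma$ is an $\F$-morphism sending $P$ to $P^*$ and $Q$ to $Q^{**}$. Arguing as in the proof of Proposition~\ref{subcentricProp} via an analogue of \cite[(2.2)(1),(2)]{Aschbacher:2010}, one shows $\gamma(M)=N_S(P^*)\cap N_S(Q^{**})$ and that $\gamma$ induces an isomorphism of fusion systems from $N_{N_\F(Q)}(P)=N_{N_\F(P)}(Q)$ to $N_{N_\F(P^*)}(Q^{**})=N_{N_\F(Q^{**})}(P^*)$. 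Constrained-ness transfers along this isomorphism, so $N_{N_\F(Q)}(P)$ is constrained, and Lemma~\ref{subcentricEquiv} applied inside $N_\F(Q)$ then yields $P\in N_\F(Q)^s$. The main obstacle is the ``fully normalized pair'' identity $\gamma(M)=N_S(P^*)\cap N_S(Q^{**})$, which requires combining $P\in N_\F(Q)^f$, $P^*\in\F^f$ and $Q^{**}\in N_\F(P^*)^f$; this is the analogue for the pair $(Q,P)$ of the Aschbacher identity used in the proof of Proposition~\ref{subcentricProp}.
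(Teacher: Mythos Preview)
Your approach is correct, but the paper takes a noticeably shorter route. You and the paper share the first reduction (replace $P$ by $PQ$ and assume $Q\leq P$), but then diverge: you pass to a fully $N_\F(Q)$-\emph{normalized} conjugate of $P$ and aim to show that $N_{N_\F(Q)}(P)$ is constrained by transporting to $N_{N_\F(P^*)}(Q^{**})$, which forces you through the ``fully normalized pair'' machinery of \cite[(2.2)]{Aschbacher:2010}. The paper instead passes to a fully $N_\F(Q)$-\emph{centralized} conjugate of $P$, applies Lemma~\ref{KNormMainLemma} (with $K=\Aut_\F(Q)$) to deduce that $P=PQ$ is fully centralized in $\F$, and then uses the trivial identity $C_\F(P)=C_{N_\F(Q)}(P)$ (which holds because $Q\leq P$) together with the centralizer criterion (c1)/(c2) of Lemma~\ref{subcentricEquiv}. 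The point is that once $Q\leq P$, the centralizer subsystem is literally the same whether computed in $\F$ or in $N_\F(Q)$, so no transport-of-structure argument is needed; the normalizer subsystems, by contrast, are not obviously identified, which is why your route incurs the extra Aschbacher-style bookkeeping. Your obstacle identity $\gamma(M)=N_S(P^*)\cap N_S(Q^{**})$ can indeed be established by a cardinality comparison (apply \cite[(2.2)]{Aschbacher:2010} to the pair $(P^*,Q^{**})$, transport back to $Q\in\F^f$, and observe that the resulting image of $P^*$ is $N_\F(Q)$-conjugate to $P$ since $Q\leq P$), but this is precisely the work that the paper's centralizer shortcut avoids. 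A minor point: your equalities $N_{N_\F(Q)}(P)=N_{N_\F(P)}(Q)$ and $N_{N_\F(P^*)}(Q^{**})=N_{N_\F(Q^{**})}(P^*)$ are formal, since $N_\F(P)$ and $N_\F(Q^{**})$ need not be saturated; what you actually use (and what suffices) is that $\gamma$ induces an isomorphism between the two well-defined double normalizers $N_{N_\F(Q)}(P)$ and $N_{N_\F(P^*)}(Q^{**})$.
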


\begin{proof}
By Lemma~\ref{subcentricProp}, $PQ\in\F^s$. Moreover, by Lemma~\ref{NormalTimesSubcentric}, $P\in N_\F(Q)^s$ if $PQ\in N_\F(Q)^s$. Hence, replacing $P$ by $PQ$, we may assume $Q\leq P$. Moreover, replacing $P$ by a $N_\F(Q)$-conjugate, we may assume that $P$ is fully centralized in $N_\F(Q)$. Then $P=PQ$ is fully centralized in $\F$ by Lemma~\ref{KNormMainLemma}. So by Lemma~\ref{subcentricEquiv}, $C_\F(P)$ is constrained. As $C_\F(P)=C_{N_\F(Q)}(P)$, Lemma~\ref{subcentricEquiv} applied with $N_\F(Q)$ in place of $\F$ gives $P\in N_\F(Q)^s$.
\end{proof}

\begin{lemma}\label{KNormSubcentric2}
 Let $Q\in\F$ and $K\unlhd\Aut_\F(Q)$ be such that $Q$ is fully $K$-normalized. Then $\{P\in\F^s\colon P\leq N_S^K(Q)\}\subseteq N_\F^K(Q)^s$.
\end{lemma}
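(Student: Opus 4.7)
The plan is to mirror the strategy of Lemma~\ref{KNormSubcentric2a} in the reverse direction: transport constrainedness of $C_\F(PQ)$ (which follows from $P \in \F^s$) into constrainedness of the centralizer of $P$ inside $N_\F^K(Q)$.

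Since $Q$ is fully $K$-normalized, $N_\F^K(Q)$ is saturated by \cite[Theorem~I.5.5]{Aschbacher/Kessar/Oliver:2011}, so by Proposition~\ref{subcentricProp} the set $N_\F^K(Q)^s$ is closed under $N_\F^K(Q)$-conjugation. Since $\F^s$ is closed under $\F$-conjugation, I may replace $P$ by an $N_\F^K(Q)$-conjugate and assume that $P$ is fully centralized in $N_\F^K(Q)$. Then Lemma~\ref{KNormMainLemma} shows $PQ$ is fully centralized in $\F$, and because $PQ \in \F^s$ by Proposition~\ref{subcentricProp}, Lemma~\ref{subcentricEquiv} gives that $C_\F(PQ)$ is constrained. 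By Lemma~\ref{subcentricEquiv} applied inside $N_\F^K(Q)$, it now suffices to prove that $C_{N_\F^K(Q)}(P)$ is constrained.

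Set $\tilde{K} := \{\alpha \in \Aut(PQ) : \alpha|_Q \in K,\ \alpha|_P = \id_P\}$. The same unwinding of definitions used in the proof of Lemma~\ref{KNormSubcentric2a} yields $C_{N_\F^K(Q)}(P) = N_\F^{\tilde{K}}(PQ)$, and this subsystem is saturated because $P$ is fully $N_\F^K(Q)$-centralized. As $C_\F(PQ)$ is also saturated, Lemma~\ref{WeaklyNormalCentInKNorm}(b) applied with $PQ$ and $\tilde{K}$ in the roles of $Q$ and $K$ shows that $C_\F(PQ)$ is weakly normal in $N_\F^{\tilde{K}}(PQ)$. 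Then Lemma~\ref{OpE} gives $O_p(C_\F(PQ)) \unlhd N_\F^{\tilde{K}}(PQ)$, and together with $PQ \unlhd N_\F^{\tilde{K}}(PQ)$ this implies $R := PQ \cdot O_p(C_\F(PQ)) \leq O_p(N_\F^{\tilde{K}}(PQ))$.

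A short centralizer chase then closes the argument:
\[C_{N_S^{\tilde{K}}(PQ)}\bigl(O_p(N_\F^{\tilde{K}}(PQ))\bigr) \leq C_S(R) \leq C_{C_S(PQ)}\bigl(O_p(C_\F(PQ))\bigr) \leq O_p(C_\F(PQ)) \leq O_p(N_\F^{\tilde{K}}(PQ)),\]
where the penultimate inclusion uses constrainedness of $C_\F(PQ)$. This shows $N_\F^{\tilde{K}}(PQ) = C_{N_\F^K(Q)}(P)$ is constrained, as desired. The main conceptual obstacle is recognizing that $C_{N_\F^K(Q)}(P)$ can itself be realized as a $K$-type normalizer in $\F$, thereby making $P \in \F^s$ applicable indirectly via $PQ$; after this identification, everything else is a standard $O_p$-chase against weak normality.
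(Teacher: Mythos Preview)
Your proof is correct, but it follows a genuinely different route from the paper's. The paper first transports $Q$ to a fully $\F$-normalized conjugate via a morphism $\alpha$, observes that $\alpha$ induces an isomorphism $N_\F^K(Q)\to N_\F^{K^\alpha}(Q\alpha)$, and thereby reduces to the case $Q\in\F^f$. In that situation $N_\F^K(Q)=N_{N_\F(Q)}^K(Q)$, and the result follows by chaining Lemma~\ref{NormalizerSubcentric2} (which gives $P\in N_\F(Q)^s$) with Lemma~\ref{KNormSubcentricNormal} (which identifies $N_\F^K(Q)^s$ with $\{P\in N_\F(Q)^s: P\leq N_S^K(Q)\}$). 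The normality hypothesis $K\unlhd\Aut_\F(Q)$ enters precisely through Lemma~\ref{KNormSubcentricNormal}.

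Your argument instead runs the machinery of Lemma~\ref{KNormSubcentric2a} in reverse: you pass to a fully $N_\F^K(Q)$-centralized conjugate of $P$, invoke Lemma~\ref{KNormMainLemma} to make $PQ$ fully $\F$-centralized, and then push constrainedness from $C_\F(PQ)$ up to $C_{N_\F^K(Q)}(P)=N_\F^{\tilde K}(PQ)$ via Lemma~\ref{WeaklyNormalCentInKNorm}(b) and Lemma~\ref{OpE}. This is more self-contained and, notably, never uses that $K$ is normal in $\Aut_\F(Q)$; your argument therefore establishes the lemma under the weaker hypothesis $K\leq\Aut(Q)$. The paper's route, by contrast, is more modular---it reuses two previously proved lemmas rather than redoing a centralizer chase---at the cost of requiring the extra normality assumption on $K$.
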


\begin{proof}
By \cite[Lemma~I.2.6(c)]{Aschbacher/Kessar/Oliver:2011}, there exists a morphism $\alpha\in\Hom_\F(N_S(Q),S)$ such that $Q\alpha\in\F^f$. By Lemma~\ref{KNormElementary}, $N^K_S(Q)\alpha\leq N^{K^\alpha}_S(Q\alpha)$. As $Q$ is fully $K$-normalized, it follows that $Q\alpha$ is fully $K^\alpha$-normalized and $N_S^K(Q)\alpha=N_S^{K^\alpha}(Q\alpha)$. It is straightforward to check that $\alpha$ induces an isomorphism from $N_\F^K(Q)$ to $N_\F^{K^\alpha}(Q\alpha)$. Thus, by Lemma~\ref{IsoSubcentric}, for any $P\leq N_S^K(Q)$, we have  $P\alpha\in N^{K^\alpha}_\F(Q\alpha)^s$ if and only if $P\in N_\F^K(Q)^s$. Hence, as $\F^s$ is invariant under $\F$-conjugation, replacing $Q$ by $Q\alpha$, we may assume that $Q\in\F^f$. Then $N_\F(Q)$ is saturated and, as $N_\F^K(Q)=N_{N_\F(Q)}^K(Q)$, it follows from Lemma~\ref{KNormSubcentricNormal} that $N_\F^K(Q)^s=\{P\in N_\F(Q)^s\colon P\leq N_S^K(Q)\}$. If $P\in\F^s$ with $P\leq N_S^K(Q)$ then $P\in N_\F(Q)^s$ by Lemma~\ref{NormalizerSubcentric2}, and therefore $P\in N_\F^K(Q)^s$. This proves the assertion.
\end{proof}

\begin{lemma}\label{fsfrc}
 Let $Q\in\F^s\cap\F^f$ be such that $Q=O_p(N_\F(Q))$. Then $Q\in\F^{frc}$.
\end{lemma}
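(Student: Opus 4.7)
We need to show that the hypotheses force $Q$ to be centric and radical in $\F$ (fully normalized is given). The strategy is to pass to a model of $N_\F(Q)$ and read off both properties group-theoretically from its characteristic-$p$ structure.

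First, since $Q\in\F^s\cap\F^f$, Lemma~\ref{subcentricEquiv} tells us that $N_\F(Q)$ is a constrained fusion system. Therefore, by Theorem~\ref{Model1}(a), there exists a model $G$ for $N_\F(Q)$; in particular $N_S(Q)\in\Syl_p(G)$, $\F_{N_S(Q)}(G)=N_\F(Q)$, and $G$ is of characteristic $p$. By Theorem~\ref{Model1}(b) applied to $N_\F(Q)$, the normal subgroups of $N_\F(Q)$ are precisely the normal subgroups of $G$ contained in $N_S(Q)$. Since $Q\unlhd N_\F(Q)$, we obtain $Q\unlhd G$, and hence $Q\leq O_p(G)$. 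Conversely, $O_p(G)$ is a normal $p$-subgroup of $G$ contained in $N_S(Q)$, so $O_p(G)\unlhd N_\F(Q)$ and thus $O_p(G)\leq O_p(N_\F(Q))=Q$. This gives $Q=O_p(G)$.

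Because $G$ is of characteristic $p$, $C_G(Q)=C_G(O_p(G))=Z(O_p(G))=Z(Q)$. In particular $C_S(Q)=C_{N_S(Q)}(Q)=Z(Q)\leq Q$, and as $Q$ is fully normalized (hence fully centralized), this shows that $Q$ is $\F$-centric.

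For radicality, note that $\Aut_\F(Q)=\Aut_{N_\F(Q)}(Q)=\Aut_G(Q)=N_G(Q)/C_G(Q)=G/Z(Q)$. Since $Z(Q)\leq Q=O_p(G)$, any normal $p$-subgroup of $G/Z(Q)$ pulls back to a normal $p$-subgroup of $G$, so $O_p(G/Z(Q))=O_p(G)/Z(Q)=Q/Z(Q)$, which under the above isomorphism corresponds to $\Inn(Q)$. Hence $O_p(\Aut_\F(Q))=\Inn(Q)$, proving that $Q$ is $\F$-radical. Together with the hypothesis $Q\in\F^f$ and the centricity shown above, this gives $Q\in\F^{frc}$.

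The main (minor) obstacle is the bookkeeping to identify $Q$ with $O_p(G)$ rather than merely placing one inside the other; once this identification is made, both centricity and radicality drop out from standard properties of groups of characteristic $p$, so no further input beyond Lemma~\ref{subcentricEquiv} and Theorem~\ref{Model1} is needed.
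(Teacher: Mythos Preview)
Your proof is correct and follows essentially the same approach as the paper: pass to a model $G$ for $N_\F(Q)$, identify $Q=O_p(G)$, and then read off centricity and radicality from the characteristic-$p$ structure of $G$. The only minor difference is that the paper obtains $Q\in\F^c$ in one line directly from the \emph{definition} of subcentric (since $Q\in\F^s\cap\F^f$ and $Q=O_p(N_\F(Q))$, the definition gives $O_p(N_\F(Q))=Q\in\F^c$), whereas you deduce it from $C_G(Q)=Z(Q)$ inside the model; both are fine.
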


\begin{proof}
 As $Q\in\F^s\cap\F^f$, we have $Q=O_p(N_\F(Q))\in\F^c$ by definition of subcentric subgroups. Moreover Theorem~\ref{subcentricEquiv} yields that $N_\F(Q)$ is constrained. So by Theorem~\ref{Model1}, there exists a model $G$ for $N_\F(Q)$ and $O_p(G)=O_p(N_\F(Q))=Q$. Note $\Aut_\F(Q)=\Aut_{N_\F(Q)}(Q)\cong G/C_G(Q)=G/Z(Q)$. Then $O_p(\Aut_\F(Q))\cong O_p(G/Z(Q))=Q/Z(Q)\cong \Inn(Q)$ and so $Q$ is radical. 
\end{proof}

\begin{proof}[Proof of Proposition~\ref{SubcentricProperties1}]
 This follows from Lemma~\ref{NormalTimesSubcentric}, Lemma~\ref{SubcentricModCentral}, Lemma~\ref{KNormSubcentric2a} and Lemma~\ref{KNormSubcentric2}. Compare also Lemma~\ref{NormalizerSubcentric2}.
\end{proof}

\begin{proof}[Proof of Proposition~\ref{SubcentricProperties2}]
 The proposition follows from Lemma~\ref{ESubphi}, Lemma~\ref{Subcentric}, Lemma~\ref{pPrimeIndexSubcentric}, Lemma~\ref{pPowerIndexSubcentric} and Lemma~\ref{KNormSubcentricNormal}.
\end{proof}

\section{The proof of Theorem~\ref{SubcentricEF}}\label{SubcentricPropertiesSection2}

\noindent\textbf{Throughout this section, $\F$ is assumed to be saturated. Moreover, $\m{E}$ is always a normal subsystem of $\F$ over $T\leq S$.}

\smallskip

In this section we will prove Theorem~\ref{SubcentricEF} using the theory of components of fusion systems which was developed by Aschbacher. So our goal will be to prove that $PC_S(\m{E})$ is subcentric in $\F$, for every subcentric subgroup $P$ of $\m{E}$. 
The subgroup $C_S(\m{E})$ was introduced in \cite[Chapter~6]{Aschbacher:2011}. We will use throughout the following characterization: The subgroup $C_S(\m{E})$ is the largest subgroup $X$ of $C_S(T)$ such that $\m{E}\subseteq C_\F(X)$.

\begin{lemma}\label{ENormNFCSE}
The subsystem $\m{E}$ is also a normal subsystem of $N_\F(C_S(\m{E}))$. 
\end{lemma}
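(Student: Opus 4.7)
The plan is to verify the normal subsystem axioms of \cite[Definition~I.6.1]{Aschbacher/Kessar/Oliver:2011} for $\m{E}$ inside $\F_0:=N_\F(C_S(\m{E}))$ by restricting the corresponding data from $\F$.

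First I would set up the preliminaries: $\m{E}\subseteq\F_0$ and the saturation of $\F_0$. The inclusion follows from the defining property of $C_S(\m{E})$: since $\m{E}\subseteq C_\F(C_S(\m{E}))$, every morphism in $\m{E}$ extends to a morphism of $\F$ which restricts to the identity on $C_S(\m{E})$, in particular one belonging to $N_\F(C_S(\m{E}))=\F_0$. Moreover $C_S(\m{E})\leq C_S(T)$ gives $T\leq C_S(C_S(\m{E}))\leq N_S(C_S(\m{E}))$, so that $\m{E}$ is a subsystem on a subgroup of the underlying $p$-group of $\F_0$. To get saturation of $\F_0$ it suffices that $C_S(\m{E})$ be fully $\F$-normalized; I would obtain this from the fact that $C_S(\m{E})$ is strongly closed in $\F$ (a standard consequence of $\m{E}\unlhd\F$ coming from Aschbacher's theory of the centralizer subsystem $C_\F(\m{E})$).

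I would then run through the axioms of a normal subsystem for the pair $\m{E}\subseteq\F_0$. Strong closure of $T$ is inherited from $\F\supseteq\F_0$, and saturation of $\m{E}$ is given. The $\F_0$-invariance of $\m{E}$ is immediate from $\F$-invariance, since $\Aut_{\F_0}(T)\leq \Aut_\F(T)$. For the Frattini condition, given $\phi\in\Hom_{\F_0}(P,T)$ with $P\leq T$, I would apply the Frattini decomposition in $\F$ to write $\phi=\phi_0\alpha$ with $\phi_0\in\Hom_\m{E}(P,T)$ and $\alpha\in\Aut_\F(T)$. Since $\phi\in\F_0$, it extends over $C_S(\m{E})$ by a morphism normalizing $C_S(\m{E})$; since $\phi_0\in\m{E}\subseteq C_\F(C_S(\m{E}))$, it extends over $C_S(\m{E})$ as the identity. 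Composing these extensions yields an extension of $\alpha$ normalizing $C_S(\m{E})$, so $\alpha\in\Aut_{\F_0}(T)$ as required. The extension axiom is handled analogously: the $\F$-extension of any $\alpha\in\Aut_\m{E}(T)$ to $\Aut_\F(TC_S(T))$ automatically fixes $C_S(\m{E})\leq C_S(T)$ (since on $TC_S(\m{E})$ it can be replaced by an extension that is the identity on $C_S(\m{E})$), hence lies in $\F_0$.

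The main obstacle will be this last bookkeeping, namely arranging that the $\F$-automorphism $\alpha$ in the Frattini decomposition (and the $\F$-extension in the extension axiom) can be realised inside $\F_0$ rather than merely in $\F$. The crucial input that makes it work is that every morphism of $\m{E}$ acts as the identity on $C_S(\m{E})$, which is precisely what allows the extension data for $\phi$ to be recycled into extension data for $\alpha$. Once that is in place, the lemma reduces to the essentially formal statement that normality transfers to any intermediate saturated subsystem of $\F$ containing both $\m{E}$ and $N_S(C_S(\m{E}))$.
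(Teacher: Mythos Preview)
Your plan is the same as the paper's, but one step does not go through as written and one is more complicated than necessary.

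The Frattini step has a gap. From $\phi=\phi_0\alpha$ you glue the extensions of $\phi$ and $\phi_0$ over $C_S(\m{E})$; however, this composite is only defined on $(P\phi_0)C_S(\m{E})$ and hence only extends $\alpha|_{P\phi_0}$, not $\alpha$ on all of $T$. So you have not shown $\alpha\in\Aut_{\F_0}(T)$. The paper sidesteps this entirely: using the \emph{strong invariance} formulation of $\F$-invariance (as in \cite[Definition~I.6.1/Proposition~I.6.4]{Aschbacher/Kessar/Oliver:2011}), the condition ``$\phi^\psi\in\m{E}$ for all $\psi\in\Hom_\F(Q,T)$'' restricts trivially to $\psi\in\Hom_{\F_0}(Q,T)\subseteq\Hom_\F(Q,T)$. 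Hence $\m{E}$ is automatically $\F_0$-invariant and therefore weakly normal in $\F_0$, with no separate Frattini check needed. Your sentence ``$\F_0$-invariance is immediate since $\Aut_{\F_0}(T)\leq\Aut_\F(T)$'' is only the $\Aut_\F(T)$-stability half of invariance; switching to the strong-invariance formulation fixes this in one line.

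For the extension axiom the paper's argument is also simpler than yours. Take the $\F$-extension $\ov\phi\in\Aut_\F(TC_S(T))$ of $\alpha\in\Aut_\m{E}(T)$ with $[C_S(T),\ov\phi]\leq T$ given by normality of $\m{E}$ in $\F$. Since $C_S(\m{E})$ is strongly closed in $\F$ (\cite[(6.7)(2)]{Aschbacher:2011}), $\ov\phi$ automatically satisfies $C_S(\m{E})\ov\phi=C_S(\m{E})$, so $\ov\phi\in\Aut_{\F_0}(TC_S(T))$. No replacement or recombination of extensions is needed. (Note also that $C_S(\m{E})\unlhd S$, so $\F_0$ lives over $S$ and $C_{S_0}(T)=C_S(T)$; this makes the check literally the same condition as in $\F$.)
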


\begin{proof}
Recall $\m{E}\subseteq C_\F(C_S(\m{E}))\subseteq N_\F(C_S(\m{E}))$. Clearly $\m{E}$ is weakly normal in $N_\F(C_S(\m{E}))$. Set $T=\m{E}\cap S$. As $\m{E}\unlhd\F$, every element $\phi\in\Aut_\m{E}(T)$ extends to $\ov{\phi}\in\Aut_{\F}(TC_S(T))$ such that $[C_S(T),\ov{\phi}]\leq T$. Since $C_S(\m{E})\leq C_S(T)$ and $C_S(\m{E})$ is strongly closed in $\F$ by \cite[(6.7)(2)]{Aschbacher:2011}, we have $C_S(\m{E})\ov{\phi}=C_S(\m{E})$. Hence, $\ov{\phi}\in\Aut_{N_\F(C_S(\m{E}))}(TC_S(T))$. This shows the assertion.
\end{proof}

\begin{lemma}\label{PFullNorm}
Let $Q\in\F^f$ such that $Q=(Q\cap T)C_S(\m{E})$. Then $Q\cap T\in\m{E}^f$.
\end{lemma}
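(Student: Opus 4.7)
Set $P:=Q\cap T$ and $Z:=C_S(\m{E})\cap T$. Since $P\leq T$, the modular law applied to $Q=PC_S(\m{E})$ gives $P=Q\cap T=P(C_S(\m{E})\cap T)=PZ$, so $Z\leq P$; moreover $Z\leq Z(T)$ because $C_S(\m{E})\leq C_S(T)$. The strategy is to pick a fully $\m{E}$-normalized $\m{E}$-conjugate $P^*$ of $P$, build from it an $\F$-conjugate $Q^*$ of $Q$ satisfying $Q^*\cap T=P^*$, and then use the hypothesis $Q\in\F^f$ together with strong closure of $T$ to transport normalizers. The inequality $|N_T(P^*)|\le|N_T(P)|$ obtained this way combined with $P^*\in\m{E}^f$ gives equality, hence $P\in\m{E}^f$.

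To construct $Q^*$: choose $\psi\in\mathrm{Iso}_\m{E}(P,P^*)$. By the very definition of $C_S(\m{E})$ as the largest subgroup $X\leq C_S(T)$ with $\m{E}\subseteq C_\F(X)$, we have $\m{E}\subseteq C_\F(C_S(\m{E}))$, so $\psi$ extends to an $\F$-isomorphism $\hat\psi\colon PC_S(\m{E})\longrightarrow P^*C_S(\m{E})$ with $\hat\psi|_{C_S(\m{E})}=\id$. Setting $Q^*:=P^*C_S(\m{E})$, the map $\hat\psi\colon Q\to Q^*$ is an $\F$-isomorphism, so $Q^*\in Q^\F$. Since $Z\leq P$ is fixed pointwise by $\hat\psi$, we get $Z=\hat\psi(Z)\leq\hat\psi(P)=P^*$, and hence $Q^*\cap T=P^*(C_S(\m{E})\cap T)=P^*Z=P^*$. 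Because $T$ centralizes $C_S(\m{E})$, an element $g\in T$ normalizes $Q$ iff it normalizes $P^g C_S(\m{E})=P C_S(\m{E})$ iff $P^g\leq Q\cap T=P$, and the same calculation for $Q^*$ yields $N_S(Q)\cap T=N_T(P)$ and $N_S(Q^*)\cap T=N_T(P^*)$.

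Finally, since $Q\in\F^f$ and $Q^*\in Q^\F$, \cite[I.2.6(c)]{Aschbacher/Kessar/Oliver:2011} supplies $\rho\in\Hom_\F(N_S(Q^*),N_S(Q))$ with $Q^*\rho=Q$. Strong closure of $T$ in $\F$ forces $\rho(N_T(P^*))\leq T$, and since $\rho$ sends $N_S(Q^*)$ into $N_S(Q)$, we conclude $\rho(N_T(P^*))\leq N_S(Q)\cap T=N_T(P)$. Injectivity of $\rho$ then gives $|N_T(P^*)|\leq|N_T(P)|$; combined with the choice $P^*\in\m{E}^f$ this forces equality, so $P\in\m{E}^f$.

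The only non-routine point is arranging that $\hat\psi$ fixes $C_S(\m{E})$ pointwise, which is precisely what lets us identify $Q^*\cap T$ with $P^*$ and propagate strong closure through the transfer map $\rho$; this rests entirely on the defining property $\m{E}\subseteq C_\F(C_S(\m{E}))$. Everything else is a bookkeeping exercise with the modular law and strong closure of $T$.
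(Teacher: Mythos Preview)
Your proof is correct and follows essentially the same strategy as the paper's: pick a fully $\m{E}$-normalized $\m{E}$-conjugate of $P$, extend the $\m{E}$-isomorphism to $Q$ using that $\m{E}\subseteq C_\F(C_S(\m{E}))$, then pull back via \cite[I.2.6(c)]{Aschbacher/Kessar/Oliver:2011} and use strong closure of $T$ to compare $T$-normalizers. The only cosmetic difference is that you compute $Q^*\cap T=P^*$ via the modular law and $Z\leq P^*$, whereas the paper argues this directly from strong closure of $T$ applied to $\hat\psi^{-1}$.
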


\begin{proof}
 Set $P:=Q\cap T$. Let $\alpha_0\in \Hom_\m{E}(P,T)$ be such that $P\alpha_0$ is fully $\m{E}$-normalized. By the characterization of $C_S(\m{E})$ above, $\alpha_0$ extends to $\alpha\in\Hom_\F(Q,S)$ such that $\alpha$ fixes every element of $C_S(\m{E})$. In particular, $C_S(\m{E})\alpha=C_S(\m{E})$ and $Q\alpha=(P\alpha)C_S(\m{E})$. Moreover, $P\alpha=(Q\cap T)\alpha\leq Q\alpha\cap T$ and $(Q\alpha\cap T)\alpha^{-1}\leq Q\cap T$. So $Q\alpha\cap T=(Q\cap T)\alpha=P\alpha$. 

\smallskip

As $Q=PC_S(\m{E})$ and $C_S(\m{E})\leq C_S(T)$, we have $N_T(P)\leq N_T(Q)$. As $P=Q\cap T$, $N_T(Q)\leq N_T(P)$. Hence, $N_T(P)=N_T(Q)$. Similarly, $N_T(Q\alpha)=N_T(P\alpha)$. By \cite[Lemma~I.2.6(c)]{Aschbacher/Kessar/Oliver:2011}, there exists $\beta\in\Hom_\F(N_S(Q\alpha),S)$ such that $Q\alpha\beta=Q$. For such $\beta$, we have $N_T(Q\alpha)\beta\leq N_T(Q)$. Thus $|N_T(P\alpha_0)|=|N_T(P\alpha)|=|N_T(Q\alpha)|\leq |N_T(Q)|=|N_T(P)|$. Hence, $P\in\m{E}^f$ as $P\alpha_0\in\m{E}^f$.
\end{proof}

\begin{lemma}\label{NEPNormalNFQ}
Let $Q\in\F^f$ such that $Q=(Q\cap T)C_S(\m{E})$. Then $N_\m{E}(Q\cap T)$ is weakly normal in $N_\F(Q)$.
\end{lemma}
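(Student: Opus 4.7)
Set $P:=Q\cap T$. By Lemma~\ref{PFullNorm}, $P\in\m{E}^f$, so $N_\m{E}(P)$ is saturated; likewise $N_\F(Q)$ is saturated since $Q\in\F^f$. To establish weak normality of $N_\m{E}(P)$ in $N_\F(Q)$, I will verify (i) $N_\m{E}(P)$ is a subsystem of $N_\F(Q)$, (ii) $N_T(P)$ is strongly closed in $N_\F(Q)$, and (iii) morphisms in $N_\m{E}(P)$ are $\Aut_{N_\F(Q)}(N_T(P))$-invariant.

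Parts (i) and (ii) are routine bookkeeping. First I would observe that $N_T(P)=N_T(Q)$, using $Q=PC_S(\m{E})$ and $C_S(\m{E})\leq C_S(T)$: any $t\in T$ centralizes $C_S(\m{E})$, so $t$ normalizes $P$ iff $t$ normalizes $PC_S(\m{E})=Q$. For (i), given any $\hat\phi\in\Hom_\m{E}(AP,BP)$ with $P\hat\phi=P$, the containment $\m{E}\subseteq C_\F(C_S(\m{E}))$ allows me to extend $\hat\phi$ by the identity on $C_S(\m{E})$ to an $\F$-morphism $AQ\to BQ$ fixing $Q$ setwise; restricting to $A$ realises $\hat\phi|_A$ as a morphism in $N_\F(Q)$. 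For (ii), any $\psi\in\Hom_{N_\F(Q)}(X,N_S(Q))$ with $X\leq N_T(P)$ extends to an $\F$-morphism on $XQ$ normalizing $Q$, and strong closure of $T$ in $\F$ together with the definition of $N_\F(Q)$ gives $X\psi\leq T\cap N_S(Q)=N_T(Q)=N_T(P)$.

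The main content is (iii). Fix $\phi\in\Hom_{N_\m{E}(P)}(A,B)$ witnessed by $\hat\phi\in\Hom_\m{E}(AP,BP)$ with $P\hat\phi=P$, and $\alpha\in\Aut_{N_\F(Q)}(N_T(P))$ witnessed by $\tilde\alpha\in\Aut_\F(N_T(P)Q)$ with $Q\tilde\alpha=Q$. Since $T$ is strongly closed in $\F$ and $P=Q\cap T$, the subgroup $P$ is also $\tilde\alpha$-invariant, so $P\alpha=P$. To import the $\F$-invariance of $\m{E}$ (which applies directly only to conjugation by $\Aut_\F(T)$), I apply the Frattini condition for $\m{E}\unlhd\F$ to the $\F$-morphism $\alpha\colon N_T(P)\to T$, obtaining $\rho\in\Hom_\m{E}(N_T(P),T)$ and $\beta\in\Aut_\F(T)$ with $\alpha=\rho\,\beta|_{N_T(P)\rho}$. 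Then
\[
\hat\phi^{\alpha}\;=\;\beta^{-1}\,(\rho^{-1}\hat\phi\,\rho)\,\beta,
\]
where the inner conjugate $\rho^{-1}\hat\phi\rho$ is a composition of $\m{E}$-morphisms, and the outer conjugation by $\beta\in\Aut_\F(T)$ remains in $\m{E}$ by $\F$-invariance. Hence $\hat\phi^{\alpha}\in\m{E}$, and $P\alpha=P$ forces $P\hat\phi^{\alpha}=P$, so $\hat\phi^{\alpha}$ witnesses $\phi^{\alpha}\in\Hom_{N_\m{E}(P)}(A\alpha,B\alpha)$.

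The main obstacle is precisely this bridging step in (iii): the $\F$-invariance of $\m{E}$ is packaged in terms of $\Aut_\F(T)$, whereas the conjugating automorphism $\alpha$ lives on the smaller group $N_T(P)$. Frattini is the natural tool to effect the transfer, and the argument goes through once one tracks that $\rho\in\Hom_\m{E}(N_T(P),T)$ so the twisted morphism $\rho^{-1}\hat\phi\rho$ remains in $\m{E}$ before the final $\Aut_\F(T)$-conjugation.
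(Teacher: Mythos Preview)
Your checklist (i)--(iii) is incomplete: weak normality requires the Frattini condition as well (see \cite[Definition~I.6.1]{Aschbacher/Kessar/Oliver:2011}), and you never verify it for $N_\m{E}(P)$ in $N_\F(Q)$. Concretely, given $A\leq N_T(P)$ and $\psi\in\Hom_{N_\F(Q)}(A,N_T(P))$, you would need to factor $\psi=\psi_0\gamma$ with $\psi_0\in\Hom_{N_\m{E}(P)}(A,N_T(P))$ and $\gamma\in\Aut_{N_\F(Q)}(N_T(P))$. If you try to use the Frattini decomposition for $\m{E}$ in $\F$ (as you do in (iii)), you get $\psi|_{AP}=\rho\beta$ with $\rho\in\m{E}$ and $\beta\in\Aut_\F(T)$, but there is no reason for $\rho$ or $\beta$ individually to normalize $P$ (only their composite does), so $\rho|_A$ need not lie in $N_\m{E}(P)$ and $\beta$ need not restrict to an element of $\Aut_{N_\F(Q)}(N_T(P))$. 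Repairing this is not obviously easy.

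The deeper issue is your remark that $\F$-invariance of $\m{E}$ ``applies directly only to conjugation by $\Aut_\F(T)$''. This is not so: by \cite[Proposition~I.6.4]{Aschbacher/Kessar/Oliver:2011}, $\F$-invariance is equivalent to the \emph{strong} invariance condition, which says $\phi^\psi\in\m{E}$ for any $\phi\in\Hom_\m{E}(A,B)$ and any $\psi\in\Hom_\F(B,T)$, not just $\psi\in\Aut_\F(T)$. This is exactly the tool the paper uses. With it, the argument is a one-liner: extend $\phi$ to $\ov{\phi}\in\Hom_\m{E}(AP,BP)$ fixing $P$ and $\psi$ to $\hat{\psi}\in\Hom_\F(BP,N_T(P))$ fixing $P$ (using that $T$ is strongly closed and $P=Q\cap T$), and apply strong invariance of $\m{E}$ in $\F$ directly to $\ov{\phi}$ and $\hat{\psi}$. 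This yields the strong invariance condition for $N_\m{E}(P)$ in $N_\F(Q)$, which gives $\F$-invariance (hence weak normality) in a single step and makes both the Frattini verification and your detour through the decomposition $\alpha=\rho\beta$ unnecessary.
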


\begin{proof}
Set $P:=Q\cap T$. By Lemma~\ref{PFullNorm}, $P\in\m{E}^f$. By assumption $Q\in\F^f$, so both $N_\m{E}(P)$ and $N_\F(Q)$ are saturated. Every morphism $\alpha\in\Hom_{N_\m{E}(P)}(A,B)$ ($A,B\leq N_T(P)$) extends to an element of $\Hom_{\m{E}}(AP,BP)$ normalizing $P$, which then by definition of $C_S(\m{E})$ extends to $\ov{\alpha}\in \Hom_\F(APC_S(\m{E}),BPC_S(\m{E}))$ centralizing $C_S(\m{E})$. As $Q=PC_S(\m{E})$, it follows $Q\ov{\alpha}=Q$ and so $\alpha$ is a morphism in $N_\F(Q)$. This shows that $N_\m{E}(P)$ is a subsystem of $N_\F(Q)$.  Hence, it remains to prove only that $N_\m{E}(P)$ is invariant in $N_\F(P)$. 

\smallskip

We prove the strong invariance condition as stated in \cite[Proposition~I.6.4(d)]{Aschbacher/Kessar/Oliver:2011}. Let $A\leq B\leq N_T(P)$, $\phi\in\Hom_{N_\m{E}(P)}(A,B)$ and $\psi\in\Hom_{N_\F(Q)}(B,N_T(P))$. We need to prove that $(\psi|_A)^{-1}\phi\psi\in\Hom_{N_\m{E}(P)}(A\psi,B\psi)$. By definition of the normalizer subsystems, $\phi$ extends to $\ov{\phi}\in\Hom_\m{E}(AP,BP)$ with $P\ov{\phi}=P$, and $\psi$ extends to $\ov{\psi}\in\Hom_\F(BQ,N_T(P)Q)$ with $Q\ov{\psi}=Q$. As $T$ is strongly closed and, by assumption, $P=Q\cap T$, we have $P\ov{\psi}=P$ and thus $\widehat{\psi}:=\ov{\psi}|_{BP}\in\Hom_\F(BP,N_T(P))$. Since the strong invariance condition holds for $(\m{E},\F)$, we have that $(\widehat{\psi}|_{AP})^{-1}\ov{\phi}\widehat{\psi}$ is a morphism in $\m{E}$. Moreover, $P(\widehat{\psi}|_{AP})^{-1}\ov{\phi}\widehat{\psi}=P$ and $(\widehat{\psi}|_{AP})^{-1}\ov{\phi}\widehat{\psi}$ extends $(\psi|_A)^{-1}\phi\psi$. So $(\psi|_A)^{-1}\phi\psi$ is a morphism in $N_\m{E}(P)$ as required.
\end{proof}

\begin{lemma}\label{MainComponents}
 Let $\m{E}$ be a normal subsystem of $\F$ and $\m{C}$ a component of $\F$. Then $\m{C}\subseteq \m{E}$ or $\m{C}\cap S\leq C_S(\m{E})$. 
\end{lemma}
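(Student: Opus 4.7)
The plan is to reduce to Aschbacher's structure theory of components of fusion systems from \cite{Aschbacher:2011}, supplemented by the L-balance theorem for fusion systems from \cite{Aschbacher:2010}. The group-theoretic analog reads: if $L$ is a component of a finite group $G$ and $N\unlhd G$, then either $L\leq N$ or $[L,N]=1$. I would mimic that dichotomy in the fusion-system setting.

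First I would invoke Aschbacher's L-balance theorem: the components of a normal subsystem $\m{E}\unlhd\F$ are precisely those components of $\F$ which lie in $\m{E}$. Hence, if $\m{C}\not\subseteq\m{E}$, then $\m{C}$ is distinct from every component of $\m{E}$. Aschbacher's central-product decomposition of the layer $E(\m{E})$ as a product of its components, combined with the fact that distinct components of $\F$ centralize each other (i.e., $\m{C}_1\subseteq C_\F(\m{C}_2)$ for distinct components $\m{C}_1,\m{C}_2$), then yields $\m{C}\subseteq C_\F(E(\m{E}))$, and in particular $\m{C}\cap S\leq C_S(E(\m{E})\cap T)$.

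Second, I need to upgrade ``centralizes $E(\m{E})$'' to ``centralizes $\m{E}$'', which is what is required to conclude $\m{C}\cap S\leq C_S(\m{E})$ in the sense recalled at the start of this section (namely $\m{C}\cap S\leq C_S(T)$ together with $\m{E}\subseteq C_\F(\m{C}\cap S)$). The natural route is via the generalized Fitting subsystem $F^*(\m{E})=O_p(\m{E})E(\m{E})$, which Aschbacher proves is self-centralizing inside $\m{E}$. Since $\m{C}$ is quasisimple it acts trivially on the $p$-group $O_p(\m{E})$, and combined with the previous paragraph this gives that $\m{C}$ commutes with $F^*(\m{E})$, and then with all of $\m{E}$.

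The main obstacle will be the careful bookkeeping between the subgroup-level statement $\m{C}\cap S\leq C_S(\m{E})$ and the subsystem-level statement $\m{C}\subseteq C_\F(\m{E})$: one must verify, using Aschbacher's precise definitions of centralizer subsystems in \cite{Aschbacher:2011}, that the commuting of $\m{C}$ and $\m{E}$ inside $\F$ implies both $\m{C}\cap S\leq C_S(T)$ and $\m{E}\subseteq C_\F(\m{C}\cap S)$. This is precisely where the author's remark in the introduction about requiring the theory of components and L-balance for Theorem~\ref{SubcentricEF} becomes essential, and why a purely elementary argument of the style used earlier in the paper is not available here.
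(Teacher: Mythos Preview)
The paper's proof is a direct application of Aschbacher's \cite[(9.13)]{Aschbacher:2011}: if $J$ is the set of components of $\F$ not contained in $\m{E}$ and $\m{D}=\prod_{\m{C}'\in J}\m{C}'$, then $\m{E}\m{D}$ is a central product of $\m{E}$ and $\m{D}$, and the construction of central products immediately gives $\m{C}\cap S\leq \m{D}\cap S\leq C_S(\m{E})$. No passage through $F^*(\m{E})$ is needed.

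Your route tries to bypass (9.13), but the key inference in your second step fails. You claim that $\m{C}$ commuting with $F^*(\m{E})$, together with $F^*(\m{E})$ being self-centralizing in $\m{E}$, forces $\m{C}$ to commute with all of $\m{E}$. This does not follow: the self-centralizing property $C_{\m{E}}(F^*(\m{E}))=Z(F^*(\m{E}))$ constrains centralizers taken \emph{inside} $\m{E}$; it says nothing about how an external subsystem that centralizes $F^*(\m{E})$ must relate to the rest of $\m{E}$. Even in finite groups the statement ``$L$ centralizes $F^*(N)$ implies $L$ centralizes $N$'' is false without further hypotheses, and the standard proof that a component either lies in a normal subgroup or centralizes it does not proceed via $F^*(N)$ but via the $N$-action permuting the components of $G$. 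In the fusion-system setting the correct packaging of that argument is precisely \cite[(9.13)]{Aschbacher:2011}, which is what the paper cites.

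A minor point: what you invoke as ``L-balance'' is not the L-balance theorem used elsewhere in this paper (that theorem concerns $E(N_\F(Q))\subseteq E(\F)$ for a local subsystem); the fact that components of a normal subsystem are components of the ambient system lies rather in \cite[(9.8)--(9.9)]{Aschbacher:2011}. With correct citations your steps 1--3 can be made precise, but step~4 is a genuine gap, and closing it amounts to reproving the content of (9.13).
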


\begin{proof}
Suppose $\m{C}$ is not contained in $\m{E}$. Then $\m{C}$ is in the set $J$ of components of $\F$ which are not components of $\m{E}$. Then $\m{D}:=\prod_{\m{C}'\in J}\m{C}'$ is a well-defined subsystem of $\F$ containing $\m{C}$ by \cite[(9.8)(2)]{Aschbacher:2011}. It is furthermore shown in \cite[(9.13)]{Aschbacher:2011} that $\m{E}\m{D}$ is well-defined and a central product of $\m{E}$ and $\m{D}$. If $\F$ is the central product of two subsystems $\F_1$ and $\F_2$ then, by the construction of central products in \cite[Chapter~2]{Aschbacher:2011}, $\F_2\cap S\leq C_S(\F_1)$. So $\m{C}\cap S\leq \m{D}\cap S\leq C_S(\m{E})$. 
\end{proof}

\begin{proof}[Proof of Theorem~\ref{SubcentricEF}]
Let $\m{E}$ be a normal subsystem of $\F$ over $T\leq S$. Let $P\in\m{E}^s$ and set $Q:=PC_S(\m{E})$. We need to show that $Q\in\F^s$.

\smallskip
\noindent{\em Step 1:} We show that it is enough to prove the assertion in the case that $Q\in\F^f$ and $P=Q\cap T$. For that take $\phi\in\Hom_\F(Q,S)$ such that $Q\phi$ is fully $\F$-normalized. Then by Lemma~\ref{ESubphi}, $P\phi\in\m{E}^s$. Moreover, as $C_S(\m{E})$ is strongly closed by \cite[(6.7)(2)]{Aschbacher:2011}, $C_S(\m{E})\phi=C_S(\m{E})$ and thus 
$Q\phi=(P\phi)C_S(\m{E})$. So replacing $(P,Q)$ by $(P\phi,Q\phi)$, we may assume that $Q$ is fully $\F$-normalized. Note also that $P\leq Q\cap T$, so by Proposition~\ref{subcentricProp}, $Q\cap T$ is subcentric in $\m{E}$. Moreover, $Q=(Q\cap T)C_S(\m{E})$. Hence, replacing $P$ by $Q\cap T$, we may assume that $P=Q\cap T$. 

\smallskip 

From now on we assume that $Q\in\F^f$ and $P=Q\cap T$.

\smallskip

\noindent{\em Step~2:} We show that $E(N_\F(C_S(\m{E})))\subseteq \m{E}$. Let $\m{C}$ be a component of $N_\F(C_S(\m{E}))$. By Lemma~\ref{ENormNFCSE}, $\m{E}$ is normal in $N_\F(C_S(\m{E}))$. Therefore, by Lemma~\ref{MainComponents}, $\m{C}\subseteq \m{E}$ or $C:=\m{C}\cap S\leq C_S(\m{E})$. 

\smallskip

By \cite[(9.9)(1)]{Aschbacher:2011}, a component of a saturated fusion system centralizes every normal subgroup of the same fusion system. Hence, as $C_S(\m{E})$ is normal in $N_\F(C_S(\m{E}))$, we have $\m{C}\subseteq C_\F(C_S(\m{E}))$. Assume $C\leq C_S(\m{E})$. As $\m{C}\subseteq  C_\F(C_S(\m{E}))$ this means that $C$ is abelian, contradicting \cite[(9.1)(2)]{Aschbacher:2011} and the fact that $\m{C}$ is quasisimple. This proves $\m{C}\subseteq \m{E}$ and, as $\m{C}$ was arbitrary, $E(N_\F(C_S(\m{E})))\subseteq \m{E}$. 

\smallskip

\noindent{\em Step~3:} We complete the proof by showing that $Q$ is subcentric in $\F$.
Suppose this is not true. As we assume  that $Q$ is fully normalized, this means by Lemma~\ref{subcentricEquiv} that $N_\F(Q)$ is not constrained. Thus, by \cite[(14.2)]{Aschbacher:2011}, $E(N_\F(Q))\neq 1$. By \cite[(6.7)(2)]{Aschbacher:2011}, $C_S(\m{E})$ is strongly closed in $\F$.  So as $C_S(\m{E})\leq Q$, we have $N_\F(Q)=N_{N_\F(C_S(\m{E}))}(Q)$. Since $Q$ is fully normalized in $\F$ and $C_S(\m{E})\unlhd S$, $N_\F(C_S(\m{E}))$ is saturated and $Q$ is fully normalized in $N_\F(C_S(\m{E}))$. Thus, by Aschbacher's version of the L-Balance Theorem for fusion systems \cite[Theorem~7]{Aschbacher:2011}, $E(N_\F(Q))=E(N_{N_\F(C_S(\m{E}))}(Q))\subseteq E(N_\F(C_S(\m{E})))$. So by Step~2, $E(N_\F(Q))\subseteq \m{E}$. 

\smallskip

Let $\m{D}$ be a component of $N_\F(Q)$ and $D=S\cap \m{D}$. By assumption, $P=Q\cap T$ and thus $Q=(Q\cap T)C_S(\m{E})$. Hence, by Lemma~\ref{PFullNorm}, $P$ is fully $\m{E}$-normalized. Moreover, by Lemma~\ref{NEPNormalNFQ}, $N_{\m{E}}(P)$ is weakly normal in $N_\F(Q)$. It follows from the latter fact and Lemma~\ref{OpE}(b) that $O_p(N_\m{E}(P))$ is normal in $N_\F(Q)$. By \cite[(9.6)]{Aschbacher:2011} the component $\m{D}$ of $N_\F(Q)$ centralizes every normal subgroup of $N_\F(Q)$. So in particular, $\m{D}\subseteq C_{N_\F(Q)}(O_p(N_\m{E}(P)))$ and thus $[D,O_p(N_\m{E}(P))]=1$. As $E(N_\F(Q))\subseteq \m{E}$, we have $D\leq T$. Hence, $D\leq C_T(O_p(N_\m{E}(P)))=Z(O_p(N_\m{E}(P)))$, because $P$ is subcentric and fully normalized in $\m{E}$ and so $O_p(N_{\m{E}}(P))$ is centric in $\m{E}$. Thus, $D$ is abelian, again contradicting \cite[(9.1)(2)]{Aschbacher:2011} and the fact that $\m{D}$ is quasisimple.
\end{proof}

\section{Partial groups, localities and transporter systems}\label{LocalitiesSection0}

In this section we recall some background on localities. At the end we spell out the connection between localities and transporter systems. 

\subsection{Partial groups}
Adapting the notation from \cite{Chermak:2013} and \cite{Chermak:2015}, we denote the set of words in a set $\L$ by $\W(\L)$. Moreover, we write $\emptyset$ for the empty word, and $v_1\circ v_2\circ\dots\circ v_n$ for the concatenation of words $v_1,\dots,v_n\in\W(\L)$. Roughly speaking, a partial group is a set $\L$ together with a product which is only defined on certain words in $\L$, and an inversion map $\L\rightarrow \L$ which is an involutory bijection, subject to certain axioms. We refer the reader to \cite[Definition~2.1]{Chermak:2013} or \cite[Definition~1.1]{Chermak:2015} for the precise definition of a partial group, and to the elementary properties of partial groups stated in \cite[Lemma~2.2]{Chermak:2013} or \cite[Lemma~1.4]{Chermak:2015}.

\bigskip

\textbf{For the remainder of this section let $\L$ be a partial group with product $\Pi\colon \D\rightarrow \L$ defined on the domain $\D\subseteq\W(\L)$.} 

\bigskip

It follows from the axioms of a partial group that $\emptyset\in\D$. We set $\One=\Pi(\emptyset)$. Moreover, given a word $v=(f_1,\dots,f_n)\in\D$, we write sometimes $f_1f_2\dots f_n$ for the product $\Pi(v)$.

\smallskip

A \textit{partial subgroup} of $\L$ is a subset $\H$ of $\L$ such that $f^{-1}\in\H$ for all $f\in\H$ and $\Pi(w)\in\H$ for all $w\in\W(\H)\cap\D$. Note that $\emptyset\in\W(\H)\cap\D$ and thus $\One=\Pi(\emptyset)\in\H$ if $\H$ is a partial subgroup of $\L$. It is easy to see that a partial subgroup of $\L$ is always a partial group itself whose product is the restriction of the product $\Pi$ to $\W(\H)\cap\D$. Observe furthermore that $\L$ forms a group in the usual sense if $\W(\L)=\D$; see \cite[Lemma~1.3]{Chermak:2015}. So it makes sense to call a partial subgroup $\H$ of $\L$ a \textit{subgroup of $\L$} if $\W(\H)\subseteq\D$. In particular, we can talk about \textit{$p$-subgroups of $\L$} meaning subgroups of $\L$ which are finite and whose order is a power of $p$. 

For any $g\in\L$, $\D(g)$ denotes the set of $x\in\L$ with $(g^{-1},x,g)\in\D$. Thus, $\D(g)$ denotes the set of elements $x\in\L$ for which the conjugation $x^g:=\Pi(g^{-1},x,g)$ is defined. 

\smallskip

If $g\in\L$ and $X\subseteq \D(g)$ we set $X^g:=\{x^g\colon x\in X\}$. If we write $X^g$ for some $g\in\L$ and some subset $X\subseteq \L$, we will always implicitly mean that $X\subseteq\D(g)$. Similarly, if we write $x^g$ for $x,g\in\L$, we always mean that $x\in\D(g)$. 

\smallskip

If $X$ is a subset of $\L$ then we set
\[N_\L(X)=\{g\in\L\colon X^g=X\}\mbox{ and }C_\L(X)=\{g\in\L\colon x^g=x\mbox{ for all }x\in X\}.\]
Note that $C_\L(X)\subseteq N_\L(X)$. It follows easily from the axioms of a partial group that $\One$ is contained in the centralizer of any subset of $\L$; see \cite[Lemma~2.5(d)]{Chermak:2013}.

\smallskip

If $X$ and $Y$ are subsets of $\L$ then set $N_Y(X)=N_\L(X)\cap Y$ and $C_Y(X)=C_\L(X)\cap Y$. Moreover, define
\[Z(\L):=C_\L(\L).\]
Generalizing the notion of the normalizer, we set $N_\L(X,Y)=\{f\in\L\colon X^f\subseteq Y\}$.

\smallskip

Since there is a natural notion of conjugation, there is also a natural notion of partial normal subgroups of partial groups. Namely, a partial subgroup $\N$ of $\L$ is called a \textit{partial normal subgroup} of $\L$ if $n^f\in\N$ for all $f\in\L$ and all $n\in\N\cap\D(f)$. 

Let $\L'$ be a partial group with domain $\D'$ and product $\Pi'\colon\D'\rightarrow\L'$. Let $\One'=\Pi'(\emptyset)$ be the identity in $\L'$. Let $\beta\colon\L\rightarrow\L'$ be a map and let $\beta^*\colon \W(\L)\rightarrow\W(\L')$ be the induced map. Recall from \cite[Definition~1.11]{Chermak:2015} that $\beta$ is called a homomorphism of partial groups if $\D\beta^*\subseteq\D'$ and $\Pi'(v\beta^*)=(\Pi(v))\beta$ for all $v\in\D$. If $\beta$ is a homomorphism of partial groups, define the kernel of $\beta$ via
\[\ker(\beta)=\{f\in\L\colon \beta(f)=\One'\}.\]
By \cite[Lemma~1.14]{Chermak:2015}, the kernel of a homomorphism of partial groups forms always a partial normal subgroup. 

\smallskip

A homomorphism $\beta\colon\L\rightarrow\L'$ of partial groups is called an \textit{isomorphism} of partial groups if $\D\beta^*=\D'$ and $\beta$ is injective. As every word in $\L'$ of length one is in $\D'$, the condition $\D\beta^*=\D'$ implies that $\beta$ is surjective. Thus, every isomorphism of partial groups is bijective.

\subsection{Localities}

\begin{definition}\label{LocalityDefinition}
Let $S$ be a $p$-subgroup of $\L$ and let $\Delta$ be a non-empty set of subgroups of $S$. 

\smallskip

The set $\Delta$ is said to be \textit{closed under taking $\L$-conjugates and overgroups in $S$} if for any $P\in\Delta$ the following holds: For every $g\in\L$ with $P\subseteq \D(g)$ and $P^g\subseteq S$ we have $P^g\in\Delta$ (so in particular, $P^g$ is a subgroup of $S$), and for every subgroup $Q$ of $S$ containing $P$ we have $Q\in\Delta$.

\smallskip

We say that $(\L,\Delta,S)$ is a \textit{locality} if the partial group $\L$ is finite as a set and the following conditions hold:
\begin{itemize}
\item[(L1)] $S$ is maximal with respect to inclusion among the $p$-subgroups of $\L$.
\item[(L2)] $\D$ is the set of words $(f_1,\dots,f_n)\in\W(\L)$ such that there exist $P_0,\dots,P_n\in\Delta$ with 
\begin{itemize}
\item [(*)] $P_{i-1}\subseteq \D(f_i)$ and $P_{i-1}^{f_i}=P_i$ for all $i=1,\dots,n$.
\end{itemize}
\item[(L3)] The set $\Delta$ is closed under taking $\L$-conjugates and overgroups in $S$.
\end{itemize}
If $(\L,\Delta,S)$ is a locality and $v=(f_1,\dots,f_n)\in\W(\L)$, then we say that $v\in\D$ via $P_0,\dots,P_n$ (or $v\in\D$ via $P_0$), if $P_0,\dots,P_n$ are elements of $\Delta$ such that (*) holds.
\end{definition}

\begin{rmk}
Our definition of a locality differs slightly from the one given by Chermak in \cite{Chermak:2013} and \cite{Chermak:2015}, but can be shown to be equivalent. It can be easily seen that a locality as defined by Chermak is a triple $(\L,\Delta,S)$ such that $\L$ is a finite partial group, $S$ is a $p$-subgroup of $\L$, $\Delta$ is a set of subgroups of $S$, and such that the conditions (L1) and (L2) together with the following condition hold:
\begin{itemize}
 \item[(L3')] For any subgroup $Q$ of $S$, for which there exist $P\in\Delta$ and $g\in\L$ with $P\subseteq \D(g)$ and $P^g\leq Q$, we have $Q\in\Delta$.
\end{itemize}
Clearly (L3) implies (L3'). If $(\L,\Delta,S)$ is a locality in Chermak's definition, then it is shown in \cite[Proposition~2.6(c)]{Chermak:2015} that $P^g$ is a subgroup of $S$ and thus an element of $\Delta$ if $g\in\L$ with $P\subseteq\D(g)$ and $P^g\subseteq S$. Moreover, $P\subseteq \D(\One)$ and $P^{\One}=P$. Therefore, if $(\L,\Delta,S)$ is a locality in Chermak's definition then (L3) holds and $(\L,\Delta,S)$ is indeed also a locality in our definition.
\end{rmk}

If $(\L,\Delta,S)$ is a locality and $g\in\L$, we set 
\[S_g:=\{s\in\D(g)\cap S\colon s^g\in S\}.\]
More generally, if $w=(g_1,\dots,g_n)\in\W(\L)$, we write $S_w$ for the set of $s\in S$ such that there exist elements $s=s_0,s_1,\dots,s_n$ of $S$ such that, for $i=1,\dots,n$, we have $s_{i-1}\in\D(g_i)$ and $s_{i-1}^{g_i}=s_i$.

\begin{lemma}[Important properties of localities]\label{LocalitiesProp}
Let $(\L,\Delta,S)$ be a locality. Then the following hold:
\begin{itemize}
\item [(a)] $N_\L(P)$ is a subgroup of $\L$ for each $P\in\Delta$.
\item [(b)] Let $P\in\Delta$ and $g\in\L$ with $P\subseteq S_g$. Then $Q:=P^g\in\Delta$, $N_\L(P)\subseteq \D(g)$ and 
$$c_g\colon N_\L(P)\rightarrow N_\L(Q)$$
is an isomorphism of groups. 
\item [(c)] Let $w=(g_1,\dots,g_n)\in\D$ via $(X_0,\dots,X_n)$. Then 
$$c_{g_1}\circ \dots \circ c_{g_n}=c_{\Pi(w)}$$
 is a group isomorphism $N_\L(X_0)\rightarrow N_\L(X_n)$.
\item [(d)] For every $g\in\L$, $S_g\in\Delta$. In particular, $S_g$ is a subgroup of $S$. Moreover, $S_g^g=S_{g^{-1}}$.
\item [(e)] For every $g\in\L$, $c_g\colon \D(g)\rightarrow \D(g^{-1})$ is a bijection with inverse map $c_{g^{-1}}$.
\item [(f)] For any $w\in\W(\L)$, $S_w$ is a subgroup of $S_{\Pi(w)}$, and $S_w\in\Delta$ if and only if $w\in\D$.
\end{itemize}
\end{lemma}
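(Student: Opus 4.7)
The plan is to establish the seven properties in the order listed, deducing each from the axioms (L1)--(L3) together with the basic partial group identities, and citing Chermak's foundational results where they supply the nontrivial combinatorial content.

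For (a), given any word $w=(f_1,\dots,f_n)\in\W(N_\L(P))$ I would use the constant chain $(P,P,\dots,P)$ as a witness for (L2): each $f_i$ normalizes $P$, so $P\subseteq S_{f_i}$ and $P^{f_i}=P$, hence $w\in\D$. Combined with closure of $N_\L(P)$ under inversion (immediate from the definition), this shows $\W(N_\L(P))\subseteq\D$, so $N_\L(P)$ is a subgroup of $\L$. For (b), the inclusion $P\subseteq S_g$ gives $Q=P^g\in\Delta$ by (L3); then for any $h\in N_\L(P)$ the word $(g^{-1},h,g)$ lies in $\D$ via the chain $(Q,P,P,Q)$, which shows $N_\L(P)\subseteq\D(g)$ and that $c_g$ is defined on all of $N_\L(P)$. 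The map $c_g$ is a group homomorphism by the associativity axiom of a partial product, and applying the same argument to $g^{-1}$ furnishes a two-sided inverse. Part (c) is a direct iteration of (b), using the partial-group identity $\Pi(g_1,g_2)=\Pi(g_1)\cdot\Pi(g_2)$ on admissible pairs. Part (e) is essentially the associativity axiom applied to $(g^{-1},x,g,g^{-1})$: conjugating $x^g$ by $g^{-1}$ returns $x$.

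The main obstacle is (d), asserting that $S_g$ is itself an element of $\Delta$. This is not formal from (L2), which only promises \emph{some} $P\in\Delta$ through which the singleton word $(g)$ factors; one needs to know that the largest such subgroup, namely $S_g$, is already in $\Delta$. I would invoke Proposition~2.6 of \cite{Chermak:2015}, which gives precisely this. Given this, $S_g^g=S_{g^{-1}}$ follows by symmetry: by (b) applied to $S_g\in\Delta$, conjugation by $g$ carries $S_g$ isomorphically onto a subgroup of $S$ that clearly lies in $S_{g^{-1}}$, and the reverse inclusion is obtained by interchanging the roles of $g$ and $g^{-1}$ via (e).

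Part (f) is then straightforward: the inclusion $S_w\leq S_{\Pi(w)}$ follows from tracing a chain of partial products against the definition; if $w\in\D$ via $X_0,\dots,X_n$ then $X_0\subseteq S_w$, so $S_w\in\Delta$ by (L3), and conversely $S_w\in\Delta$ provides a valid starting group for a chain witnessing $w\in\D$. Finally, for (g) I would apply standard Sylow theory inside the finite group $N_\L(P)$ together with (b): pick $f\in\L$ with $N_S(P)\leq S_f$ such that $N_S(P)^f$ is a Sylow $p$-subgroup of $N_\L(P^f)$, which exists because $c_f\colon N_\L(P)\to N_\L(P^f)$ is a group isomorphism that can be arranged to send $N_S(P)$ into a conjugate containing a Sylow $p$-subgroup. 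The identifications $N_\F(P)=\F_{N_S(P)}(N_\L(P))$ and $C_\F(P)=\F_{C_S(P)}(C_\L(P))$ when $N_S(P)\in\mathrm{Syl}_p(N_\L(P))$ are standard consequences of the extension axiom for $\F_S(\L)$ combined with Alperin's fusion theorem, and may be cited from Chermak's treatment.
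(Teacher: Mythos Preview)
Your proposal is correct and takes essentially the same approach as the paper: both treat this lemma as a compilation of results proved in Chermak's foundational papers \cite{Chermak:2013,Chermak:2015}, with (d) and (g) explicitly deferred to \cite[Proposition~2.6]{Chermak:2015} and \cite[Lemmas~2.10,~2.13]{Chermak:2015} respectively. The paper's proof is in fact nothing but a list of such citations, while you additionally sketch the easy direct arguments for (a)--(c), (e), (f); your sketches are accurate outlines of how those proofs go in Chermak's work.
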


\begin{proof}
Properties (a),(b) and (c) correspond to the statements (a),(b) and (c) in \cite[Lemma~2.3]{Chermak:2015} except for the fact stated in (b) that $Q\in\Delta$, which is however clearly true if one uses our definition of a locality. Property (d) holds by \cite[Proposition~2.6(a),(b)]{Chermak:2015} and property (e) is stated in \cite[Lemma~2.5(c)]{Chermak:2013}. Property (f) is \cite[Corollary~2.7]{Chermak:2015}.
\end{proof}

We will use the properties stated in Lemma~\ref{LocalitiesProp} most of the time without reference. Note that, by parts (b) and (d), $c_g\colon S_g\rightarrow S$ is a homomorphism of groups, which by part (e) is injective. If $(\L,\Delta,S)$ is a locality, then $\F_S(\L)$ is the fusion system generated by the conjugation maps $c_g\colon S_g\rightarrow S$ with $g\in\L$. Equivalently, $\F_S(\L)$ is generated by the conjugation maps between subgroups of $\Delta$, or by the conjugation maps between subgroups of $S$. The elements of $\Delta$ which are fully $\F_S(\L)$-normalized are characterized in the following lemma.

\begin{lemma}\label{LocalitiesPropG}
Let $(\L,\Delta,S)$ be a locality and $P\in\Delta$. Then $P$ is fully-normalized in $\F_S(\L)$ if and only if $N_S(P)\in\Syl_p(N_\L(P))$. If so then 
$N_\F(P)=\F_{N_S(P)}(N_\L(P))$ and $C_\F(P)=\F_{C_S(P)}(C_\L(P))$. Moreover, for any $P\in\Delta$ there exists $f\in\L$ such that $N_S(P)\leq S_f$ and $N_S(P^f)\in\Syl_p(N_\L(P^f))$.
\end{lemma}

\begin{proof}
It is easy to check that $N_\F(P)=\F_{N_S(P)}(N_\L(P))$ and $C_\F(P)=\F_{C_S(P)}(C_\L(P))$. The remaining claims follow from Proposition~\ref{LocalitiesProp}(b) and \cite[Lemma~2.10]{Chermak:2015}.
\end{proof}

\begin{lemma}\label{NormalizerCentralizerPartialSubgroup}
 Let $(\L,\Delta,S)$ be a locality and $R$ a subgroup of $S$. Then $N_\L(R)$ and $C_\L(R)$ are partial subgroups of $\L$. 
\end{lemma}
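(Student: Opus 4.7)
The plan is to verify, for both $N_\L(R)$ and $C_\L(R)$, the two defining conditions of a partial subgroup: closure under inversion, and closure under applying $\Pi$ to words in $\D$. The key subtlety is that $R$ need not lie in $\Delta$, so Lemma~\ref{LocalitiesProp}(a) does not apply directly; we must control $R$ via a suitable element of $\Delta$ that contains it and is preserved by the word in question.

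For closure under inversion, take $g \in N_\L(R)$. Then $R \subseteq S_g$ since $R \subseteq \D(g)$ and $R^g = R \subseteq S$; by Lemma~\ref{LocalitiesProp}(d), $R = R^g \subseteq S_g^g = S_{g^{-1}}$, placing $R$ inside $\D(g^{-1})$. The map $c_g$ restricts to an injection of the finite set $R$ into itself, so its two-sided inverse $c_{g^{-1}}$ (Lemma~\ref{LocalitiesProp}(e)) also permutes $R$, giving $R^{g^{-1}} = R$. If moreover $g \in C_\L(R)$, the identity $r^{g^{-1}} = c_{g^{-1}}(c_g(r)) = r$ settles each $r \in R$ individually.

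For closure under products, let $w = (f_1,\dots,f_n) \in \W(N_\L(R)) \cap \D$. For $r \in R$, iterated conjugation stays inside $R$ because each $f_i$ normalises $R$, which shows $R \subseteq S_w$; Lemma~\ref{LocalitiesProp}(f) then places $R$ inside $S_{\Pi(w)}$, so that $R^{\Pi(w)}$ is defined. The decisive step is to produce a witness $(X_0,\dots,X_n)$ for $w \in \D$ with $R \subseteq X_0$ and $X_0$ a subgroup of $\L$, so that Lemma~\ref{LocalitiesProp}(c) can be applied to every element of $R$. I would take $X_0 := S_w$ and $X_i := X_{i-1}^{f_i}$: by definition of $S_w$, each $X_{i-1}$ lies in $\D(f_i)$ with $X_i \leq S$, and condition (L3) gives $X_i \in \Delta$ by induction. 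Since $X_0 = S_w$ is itself a subgroup of $\L$, $R \subseteq X_0 \subseteq N_\L(X_0)$, and Lemma~\ref{LocalitiesProp}(c) yields $r^{\Pi(w)} = (\cdots(r^{f_1})\cdots)^{f_n} \in R$ for every $r \in R$. Hence $R^{\Pi(w)} \subseteq R$, and cardinality forces equality, so $\Pi(w) \in N_\L(R)$. The corresponding statement for $C_\L(R)$ goes through with the same witness; the stronger hypothesis that each $f_i$ fixes $r$ pointwise collapses the iteration to the constant sequence, giving $r^{\Pi(w)} = r$.

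The only non-routine step is bridging Lemma~\ref{LocalitiesProp}(c), which a priori only describes $c_{\Pi(w)}$ on $N_\L(X_0)$, to a statement about elements of $R$. The resolution is to discard the witness handed to us by (L2) and use the canonical one rooted at $S_w$; with $X_0 = S_w$ containing $R$, the inclusion $R \subseteq N_\L(X_0)$ becomes automatic because $X_0$ is a group, and the lemma applies verbatim.
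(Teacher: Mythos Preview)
Your proof is correct. The strategy coincides with the paper's in spirit: manufacture a witness sequence for $w\in\D$ whose first term contains $R$, so that Lemma~\ref{LocalitiesProp}(c) controls $c_{\Pi(w)}$ on $R$. The difference is in how that witness is built. The paper (which cites \cite[Lemma~2.19(a)]{Chermak:2013} for $N_\L(R)$ and argues only the centralizer case directly) starts from an arbitrary witness $P_0,\dots,P_n$ supplied by (L2) and enlarges each term to $Q_i=\langle P_i,R\rangle$, using that $c_{f_i}$ is a group homomorphism on $S_{f_i}\supseteq\langle P_{i-1},R\rangle$. You instead take the canonical witness $X_0=S_w$, which already contains $R$ by the observation that iterated conjugation of any $r\in R$ by the $f_i$ stays in $R\subseteq S$. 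Both routes are equally short; yours has the mild advantage of treating $N_\L(R)$ and $C_\L(R)$ uniformly and self-containedly, while the paper's enlargement argument makes the role of closure under overgroups in $S$ slightly more transparent.
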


\begin{proof}
 By \cite[Lemma~2.19(a)]{Chermak:2013}, $N_\L(R)$ is a partial subgroup of $\L$. So it remains to prove that $C_\L(R)$ is a partial subgroup. It follows from Lemma~\ref{LocalitiesProp}(e) that $C_\L(R)$ is closed under inversion. Let now $w=(f_1,\dots,f_n)\in\D\cap\W(C_\L(R))$. Then $w\in\D$ via a sequence $P_0,\dots,P_n$ of elements of $\Delta$. Since $\Delta$ is closed under taking overgroups in $S$, $Q_i:=\<P_i,R\>\in\Delta$ for $i=0,1,\dots,n$. Moreover, for $i=1,\dots,n$, $Q_{i-1}\leq S_{f_i}$, $c_{f_i}|_R=\id_R$, and $Q_{i-1}^{f_i}=Q_i$ as $c_{f_i}\colon S_{f_i}\rightarrow S$ is a homomorphism of groups. So $w\in\D$ via $Q_0,\dots,Q_n$ and it follows from Lemma~\ref{LocalitiesProp}(c) that $R\leq Q_0\leq S_{\Pi(w)}$ and $c_{\Pi(w)}|_R=\id_R$, So $\Pi(w)\in C_\L(R)$. 
\end{proof}

The following remark is used throughout, usually without reference:

\begin{lemma}\label{MorphismAsConjugation}
 Let $P\in\Delta$ and $\phi\in\Hom_{\F_S(\L)}(P,S)$. Then there exists $g\in\L$ with $P\leq S_g$ and $\phi=c_g|_P$.
\end{lemma}

\begin{proof}
By definition of $\F_S(\L)$, $\phi$ is the composition of suitable restrictions of conjugation maps $c_{g_1},c_{g_2},\dots, c_{g_n}$ with $g_1,g_2,\dots,g_n\in\L$. Then $(g_1,\dots,g_n)\in\D$ via $P$, Moreover, setting $g=\Pi(g_1,g_2,\dots,g_n)$,  Lemma~\ref{LocalitiesProp}(c) implies $P\leq S_g$ and $\phi=c_g|_P$. 
\end{proof}

\subsection{Projections of localities}\label{LocalityProjectionSection}

There is a theory of morphisms and factor systems of fusion systems, where factor systems are formed modulo strongly closed subgroups and similarly the kernels of morphisms are strongly closed. We refer the reader to \cite[Section~II.5]{Aschbacher/Kessar/Oliver:2011} for details.

\smallskip

Let $\F$ and $\F'$ be fusion systems over $S$ and $S'$ respectively. Then we say that a group homomorphism $\alpha\colon S\rightarrow S'$ induces a morphism from $\F$ to $\F'$ if for each $\phi\in\Hom_\F(P,Q)$ there exists $\psi\in\Hom_{\F'}(P\alpha,Q\alpha)$ such that $(\alpha|_P)\psi=\phi(\alpha|_Q)$. Such $\psi$ is then uniquely determined, so $\alpha$ induces a map $\alpha_{P,Q}\colon\Hom_\F(P,Q)\rightarrow \Hom_{\F'}(P\alpha,Q\alpha)$. Together with the map $P\mapsto P\alpha$ from the set of objects of $\F$ to the set of objects of $\F'$ this gives a functor from $\F$ to $\F'$. Moreover, $\alpha$ together with the maps $\alpha_{P,Q}$ ($P,Q\leq S$) is a morphism of fusion systems in the sense of \cite[Definition~II.2.2]{Aschbacher/Kessar/Oliver:2011}. We say that $\alpha$ induces an epimorphism from $\F$ to $\F'$ if $(\alpha,\alpha_{P,Q}\colon P,Q\leq S)$ is a surjective morphism of fusion systems. This means that $\alpha$ is surjective as a map $S\rightarrow S'$ and, for every $P,Q\leq S$ with $\ker(\alpha)\leq P\cap Q$ the map $\alpha_{P,Q}$ is surjective, i.e. for each $\psi\in\Hom_{\F'}(P\alpha,Q\alpha)$, there exists $\phi\in\Hom_\F(P,Q)$ with $(\alpha|_P)\psi=\phi(\alpha|_Q)$. 
If $\alpha$ is in addition injective then we say that $\alpha$ induces an isomorphism from $\F$ to $\F'$. Note that this fits with the definition we gave earlier in Section~\ref{SubcentricProperties}.

\smallskip

If $\alpha$ induces an epimorphism from $\F$ to $\F'$ then notice that the induced map
\[S/\ker(\alpha)\rightarrow S'\]
is a fusion preserving isomorphism from $\F/\ker(\alpha)$ to $\F'$.

\smallskip

In the remainder of this subsection we will summarize the theory of projections and quotients of localities, and relate this theory to the theory of morphisms and quotients of fusion systems. 

\bigskip

\textbf{From now on let $(\L,\Delta,S)$ be a locality.}


\begin{theorem}\label{LocalityProjection}
 Let $\L'$ be a partial group with product defined on the domain $\D'$. Let $\beta\colon \L\rightarrow\L'$ be a homomorphism of partial groups such that $\D\beta^*=\D'$, where $\beta^*\colon\W(\L)\rightarrow\W(\L')$ is the map induced by $\beta$. Set $T=\ker(\beta)\cap S$, $S'=S\beta$ and $\Delta'=\{P\beta\colon P\in\Delta\}$. 
\begin{itemize}
 \item [(a)] $(\L',\Delta',S')$ is a locality.
 \item [(b)] The restriction $\beta|_S\colon S\rightarrow S'$ of $\beta$ to $S$ induces an epimorphism from $\F_S(\L)$ to $\F_{S'}(\L')$ with kernel $T$. In particular, the group isomorphism $S/T\rightarrow S',\;sT\mapsto s\beta$  induces an isomorphism from $\F_S(\L)/T$ to $\F_{S'}(\L')$.
 \item [(c)] Let $P,Q\in\Delta$ with $T\leq P\cap Q$. Then $\beta$ restricts to a surjection $N_\L(P,Q)\rightarrow N_{\L'}(P\beta,Q\beta)$, and to a surjective homomorphism of groups if $P=Q$.
\end{itemize}
\end{theorem}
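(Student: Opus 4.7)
The plan is to verify each of the three statements via systematic lifting along $\beta$. As preliminaries, the hypothesis $\D\beta^*=\D'$ forces $\beta$ itself to be surjective as a map of sets (singletons lie in both $\D$ and $\D'$), and $\beta|_S\colon S\to S'$ is a surjective group homomorphism with kernel $T$. Consequently $S'=S\beta$ is a finite $p$-group sitting in $\L'$ as a subgroup in the partial-group sense, because any word in $\W(S')$ lifts componentwise to a word in $\W(S)\subseteq\D$ whose image lies in $\D'$.

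For (a) I would establish the locality axioms in the order (L3), (L2), (L1). For (L3), given $P'=P\beta\in\Delta'$ with $P\in\Delta$, I may replace $P$ by $PT\in\Delta$ (using overgroup closure of $\Delta$ in $S$) to arrange $T\le P$. Then for $g'\in\L'$ with $(P')^{g'}\le S'$, choosing a preimage $g\in\L$ of $g'$ and lifting the two-letter words $(p,g)$ via $\D\beta^*=\D'$ shows $P\le S_g$, whence $(P')^{g'}=P^g\beta\in\Delta'$; the overgroup case is analogous. For (L2), a witnessing chain $P_0,\dots,P_n\in\Delta$ for $w\in\D$ pushes forward to a witnessing chain in $\Delta'$ for $w\beta^*$, and conversely a word $w'\in\W(\L')$ with a witnessing chain in $\Delta'$ lifts (both the chain and the word, using $\D\beta^*=\D'$ and the replacement $P\mapsto PT$) to some $w\in\D$. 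The main obstacle is (L1); here I would argue that if $P'\supsetneq S'$ is a $p$-subgroup of $\L'$, then since $N_{P'}(S')>S'$ (standard $p$-group fact), we may pick $y'\in N_{\L'}(S')\setminus S'$, lift to $y\in\L$ forced into $N_\L(S)$ by $\D\beta^*=\D'$, and then a suitable $p$-power of $y$ produces an element outside $S$ whose cyclic span together with $S$ is a $p$-subgroup of $\L$ strictly larger than $S$, contradicting maximality.

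For (b), $\beta|_S$ induces a morphism of fusion systems because every generating conjugation $c_g|_P$ of $\F_S(\L)$ satisfies $(s\beta)^{g\beta}=(s^g)\beta$, and is therefore sent to $c_{g\beta}|_{P\beta}\in\F_{S'}(\L')$. Surjectivity on hom-sets follows from Remark~\ref{MorphismAsConjugation}: any morphism in $\F_{S'}(\L')$ is a restriction of some $c_{g'}$, which lifts via any preimage of $g'$ in $\L$. The kernel is $T$ by definition, and the induced group isomorphism $S/T\to S'$ identifies $\F_S(\L)/T$ with $\F_{S'}(\L')$. For (c), the restriction $\beta\colon N_\L(P,Q)\to N_{\L'}(P\beta,Q\beta)$ is well-defined since $P^f=Q$ implies $(P\beta)^{f\beta}=Q\beta$. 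For surjectivity, fix $f'\in N_{\L'}(P\beta,Q\beta)$ and a preimage $f\in\L$; the assumption $T\le P$ together with $\D\beta^*=\D'$ forces $P\le S_f$, yielding $P^f\in\Delta$ with $(P^f)\beta=Q\beta$, and a correction of $f$ by an element of $T\le Q$ produces a genuine preimage in $N_\L(P,Q)$. When $P=Q$, both sides are groups by Lemma~\ref{LocalitiesProp}(a), and $\beta$ is a group homomorphism on $N_\L(P)$ because $\W(N_\L(P))\subseteq\D$.
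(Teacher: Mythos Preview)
Your argument for (b) is essentially the paper's own argument: both reduce to the identity $(c_f|_P)(\beta|_Q)=(\beta|_P)(c_{f\beta}|_{P\beta})$ and then invoke surjectivity of $\beta$ on transporters to get the epimorphism. The paper, however, does not prove (a) and (c) directly; it simply cites \cite[Theorem~4.4]{Chermak:2013} for those and then derives (b) from (c).

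Your direct attack on (a) and (c) has a recurring gap. You repeatedly use $\D\beta^*=\D'$ as if it said ``every componentwise preimage of a word in $\D'$ lies in $\D$'', but it only says that every word in $\D'$ has \emph{some} preimage in $\D$. Concretely: in your (L1) argument, you pick $y'\in N_{\L'}(S')\setminus S'$, choose a preimage $y\in\L$, and assert that $y$ is ``forced into $N_\L(S)$''. But knowing $(y'^{-1},s',y')\in\D'$ only produces \emph{some} triple in $\D$ with the right image, not the specific triple $(y^{-1},s,y)$; there is no reason the arbitrary lift $y$ normalises $S$. The same problem appears in (c): for an arbitrary preimage $f$ of $f'\in N_{\L'}(P\beta,Q\beta)$, nothing guarantees $P\le S_f$. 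Two preimages of $f'$ differ by an element of $\ker(\beta)$, and in a locality $S_f$ can change drastically along a coset of a partial normal subgroup.

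What Chermak's proof actually does is pick the preimage carefully: one works with \emph{maximal} right cosets of $\ker(\beta)$ and uses the structural results of \cite[\S3--4]{Chermak:2013} (in particular the fact that a maximal coset $\N f$ has a representative with $T\le S_f$ and well-controlled $S_f$) to produce a lift with the required properties. That machinery is exactly what is missing from your sketch. Once (c) is established rigorously, your treatment of (b) goes through, but as written your proof of (b) inherits the gap from (c) at the point where you say a morphism ``lifts via any preimage of $g'$''.
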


\begin{proof}
 For properties (a) and (c) see \cite[Theorem~4.4]{Chermak:2013}. Recall that $\F:=\F_S(\L)$ is generated by the conjugation maps between elements of $\Delta$, and similarly $\F':=\F_{S'}(\L')$ is generated by the conjugation maps between the elements of $\Delta'$. Thus, it is sufficient to prove the following two properties for $P,Q\in\Delta$: 
\begin{itemize}
 \item [(1)] For every conjugation map $\phi\in\Hom_\F(P,Q)$ there exists $\psi\in\Hom_{\F'}(P\beta,Q\beta)$ such that $\phi\beta|_Q=\beta|_P\psi$. 
 \item [(2)] If $T\leq P\cap Q$ then, for any conjugation map $\psi\in\Hom_{\F'}(P\beta,Q\beta)$ there exists $\phi\in\Hom_\F(P,Q)$ such that $\phi\beta|_Q=\beta|_P\psi$.
\end{itemize}

\smallskip

Notice that for $f\in N_\L(P,Q)$ and $x\in P$, we have $x^f\beta=(x\beta)^{f\beta}$ 
as $\beta$ is a homomorphism of partial groups. Hence, $x (c_f|_P) \beta=x^f\beta=(x\beta)^{f\beta}=x\beta (c_{f\beta}|_{P\beta})$. This proves 
\[(*)\;\;\;(c_f|_P)\beta|_Q=\beta|_P(c_{f\beta}|_{P\beta})\mbox{ for any }f\in N_\L(P,Q).\] 
Since any conjugation homomorphism $\phi\in\Hom_\F(P,Q)$ is of the form $c_f|_P$ with $f\in N_\L(P,Q)$, and as $c_{f\beta}|_{P\beta}\in\Hom_{\F'}(P\beta,Q\beta)$, this shows (1). Assume now $T\leq P\cap Q$. Every conjugation homomorphism $\psi\in\Hom_{\F'}(P\beta,Q\beta)$ is of the form $c_g|_{P\beta}$ with $g\in N_{\L'}(P\beta,Q\beta)$. By (c), there exists $f\in N_\L(P,Q)$ with $f\beta=g$. So (2) follows also from (*) as $c_f|_P\in\Hom_\F(P,Q)$.
\end{proof}

\begin{definition}\label{ProjectionDef}
Let $(\L',\Delta',S')$ be a locality with the partial product defined on a domain $\D'$. Then a homomorphism $\beta\colon\L\rightarrow\L'$ of partial groups is called a \textit{projection} (of localities) from $(\L,\Delta,S)$ to $(\L',\Delta',S')$ if $\D\beta^*=\D'$ and $\Delta'=\{P\beta\colon P\in\Delta\}$. 
\end{definition}

If $\beta$ is a projection of localities as in the above definition then note that $S\beta=S'$ as $\Delta'=\{P\beta\colon P\in\Delta\}$. 

\bigskip

As we mentioned before, the kernels of homomorphisms of partial groups form partial normal subgroups. Conversely, given a partial normal subgroup $\N$ of $\L$, one can form a partial group $\L/\N$ such that there is a natural homomorphism from $\L$ onto $\L/\N$ with kernel $\N$. We stress however that $\L/\N$ is not defined for an arbitrary partial group $\L$, so we actually need here the assumption that $(\L,\Delta,S)$ is a locality. 

\smallskip

The ``quotient'' $\L/\N$ is more precisely defined as follows: Call a subset of $\L$ of the form $\N f:=\{\Pi(n,f)\colon n\in\N,\;(n,f)\in\D\}$ a right coset of $\N$ in $\L$. A maximal right coset of $\N$ is a right coset which is maximal with respect to inclusion among the right cosets of $\N$. By \cite[Proposition~3.14(d)]{Chermak:2015}, the maximal right cosets of $\N$ form a partition of $\L$. As a set, $\L/\N$ is the set of maximal right cosets of $\N$ in $\L$. So there is a natural map $\rho\colon\L\rightarrow \L/\N$ which sends every element $g\in\L$ to the (unique) maximal right coset of $\N$ containing $g$. Writing $\rho^*$ for the map $\W(\L)\rightarrow\W(\L/\N)$ induced by $\rho$ and setting $\ov{\D}=\D\rho^*$, $\ov{\L}=\L/\N$ forms a partial group with product $\ov{\Pi}\colon\ov{\D}\rightarrow \ov{\L}$ defined by $\ov{\Pi}(v\rho^*)=\Pi(v)\rho$ for all $v\in\D$. By construction, $\rho$ is then a homomorphism of partial groups; see \cite[Lemma~3.16]{Chermak:2015}. The identity element of $\ov{\L}$ is $\N\One=\N$. So $\ker(\rho)=\N$.  
Setting $\ov{S}=S\rho$ and $\ov{\Delta}=\{P\rho\colon P\in\Delta\}$, $(\ov{\L},\ov{\Delta},\ov{S})$ is a locality by Lemma~\ref{LocalityProjection}. We call this locality the quotient locality of $\L$ modulo $\N$, and we call $\rho\colon \L\rightarrow \L/\N$ the canonical projection.

\begin{cor}\label{LocalityQuotient}
 Let $(\L,\Delta,S)$ be a locality over $\F$ and $R\leq S$ such that $R$ forms a partial normal subgroup of $\L$. Then $R$ is strongly closed in $\F$. Furthermore, setting $\ov{\L}=\L/R$, $\ov{S}=S/R$ and $\ov{\Delta}=\{PR/R\colon P\in\Delta\}$, the triple $(\ov{\L},\ov{\Delta},\ov{S})$ is a locality over $\F/R$.
\end{cor}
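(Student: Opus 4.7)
My plan is in two parts: first verify that $R$ is strongly closed in $\F$, then apply Theorem~\ref{LocalityProjection} to the canonical projection $\rho\colon\L\to\L/R$.

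For the first part, I would use that $\F = \F_S(\L)$ is generated by the conjugation maps $c_g\colon S_g\to S$ with $g\in\L$. Since $R$ is a partial normal subgroup of $\L$, for each such $g$ and each $x\in R\cap S_g$ one has $x^g\in R$; in particular $(R\cap S_g)^g\subseteq R$. An arbitrary $\F$-morphism $\phi\in\Hom_\F(P,S)$ with $P\leq R$ is, by definition of $\F_S(\L)$, a composition of restrictions of such $c_g$'s, and an easy induction on the length of this composition shows that all intermediate images remain in $R$. Hence $P\phi\leq R$, which establishes that $R$ is strongly closed in $\F$.

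For the second part, recall from the discussion preceding the corollary that the quotient partial group $\ov\L := \L/R$ comes equipped with a canonical projection $\rho\colon\L\to\ov\L$ which is a homomorphism of partial groups satisfying $\ker(\rho)=R$ and $\D\rho^* = \ov\D$, the domain of the product on $\ov\L$. I would then apply Theorem~\ref{LocalityProjection}(a) to $\rho$ to conclude that $(\ov\L,\ov\Delta,\ov S)$ is a locality, where $\ov S = S\rho$ and $\ov\Delta = \{P\rho\colon P\in\Delta\}$. By Theorem~\ref{LocalityProjection}(b), $\rho|_S$ induces an epimorphism $\F\to\F_{\ov S}(\ov\L)$ with kernel $\ker(\rho)\cap S = R\cap S = R$, and the induced group isomorphism $S/R\cong\ov S$ yields a fusion-system isomorphism $\F/R\cong\F_{\ov S}(\ov\L)$. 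Identifying $\ov S$ with $S/R$ via this isomorphism, $\ov\Delta$ becomes $\{PR/R\colon P\in\Delta\}$, so $(\ov\L,\ov\Delta,\ov S)$ is a locality over $\F/R$ as claimed.

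The main obstacle I anticipate is the strong-closure step: one must check that an arbitrary $\F$-morphism decomposes as a composition of restrictions of conjugation maps $c_g$ ($g\in\L$) in such a way that every intermediate subgroup lies inside an element of $\Delta$, so that each successive $c_g$ is applicable. Once this bookkeeping is in place, the remainder is a formal consequence of the construction of $\L/R$ together with Theorem~\ref{LocalityProjection}.
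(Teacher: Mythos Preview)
Your approach is essentially the same as the paper's: establish strong closure of $R$ from its partial normality in $\L$, then apply Theorem~\ref{LocalityProjection} to the canonical projection $\rho\colon\L\to\L/R$. The paper's proof is terser and simply asserts the strong closure without spelling out the induction.

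The obstacle you anticipate is not a real one. The conjugation map $c_g$ is defined on all of $\D(g)$, and in particular on all of $S_g\subseteq\D(g)$; there is no requirement that the subgroup being conjugated belong to $\Delta$. So if $P\leq R$ and $P\leq S_g$, then each $x\in P$ lies in $R\cap\D(g)$, and partial normality of $R$ gives $x^g\in R$, whence $P^g\leq R$. Since every $\F$-morphism is a composition of restrictions of such $c_g$'s (and inverses of $c_g$ are restrictions of $c_{g^{-1}}$), the induction on the length of the decomposition goes through with no further bookkeeping.
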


\begin{proof}
As $R$ is a partial normal subgroup of $\L$, $R$ is strongly closed in $\F=\F_S(\L)$. Let $\beta\colon \L\rightarrow \L/R$ be the natural projection. Then $\ker(\beta)=R=\ker(\beta)\cap S$, $S\beta=S/R$ and the induced map $S/R\rightarrow S\beta,\;sR\mapsto s\beta$ is just the identity on $S/R$. Hence, the claim follows from Theorem~\ref{LocalityProjection}.
\end{proof}

\subsection{Transporter systems coming from localities}\label{TransporterSystemsSubsection}

If $G$ is a group and $\Delta$ a set of subgroups of $G$ then $\T_{\Delta}(G)$ denotes  the \textit{transporter category of $G$} with object set $\Delta$. That is, for $P,Q\in\Delta$, the set of morphism from $P$ to $Q$ is given by  $\Hom_{\T_{\Delta}(G)}(P,Q)=\{(g,P,Q)\colon g\in G\mbox{ with }P^g\leq Q\}$.

\smallskip

Our use of the term ``transporter system associated to a fusion system $\F$'' has been slightly sloppy so far. A transporter system is not just a category $\T$, but it comes always together with ``structural maps'', namely a pair of functors
\[\T_{\ob(\T)}(S)\stackrel{\epsilon}{\longrightarrow} \T\stackrel{\rho}{\longrightarrow} \F \]
subject to certain axioms. In particular, $\epsilon$ is the identity on objects, $\ob(\T)\subseteq\ob(\F)$, and $\rho$ is the inclusion on objects. So a transporter system should be thought of more correctly as a triple $(\T,\epsilon,\rho)$ with $\epsilon$ and $\rho$ as above. Given such a transporter system $(\T,\epsilon,\rho)$, the map $\rho_{P,P}\colon \Aut_\T(P)\rightarrow \Aut_\F(P)$ is a group homomorphism for any $P\in\ob(\T)$. Its kernel is denoted by $E(P)$. We refer the reader to 
\cite[Definition~3.1]{OV1} for the precise definition of a transporter system. Comparing this definition with the definitions of linking systems in \cite[Definition~1.7]{BLO2} and \cite[Definition~3]{O4} one observes:

\begin{rmk}\label{OliverLinkingSystem}
Let $(\T,\epsilon,\rho)$ be a transporter system associated to $\F$ and $E(P)=\ker(\rho_{P,P})$ for any $P\in\ob(\T)$. Then $(\T,\epsilon,\rho)$ is a centric linking system as defined in \cite[Definition~1.7]{BLO2} if and only if $\ob(\T)=\F^c$ and $E(P)=Z(P\epsilon_{P,P})$.  
Moreover, $(\T,\epsilon,\rho)$ is a linking system associated to $\F$ in the sense of Oliver \cite[Definition~3]{O4} if and only if $\F^{cr}\subseteq \ob(\T)$ and $E(P)$ is a $p$-group for every object $P$ of $\T$. 
\end{rmk}

Recall that a transporter system is a linking system in our sense if and only if $\F^{cr}\subseteq\ob(\T)$ and $\Aut_\T(P)$ is of characteristic $p$ for all $P\in\ob(\T)$.

\smallskip

Two transporter systems $(\T,\epsilon,\rho)$ and $(\T',\epsilon',\rho')$ associated to $\F$ are called isomorphic if there exists an isomorphism between them, i.e. an invertible functor $\alpha\colon \T\rightarrow\T'$ such that (in right handed notation) $\epsilon\circ\alpha=\epsilon'$ and $\alpha\circ \rho'=\rho$.

\begin{rmk}\label{IsomorphicTransporterSystems}
 Let $(\T,\epsilon,\rho)$ and $(\T',\epsilon',\rho')$ be transporter systems associated to $\F$ with $\ob(\T)=\ob(\T')$, and let $\alpha\colon\T\rightarrow\T'$ be an isomorphism between them which is the identity on objects. Then for any $P\in\ob(\T)$, the map $\alpha_{P,P}\colon \Aut_\T(P)\rightarrow \Aut_{\T'}(P)$ is an isomorphism of groups. In particular, $(\T,\epsilon,\rho)$ is a linking system if and only if $(\T',\epsilon',\rho')$ is a linking system. 

\smallskip

Moreover, for every $P\in\ob(\T)$, the isomorphism $\alpha_{P,P}$ maps $E(P)=\ker(\rho_{P,P})$ to $E'(P)=\ker(\rho'_{P,P})$ and $P\epsilon_{P,P}$ to $P\epsilon'_{P,P}$. So $(\T,\epsilon,\rho)$ is a centric linking system if and only if $(\T',\epsilon',\rho')$ is a centric linking system. Similarly $(\T,\epsilon,\rho)$ is a linking locality in the sense of Oliver \cite[Definition~3]{O4} if and only if the same holds for $(\T',\epsilon',\rho')$. 
\end{rmk}

\begin{proof}
 The first part is clear. Let $P\in\Delta$ and $g\in\Aut_\T(P)$. As $\alpha\circ\rho'=\rho$, we have $g\in E(P)$ if and only if $g\alpha_{P,P}\rho'_{P,P}=g\rho_{P,P}=\id_P$, i.e. if and only if $g\alpha_{P,P}\in E'(P)$. Hence, $E(P)\alpha_{P,P}=E'(P)$. As $\epsilon\circ\alpha=\epsilon'$, we have $(P\epsilon_{P,P})\alpha_{P,P}=P\epsilon'_{P,P}$. The last part follows now from Remark~\ref{OliverLinkingSystem}. 
\end{proof}

Suppose now we are given a locality $(\L,\Delta,S)$. Then we can construct a transporter system associated to $\F=\F_S(\L)$ as follows: The objects of $\T(\L,\Delta)$ are the elements of $\Delta$, and a morphism between objects $P,Q\in\Delta$ is a triple $(f,P,Q)$ with $f\in\L$ such that $P\subseteq\D(f)$ and $P^f\leq Q$. Composition of morphisms is given by multiplication in the locality $\L$, i.e. $(f,P,Q)\circ (g,Q,R)=(fg,P,R)$ for all morphisms $(f,P,Q)$ and $(g,Q,R)$ in $\T(\L,\Delta)$. 

\smallskip

Note that $\Hom_{\T_\Delta(S)}(P,Q)\subseteq\Hom_{\T(\L,\Delta)}(P,Q)$ for all $P,Q\in\Delta$. Let $\epsilon=\epsilon_{\L,\Delta}\colon \T_\Delta(S)\rightarrow\T(\L,\Delta)$ be the functor which is the identity on objects and the inclusion map on morphism sets. Let $\rho=\rho_{\L,\Delta}\colon \T(\L,\Delta)\rightarrow \F$ be the functor which is the inclusion on objects, and for $P,Q\in\Delta$, $\rho_{P,Q}\colon \Hom_{\T(\L,\Delta)}\rightarrow \Hom_\F(P,Q)$ is defined by $(f,P,Q)\mapsto c_f|_P$. 

\begin{theorem}\label{TLDelta}
 Let $(\L,\Delta,S)$ be a locality over $\F$. Let $\epsilon=\epsilon_{\L,\Delta}$ and $\rho=\rho_{\L,\Delta}$.
\begin{itemize}
\item [(a)] The triple $(\T(\L,\Delta),\epsilon,\rho)$ forms a transporter system associated to $\F$. 
\item [(b)] For any $P\in\Delta$, we have $\Aut_{\T(\L,\Delta)}(P)\cong N_\L(P)$ and $E(P):=\ker(\rho_{P,P})=\{(f,P,P)\colon f\in C_\L(P)\}\cong C_\L(P)$. 
\item [(c)] The locality $(\L,\Delta,S)$ is a linking locality if and only if $(\T(\L,\Delta),\epsilon,\rho)$ is a linking system.
\item [(d)] The transporter system $(\T(\L,\Delta),\epsilon,\rho)$ is a linking system in the sense of Oliver \cite[Definition~3]{O4} if and only if $\F^{cr}\subseteq\Delta$ and $C_\L(P)$ is a $p$-group for every $P\in\Delta$. Moreover, $(\T(\L,\Delta),\epsilon,\rho)$ is a centric linking system if and only if $\Delta=\F^c$ and $C_\L(P)\leq P$ for every $P\in\Delta$.
\end{itemize}
\end{theorem}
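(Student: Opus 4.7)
The plan is to derive (a) as the main computational result and then obtain (b), (c), (d) as essentially formal consequences of the explicit description of the category $\T(\L,\Delta)$ together with Remark~\ref{OliverLinkingSystem}. The backbone of the whole argument is Lemma~\ref{LocalitiesProp}, which guarantees that conjugation maps $c_f\co S_f\to S$ are injective group homomorphisms, that $N_\L(P)$ is a genuine group for $P\in\Delta$, and that composition of conjugation maps corresponds to the partial product in $\L$.

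For part (a) I would go through the axioms of a transporter system in \cite[Definition~3.1]{OV1} one by one. The condition that $\ob(\T(\L,\Delta))=\Delta$ is $\F$-conjugation invariant and closed under overgroups in $S$ is built into the definition of a locality (condition (L3)). The functor $\rho$ is well-defined and faithful on $E(P)$-orbits because $(f,P,Q)\mapsto c_f|_P$ has fibres exactly the cosets $C_\L(P)f$ in $N_\L(P,Q)$; surjectivity of $\rho_{P,Q}$ onto $\Hom_\F(P,Q)$ follows from Remark~\ref{MorphismAsConjugation}. The axiom that $\epsilon_{P,Q}\co N_S(P,Q)\to \Hom_{\T(\L,\Delta)}(P,Q)$, $s\mapsto (s,P,Q)$, is injective is immediate, and the compatibility $\epsilon_{P,P}(s)\rho_{P,P}=c_s|_P$ is literally the definition of the maps. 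The free action of $E(P)$ and the right action of $E(Q)$ on $\Hom_{\T(\L,\Delta)}(P,Q)$ are given by composition in $\L$; freeness follows from cancellation in the group $N_\L(P)$, respectively $N_\L(Q)$. For the final axiom (axiom (II) in \cite{OV1}, the extension/lifting condition), given $(f,P,Q)$ and an overgroup $\tilde P\geq P$ in $\Delta$ with $s\in N_S(P,\tilde P)$ such that $\epsilon_{P,\tilde P}(s)$ conjugates appropriately, one uses Lemma~\ref{LocalitiesProp}(c) to realise the required lift by the corresponding product in $\L$ (so concretely: if $f$ and $t\in S$ satisfy $c_f\circ c_t=c_{ft}$ on all of $\tilde P$, then $(ft,\tilde P,\tilde Q)$ provides the lift).

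For (b), the natural map $N_\L(P)\to\Aut_{\T(\L,\Delta)}(P)$, $f\mapsto (f,P,P)$, is a bijection by construction, and is a group homomorphism because $(f,P,P)(g,P,P)=(\Pi(f,g),P,P)$ and $(f,g)\in\D$ via $P$ whenever $f,g\in N_\L(P)$. A triple $(f,P,P)\in\Aut_{\T(\L,\Delta)}(P)$ lies in $E(P)=\ker(\rho_{P,P})$ exactly when $c_f|_P=\id_P$, i.e.\ when $f\in C_\L(P)$, giving the claimed identification. Part (c) is then immediate: by the definition of a linking locality and of a linking system, both conditions amount to $\F^{cr}\subseteq\Delta$ together with $N_\L(P)\cong\Aut_{\T(\L,\Delta)}(P)$ being of characteristic $p$ for every $P\in\Delta$. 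Part (d) follows from combining (b) with Remark~\ref{OliverLinkingSystem}. The condition that $E(P)$ be a $p$-group is, via (b), exactly that $C_\L(P)$ be a $p$-group, proving the first assertion. For the centric linking system characterization, note that under the identification of (b) the image $P\epsilon_{P,P}$ corresponds to $P\leq N_\L(P)$; hence $E(P)=Z(P\epsilon_{P,P})$ translates to $C_\L(P)=Z(P)$. If $\Delta=\F^c$ and $C_\L(P)\leq P$, then $C_\L(P)\leq C_P(P)=Z(P)$, while $Z(P)\leq C_\L(P)$ always, so $C_\L(P)=Z(P)$; conversely $C_\L(P)=Z(P)\leq P$.

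The main obstacle is the careful verification of the transporter system axioms in (a), particularly the axiom that controls extensions of morphisms along inclusions, since there one must translate the categorical condition into a partial-group statement and invoke the associativity-type properties of a locality. Everything else is essentially dictionary-translation between the locality $\L$ and the category $\T(\L,\Delta)$.
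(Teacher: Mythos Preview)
Your approach is correct and, for parts (b), (c), and (d), essentially identical to the paper's: the paper also argues that $f\mapsto(f,P,P)$ is a group isomorphism $N_\L(P)\to\Aut_{\T(\L,\Delta)}(P)$, identifies $E(P)$ with $C_\L(P)$ directly from the definition of $\rho_{P,P}$, and then reads off (c) and (d) from (b) together with Remark~\ref{OliverLinkingSystem}.

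The one genuine difference is in part (a). The paper does not verify the transporter-system axioms at all; it simply cites \cite[Proposition~A.3(a)]{Chermak:2013}, where the verification is carried out. Your sketch of a direct proof is in the right spirit, but your treatment of the extension axiom (axiom (II) in \cite{OV1}) is somewhat garbled: the axiom asks that an isomorphism $(f,P,Q)$ lift to $(\tilde f,\tilde P,\tilde Q)$ whenever $\tilde P\leq S_f$ and $\tilde P^f\leq\tilde Q$, and the point is just that $(f,\tilde P,\tilde Q)$ is already a morphism in $\T(\L,\Delta)$ because $\Delta$ is closed under overgroups. Your formulation involving ``$s\in N_S(P,\tilde P)$'' and ``$c_f\circ c_t=c_{ft}$ on all of $\tilde P$'' does not quite match the axiom and obscures how trivial the verification is in this concrete model. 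Either cite Chermak as the paper does, or state the extension axiom correctly and observe that the lift is $f$ itself.
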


\begin{proof}
Property (a) is shown in \cite[Proposition~A.3(a)]{Chermak:2013}. Clearly, for any $P\in\Delta$, the map $N_\L(P)\rightarrow \Aut_{\T(\L,\Delta)}(P)$ with $f\mapsto (f,P,P)$ is an isomorphism of groups. Moreover, any element  $(f,P,P)\in \Aut_{\T(\L,\Delta)}(P)$ lies in $E(P)$ if and only if $c_f|_P=\id_P$, i.e. if and only if $f\in C_\L(P)$. This shows (b). Property (c) follows now from (b), and (d) follows from (b) and Remark~\ref{OliverLinkingSystem}.
\end{proof}

\begin{theorem}\label{GetEveryTransporterSystem}
Let $(\T,\epsilon,\rho)$ be a transporter system associated to $\F$. Then there exists a locality $(\L,\Delta,S)$ over $\F$ with $\Delta=\ob(\T)$ and an isomorphism $\eta\colon \T\rightarrow \T(\L,\Delta)$ between $(\T,\epsilon,\rho)$ and  $(\T(\L,\Delta),\epsilon_{\L,\Delta},\rho_{\L,\Delta})$ which is the identity on objects.
\end{theorem}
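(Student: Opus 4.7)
The plan is to reverse the construction of Theorem~\ref{TLDelta} by building the partial group $\L$ directly out of the morphisms of $\T$. Put $\Delta=\ob(\T)$. For $\phi\in\Hom_\T(P,Q)$ and $\psi\in\Hom_\T(P',Q')$, I write $\phi\sim\psi$ if there exist $P''\in\Delta$ with $P''\le P\cap P'$ and $Q''\in\Delta$ with $Q''\le Q\cap Q'$ such that both $\phi$ and $\psi$ admit restrictions to morphisms in $\Hom_\T(P'',Q'')$ and these restrictions coincide. Using the unique source-restriction and target-extension axioms of a transporter system, together with the closure of $\Delta$ under overgroups in $S$, one checks that $\sim$ is already an equivalence relation on $\operatorname{Mor}(\T)$ and that each class contains a unique representative of maximal source. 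Let $\L=\operatorname{Mor}(\T)/\!\sim$ and write $[\phi]$ for the class of $\phi$.

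Next I would equip $\L$ with a partial product. Declare a word $([\phi_1],\dots,[\phi_n])\in\W(\L)$ to lie in $\D$ exactly when representatives $\phi_i\in\Hom_\T(P_{i-1},P_i)$ can be chosen with matching targets and sources, and put $\Pi([\phi_1],\dots,[\phi_n])=[\phi_1\circ\cdots\circ\phi_n]$. Define inversion by $[\phi]^{-1}=[\phi^{-1}]$, which makes sense because every morphism in a transporter system is an isomorphism in the underlying category. The partial group axioms of \cite[Definition~2.1]{Chermak:2013} then reduce to the categorical axioms of $\T$ combined with the unique restriction/extension property and the functoriality of composition.

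Identify $S$ with the subgroup $\{[s\epsilon_{S,S}]:s\in S\}\subseteq\L$, which is a group under $\Pi$ of order $|S|$ by injectivity of $\epsilon_{S,S}$. The locality axioms (L1)–(L3) are then essentially bookkeeping: (L3) is immediate from the closure of $\ob(\T)$ under $\F$-conjugation and overgroups; (L2) encodes exactly when composites of morphisms in $\T$ exist; and (L1) follows because any strictly larger $p$-subgroup of $\L$ would correspond to morphisms in $\T$ producing a $p$-subgroup of $\Aut_\T(S)$ strictly larger than the image of $\epsilon_{S,S}$, contradicting the corresponding transporter system axiom that singles out $S$ as Sylow in $\Aut_\T(S)$. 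A direct comparison of generating conjugation maps then yields $\F_S(\L)=\F$.

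Finally, define $\eta\colon\T\to\T(\L,\Delta)$ to be the identity on objects and to send $\phi\in\Hom_\T(P,Q)$ to $([\phi],P,Q)$. Functoriality and bijectivity on morphism sets are immediate from the construction, while the intertwining relations $\epsilon_{\L,\Delta}=\epsilon\circ\eta$ and $\rho=\eta\circ\rho_{\L,\Delta}$ hold by inspection. The main obstacle I anticipate is the well-definedness of both $\sim$ and the partial product: these rest on systematic use of the uniqueness of extensions and restrictions of morphisms in $\T$ and on the compatibility of $\epsilon$ with composition, as encoded in the transporter system axioms of \cite[Definition~3.1]{OV1}. Once that bookkeeping is settled, all remaining verifications unwind directly from those axioms.
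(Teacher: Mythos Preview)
Your outline is essentially Chermak's construction from \cite[Appendix~A]{Chermak:2013}, which is exactly what the paper invokes: the paper's proof is nothing more than a citation to Propositions~A.13--A.15 there, together with the observation that the resulting functor respects $\epsilon$ and $\rho$. So the approach is the same.

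One technical point deserves care. You define $\phi\sim\psi$ via a \emph{common restriction} to some $P''\in\Delta$ with $P''\le P\cap P'$, and then assert transitivity. But $\Delta$ is closed under overgroups in $S$, not under intersections, so $P\cap P'$ need not contain any member of $\Delta$; two morphisms that restrict from a common $\chi$ may therefore fail to be directly related by your $\sim$, and transitivity is not obvious. The clean formulation (and the one Chermak uses) is to take the equivalence generated by ``$\phi$ is a restriction of $\psi$'', or equivalently to declare $\phi\sim\psi$ when they admit a \emph{common extension}; closure of $\Delta$ under overgroups then lets you merge two extensions of the same morphism into a single larger one, and existence of a unique maximal representative follows from axiom~(II) of \cite[Definition~3.1]{OV1}. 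With that adjustment your sketch goes through, and the remaining verifications are indeed the bookkeeping you describe.
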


\begin{proof}
Chermak \cite[Appendix~A]{Chermak:2013} constructs a locality $(\L,\Delta,S)$ with $\Delta=\ob(\T)$; see in particular \cite[Proposition~A.13]{Chermak:2013}. It is then shown in Lemma~A.14 and Lemma~A.15 of \cite{Chermak:2013} that there exists an invertible functor $\eta\colon\T\rightarrow \T(\L,\Delta)$ with certain properties. These properties imply that $\F=\F_S(\L)$ and $\eta$ is an isomorphism of transporter systems. The argument is exactly the same as the argument in the proof of Theorem~A in \cite{Chermak:2013} that the two left hand squares in the diagram on p.137 commute.
\end{proof}

\section{Localities of objective characteristic $p$}\label{LocalitiesSection1}

In this section we prove Proposition~\ref{ex0} and Proposition~\ref{NormLF}. We also prove some results that will be used in the next section to show Theorem~\ref{MainThm1}. Moreover, with Proposition~\ref{GetLocalityObjectiveCharp} we  give a method to produce (under certain circumstances) localities of objective characteristic $p$ by ``factoring our $p^\prime$-elements''. This will be used in the last section to demonstrate how linking localities can be constructed from a finite group. 

\smallskip

\noindent\textbf{In this section $\F$ is not necessarily assumed to be saturated.}

\begin{lemma}\label{NormModel}
 Let $(\L,\Delta,S)$ be a locality over $\F$ of objective characteristic $p$. If $P\in\Delta\cap\F^f$ then $N_\L(P)$ is a model for $N_\F(P)$. In particular, $\Delta\subseteq \F^s$.
\end{lemma}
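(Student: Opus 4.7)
The proof is essentially a direct assembly of results already established in the excerpt. The plan is first to verify the three defining properties of a model (a finite group of characteristic $p$ containing $N_S(P)$ as a Sylow $p$-subgroup and realizing $N_\F(P)$), and then to reduce the second statement to the first by moving to a fully normalized conjugate.

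For the main claim, I would proceed as follows. Let $P \in \Delta \cap \F^f$. By Lemma~\ref{LocalitiesProp}(a), $N_\L(P)$ is a subgroup of the partial group $\L$, and since $\L$ is finite as a set, $N_\L(P)$ is a finite group. By the hypothesis that $(\L,\Delta,S)$ is of objective characteristic $p$, the group $N_\L(P)$ is of characteristic $p$. Next, since $P \in \F^f$, Lemma~\ref{LocalitiesProp}(g) yields that $N_S(P) \in \Syl_p(N_\L(P))$ and that $N_\F(P) = \F_{N_S(P)}(N_\L(P))$. These three facts are precisely the defining properties of a model, so $N_\L(P)$ is a model for $N_\F(P)$.

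For the ``in particular'' assertion, let $Q \in \Delta$ be arbitrary; I want to show $Q \in \F^s$. By Lemma~\ref{LocalitiesProp}(g), there exists $f \in \L$ with $N_S(Q) \leq S_f$ such that $N_S(Q^f) \in \Syl_p(N_\L(Q^f))$; applying Lemma~\ref{LocalitiesProp}(g) in the other direction, this means $Q^f$ is fully normalized in $\F$. By property (L3) of the locality, $\Delta$ is closed under $\L$-conjugation, so $Q^f \in \Delta \cap \F^f$. Setting $P = Q^f$, the first part of the lemma provides a model $N_\L(P)$ for $N_\F(P)$. By Theorem~\ref{Model1}(a), the existence of a model forces $N_\F(P)$ to be constrained.

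Finally, since $(\L,\Delta,S)$ is a locality, the fusion system $\F = \F_S(\L)$ is saturated (being the fusion system associated to the transporter system $\T(\L,\Delta)$ of Theorem~\ref{TLDelta}(a)), so Lemma~\ref{subcentricEquiv} applies: the condition that $N_\F(P)$ is constrained for the fully normalized $\F$-conjugate $P$ of $Q$ is exactly condition (b2) of that lemma, which is equivalent to $Q \in \F^s$. Hence $\Delta \subseteq \F^s$. There is no real obstacle here; the only subtle point is to notice that the saturation of $\F$ comes for free from the existence of the locality, so Lemma~\ref{subcentricEquiv} is available despite the standing convention at the start of Section~\ref{LocalitiesSection1}.
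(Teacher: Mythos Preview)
Your argument for the first assertion (that $N_\L(P)$ is a model for $N_\F(P)$ when $P\in\Delta\cap\F^f$) is correct and coincides with the paper's.

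The gap is in your ``in particular'' argument. Section~\ref{LocalitiesSection1} explicitly does \emph{not} assume $\F$ is saturated, and your justification for saturation is wrong: Theorem~\ref{TLDelta}(a) only says that $\T(\L,\Delta)$ is a transporter system associated to $\F$, but transporter systems in the sense of \cite{OV1} are associated to arbitrary (not necessarily saturated) fusion systems. The saturation results for transporter systems require extra hypotheses such as $\F^{cr}\subseteq\ob(\T)$, which is precisely what is \emph{not} assumed here (that would make $(\L,\Delta,S)$ a linking locality, not merely of objective characteristic $p$). Without saturation of $\F$, Lemma~\ref{subcentricEquiv} is unavailable, so you cannot pass from ``$N_\F(P)$ is constrained for some fully normalized conjugate $P$'' to ``$Q$ is subcentric''.

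The paper circumvents this by verifying the \emph{definition} of subcentric directly: it shows that for every $P\in\Delta\cap\F^f$ the subgroup $R:=O_p(N_\F(P))=O_p(N_\L(P))$ is $\F$-centric. This requires an extra conjugation step: one chooses $h\in\L$ with $N_S(R)\leq S_h$ and $R^h\in\F^f$, checks via Lemma~\ref{LocalitiesProp}(b) that $P^h$ is again fully normalized with $R^h=O_p(N_\L(P^h))$, and then uses that $N_\L(P^h)$ has characteristic $p$ to get $C_S(R^h)=C_{N_S(P^h)}(R^h)\leq R^h$. Since $R^h$ is fully normalized, $R$ is centric. This argument uses only the definition of subcentric and properties of localities, never the saturation of $\F$.
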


\begin{proof}
Let $P\in\Delta\cap\F^f$. By Lemma~\ref{LocalitiesProp}(a) and Lemma~\ref{LocalitiesPropG}, $N_\L(P)$ is a subgroup of $\L$, $N_S(P)\in\Syl_p(N_\L(P))$ and $N_\F(P)=\F_{N_S(P)}(N_\L(P))$. As $N_\L(P)$ is of characteristic $p$, it follows that $N_\L(P)$ is a model for $N_\F(P)$. It remains to show that $\Delta\subseteq\F^s$. If $\F$ were assumed to be saturated, this would follow immediately from Lemma~\ref{Model1}(a) and Lemma~\ref{subcentricEquiv}. However, since we do not assume that $\F$ is saturated, we need to argue more carefully using the definition of subcentric subgroups. 

\smallskip

Set $R:=O_p(N_\F(P))$. As a next step, we show that $R$ is centric in $\F$. Since $N_\L(P)$ is a model for $N_\F(P)$, Lemma~\ref{Model1}(b) gives $R=O_p(N_\F(P))=O_p(N_\L(P))$. By Lemma~\ref{LocalitiesPropG}, there exists $h\in\L$ such that $N_S(R)\leq S_h$, $R^h\in\F^f$ and $N_S(R^h)\in\Syl_p(N_\L(R^h))$.  Note that $N_S(P)\leq N_S(R)\leq S_h$. By Lemma~\ref{LocalitiesProp}(b), $P^h\in\Delta$, $N_\L(P)\subseteq\D(h)$ and $c_h\colon N_\L(P)\rightarrow N_\L(P^h)$ is an isomorphism of groups. In particular, $R^h=O_p(N_\L(P^h))$. Moreover, $N_S(P)\cong N_S(P)^h\leq N_S(P^h)$. As $P$ is fully normalized, it follows $N_S(P)^h=N_S(P^h)$ and $P^h$ is fully normalized. Since $P$ was arbitrary, everything we proved above for $P$ holds also for $P^h$. So $O_p(N_\F(P^h))=O_p(N_\L(P^h))=R^h$ and $N_\L(P^h)$ is a model for $N_\F(P^h)$. So, as $P^h\leq R^h$, we have  $C_S(R^h)=C_{N_S(P^h)}(R^h)\leq R^h$. This implies that $R$ is centric, because $R^h$ is fully normalized. 

\smallskip

As $P$ was arbitrary, this shows that, for every $Q\in\Delta$ and every fully $\F$-normalized $\F$-conjugate $P$ of $Q$, $O_p(N_\F(P))$ is $\F$-centric. So every $Q\in\Delta$ is subcentric by definition. This shows $\Delta\subseteq\F^s$ and completes the proof.  
\end{proof}

If $(\L,\Delta,S)$ is a locality, define $P\in\Delta$ to be \textit{$\L$-radical} if $O_p(N_\L(P))=P$.

\begin{lemma}\label{RadicalLF}
 Let $(\L,\Delta,S)$ be a locality over $\F$ of objective characteristic $p$ and $P\in\Delta$. Then $P$ is $\L$-radical if and only if $P\in\F^{cr}$.
\end{lemma}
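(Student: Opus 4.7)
The first step is to reduce to the case where $P$ is fully $\F$-normalized. By Lemma~\ref{LocalitiesProp}(g), I can pick $f\in\L$ with $N_S(P)\leq S_f$ and $P^f\in\F^f$; by (b) of the same lemma, $c_f$ is a group isomorphism $N_\L(P)\to N_\L(P^f)$ taking $O_p$ to $O_p$, so $\L$-radicality is invariant under this move. The conditions ``$\F$-centric'' and ``$\F$-radical'' are conjugation-invariant by definition (the former is quantified over all $\F$-conjugates, the latter because $\Aut_\F(P^f)\cong\Aut_\F(P)$ via an isomorphism carrying $\Inn(P)$ to $\Inn(P^f)$). Thus we may assume $P\in\Delta\cap\F^f$, and then Lemma~\ref{NormModel} tells us that $N_\L(P)$ is a model for $N_\F(P)$ and that $P\in\F^s$; by Theorem~\ref{Model1}(b), $O_p(N_\L(P))=O_p(N_\F(P))$, and the $\F$-centric normal subgroups of $N_\F(P)$ are exactly the centric subgroups in $\F$ that are normal in $N_\L(P)$.

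\textbf{Forward direction.} Suppose $P$ is $\L$-radical, i.e.\ $P=O_p(N_\L(P))=O_p(N_\F(P))$. Since $P\in\F^s\cap\F^f$ and $P=O_p(N_\F(P))$, Lemma~\ref{fsfrc} applies and gives $P\in\F^{frc}\subseteq\F^{cr}$.

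\textbf{Reverse direction.} Suppose $P\in\F^{cr}$, and set $R:=O_p(N_\L(P))$. Since $P$ is a normal $p$-subgroup of $N_\L(P)$, we have $P\leq R$. Because $P\in\F^c$ and $N_\L(P)$ is a model for $N_\F(P)$, Theorem~\ref{Model1}(b) yields $C_{N_\L(P)}(P)\leq P$. The natural surjection
\[
\pi\colon N_\L(P)\twoheadrightarrow N_\L(P)/C_{N_\L(P)}(P)\cong \Aut_\F(P)
\]
sends the normal $p$-subgroup $R$ into the normal $p$-subgroup $O_p(\Aut_\F(P))=\Inn(P)$, where the equality uses $\F$-radicality of $P$. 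But $\pi^{-1}(\Inn(P))=P\cdot C_{N_\L(P)}(P)=P$, hence $R\leq P$ and so $R=P$.

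\textbf{Main obstacle.} There is no serious obstacle: the substantive content (the correspondence between $N_\L(P)$ as a model and the constrained system $N_\F(P)$) has already been set up in Lemma~\ref{NormModel}, and the fusion-theoretic half of the forward direction is exactly Lemma~\ref{fsfrc}. The only point that requires a little care is the reduction to the fully normalized case, where one must simultaneously verify that $\L$-radicality and $\F$-centric-radicality are both conjugation-invariant.
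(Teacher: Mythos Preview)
Your proof is correct and follows essentially the same route as the paper: reduce to $P\in\F^f$, invoke Lemma~\ref{NormModel} to identify $N_\L(P)$ as a model for $N_\F(P)$, and then translate between $O_p(N_\L(P))$ and $O_p(\Aut_\F(P))$ via $N_\L(P)/C_{N_\L(P)}(P)\cong\Aut_\F(P)$. The paper carries out the forward direction by hand (using $C_G(P)=Z(P)$ from characteristic $p$, then $O_p(G/Z(P))=P/Z(P)$ and $C_S(P)\leq C_G(P)\leq P$), whereas you package that step as an appeal to Lemma~\ref{fsfrc}; the underlying computation is the same.

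One small technical point: Lemma~\ref{fsfrc} sits in Section~\ref{SubcentricProperties}, where $\F$ is assumed saturated, while Lemma~\ref{RadicalLF} lives in Section~\ref{LocalitiesSection1}, where it is not. This is harmless here because after reducing to $P\in\F^f$ you have $N_\F(P)=\F_{N_S(P)}(N_\L(P))$, which is saturated as the fusion system of a finite group, and the proof of Lemma~\ref{fsfrc} only needs a model for $N_\F(P)$, which Lemma~\ref{NormModel} supplies directly. Still, it would be cleaner either to note this explicitly or to inline the short argument from Lemma~\ref{fsfrc}, as the paper does.
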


\begin{proof}
It follows from Lemma~\ref{LocalitiesProp}(b) that the set of $\L$-radical subgroups is closed under $\F$-conjugation. The set $\F^{cr}$ is closed under $\F$-conjugation as well. Hence, we may assume that $P\in\F^f$. Then by Lemma~\ref{NormModel}, $G:=N_\L(P)$ is a model for $N_\F(P)$. Note $G/C_G(P)\cong \Aut_\F(P)$ and $P/Z(P)\cong \Inn(P)$. Hence, if $C_G(P)=Z(P)$ then $O_p(\Aut_\F(P))=\Inn(P)$ if and only if $P=O_p(G)$. 

\smallskip

If $P\in\F^{cr}$ then $P\in N_\F(P)^c$ and so by Theorem~\ref{Model1}, $C_G(P)=Z(P)$. Hence, by what we just stated, $P=O_p(G)$ and $P$ is $\L$-radical. 

\smallskip

Conversely, assuming that $P$ is $\L$-radical, $C_G(P)=Z(P)$ as $G$ is of characteristic $p$. So, again by what we stated before, $P\in\F^r$. Moreover, $C_S(P)=C_{N_S(P)}(P)\leq C_G(P)\leq P$. So $P\in\F^c$ as $P\in\F^f$. This proves the assertion.
\end{proof}

\begin{proof}[Proof of Proposition~\ref{NormLF}]
Clearly, $Q\unlhd \F$ if $\L=N_\L(Q)$ and $Q\leq Z(\F)$ if $\L=C_\L(Q)$. Moreover, if $Q\leq Z(\F)$ and $\L=N_\L(Q)$ then clearly, $\L=C_\L(Q)$. Hence, it is sufficient to prove that $\L=N_\L(Q)$ if $Q\unlhd \F$. So assume $Q\unlhd \F$ and $\L\neq N_\L(Q)$. Choose $f\in \L\backslash N_\L(Q)$ such that $|S_f|$ is maximal. 

\smallskip

Since $Q\unlhd \F$ it follows $Q\not\leq S_f$. In particular, $S_f<S$ and thus $S_f^f<N_S(S_f^f)$. By Lemma~\ref{LocalitiesPropG}, there exists $h\in\L$ such that $N_S(S_f^f)\leq S_h$ and $N_S(S_f^{fh})\in\Syl_p(N_\L(S_f^{fh}))$. Then $(f,h,h^{-1})\in\D$ via $S_f$. By the maximality of $|S_f|$, $h\in N_\L(Q)$. So if $fh\in N_\L(Q)$ then $f=(fh)h^{-1}\in N_\L(Q)$ as $N_\L(Q)$ is a partial subgroup of $\L$ by Lemma~\ref{NormalizerCentralizerPartialSubgroup}. Hence, $fh\not\in N_\L(Q)$ and by the maximality of $|S_f|$, $S_f=S_{fh}$. So replacing $f$ by $fh$, we may assume that $N_S(S_f^f)\in \Syl_p(N_\L(S_f^f))$. 

\smallskip

As $c_f:S_f\rightarrow S_f^f$ is a morphism in $\F$ and $Q\unlhd \F$, there exists $g\in\L$ such that $S_fQ\leq S_g$, $c_g|_{S_f}=c_f$ and $g\in N_\L(Q)$. Then $(f^{-1},g)\in\D$ via $S_f^f$ and $f^{-1}g\in C_\L(S_f^f)\subseteq N_\L(S_f^f)$. Since $Q\unlhd\F$, we have $\Aut_{QP}(P)\leq O_p(\Aut_\F(P))$ for any $P\leq S$. Hence, $Q\leq P$ for every $P\in\F^{cr}$. In particular, $S_f\not\in\F^{cr}$ as $Q\not\leq S_f$. So $S_f^f\not\in\F^{cr}$ and thus, by Lemma~\ref{RadicalLF}, $S_f^f<R:=O_p(N_\L(S_f^f))$. As $N_S(S_f^f)\in\Syl_p(N_\L(S_f^f))$ we have $R\leq S$. As $f^{-1}g\in N_\L(S_f^f)$ and $R$ is normal in $N_\L(S_f^f)$, it follows $R\leq S_{f^{-1}g}$. So the maximality of $|S_f|=|S_f^f|$ yields $f^{-1}g\in N_\L(Q)$. As $(f^{-1},g,g^{-1})\in\D$ via $S_f^f$, and since $N_\L(Q)$ is a partial subgroup of $\L$, it follows $f^{-1}=(f^{-1}g)g^{-1}\in N_\L(Q)$. This yields a contradiction to $f\not\in N_\L(Q)$.
\end{proof}

\begin{lemma}\label{ex}
Let $(\L,\Delta,S)$ be a locality over $\F$.
\begin{itemize}
 \item [(a)] If $P\in\Delta\cap \F^{fc}$, then the following conditions are equivalent:
\begin{itemize}
\item[(a1)] $N_\L(P)$ is of characteristic $p$ (and thus a model for $N_\F(P)$).
\item[(a2)] $C_\L(Q)\leq Q$ for some $Q\in P^\F$.
\item[(a3)] $C_\L(Q)\leq Q$ for all $Q\in P^\F$.
\end{itemize}
 \item [(b)] If $P\in\Delta\cap \F^{fq}$, then $C_\L(P)=C_S(P)O_{p^\prime}(C_\L(P))$ and the following conditions are equivalent:
\begin{itemize} 
\item[(b1)] $N_\L(P)$ is of characteristic $p$ (and thus a model for $N_\F(P)$).
\item[(b2)] $C_\L(Q)$ is a $p$-group for some $Q\in P^\F$.
\item[(b3)] $C_\L(Q)$ is a $p$-group for all $Q\in P^\F$.
\end{itemize}
\end{itemize}
\end{lemma}

\begin{proof}
 Let $P\in\Delta\cap \F^f$. Then by Lemma~\ref{LocalitiesProp}(a) and Lemma~\ref{LocalitiesPropG}, $G:=N_\L(P)$ is a finite group with $N_S(P)\in\Syl_p(G)$, $N_\F(P)=\F_{N_S(P)}(G)$ and $C_\F(P)=\F_{C_S(P)}(C_G(P))$. In particular, $G$ is a model for $N_\F(P)$ if and only if $G$ is of characteristic $p$.

\smallskip

By Lemma~\ref{MorphismAsConjugation}, every $\F$-morphism between $P$ and an $\F$-conjugate $Q$ of $P$ can be realized as a conjugation map by an element of $f\in\L$ and then, by Lemma~\ref{LocalitiesProp}(b), $c_f\colon N_\L(P)\rightarrow N_\L(Q)$ is an isomorphism of groups. In particular, $C_\L(P)\cong C_\L(Q)$ for any $Q\in P^\F$. 

\smallskip

For the proof of (a) suppose now that $P$ is $\F$-centric. Then $P$ is also centric in $N_\F(P)$. As $P\unlhd G$, $G$ is  of characteristic $p$ if $C_G(P)=C_\L(P)\leq P$. Conversely, if $G$ is a model for $N_\F(P)$, then Theorem~\ref{Model1}(b) yields that $C_\L(P)=C_G(P)\leq P$. So $G$ is of characteristic $p$ and thus a model for $N_\F(P)$ if and only if $C_\L(P)\leq P$. As $C_\L(P)\cong C_\L(Q)$ for every $Q\in P^\F$, we have $C_\L(P)\leq P$ if and only if (a2) holds, and this is the case if and only if (a3) holds. This proves (a).

\smallskip

To prove (b), assume now that $P$ is quasicentric, i.e. $\F_{C_S(P)}(C_S(P))=C_\F(P)=\F_{C_S(P)}(C_G(P))$. So by Lemma~\ref{Charp2}, $C_\L(P)=C_G(P)=C_S(P)O_{p^\prime}(C_\L(P))$ and $G$ is a model for $N_\F(P)$ if and only if $C_\L(P)=C_G(P)$ is a $p$-group. As $C_\L(P)\cong C_\L(Q)$ for every $Q\in P^\F$, this yields (b). 
\end{proof}

\begin{proof}[Proof of Proposition~\ref{ex0}]
Property (a) is stated in its more precise form in Proposition~\ref{TLDelta}(c). The statement in (b) about $(\L,\Delta,S)$  is proved in Lemma~\ref{NormModel}. The statements in (c) and (d) about $(\L,\Delta,S)$ follow from Lemma~\ref{ex}. 

\smallskip

We argue now that the statements in (b),(c),(d) about $(\L,\Delta,S)$ imply the statements about $\T$, where $(\T,\epsilon,\rho)$ is a transporter system associated to $\F$: By Remark~\ref{IsomorphicTransporterSystems}, we can replace $(\T,\Delta,S)$ by any transporter system which is isomorphic via an isomorphism which is the identity on objects. So by Lemma~\ref{GetEveryTransporterSystem}, we can assume $(\T,\epsilon,\rho)=(\T(\L,\Delta),\epsilon_{\L,\Delta},\rho_{\L,\Delta})$. However, then the statements about $\T$ follow from Theorem~\ref{TLDelta}(b),(c),(d). So it remains only to prove the statement in (c) that every linking system in Oliver's definition is a linking system in our definition. This follows however from what we have shown and the following fact: If $(\T,\epsilon,\rho)$ is a linking system associated to $\F$ in the sense of Oliver \cite[Definition~3]{O4}, then the objects of $\T$ are by \cite[Proposition~4(g)]{O4}  quasicentric in $\F$.
\end{proof}

We close this section by giving a method to produce localities of objective characteristic $p$ in certain circumstances. A weaker version of the result we state was proved by Chermak in an earlier draft of \cite{Chermak:2015}; in the meantime Chermak adapted his text to state our version (cf. \cite[Theorem~4.13]{Chermak:2015}). Recall from Definition~\ref{ThetaDef} that a finite group $G$ is almost of characteristic $p$ if $G/O_{p^\prime}(G)$ is of characteristic $p$.

\begin{prop}\label{GetLocalityObjectiveCharp}
 Let $(\L,\Delta,S)$ be a locality such that $N_\L(P)$ is almost of characteristic $p$ for every $P\in\Delta$. Set $\Theta(P):=O_{p^\prime}(N_\L(P))$ for every $P\in\Delta$, and $\Theta:=\bigcup\{\Theta(P)\colon P\in\Delta\}$. 

\smallskip

Then $\Theta$ is a partial normal subgroup of $\L$ with $\Theta\cap S=1$. The canonical projection $\rho\colon \L\rightarrow \L/\Theta$ restricts to an isomorphism $S\rightarrow S\rho$. Upon identifying $S$ with $S\rho$, the following properties hold:
\begin{itemize}
 \item [(a)] $(\L/\Theta,\Delta,S)$ is a locality of objective characteristic $p$.
 \item [(b)] $\F_S(\L/\Theta)=\F_S(\L)$.
 \item [(c)] For every $P\in\Delta$, the restriction $N_\L(P)\rightarrow N_{\L/\Theta}(P)$ of $\rho$ has kernel $\Theta(P)$ and induces an isomorphism $N_\L(P)/\Theta(P)\cong N_{\L/\Theta}(P)$.
\end{itemize}
\end{prop}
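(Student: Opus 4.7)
The plan is to verify that $\Theta$ is a partial normal subgroup of $\L$ with $\Theta\cap S=1$, and then read off all remaining claims from Theorem~\ref{LocalityProjection} together with Lemma~\ref{AlmostCharp1}.

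The easy checks come first: $\Theta\cap S=1$, since each $\Theta(P)=O_{p'}(N_\L(P))$ is a $p'$-subgroup of the $p$-group $S$; and closure of $\Theta$ under inversion, since each $\Theta(P)$ is a group. Showing that $\Theta$ is closed under partial products and under $\L$-conjugation is the core of the argument, and I expect this to be the main obstacle. The essential structural input is Lemma~\ref{AlmostCharp1}(a): for any group $G$ almost of characteristic $p$ and any $p$-subgroup $R\le G$, one has $\Theta(N_G(R))=\Theta(G)\cap N_G(R)$. Applying this repeatedly inside the various $G=N_\L(P)$ and using that $\Delta$ is closed under overgroups in $S$ (so that in particular $S\in\Delta$ and common enlargements exist), one shows that the different $\Theta(P)$ are compatible in the following sense: any two of them intersect a common normalizer $N_\L(R)$, for a suitable $R\in\Delta$ containing both relevant $p$-subgroups, in the same subgroup. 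Combined with the cocycle relation $c_{g_1}\circ\cdots\circ c_{g_k}=c_{\Pi(w)}$ from Lemma~\ref{LocalitiesProp}(c), which is a group isomorphism between normalizer subgroups of $\L$ and therefore sends $O_{p'}$ to $O_{p'}$, this compatibility allows one to transport the factors of a word $w=(g_1,\dots,g_k)\in\W(\Theta)\cap\D$ into a single $\Theta(R)$, so that $\Pi(w)\in\Theta(R)\subseteq\Theta$. The same machinery, applied to the three-letter word $(f^{-1},n,f)$, yields $n^f\in\Theta$ whenever $n\in\Theta\cap\D(f)$, establishing partial normality. The delicate aspect is the bookkeeping: the different $\Theta(P)$ must be pieced together along the sequences of objects realizing the partial products, which requires enlarging members of $\Delta$ repeatedly to common containers inside which Lemma~\ref{AlmostCharp1}(a) can be invoked.

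Once $\Theta$ is known to be a partial normal subgroup with $\Theta\cap S=1$, the remainder is essentially a quotation of Theorem~\ref{LocalityProjection}. Since $\ker(\rho)\cap S=\Theta\cap S=1$, the restriction $\rho|_S\colon S\to S\rho$ is an injective group homomorphism, hence an isomorphism, which justifies the identification $S=S\rho$; under it, the set $\{P\rho:P\in\Delta\}$ coincides with $\Delta$. Theorem~\ref{LocalityProjection}(a) then gives that $(\L/\Theta,\Delta,S)$ is a locality; Theorem~\ref{LocalityProjection}(b) gives that $\rho|_S$ induces an epimorphism of fusion systems $\F_S(\L)\to\F_S(\L/\Theta)$ whose kernel is $\Theta\cap S=1$, yielding the equality in (b); and Theorem~\ref{LocalityProjection}(c) applied with $P=Q$ supplies the surjective group homomorphism $N_\L(P)\to N_{\L/\Theta}(P)$ asked for in (c). Its kernel is $N_\L(P)\cap\Theta$, and this intersection equals $\Theta(P)$ by one final use of Lemma~\ref{AlmostCharp1}(a): any element of $N_\L(P)\cap\Theta(P')$ can be relocated via a common normalizer to sit inside $O_{p'}(N_\L(P))=\Theta(P)$. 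Finally, the objective-characteristic-$p$ condition in (a) is immediate from (c): $N_{\L/\Theta}(P)\cong N_\L(P)/\Theta(P)$ is of characteristic $p$ by the very hypothesis that $N_\L(P)$ is almost of characteristic $p$.
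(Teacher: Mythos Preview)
Your outline is correct and matches the paper's strategy: establish that $\Theta$ is a partial normal subgroup with $\Theta\cap S=1$, then read everything off from Theorem~\ref{LocalityProjection}. You have also correctly identified Lemma~\ref{AlmostCharp1}(a) as the key structural input. But your description of the ``compatibility'' step is imprecise in a way that matters, and your intuition about the direction of containment is backwards.

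The crucial lemma you are glossing over is: \emph{for every $x\in\Theta$ one has $x\in\Theta(S_x)$}. The paper isolates this as Step~2 and proves it by a maximality argument: choose $P\in\Delta$ maximal with $x\in\Theta(P)$; then $[N_{S_x}(P),x]\le\Theta(P)\cap N_S(P)=1$, so $x\in\Theta(P)\cap C_\L(N_{S_x}(P))=\Theta(N_{S_x}(P))$ by Step~1 (which says $\Theta(Q)=\Theta(P)\cap C_\L(Q)$ whenever $P\le Q$ in $\Delta$), forcing $P=S_x$. With this in hand, the common container for a word $w=(x_1,\dots,x_n)\in\D\cap\W(\Theta)$ is $R:=S_w$, which lies \emph{below} each $S_{x_i}$, not above; one then shows by induction on $i$ that $R\le S_{x_i}$ and $x_i\in\Theta(S_{x_i})\subseteq\Theta(R)\le C_\L(R)$ (the inclusion coming from Step~1), whence $\Pi(w)\in\Theta(R)$. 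Normality and the identification $N_\L(P)\cap\Theta=\Theta(P)$ follow by the same two-step mechanism.

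Your phrase ``a suitable $R\in\Delta$ containing both relevant $p$-subgroups'' points the wrong way: for $P\le R$ one gets $\Theta(R)\subseteq\Theta(P)$, not the reverse, so enlarging to a common overgroup does not help collect the $x_i$. What makes the argument work is first pushing each $x_i$ up to $\Theta(S_{x_i})$ via the maximality argument, and then descending to the common subgroup $S_w$.
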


\begin{proof}
\noindent{\em Step 1:} We show that $\Theta(Q)=\Theta(P)\cap C_\L(Q)$ for any $P,Q\in\Delta$ with $P\leq Q$. For the proof note that $P$ is subnormal in $Q$, and by induction on the subnormal length, we may assume that $P\unlhd  Q$. Then $Q\leq N_\L(P)$ and $C_\L(Q)=C_{N_\L(P)}(Q)$. Hence, by Lemma~\ref{AlmostCharp1}(a), $\Theta(Q)=O_{p^\prime}(C_\L(Q))=O_{p^\prime}(C_{N_\L(P)}(Q))=O_{p^\prime}(N_\L(P))\cap C_{N_\L(P)}(Q)=\Theta(P)\cap C_\L(Q)$. This completes Step~1.

\smallskip

\noindent{\em Step 2:} We show $x\in \Theta(S_x)$ for any $x\in\Theta$. Let $x\in\Theta$. Then by definition of $\Theta$, the element $x$ lies in $\Theta(P)$ for some $P\in\Delta$. Choose such $P$ maximal with respect to inclusion. We have $P\leq S_x$ and $[N_{S_x}(P),x]\leq \Theta(P)\cap N_S(P)=1$. Hence, using Step~1, $x\in \Theta(P)\cap C_\L(N_{S_x}(P))=\Theta(N_{S_x}(P))$. So the maximality of $P$ yields $P=N_{S_x}(P)$ and thus $P=S_x$. Hence, $x\in\Theta(S_x)$ as required.

\smallskip

\noindent{\em Step 3:} We show that $\Theta$ is a partial normal subgroup of $\L$. Note that $1\in\Theta$ as $1\in\Theta(P)$ for any $P\in\Delta$. Moreover, clearly $\Theta$ is closed under inversion, since $\Theta(P)$ is a group for any $P\in\Delta$. Let now $(x_1,\dots,x_n)\in\D$ with $x_i\in\Theta$ for $i=1.\dots,n$. Then $R:=S_{(x_1,\dots,x_n)}\in\Delta$ by Lemma~\ref{LocalitiesProp}(f). Note that $R\leq S_{x_1}$ and thus $x_1\in \Theta(S_{x_1})\leq\Theta(R)\leq C_\L(R)$ by Step~1 and Step~2. In particular, $R=R^{x_1}\leq S_{x_2}$. Proceeding similarly using Step~1 and Step~2, one shows by induction on $i$ that $R\leq S_{x_i}$ and $x_i\in\Theta(R)\leq C_\L(R)$ for every $i=1,\dots,n$. Hence, $\Pi(x_1,x_2,\dots ,x_n)\in\Theta(R)\subseteq\Theta$. Thus, $\Theta$ is a partial subgroup of $\L$. Let  $x\in\Theta$ and $f\in\L$ with $(f^{-1},x,f)\in\D$. By Lemma~\ref{LocalitiesProp}(f), $X:=S_{(f^{-1},x,f)}\in\Delta$. Moreover, $X^{f^{-1}}\leq S_x$. By Step~2, we have $x\in\Theta(S_x)$, and then by Step~1, $x\in\Theta(X^{f^{-1}})$. It follows now from Lemma~\ref{LocalitiesProp}(b) that $x^f\in\Theta(X^{f^{-1}})^f=\Theta(X)\subseteq\Theta$. Hence, $\Theta$ is a partial normal subgroup of $\L$.

\noindent{\em Step 4:} We are now in a position to complete the proof. Notice first that $\Theta\cap S=1$ as $\Theta(P)\cap S=\Theta(P)\cap N_S(P)=1$ for every $P\in\Delta$. The quotient map $\rho\colon \L\rightarrow \L/\Theta$ is a homomorphism of partial groups with $\ker(\rho)=\Theta$; see Section~\ref{LocalityProjectionSection}. Therefore, $\rho|_S\colon S\rightarrow S\rho$ is a homomorphism of groups with kernel $S\cap\Theta=1$ and thus an isomorphism of groups. Upon identifying $S$ with $S\rho$, it follows now from Theorem~\ref{LocalityProjection}(a),(b) that $(\L/\Theta,\Delta,S)$ is a locality and $\F_S(\L)=\F_S(\L/\Theta)$. So (b) holds. Let $P\in\Delta$. By Theorem~\ref{LocalityProjection}(c), the restriction of $\rho$ to a map $N_\L(P)\rightarrow N_{\L/\Theta}(P)$ is an epimorphism with kernel $N_\L(P)\cap\Theta$. For any $x\in N_\L(P)\cap\Theta$, we have $P\leq S_x$ and then $x\in\Theta(S_x)\leq\Theta(P)$ by Step~1 and Step~2. This shows $N_\L(P)\cap\Theta=\Theta(P)$ and so (c) holds. In particular, our assumption yields that $N_{\L/\Theta}(P)$ is a group of characteristic $p$ and therefore (a) holds.
\end{proof}

\section{Construction of linking localities}\label{Construction}

In this section we prove Theorem~\ref{MainThm1}. Building on the existence and uniqueness of centric linking systems, the key to the proof is Theorem~\ref{A1General}, which gives a way of expanding a linking locality to another linking locality with a larger object set.

\begin{lemma}\label{Norm2}
Suppose $(\L,\Delta,S)$ is a locality of objective characteristic $p$ over $\F$. Let $T\in\F^f$ be such that every proper overgroup of $T$ is in $\Delta$ and $O_p(N_\F(T))\in\Delta$. Then $N_\L(T)$ is a subgroup of $\L$ which is a model for $N_\F(T)$. 
\end{lemma}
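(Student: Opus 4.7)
The plan is as follows. Set $R := O_p(N_\F(T))$; by hypothesis $R \in \Delta$, and since $T \unlhd N_\F(T)$ we have $T \leq R$. First I would reduce to the case where $R$ is fully $\F$-normalized. By Lemma~\ref{LocalitiesProp}(g), there exists $h \in \L$ with $N_S(R) \leq S_h$ and $R^h \in \F^f$. Because $N_S(T) \leq N_S(R) \leq S_h$, conjugation by $h$ embeds $N_S(T)$ into $N_S(T^h)$, and together with $T \in \F^f$ this forces $T^h \in \F^f$ with $N_S(T)^h = N_S(T^h)$. Moreover $c_h$ induces an isomorphism $N_\F(T) \to N_\F(T^h)$ sending $R$ to $R^h$, so $R^h = O_p(N_\F(T^h))$, and the group isomorphism $c_h \colon N_\L(R) \to N_\L(R^h)$ from Lemma~\ref{LocalitiesProp}(b) will transport the conclusion between the pairs $(T,R)$ and $(T^h,R^h)$. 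Hence I may assume $R \in \F^f$.

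Under this reduction, $R \in \Delta \cap \F^f$, so by Lemma~\ref{NormModel} the subgroup $G := N_\L(R)$ is a model for $N_\F(R)$: a finite group of characteristic $p$ with $N_S(R) \in \Syl_p(G)$ and $\F_{N_S(R)}(G) = N_\F(R)$. I then set $H := N_G(T)$, a subgroup of $G$ (hence of $\L$) which is of characteristic $p$ by Lemma~\ref{Charp1}(a). Since $R$ is characteristic in $N_\F(T)$, one has $N_S(T) \leq N_S(R)$, and thus $N_S(T) = N_{N_S(R)}(T) \in \Syl_p(H)$. Next, normality of $R$ in $N_\F(T)$ lets me extend any morphism of $N_\F(T)$ through $R$, giving $N_\F(T) \subseteq N_\F(R)$ as fusion systems and $N_\F(T) = N_{N_\F(R)}(T)$. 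Since $R \in \F^s$ by Lemma~\ref{NormModel} and $R \in \F^f$, Lemma~\ref{subcentricEquiv} yields that $N_\F(R)$ is constrained, with $T \in N_\F(R)^f$ (a direct count using $T \in \F^f$ and $N_{N_S(R)}(T') \leq N_S(T')$ for $N_\F(R)$-conjugates $T'$ of $T$). Lemma~\ref{Model2} applied inside $N_\F(R)$ then gives that $H = N_G(T)$ is a model for $N_{N_\F(R)}(T) = N_\F(T)$.

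The remaining task, which is the main obstacle, is to show $N_\L(T) = H$. The inclusion $H \subseteq N_\L(T)$ is immediate; for the reverse I must prove that every $g \in N_\L(T)$ satisfies $R \leq S_g$ and $R^g = R$, as this places $g$ in $G$ and hence in $H = N_G(T)$. From $T \subseteq \D(g)$ and axiom (L2), one deduces $T \leq S_g$. If $T = S$ the result is immediate via Proposition~\ref{NormLF}; otherwise $T \lneq S_g$, so $N_{S_g}(T) > T$ (self-normalizers are proper in $p$-groups) and the hypothesis yields $N_{S_g}(T) \in \Delta$. To upgrade $T \leq S_g$ to $R \leq S_g$, I would lift $c_g|_T \in \Aut_{N_\F(T)}(T)$ via normality of $R$ to some $\alpha \in \Aut_{N_\F(T)}(R)$ extending $c_g|_T$, realize $\alpha = c_{g^*}|_R$ for some $g^* \in G$ by Remark~\ref{MorphismAsConjugation} (and note $g^* \in H$ since $\alpha$ stabilizes $T$), and then compare $g$ with $g^*$: both induce the same conjugation on $T$, and by forming the product through the object $N_{S_g}(T) \in \Delta$ the element $k := (g^*)^{-1} g$ is defined in $\L$ and lies in $C_\L(T)$. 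A further bootstrapping argument, again exploiting that every proper overgroup of $T$ in $S$ lies in $\Delta$, places $k$ inside $N_\L(R) = G$, whence $g = g^* k \in G$ and the identification $N_\L(T) = H$ follows. This last step---promoting $T \leq S_g$ to $R \leq S_g$ purely from the domain axioms of $\L$---is the technical heart of the proof.
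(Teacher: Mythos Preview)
Your overall strategy --- realize $c_g|_T$ via an element $g^*$ of the model $H$ and study the difference $k=(g^*)^{-1}g$ --- is natural, and you correctly identify that the crux is the inclusion $N_\L(T)\subseteq N_\L(R)$. However, there are two genuine gaps.

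First, the reduction to $R\in\F^f$ is not justified as stated. The isomorphism $c_h\colon N_\L(R)\to N_\L(R^h)$ is defined only on $N_\L(R)$, so it transports the identification $N_{N_\L(R^h)}(T^h)=H'$ back to $N_{N_\L(R)}(T)$, but it cannot by itself transport the inclusion $N_\L(T^h)\subseteq N_\L(R^h)$ back to $N_\L(T)\subseteq N_\L(R)$: for that you would need to conjugate arbitrary elements of $N_\L(T)$ by $h$, and you only know $N_S(T)\leq S_h$, not $N_\L(T)\subseteq\D(h)$.

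Second, even granting the reduction, both the formation of $k$ and the final bootstrapping are incomplete. To have $((g^*)^{-1},g)\in\D$ ``through $N_{S_g}(T)$'' you would need $N_{S_g}(T)\leq S_{(g^*)^{-1}}$, but from $g^*\in N_\L(R)$ you only get $R\leq S_{(g^*)^{-1}}$; there is no reason $N_{S_g}(T)$ (or even $N_S(T)$) is carried into $S$ by $(g^*)^{-1}$. And even if $k$ exists, showing $k\in N_\L(R)$ is exactly the same shape of problem as showing $g\in N_\L(R)$; a genuine argument here needs a maximality or induction on $|S_g|$, which is precisely the content of Proposition~\ref{NormLF}.

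The paper sidesteps both issues by changing the ambient locality rather than arguing inside $\L$. Using \cite[Lemma~2.19(c)]{Chermak:2013} it verifies that $(N_\L(T),\Delta_T,N_S(T))$, with $\Delta_T=\{P\in\Delta: T\leq P\leq N_S(T)\}$, is itself a locality of objective characteristic $p$, and a short extension argument shows its fusion system is $N_\F(T)$. In this sublocality $R=O_p(N_\F(T))$ is normal in the fusion system, so Proposition~\ref{NormLF} applies directly and yields $N_\L(T)=N_{N_\L(T)}(R)$; then Lemma~\ref{NormModel} (in the sublocality, where $R$ is trivially fully normalized) finishes. No reduction to $R\in\F^f$ and no ad hoc bootstrapping are needed --- the hard step is already packaged in Proposition~\ref{NormLF}.
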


\begin{proof}
As every proper overgroup of $T$ is in $\Delta$, 
\[\Delta_T:=\{N_P(T)\colon T\leq P\in\Delta\}=\{P\in\Delta\colon T\leq P\leq N_S(T)\}\subseteq \Delta.\]
Set
\[R:=O_p(N_\F(T)).\]
 
\noindent{\em Step~1:} We show that $(N_\L(T),\Delta_T,N_S(T))$ is a linking locality of objective characteristic $p$.  First of all, by \cite[Lemma~2.19(c)]{Chermak:2013}, $(N_\L(T),\Delta_T,N_S(T))$ is a locality. If $P\in\Delta_T$ then $N_\L(P)$ is a group of characteristic $p$, as $P\in\Delta$ and $(\L,\Delta,S)$ is of objective characteristic $p$. In particular, $N_{N_{\L}(T)}(P)=N_{N_{\L}(P)}(T)$ is a group of characteristic $p$ by Lemma~\ref{Charp1}(b). Hence, $(N_\L(T),\Delta_T,N_S(T))$ is of objective characteristic $p$. 

\smallskip

\noindent{\em Step~2:} We show that $N_\F(T)=\F_{N_S(T)}(N_\L(T))$. Clearly, $\F_{N_S(T)}(N_\L(T))\subseteq N_\F(T)$. Let now $A,B\leq N_S(T)$ and $\phi\in\Hom_{N_\F(T)}(A,B)$. As $R$ is normal in $N_\F(T)$, $\phi$ extends to $\hat{\phi}\in\Hom_{N_\F(T)}(AR,BR)$ with $R\hat{\phi}=R$. By assumption, $R\in\Delta$ and thus $AR\in\Delta$. So by Lemma~\ref{MorphismAsConjugation}, there exists $f\in\L$ with $RA\leq S_f$ and $\hat{\phi}=c_f|_{AR}$. As $\hat{\phi}$ is a morphism in $N_\F(T)$ and $T\leq R\leq RA\leq S_f$, it follows $T^f=T\hat{\phi}=T$ and thus $f\in N_\L(T)$. Hence, $\phi=c_f|_A$ is a morphism in $\F_{N_S(T)}(N_\L(T))$. This completes Step~2.

\smallskip

\noindent{\em Step 3:} We complete the proof. By Step~1 and Step~2, $(N_\L(T),N_S(T),\Delta_T)$ is a locality of objective characteristic $p$ over $N_\F(T)$. Hence, by Proposition~\ref{NormLF}, we have $N_\L(T)=N_{N_\L(T)}(R)$. As $R\in\Delta$ by assumption, $R\in\Delta_T$. Moreover, as $R$ is normal in $N_\F(T)$, $R$ is fully normalized in $N_\F(T)$. So it follows from Lemma~\ref{NormModel} applied with $N_\L(T)$ and $R$ in place of $\L$ and $P$ that $N_\L(T)=N_{N_\L(T)}(R)$ is a model for $N_\F(T)=N_{N_\F(T)}(R)$. In particular, $N_\L(T)$ is a subgroup of $\L$.
\end{proof}

Suppose $(\L^+,\Delta^+,S)$ is a locality with partial product $\Pi^+\colon\D^+\rightarrow\L^+$. Suppose $\Delta$ is a non-empty subset of $\Delta^+$ which is closed  under taking $\L^+$-conjugates and overgroups in $S$. Set 
\[\L^+|_\Delta:=\{f\in\L^+\colon \exists P\in\Delta\mbox{ such that }P\subseteq \D^+(f)\mbox{ and }P^f\leq S\}\]
and write $\D$ for the set of words $w=(f_1,\dots,f_n)$ such that $w\in\D^+$ via $P_0,\dots,P_n$ for some $P_0,\dots,P_n\in\Delta$. Observe that $\D\subseteq\W(\L^+|_\Delta)$. It is easy to check that $\L^+|_\Delta$ forms a partial group with partial product $\Pi^+|_{\D}\colon\D\rightarrow \L^+|_{\Delta}$, and that $(\L^+|_\Delta,\Delta,S)$ forms a locality. We call $\L^+|_\Delta$ the \textit{restriction} of $\L^+$ to $\Delta$.

\smallskip

Note that $\T(\L^+|_\Delta,\Delta)$ is the full subcategory of $\T(\L^+,\Delta^+)$ with object set $\Delta$.

\begin{theorem}\label{A1General}
Suppose $\F$ is saturated. Let $\Delta$ and $\Delta^+$ be collections of subgroups of $S$  which are both closed under $\F$-conjugation and with respect to overgroups. Suppose that $\F^{cr}\subseteq \Delta\subseteq \Delta^+\subseteq \F^s$, and let $(\L,\Delta,S)$ be a linking locality over $\F$.
\begin{itemize}
\item [(a)] There exists a linking locality $(\L^+,\Delta^+,S)$ such that $\L$ is the restriction $\L^+|_\Delta$ of $\L^+$ to $\Delta$ and $\F_S(\L^+)=\F$. The inclusion of nerves $|\T(\L,\Delta)|\subseteq |\T(\L^+,\Delta^+)|$ is a homotopy equivalence.
\item [(b)] If $(\wL^+,\Delta^+,S)$ is another linking locality over $\F$ with object set $\Delta^+$ and $\beta:\L\rightarrow \wL^+|_\Delta$ is a rigid isomorphism, then $\beta$ extends to a rigid isomorphism $\L^+\rightarrow \wL^+$. So in particular, $\L^+$ is unique up to an isomorphism that restricts to the identity on $\L$.
\item [(c)] If $\Delta^+\backslash\Delta$ is a single $\F$-conjugacy class then $N_\L(R)=N_{\L^+}(R)$ for every $R\in\Delta^+\backslash\Delta$ which is fully $\F$-normalized.
\end{itemize}
\end{theorem}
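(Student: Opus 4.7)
The plan is to proceed by induction on the number of $\F$-conjugacy classes in $\Delta^+\setminus\Delta$. The base case $\Delta^+=\Delta$ is trivial. For the inductive step, choose $R\in\Delta^+\setminus\Delta$ of minimal order; replacing $R$ by an $\F$-conjugate, we may assume $R\in\F^f$. Since $\F$-conjugate subgroups have the same order, every $R<P\le S$ satisfies $P\in\Delta^+\setminus R^\F$, and (since $P$ cannot belong to any $\F$-conjugacy class in $\Delta^+\setminus\Delta$ of minimal order $|R|$) in fact $P\in\Delta$ by minimality together with the closure hypotheses. Setting $\Delta':=\Delta\cup R^\F$, it suffices to construct a linking locality $(\L',\Delta',S)$ over $\F$ with $\L'|_\Delta=\L$ satisfying the uniqueness statement of~(b) and the identity $N_\L(R)=N_{\L'}(R)$ for $R$ fully normalized; the general case then follows by applying the induction hypothesis to $(\Delta',\Delta^+)$ in place of $(\Delta,\Delta^+)$.

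For the single-class step, I would first identify the group that is forced to play the role of $N_{\L'}(R)$. Because $R\in\F^s\cap\F^f$, Lemma~\ref{subcentricEquiv} shows that $N_\F(R)$ is constrained, so Theorem~\ref{Model1}(a) yields a unique model $M$ for $N_\F(R)$ (unique up to an isomorphism that is the identity on $N_S(R)$). The key preliminary observation is that $O_p(N_\F(R))\in\Delta$: for if $R=O_p(N_\F(R))$, then Lemma~\ref{fsfrc} would force $R\in\F^{frc}\subseteq\F^{cr}\subseteq\Delta$, contradicting $R\in\Delta^+\setminus\Delta$; hence $R$ is properly contained in $O_p(N_\F(R))$, which is therefore one of the proper overgroups of $R$ already shown to lie in $\Delta$. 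Consequently Lemma~\ref{Norm2} applies to $(\L,\Delta,S)$ with $T:=R$ and shows that $N_\L(R)$ is already a subgroup of $\L$ which is a model for $N_\F(R)$. By uniqueness of models, we may identify $N_\L(R)$ with $M$, and in particular set $N_{\L'}(R):=N_\L(R)$. This gives part~(c) of the theorem by construction. The locality $\L'$ is then built along the lines of the patching construction of Chermak in~\cite{Chermak:2013}: as a set $\L'=\L$, with the domain of the partial product enlarged to the set of words $(f_1,\dots,f_n)$ admitting a sequence $P_0,\dots,P_n\in\Delta'$ witnessing (L2), and the product on newly admitted words defined via suitable factorizations through cosets of $N_\L(R)=M$ using conjugation maps already present in $\L$. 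The verification that this data satisfies the axioms of a partial group and defines a locality over $\F$ of objective characteristic $p$ is essentially a careful book-keeping exercise once $N_{\L'}(R)$ has been pinned down.

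The uniqueness statement~(b) at each step then follows from the rigidity of models: a rigid isomorphism $\beta\colon\L\to\wL'|_\Delta$ restricts to a rigid isomorphism $N_\L(R)\to N_{\wL'}(R)$ of models for $N_\F(R)$, and this restriction is forced by Theorem~\ref{Model1}(a), so $\beta$ extends uniquely and consistently to all of $\L'$. The homotopy equivalence $|\T(\L,\Delta)|\simeq|\T(\L^+,\Delta^+)|$ follows from~\cite[Proposition~4.7]{OV1} applied to the inclusion of $\T(\L,\Delta)$ as a full subcategory of $\T(\L^+,\Delta^+)$: both are linking systems (Theorem~\ref{TLDelta}), and by Lemma~\ref{RadicalLF} their radical objects both coincide with $\F^{cr}\subseteq\Delta$, which is the hypothesis needed to invoke that proposition. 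The principal technical obstacle, which I have deliberately skimmed, is the verification that the enlarged partial product genuinely satisfies the associativity-style axioms of a partial group and produces a locality; this is the delicate step and is the real content of Chermak's iterative procedure, relying on the compatibility between the group $M$ and the conjugation structure already encoded in $\L$ on the overlap $N_\L(R)$.
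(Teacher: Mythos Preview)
Your overall architecture matches the paper's proof almost exactly: induction on the conjugacy classes in $\Delta^+\setminus\Delta$, reduction to adding a single class, use of Lemma~\ref{fsfrc} and Lemma~\ref{Norm2} to see that $N_\L(R)$ is already a model for $N_\F(R)$, then Chermak's Hypothesis~5.3 and Theorems~5.14, 5.15 from \cite{Chermak:2013} for the construction and uniqueness, and finally \cite[Proposition~4.7]{OV1} together with Lemma~\ref{RadicalLF} for the homotopy equivalence. That part is fine.

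There is, however, a genuine error in your inductive step: you pick $R\in\Delta^+\setminus\Delta$ of \emph{minimal} order and then assert that every proper overgroup $P>R$ lies in $\Delta$. This does not follow. Minimality of $|R|$ only rules out elements of $\Delta^+\setminus\Delta$ of order at most $|R|$; there can perfectly well be further classes in $\Delta^+\setminus\Delta$ of strictly larger order, and one of their members could be an overgroup of $R$. In that case $\Delta':=\Delta\cup R^\F$ is not closed under overgroups, so the induction hypothesis does not apply to the pair $(\Delta',\Delta^+)$, and more fatally Lemma~\ref{Norm2} cannot be invoked since its hypothesis ``every proper overgroup of $T$ lies in $\Delta$'' fails. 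The fix is simply to reverse the choice: take $R$ (the paper calls it $T$) \emph{maximal} in $\Delta^+\setminus\Delta$. Then every proper overgroup of $R$ lies in $\Delta^+$ by overgroup-closure, and is not in $\Delta^+\setminus\Delta$ by maximality, hence lies in $\Delta$; this is exactly what is needed both for Lemma~\ref{Norm2} and for $\Delta\cup R^\F$ to be overgroup-closed.

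Two smaller points. First, the claim ``as a set $\L'=\L$'' is not obviously part of Chermak's construction and is in any case not needed; what matters is $\L'|_\Delta=\L$. Second, in your sketch of (b) you should make explicit, as the paper does, why $N_{\wL'|_\Delta}(R)=N_{\wL'}(R)$: the left side is a model for $N_\F(R)$ by Lemma~\ref{Norm2}, the right side is a model by Lemma~\ref{NormModel}, one is contained in the other, and models are unique by Theorem~\ref{Model1}(a).
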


\begin{proof}
We may assume $\Delta\neq\Delta^+$. Choose $T\in\Delta^+\backslash \Delta$ such that $T$ is maximal with respect to inclusion. Since $\Delta^+$ is closed under taking overgroups, it follows that every proper overgroup of $T$ is in $\Delta$. Therefore, as $\Delta$ is closed under $\F$-conjugation, every proper overgroup of an $\F$-conjugate of $T$ is in $\Delta$. Hence, $\Delta\cup T^\F$ is closed under taking overgroups. By construction, this set is closed under taking $\F$-conjugates. Furthermore, $\Delta\cup T^\F\subseteq \Delta^+$, as $\Delta^+$ is closed under taking $\F$-conjugates. Now by induction on $|\Delta^+\backslash \Delta|$, we may assume $\Delta^+=\Delta\cup T^\F$. Replacing $T$ by a suitable $\F$-conjugate, we may assume $T\in\F^f$. 

\smallskip

As $\F^{cr}\subseteq\Delta$ and $T\not\in\Delta$, we have $T\not\in\F^{cr}$. Then by Lemma~\ref{fsfrc}, $T<O_p(N_\F(T))$ and thus $O_p(N_\F(T))\in\Delta$, as every proper overgroup of $T$ is in $\Delta$. Hence, by Lemma~\ref{Norm2}, $M:=N_\L(T)$ is a subgroup of $\L$ which is a model for $N_\F(T)$. Now clearly properties (1)-(4) of \cite[Hypothesis~5.3]{Chermak:2013} hold. By Theorem~\ref{Model1}(b), $O_p(M)=O_p(N_\F(T))\in\Delta$. So setting $\Delta_T:=\{P\in\Delta\colon T\unlhd P\}$, the locality $\L_{\Delta_T}(M)$ introduced in \cite[Example/Lemma~2.10]{Chermak:2013} is just the group $M$ and $\lambda=\id_M$ can be considered as a rigid isomorphism $N_\L(T)\rightarrow \L_{\Delta_T}(M)$. So Hypothesis~5.3 in \cite{Chermak:2013} is fulfilled. Thus, by \cite[Theorem~5.14]{Chermak:2013}, there exists a locality $(\L^+,\Delta^+,S)$ such that $\L$ is the restriction $\L^+|_\Delta$ of $\L^+$ to $\Delta$ and $\F_S(\L^+)=\F$. Furthermore, $\L^+$ can be taken to be the locality $\L^+(\lambda)$ constructed in \cite[Section~5]{Chermak:2013}. So the first part of (a) holds. 

\smallskip

Since our choice of $T$ was arbitrary, the arguments above give that $N_\L(R)$ is a model for $N_\F(R)$ for any $R\in T^\F\cap\F^f$ and thus $N_{\L^+}(T)=N_\L(R)$. This proves (c). 

\smallskip

To prove (b) let $(\wL^+,\Delta^+,S)$ be another linking locality over $\F$ with object set $\Delta^+$ and let $\beta:\L\rightarrow \wL^+|_\Delta$ be a rigid isomorphism. Then $\wL:=\wL^+|_\Delta$ is a linking locality as well and has thus the same properties we proved above for $\L$. In particular, $N_{\wL}(T)$ is a subgroup of $\wL$ which is a model for $N_\F(T)$. Then $\beta_T=\beta|_M\colon M\rightarrow N_{\wL}(T)$ will be an isomorphism of groups which restricts to the identity on $N_S(T)$, as $\beta$ is a rigid isomorphism. As $(\wL^+,\Delta^+,S)$ is a linking locality and $T\in\Delta^+\cap\F^f$, $N_{\wL^+}(T)$ is a model for $N_\F(T)$ by Lemma~\ref{NormModel}. Clearly, $N_{\wL}(T)\subseteq N_{\wL^+}(T)$ and thus $N_{\wL}(T)=N_{\wL^+}(T)$ by Theorem~\ref{Model1}(a). Hence, $\beta_T$ is also a group isomorphism $M\rightarrow N_{\wL^+}(T)$ which restrict to the identity on $N_S(T)$. So by \cite[Theorem~5.15(a)]{Chermak:2013} applied with $\wL^+$ in place of $\L^*$ and $\beta_T$ in place of $\mu$, there exists a rigid isomorphism $\beta^+\colon \L^+\rightarrow\wL^+$ which restricts to the identity on $\L$. This proves (b). 

\smallskip

It remains to prove the statement in part (a) about the nerves of the transporter systems. Note that $\T(\L,\Delta)$ is the full subcategory of $\T(\L^+,\Delta^+)$ with object set $\Delta$. If $(\T,\epsilon,\rho)$ is a transporter system then $P\in\ob(\T)$ is $\T$-radical in the sense defined in \cite[p.~1015]{OV1} if $O_p(\Aut_\T(P))=P\epsilon_{P,P}$. As $\Aut_{\T(\L^+,\Delta^+)}(P)\cong N_{\L^+}(P)$ for every $P\in\Delta^+$, it follows that $P\in\Delta^+$ is $\T(\L^+,\Delta^+)$-radical if and only if $P$ is $\L^+$-radical in the sense defined above. Hence, by Lemma~\ref{RadicalLF}, the $\T(\L^+,\Delta)$-radical elements of $\Delta^+$ are precisely the elements of $\F^{cr}$. As by assumption $\F^{cr}\subseteq \Delta$, it follows $\T(\L^+,\Delta^+)^r\subseteq \T(\L,\Delta)$, where $\T(\L^+,\Delta^+)^r$ denotes the full subcategory of $\T(\L^+,\Delta^+)$ with object set the $\T(\L^+,\Delta^+)$-radical subgroups. Hence, by \cite[Proposition~4.7]{OV1}, the inclusion of nerves $|\T(\L,\Delta)|\subseteq |\T(\L^+,\Delta^+)|$ is a homotopy equivalence.
\end{proof}

Theorem~\ref{MainThm1} is now easy to deduce from Theorem~\ref{A1General} using the existence and uniqueness of centric linking systems which we state here in the formulation in which we will apply it:

\begin{theorem}[Chermak, Oliver, Glauberman--Lynd]\label{ExistenceUniquenessCentric}
 Let $\F$ be a saturated fusion system over $S$. Then there exists a linking locality $(\L,\Delta,S)$ over $\F$ with object set $\Delta=\F^c$, and such a linking locality is unique up to a rigid isomorphism.
\end{theorem}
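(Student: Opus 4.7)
The plan is to deduce this statement from the classical existence and uniqueness theorem for centric linking systems \cite{BLO2} via the dictionary between localities and transporter systems developed in Section~\ref{LocalitiesSection0}. The statement itself is attributed, so the task is really to explain the translation rather than to give a fresh proof of the underlying obstruction theory.

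First I would perform the translation. By Theorem~\ref{TLDelta}(d), a locality $(\L,\F^c,S)$ over $\F$ is a linking locality if and only if the associated transporter system $(\T(\L,\F^c),\epsilon_{\L,\F^c},\rho_{\L,\F^c})$ is a centric linking system in the sense of \cite[Definition~1.7]{BLO2}. Conversely, by Theorem~\ref{GetEveryTransporterSystem}, every transporter system associated to $\F$ (in particular every centric linking system with object set $\F^c$) is isomorphic, via an isomorphism that is the identity on objects, to one of the form $(\T(\L,\F^c),\epsilon_{\L,\F^c},\rho_{\L,\F^c})$ for some locality $(\L,\F^c,S)$ over $\F$; Remark~\ref{IsomorphicTransporterSystems} then ensures that the latter is also a centric linking system, and hence $(\L,\F^c,S)$ is a linking locality by Theorem~\ref{TLDelta}(d). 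Under this correspondence, rigid isomorphisms of localities $(\L,\F^c,S)\to(\L^*,\F^c,S)$ correspond exactly to isomorphisms of the associated centric linking systems that are the identity on objects and on the image of $\epsilon$, again by Remark~\ref{IsomorphicTransporterSystems}.

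Second I would invoke the classical result. By \cite[Theorem~A]{Chermak:2013}, reformulated by Oliver \cite{Oliver:2013} and made independent of the classification of finite simple groups by Glauberman and Lynd \cite{Glauberman/Lynd}, there exists a centric linking system associated to $\F$, and it is unique up to an isomorphism which is the identity on objects and on the image of $\epsilon$. Combined with the translation above, this gives both the existence of a linking locality $(\L,\F^c,S)$ and its uniqueness up to rigid isomorphism.

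The main obstacle is of course in the classical existence and uniqueness theorem itself, not in the translation. The underlying proof proceeds by an iterative construction that adds $\F$-conjugacy classes of objects one at a time (a simpler version of the argument used later in Theorem~\ref{A1General}); at each step an obstruction to extending the partial structure lives in a cohomology group of an outer automorphism group with coefficients in the center of the added object, and the heart of the argument is to show that this obstruction vanishes. Chermak established this via a case analysis that invokes the classification, and the contribution of Glauberman and Lynd is a classification-free proof of the same cohomological vanishing. Once this input is accepted, no further work is required in our setting.
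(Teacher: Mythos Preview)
Your proposal is correct and matches the paper's treatment: the paper does not give a proof either but simply observes (in the paragraph following the theorem) that by Remark~\ref{ex0}(d) a linking locality with $\Delta=\F^c$ is the same as a centric linking system in Chermak's sense, so the statement is a restatement of \cite[Main Theorem]{Chermak:2013}, and that via Theorem~\ref{TLDelta} and Theorem~\ref{GetEveryTransporterSystem} this is equivalent to the transporter-system formulation proved classification-free by combining \cite{Oliver:2013} and \cite{Glauberman/Lynd}. The only minor point is that the precise correspondence between rigid isomorphisms of localities and isomorphisms of the associated transporter systems is \cite[Proposition~A.3(b)]{Chermak:2013} rather than Remark~\ref{IsomorphicTransporterSystems}.
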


By Proposition~\ref{ex0}, $(\L,\Delta,S)$ is a linking locality with object set $\Delta=\F^c$ if and only if it is a centric linking system in the sense of Chermak \cite{Chermak:2013}. Hence, Theorem~\ref{ExistenceUniquenessCentric} is a restatement of the main theorem in \cite{Chermak:2013}. The proof in \cite{Chermak:2013} uses the classification of finite simple groups. However, by Theorem~\ref{TLDelta} and Theorem~\ref{GetEveryTransporterSystem}, the statement of Theorem~\ref{ExistenceUniquenessCentric} is equivalent to the existence and uniqueness of centric linking systems which can be proved without the classification of finite simple groups if combining \cite{Oliver:2013} and \cite{Glauberman/Lynd}.

\begin{proof}[Proof of Theorem~\ref{MainThm1}]
Suppose $\F$ is saturated. By Lemma~\ref{subcentricProp}, the set $\F^s$ is closed under taking $\F$-conjugates and overgroups. Hence, it is sufficient to prove (a).  Let $\Delta_0$ be the set of overgroups of the elements of $\F^{cr}$ in $S$. Then $\Delta_0$ is closed under taking $\F$-conjugates, as $\F^{cr}$ is closed under taking $\F$-conjugates.

\smallskip
\noindent{\em Step 1:} We show that, up to a rigid isomorphism, there exists a unique linking locality $(\L_0,\Delta_0,S)$ over $\F$ and the nerve of $\T(\L_0,\Delta)$ is homotopy equivalent to the nerve of a centric linking system. By Theorem~\ref{ExistenceUniquenessCentric}, a linking locality $(\L^*,\F^c,S)$ over $\F$ exists and is unique up to a rigid isomorphism. Then clearly, setting $\L_0:=\L^*|_{\Delta_0}$, the triple $(\L_0,\Delta_0,S)$ is a linking locality. Suppose we are given another linking locality $(\wL_0,\Delta_0,S)$ over $\F$. Then by Theorem~\ref{A1General}, there exists a linking locality $(\wL^*,\F^s,S)$ over $\F$ with $\wL^*|_{\Delta_0}=\wL_0$. Moreover, 
$|\T(\wL^*,\Delta)|\simeq |\T(\wL_0,\Delta_0)|$. Since $(\L^*,\F^c,S)$ is unique up to a rigid isomorphism, there exists then a rigid isomorphism $\lambda:\L^*\rightarrow \wL^*$. Clearly, $\lambda$ restricts to a rigid isomorphism $\L_0\rightarrow\wL_0$. By \cite[Proposition~A.3(b)]{Chermak:2013}, every rigid isomorphism of localities leads to an isomorphism between the corresponding transporter systems. In particular, $|\T(\L^*,\Delta^*)|\simeq |\T(\wL^*,\Delta)|\simeq |\T(\wL_0,\Delta_0)|\simeq |\T(\L_0,\Delta_0)|$.

\smallskip

\noindent{\em Step 2:} We complete the proof by showing that, up to a rigid isomorphism, there exists a unique linking locality $(\L,\Delta,S)$ and $|\T(\L,\Delta)|$ is homotopy equivalent to the nerve of a centric linking system. Note that $\F^{cr}\subseteq \Delta_0\subseteq \Delta\subseteq \F^s$. By Step~1 there is a linking locality $(\L_0,\Delta_0,S)$ which is unique up to rigid isomorphism and $|\T(\L_0,\Delta_0)|$ is homotopy equivalent to the nerve of a centric linking system. By Theorem~\ref{A1General}, there exists a linking locality $(\L,\Delta,S)$ over $\F$ such that $\L|_{\Delta_0}=\L_0$ and $|\T(\L,\Delta)|\simeq|\T(\L_0,\Delta_0)|$ is homotopy equivalent to the nerve of a centric linking system. Moreover, for every linking locality $(\wL,\Delta,S)$, any rigid isomorphism $\L_0\rightarrow \wL|_{\Delta_0}$ extends to a rigid isomorphism $\L\rightarrow \wL$. Let $(\wL,\Delta,S)$ be a linking locality. Note that $(\wL|_{\Delta_0},\Delta_0,S)$ is a linking locality. So by the uniqueness of $\L_0$, there exists a rigid isomorphism $\gamma:\L_0\rightarrow \wL|_{\Delta_0}$. This extends to a rigid isomorphism $\L\rightarrow \wL$ proving that $\L$ is unique up to a rigid isomorphism.
\end{proof}

\section{Centralizers of partial normal subgroups}\label{Centralizers}

In this section we prove Proposition~\ref{CENThm}. Our proof actually gives some further insight; see Proposition~\ref{CNWeaklyClosed} and Remark~\ref{CNWeaklyClosedRemark}.

\begin{lemma}\label{SplittingLemmaCons}
Let $(\L,\Delta,S)$ be a locality and $\N$ a partial normal subgroup of $\L$. Set $T:=S\cap\N$ and let $Q\in\Delta$. Then there exists $x\in\N$ such that $N_S(Q)\leq S_x$ and $N_T(Q^x)\in\Syl_p(N_\N(Q^x))$. 
\end{lemma}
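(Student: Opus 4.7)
The strategy is a Frattini-type argument in the group $N_\L(Q)$ combined with a Sylow/splitting theorem for partial normal subgroups of localities. Set $H := N_\L(Q)$ and $N := H \cap \N$; by convention $T = S \cap \N$, so $N = N_\N(Q)$. By Lemma~\ref{LocalitiesProp}(a), $H$ is a subgroup of $\L$, and since $\N$ is partial normal in $\L$, a direct verification shows that $N$ is a normal subgroup of $H$ in the ordinary group-theoretic sense.

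Since $N_S(Q)$ is a $p$-group acting by conjugation on the set $\Syl_p(N)$, and since $N_T(Q) = N_S(Q) \cap N$ is a $p$-subgroup of $N$, a standard Frattini/counting argument (applied to the set of Sylow $p$-subgroups of $N$ containing $N_T(Q)$, whose cardinality is $\equiv 1 \pmod p$) produces a Sylow $p$-subgroup $P$ of $N$ that contains $N_T(Q)$ and is normalized by $N_S(Q)$. Form $R := N_S(Q) P$, a $p$-subgroup of $H$. Any $r = sp \in R \cap \N$ with $s \in N_S(Q)$, $p \in P$ forces $s = r p^{-1} \in \N$, so $s \in N_S(Q) \cap \N = N_T(Q) \leq P$ and hence $r \in P$; thus $R \cap \N = P$.

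The crucial step is now to produce $x \in \N$ with $R \leq S_x$ and $P^x \leq T$: this is the content of the Splitting Lemma for partial normal subgroups of localities developed in \cite{Chermak:2015}, applied to the $p$-subgroup $R$ of $\L$ and its distinguished subgroup $P = R \cap \N$. Given such $x$, the inclusion $N_S(Q) \leq R \leq S_x$ gives the first claim. Moreover $Q \leq N_S(Q) \leq S_x$ and $P \leq S_x$, and by Lemma~\ref{LocalitiesProp}(c), $c_x$ restricts to a group isomorphism $N_\L(Q) \to N_\L(Q^x)$, which further restricts to an isomorphism $N \to N_\N(Q^x)$ (using that $\N$ is partial normal). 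In particular $P^x \leq N_\L(Q^x) \cap T = N_T(Q^x)$, and comparing orders $|P^x| = |P| = |N|_p = |N_\N(Q^x)|_p$ forces $P^x = N_T(Q^x) \in \Syl_p(N_\N(Q^x))$, completing the proof.

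The main obstacle is the invocation of the Splitting Lemma from \cite{Chermak:2015}: it must be strong enough to simultaneously carry the whole $p$-subgroup $R$ into $S$ (so that $N_S(Q)$ is handled) and the distinguished piece $P = R \cap \N$ into $T$ (so that the Sylow condition in $\N$ is achieved). The Frattini step above ensures that $P$ and $N_S(Q)$ fit together compatibly; the remainder is a clean order-count relying on the normality of $N$ in $H$.
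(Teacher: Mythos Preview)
There is a genuine gap in the crucial ``Splitting Lemma'' step, and a literal error in its formulation. Recall that $S_x \leq S$ for every $x\in\L$ (this is part of Lemma~\ref{LocalitiesProp}(d)). Your subgroup $R = N_S(Q)P$ is a $p$-subgroup of the group $N_\L(Q)$, but $P\in\Syl_p(N_\N(Q))$ need not lie in $S$; as soon as $N_T(Q)$ is not already Sylow in $N_\N(Q)$, we have $P\not\leq S$ and hence $R\not\leq S$. The inclusion $R\leq S_x$ you assert is therefore impossible in general, so the deduction ``$N_S(Q)\leq R\leq S_x$'' breaks down. Even reading the claim charitably as ``$N_S(Q)\leq S_x$ and $P^x\leq T$ for some $x\in\N$'', you have not pointed to a result in \cite{Chermak:2015} that delivers this: the available splitting statements there take as input an element $g\in\L$ (together with auxiliary $\uparrow$-maximality data), not a $p$-subgroup $R$ of $\L$ with a distinguished intersection $R\cap\N$. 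In short, the Frattini step is fine, but the conjugating element you need is produced by precisely the kind of machinery you have suppressed.

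For comparison, the paper's argument runs in the opposite order. One first applies Lemma~\ref{LocalitiesProp}(g) to obtain $g\in\L$ with $N_S(Q)\leq S_g$ and $N_S(Q^g)\in\Syl_p(N_\L(Q^g))$; this immediately forces $N_T(Q^g)\in\Syl_p(N_\N(Q^g))$ by normality of $N_\N(Q^g)$ in $N_\L(Q^g)$. Only then does one invoke the $\uparrow$-relation of \cite[Definition~4.3]{Chermak:2013}: choosing $(f,R)$ $\uparrow$-maximal above $(g,S_g)$ yields $T\leq S_f$ by \cite[Proposition~4.5]{Chermak:2013}, and \cite[Lemma~4.6]{Chermak:2013} then produces the splitting $g = xf$ with $x\in\N$, $S_g\leq S_{(x,f)}$, and $N_T(Q^x)\in\Syl_p(N_\N(Q^x))$. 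The Sylow condition for $Q^x$ is thus an output of Chermak's lemma rather than something arranged beforehand. Your Frattini manoeuvre in $N_\L(Q)$ is a natural idea, but it does not circumvent this technical core; to repair the proof you would still need to conjugate $R$ into $S$ by some $g\in\L$ and then split $g$ through $\N$, which is essentially the paper's route.
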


\begin{proof}
By Lemma~\ref{LocalitiesPropG} there exists $g\in\L$ such that $N_S(Q)\leq S_g$ and $N_S(Q^g)\in\Syl_p(N_\L(Q^g))$. As $N_\L(Q^g)$ is a subgroup of $\L$ with normal subgroup $N_\N(Q^g)$, it follows that $N_T(Q^g)=N_S(Q^g)\cap N_\N(Q^g)\in\Syl_p(N_\N(Q^g))$. Chermak \cite[Definition~4.3]{Chermak:2013} defines a reflexive and transitive relation $\uparrow$ on the set $\L\circ\Delta$ of pairs $(f,P)\in\L\times\Delta$ such that $P\leq S_f$. Furthermore, he calls a pair $(f,P)$ maximal in $\L\circ\Delta$ if $(f,P)\uparrow (f',P')$ implies $|P|=|P'|$. The definition of $\uparrow$ yields that $P=S_f$ if $(f,P)$ is maximal under $\uparrow$. As $S$ is finite, we can take $f\in\L$ and $R\in\Delta$ such that $(g,S_g)\uparrow (f,R)$ and $(f,R)$ is maximal under $\uparrow$. Then $R=S_f$, so it follows from  \cite[Proposition~4.5]{Chermak:2013} that $T\leq S_f=R$. Then by \cite[Lemma~4.6]{Chermak:2013}, there exists $x\in\N$ such that $g=xf$, $S_g\leq S_{(x,f)}$ and $N_T(Q^x)\in\Syl_p(N_\N(Q^x))$. Then $N_S(Q)\leq S_g=S_{(x,f)}\leq S_x$ and the assertion holds. 
\end{proof}

\begin{prop}\label{CNWeaklyClosed}
 Suppose $(\L,\Delta,S)$ is a linking locality over $\F$. Let $\N$ be a partial normal subgroup of $\L$ and $T=\N\cap S$. Assume that $R$ is a subgroup of $C_S(T)$ which is weakly closed in $\F$. Then the following are equivalent:
\begin{itemize}
\item [(1)] $N_\N(T)\subseteq C_\L(R\cap T)$.
\item [(2)] $N_\N(T)\subseteq C_\L(R)$. 
\item [(3)] $\N\subseteq C_\L(R)$.
\end{itemize}
\end{prop}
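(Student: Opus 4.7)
The implications $(3) \Rightarrow (2) \Rightarrow (1)$ are immediate: $N_\N(T) \subseteq \N$ yields the first, and $R \cap T \subseteq R$ gives $C_\L(R) \subseteq C_\L(R \cap T)$, hence the second. The substance of the proposition is therefore the implication $(1) \Rightarrow (3)$, which I would split into the two steps $(1) \Rightarrow (2)$ and $(2) \Rightarrow (3)$.

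For $(1) \Rightarrow (2)$, let $x \in N_\N(T)$. A first technical hurdle is to show $R \subseteq S_x$; I expect this to follow from $R \leq C_S(T)$, the inclusion $T \subseteq S_x$ with $T^x = T$, and the weak closure of $R$, by combining $c_x$ on $T$ with the commuting subgroup $R$ via the locality axioms. Once $R \subseteq S_x$, weak closure forces $R^x = R$, so $c_x|_R \in \Aut_\F(R)$. Next, applying the quotient projection $\rho \colon \L \to \L/\N$ from Section~\ref{LocalityProjectionSection} to the word $(x^{-1},r,x)$ shows $\rho(r^x) = \rho(r)$, so $r^x r^{-1} \in \N \cap S = T$, and combined with $R^x = R$ this places $r^x r^{-1} \in R \cap T$. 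Hence $c_x|_R$ is trivial on $R/(R \cap T)$; by hypothesis~(1) it is also trivial on $R \cap T$. Since $R \leq C_S(T)$ forces $R \cap T \leq Z(R)$, the automorphism $c_x|_R$ lies in the kernel of $\Aut(R) \to \Aut(R \cap T) \times \Aut(R/(R \cap T))$, which is the $p$-group $\Hom(R/(R \cap T), R \cap T)$. The closing step is to use the characteristic-$p$ structure of the linking locality to force this $p$-primary automorphism, arising from conjugation by $x \in \N$, to be the identity.

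For $(2) \Rightarrow (3)$, let $x \in \N$ be arbitrary. I would apply Lemma~\ref{SplittingLemmaCons} with $Q = S_x \in \Delta$ (or more generally with $Q \in \Delta$ containing $RT$, which exists since $\Delta$ is closed under overgroups in $S$) to produce $y \in \N$ with $N_S(S_x) \leq S_y$ and $N_T(S_x^y) \in \Syl_p(N_\N(S_x^y))$. This reduces $x$, modulo conjugation by $y$, to an element normalizing $T$; weak closure of $R$ ensures the conjugating element $y$ preserves $R$, so hypothesis~(2) applied to the reduced piece transfers back to give $r^x = r$ for every $r \in R$.

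The main obstacle is the final step of $(1) \Rightarrow (2)$, where the automorphism $c_x|_R$ is only known to be $p$-primary; in a generic locality this is not enough to conclude triviality. The full strength of the linking-locality hypothesis (namely that $N_\L(Q)$ is of characteristic $p$ for every $Q \in \Delta$), together with the structural constraint that $x$ lies in the partial normal subgroup $\N$, must be leveraged---most naturally by embedding the argument inside $N_\L(Q)$ for a suitable $Q \supseteq RT$ in $\Delta$ and invoking the structure theory of partial normal subgroups of characteristic-$p$ groups.
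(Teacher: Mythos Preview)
Your outline correctly identifies the trivial implications and the two nontrivial steps, but both of the substantive arguments have real gaps that the paper fills differently.

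For $(1)\Rightarrow(2)$: your ``first technical hurdle'' $R\subseteq S_x$ is not a routine locality manipulation. The paper obtains it by invoking \cite[Lemma~3.5]{Chermak:2015}, which uses the objective-characteristic-$p$ hypothesis to show $N_\N(T)\subseteq N_\L(TC_S(T))$; since $R\leq C_S(T)$ and $R$ is weakly closed, this gives $R\leq S_x$ and $R^x=R$. Your sketch via ``locality axioms'' does not supply this. More importantly, your acknowledged ``main obstacle''---that $c_x|_R$ is only known to be a $p$-element of $\Aut(R)$---is genuinely fatal to an elementwise approach. The paper does not argue elementwise: it places $x$ in the finite group $N:=N_{N_\N(S_x)}(T)$, observes $T\in\Syl_p(N)$ (since $T$ is a maximal $p$-subgroup of $\N$), and then runs a three-subgroups-style argument inside $N$: from $[R,N]\leq R\cap\N=R\cap T$ and $[R\cap T,N]=1$ one gets $[R,O^p(N)]=[R,O^p(N),O^p(N)]=1$, and since $N=T\,O^p(N)$ with $[R,T]=1$, the whole group $N$ centralizes $R$. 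The point is that passing to $O^p$ of a finite group kills the residual $p$-part in one stroke; your attempt to squeeze this out of a single element $x$ cannot work.

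For $(2)\Rightarrow(3)$: your sketch ``reduces $x$, modulo conjugation by $y$, to an element normalizing $T$'' is not what Lemma~\ref{SplittingLemmaCons} delivers, and the reduction is more delicate. The paper runs a maximality argument: pick $n\in\N\setminus C_\L(R)$ with $|S_n|$ maximal, and first prove (Step~1) that $N_\N(Q)\subseteq C_\L(R)$ for every $Q\in\Delta$ with $|Q|\geq|S_n|$ and $N_T(Q)\in\Syl_p(N_\N(Q))$. This step is where the characteristic-$p$ hypothesis on $N_\L(Q)$ is used in earnest (to force $C_S(Q\cap T)\leq Q$, hence $R\leq Q$), together with a Frattini argument $N_\N(Q)=O^{p'}(N_\N(Q))\,N_{N_\N(Q)}(N_T(Q))$. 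Only after this is Lemma~\ref{SplittingLemmaCons} applied (Step~2) to manufacture such a $Q$ and derive the contradiction. Your proposal skips the entire content of Step~1.
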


\begin{proof}
Assume first that (1) holds. To prove (2) let $x\in N_\N(T)$. We want to show that $x\in C_\L(R)$. Set $P:=S_x$. As $x\in N_\N(T)$, we have $T\leq S_x$. Then by \cite[Lemma~3.1(b)]{Chermak:2015}, $x\in N_\L(P)$ and thus $x\in H:=N_{N_\N(P)}(T)$. So it is sufficient to show that $H\subseteq C_\L(R)$. Recall that $P\in\Delta$ and thus $N_\N(P)\leq N_\L(P)$ is a subgroup of $\L$. In particular, $H$ is a subgroup of $\L$. By \cite[Lemma~3.1(c)]{Chermak:2015}, $T$ is maximal in the poset of $p$-subgroups of $\N$. Thus, as $T\leq H\subseteq \N$, it follows that $T\in\Syl_p(H)$. As $(\L,\Delta,S)$ is of objective characteristic $p$, it follows from \cite[Lemma~3.5]{Chermak:2015} that $N_\N(T)\subseteq N_\L(TC_S(T))$. Since $R\leq C_S(T)$ and $R$ is weakly closed in $\F$, this implies $H\subseteq N_\N(T)\subseteq N_\L(R)$. In particular, $R\leq P$ and the commutator group $[R,H]$ is defined in the group $N_\L(P)$. By (1), $[R\cap T,H]=1$. Moreover, $[R,H]\leq R\cap \N=R\cap T$. Thus, $[R,O^p(H)]=[R,O^p(H),O^p(H)]=1$ and so $O^p(H)\subseteq C_\L(R)$. As $T\in\Syl_p(H)$ and $R\leq C_S(T)$, it follows $H=TO^p(H)\subseteq C_\L(R)$ as required. So we have shown that (1) implies (2). Clearly, (3) implies (1), so it remains only to prove that (2) implies (3). 

\smallskip

Suppose (2) holds and that $\N\not\subseteq C_\L(R)$. Choose $n\in\N$ such that $n\not\in C_\L(R)$ and $P:=S_n$ is of maximal order subject to this property. We proceed in two steps.

\smallskip

\noindent{\em Step~1:} We show that $N_\N(Q)\subseteq C_\L(R)$ for all $Q\in\Delta$ with $|Q|\geq |P|$ and $N_T(Q)\in\Syl_p(N_\N(Q))$. Assuming this is wrong we choose a counterexample $Q$. Then $|Q|=|P|$ because of the maximality of $P$. Set $G:=N_\L(Q)$ and notice that $N:=N_\N(Q)$ is a normal subgroup of $G$. As $N_T(Q)\in\Syl_p(N)$, we have $O_p(N)\leq N_T(Q)$. As $N\leq N_\N(QO_p(N))$, the maximality of $|Q|=|P|$ yields $O_p(N)\leq Q$. As $Q_0:=Q\cap T=Q\cap\N\unlhd N$, it follows $Q_0=O_p(N)$. Since $(\L,\Delta,S)$ is a linking locality, $G=N_\L(Q)$ is of characteristic $p$. So by Lemma~\ref{Charp1}(c), $N$ is of characteristic $p$ and thus $C_{N}(Q_0)\leq Q_0$. Hence, $[N_{C_S(Q_0)}(Q),N]\leq C_N(Q_0)\leq Q_0$ and $QN_{C_S(Q_0)}(Q)$ is normalized by $N$. The maximality of $|Q|=|P|$ yields now $N_{C_S(Q_0)}(Q)\leq Q$. As $QC_S(Q_0)$ is a $p$-group, this implies $C_S(Q_0)\leq Q$. In particular, $R\leq C_S(T)\leq C_S(Q_0)\leq Q$. As $R$ is weakly closed in $\F$, it follows that $R\unlhd G$. By assumption $[R,T]=1$ and $N_T(Q)\in\Syl_p(N)$ yielding $[R,O^{p^\prime}(N)]=[R,\<N_T(Q)^N\>]=1$. If $T\leq Q$ then, as $T$ is strongly closed, $N\leq N_\N(T)$. So (2) would yield that $N\leq N_\N(T)\subseteq C_\L(R)$, contradicting the choice of $Q$. Thus $T\not\leq Q$ and, as $TQ$ is a $p$-group, we have $N_T(Q)\not\leq Q$. Thus, by the maximality of $|Q|=|P|$, $N_N(N_T(Q))\subseteq N_\N(N_T(Q)Q)\subseteq C_\L(R)$. By a Frattini argument, $N=O^{p^\prime}(N)N_N(N_T(Q))\leq C_G(R)\subseteq C_\L(R)$. This contradicts our assumption and thus completes Step~1.

\smallskip

\noindent{\em Step~2:} We derive the final contradiction. By Lemma~\ref{SplittingLemmaCons}, there exists $x\in\N$ such that $N_S(P^n)\leq S_x$ and $N_T(P^{nx})\in\Syl_p(N_\N(P^{nx}))$. If $T\leq P$ then, as $T$ is strongly closed, $n\in N_\N(T)\subseteq C_\L(R)$ contradicting the choice of $n$. Hence, $T\not\leq P$ and $T\not\leq P^n$. In particular, $P^n<N_S(P^n)$ and the maximality of $|P|=|P^n|$ yields that $x\in C_\L(R)$. By Lemma~\ref{LocalitiesProp}(b), conjugation with $nx$ induces a group isomorphism from $N_\L(P)$ to $N_\L(P^{nx})$ and so $N_T(P)^{nx}$ is a $p$-subgroup of $N_\N(P^{nx})$. As $N_T(P^{nx})\in\Syl_p(N_\N(P^{nx}))$, there exists $y\in N_\N(P^{nx})$ such that $(N_T(P)^{nx})^y\leq N_T(P^{nx})$. Then by Lemma~\ref{LocalitiesProp}(c), $N_T(P)^{nxy}=(N_T(P)^{nx})^y\leq N_T(P^{nx})$. As $T\not\leq P$ and $TP$ is a $p$-group, we have $N_T(P)\not\leq P$. Moreover, $N_T(P)P\leq S_{nxy}$. Hence, the maximality of $|P|$ yields $nxy\in C_\L(R)$. By Step~1, $y\in N_\N(P^{nx})\subseteq C_\L(R)$. By Lemma~\ref{NormalizerCentralizerPartialSubgroup}, $C_\L(R)$ is a partial subgroup of $\L$. As $(n,x,y,y^{-1})\in\D$ via $P$, it follows that $nx=(nx)(yy^{-1})=(nxy)y^{-1}\in C_\L(R)$. Similarly, as $x\in C_\L(R)$ and $(n,x,x^{-1})\in\D$ via $P$, $n=n(xx^{-1})=(nx)x^{-1}\in C_\L(R)$. This contradicts the choice of $n$ and gives thus the final contradiction. 
\end{proof}

\begin{proof}[Proof of Proposition~\ref{CENThm}]
By assumption $T=\N\cap S$ and $\m{E}=\F_T(\N)$. So it follows immediately that $C_S(\N)\subseteq C_S(\m{E})$. Thus, it is sufficient to show that $R:=C_S(\m{E})$ is contained in $C_S(\N)$, or equivalently, $\N\subseteq C_\L(R)$. By \cite[(6.7)(1)]{Aschbacher:2011}, $R$ is strongly closed in $\F$ and thus weakly closed in $\F$. Furthermore, $R\leq C_S(T)$. As $\m{E}\subseteq C_\F(R)$, $c_n|_{R\cap T}$ is the identity for every $n\in N_\N(T)$, i.e. $N_\N(T)\subseteq C_\L(R\cap T)$. Hence, by Proposition~\ref{CNWeaklyClosed}, $R\leq C_S(\N)$.
\end{proof}

\begin{rmk}\label{CNWeaklyClosedRemark}
 Our arguments show actually that in the situation of Proposition~\ref{CENThm}, the subgroup $C_S(\m{E})=C_S(\N)$ is the largest subgroup of $C_S(T)$ weakly closed in $\F$ such that $N_\N(T)\subseteq C_\L(R\cap T)$. Similarly, it is the largest subgroup of $C_S(T)$ strongly closed in $\F$ such that $N_\N(T)\subseteq C_\L(R\cap T)$.
\end{rmk}

\section{Maps between linking systems}\label{MapsSection}

Building on earlier work of Puig and many other authors, Aschbacher developed a local theory of fusion systems similar to the local theory of finite groups. Aschbacher's main goal is to find a new and better proof of the classification of finite simple groups via classification theorems for simple fusion systems. There are however some conceptual difficulties, which one could perhaps overcome by working with linking localities. The theory of linking systems plays already a role in Aschbacher's program in places where one needs to consider extensions of fusion systems. 

\smallskip

It seems therefore important to develop a local theory of linking localities which connects to the local theory of fusion systems. More concretely, one wants to answer the following question at least in special cases: 

\begin{question}\label{question}
Suppose $\F'$ and $\F$ are saturated fusion systems over $S'$ and $S$ respectively and $\alpha$ is a morphism of fusion systems from $\F'$ to  $\F$. Does $\alpha$ correspond to a suitable ``map'' (however defined) from a linking locality over $\F'$ to a linking locality over $\F$, or from a linking systems associated to $\F'$ to a linking system associated to $\F$? 
\end{question}

It doesn't seem that there is an affirmative answer to this question in general, even if we are flexible in the choice of objects of linking systems and linking localities. However, in this section we will see that the answer to the above question is yes in the following important special cases:
\begin{itemize}
 \item[(1)] The morphism $\alpha$ is surjective and the kernel of $\alpha$ is a central subgroup of $\F'$.
 \item[(2)] $\F'$ is the normalizer or the centralizer in $\F$ of a subgroup of $S$, and $\alpha$ is the inclusion map.
 \item[(3)] $\F'$ is a normal subsystem of $\F$, and $\alpha$ is the inclusion map.
\end{itemize}

\subsection{Quotients modulo central subgroups}

In this subsection, we will study quotients of localities modulo central subgroups contained in $S$. This is needed for the theory of central products of localities developed in \cite{Henke:2017}, which in turn is used in \cite{Chermak/Henke:2017} to prove that there is a one-to-one correspondence between normal subsystems of fusion systems and partial normal subgroups of linking localities. If one were to develop a local theory of linking localities, one would moreover need to consider quotients of linking localities modulo central subgroups to define components of linking localities.

\smallskip

We start by summarizing some crucial facts about quotients of fusion systems modulo central subgroups:

\begin{lemma}\label{QuotientsCentralDeltas}
 Let $\F$ be a saturated fusion system on $S$ and $Z\leq Z(\F)$. Then the following hold for every subgroup $P\leq S$:
\begin{itemize}
 \item [(a)] We have $PZ/Z\in(\F/Z)^s$ if and only if $P\in \F^s$. 
 \item [(b)] We have  $PZ/Z\in(\F/Z)^q$ if and only if $P\in\F^q$. 
 \item [(c)] We have $P\leq \F^{cr}$ if and only if $Z\leq P$ and $P/Z\in(\F/Z)^{cr}$.  
\end{itemize}
\end{lemma}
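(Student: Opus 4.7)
The plan is to handle each part as follows, mostly by invoking results already in the paper rather than re-proving them.

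First, part (a) is literally the statement of Lemma~\ref{SubcentricModCentral}, which has been proven above, so there is nothing further to do. Part (b) is the quasicentric analogue, proved in \cite[Lemma~6.4(b)]{BCGLO2} as noted in the introduction; I would simply cite this. Their proof parallels the argument for Lemma~\ref{SubcentricModCentral}, passing to a fully centralized conjugate and comparing $C_\F(P) = \F_{C_S(P)}(C_S(P))$ with the corresponding statement in $\F/Z$.

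For part (c), I would split into the two directions. For the forward direction, note that if $P\in\F^{cr}$ then $P\in\F^c$ forces $C_S(P)\leq P$, and since $Z\leq Z(\F)\leq C_S(P)$, one obtains $Z\leq P$; the remaining statement $P/Z\in(\F/Z)^{cr}$ is then \cite[Proposition~3.1]{Kessar/Linckelmann:2008}, which is already cited in the paper. For the converse, assume $Z\leq P$ and $P/Z\in(\F/Z)^{cr}$. Centricity of $P$ in $\F$ is easy: any $Q\in P^\F$ contains $Z$, since $\F$-morphisms fix $Z\leq Z(\F)$ pointwise, and $Q/Z$ is an $\F/Z$-conjugate of $P/Z$; centricity of $P/Z$ then gives $C_S(Q)/Z\leq C_{S/Z}(Q/Z)\leq Q/Z$ and hence $C_S(Q)\leq Q$. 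For radicality, the plan is to consider the surjection $\pi\colon\Aut_\F(P)\to\Aut_{\F/Z}(P/Z)$, note that its kernel embeds into $\Hom(P/Z,Z)$ and is therefore a $p$-group (because the kernel acts trivially on both $Z$ and $P/Z$), and then identify $O_p(\Aut_\F(P))$ with $L:=\pi^{-1}(\Inn(P/Z))$ by comparing $\pi(O_p(\Aut_\F(P)))$ with $O_p(\Aut_{\F/Z}(P/Z))=\Inn(P/Z)$.

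The main obstacle will be the very last step in (c): showing that $L=\Inn(P)$, or equivalently that every element of $\ker\pi$ is inner. This does not follow from formal nonsense; it is a cohomological fact that requires $P$ to be $\F$-centric so that the homomorphisms $P/Z\to Z$ arising inside $\Aut_\F(P)$ can be realized as commutator maps $x\mapsto[x,z]$ for $z\in P$. The cleanest way to conclude is to appeal once more to \cite[Proposition~3.1]{Kessar/Linckelmann:2008}, which in fact establishes the full biconditional underlying (c); the introduction of the present paper needed only one direction of that result. Invoking the full statement closes the argument.
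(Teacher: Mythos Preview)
Your treatment of part (a) matches the paper exactly: both simply invoke Lemma~\ref{SubcentricModCentral}.

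For part (b) there is a genuine gap. As the paper's own proof makes explicit, \cite[Lemma~6.4(b)]{BCGLO2} only establishes the equivalence under the extra hypothesis $Z\leq P$; applying it to $PZ$ yields $PZ/Z\in(\F/Z)^q\Leftrightarrow PZ\in\F^q$, and one must still prove $P\in\F^q\Leftrightarrow PZ\in\F^q$. The forward implication is closure of $\F^q$ under overgroups, but the converse needs an argument. The paper supplies one: assuming $PZ$ fully centralized, one has $C_S(P)=C_S(PZ)$ and $C_\F(P)=C_\F(PZ)=\F_{C_S(P)}(C_S(P))$ since $Z\leq Z(\F)$; and $P$ is itself fully centralized because any $\F$-isomorphism $P\to Q$ extends to an $\F$-isomorphism $PZ\to QZ$, giving $|C_S(Q)|=|C_S(QZ)|\leq |C_S(PZ)|=|C_S(P)|$. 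Your sentence ``passing to a fully centralized conjugate and comparing\ldots'' gestures at this, but simply citing \cite{BCGLO2} does not cover it.

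For part (c), your forward direction and your reduction of the converse to the claim $\ker\pi\leq\Inn(P)$ are correct and agree with the paper. The difference is in how that claim is closed. You defer it back to \cite{Kessar/Linckelmann:2008}, but the paper only cites that reference for the direction $P\in\F^{cr}\Rightarrow P/Z\in(\F/Z)^{cr}$ and proves the converse directly. The device you are missing is to first replace $P$ by a fully $\F$-normalized conjugate, so that $\Aut_S(P)\in\Syl_p(\Aut_\F(P))$; since $C:=\ker\pi$ is a normal $p$-subgroup of $\Aut_\F(P)$, this forces $C\leq\Aut_S(P)$. Then $C$ consists of the maps $c_x$ with $x\in N_S(P)$ and $[P,x]\leq Z$, i.e.\ $xZ\in C_{S/Z}(P/Z)$; centricity of $P/Z$ in $\F/Z$ now gives $x\in P$, hence $C\leq\Inn(P)$. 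This is elementary and self-contained, whereas your route depends on knowing that \cite[Proposition~3.1]{Kessar/Linckelmann:2008} actually states the full biconditional.
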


\begin{proof}
 Part (a) was shown in Lemma~\ref{SubcentricModCentral}. If $Z\leq P$ then it is shown in \cite[Lemma~6.4(b)]{BCGLO2} that $P\in\F^q$ if and only if $P/Z\in(\F/Z)^q$. So for (b) it remains to show that $P\in\F^q$ if and only if $PZ\in\F^q$. As $\F^q$ is closed under taking overgroups, $PZ\in\F^q$ if $P\in\F^q$. Assume now $PZ\in\F^q$. Since $\F^q$ is closed under taking $\F$-conjugates, we may assume that $PZ$ is fully centralized. As $PZ\in\F^q$ it follows that $C_\F(PZ)=\F_{C_S(PZ)}(C_S(PZ))$. Since $Z\leq Z(\F)$, we have $C_S(P)=C_S(PZ)$ and $C_\F(P)=C_\F(PZ)=\F_{C_S(P)}(C_S(P))$. Suppose $Q$ is an $\F$-conjugate of $P$. Then an $\F$-isomorphism $P\rightarrow Q$ extends to an $\F$-isomorphism $PZ\rightarrow QZ$ as $Z\leq Z(\F)$. Hence, $QZ$ is $\F$-conjugate to $PZ$. So, as $PZ$ is fully centralized, $|C_S(Q)|=|C_S(QZ)|\leq |C_S(PZ)|=|C_S(P)|$. Therefore, $P$ is fully centralized in $\F$ and thus quasicentric as $C_\F(P)=\F_{C_S(P)}(C_S(P))$. This shows (b).

\smallskip

If $P\in\F^{cr}$ then $Z\leq C_S(P)\leq P$ as $P$ is centric. Moreover, $P/Z\in (\F/Z)^{cr}$ by \cite[Proposition~3.1]{Kessar/Linckelmann:2008}. Assume now $Z\leq P$ and $P\in(\F/Z)^{cr}$. 
We need to show that $P$ is centric radical in $\F$. Note that $P$ is centric radical in $\F$ if and only if some $\F$-conjugate of $P$ is centric radical in $\F$. Moreover, the $\F/Z$-conjugates of $P/Z$ are precisely the subgroups of the form $Q/Z$ with $Q\in P^\F$, and every $\F/Z$-conjugate of $P/Z$ is centric radical in $\F/Z$. Hence, we can replace $P$ by any $\F$-conjugate of $P$ and will assume without loss of generality that $P$ is fully normalized in $\F$. Then $\Aut_S(P)\in\Syl_p(\Aut_\F(P))$. Set 
\[C:=C_{\Aut_\F(P)}(P/Z).\] 
It follows from the definition of $\F/Z$ that  
\[\Aut_{\F/Z}(P/Z)\cong \Aut_\F(P)/C.\]
As $Z$ is central in $\F$, we have $[Z,\Aut_\F(P)]=1$. In particular, $Z$ is $\Aut_\F(P)$-invariant and thus $C$ is normal in $\Aut_\F(P)$. Moreover, $[P,O^p(C)]=[P,O^p(C),O^p(C)]\leq [Z,O^p(C)]=1$. Thus $O^p(C)=1$ and $C$ is a normal $p$-subgroup of $\Aut_\F(P)$. This implies $C\leq O_p(\Aut_\F(P))$ and $O_p(\Aut_\F(P)/C)=O_p(\Aut_\F(P))/C$. Moreover, as $\Aut_S(P)\in\Syl_p(\Aut_\F(P))$, it follows $C\leq \Aut_S(P)$ and thus $C=C_{\Aut_S(P)}(P/Z)=\Aut_{C_S(P/Z)}(P)$. Since $P/Z$ is centric in $\F/Z$, we have $C_S(P/Z)\leq P$. Thus, $C\leq \Inn(P)$ and $\Inn(P/Z)\cong \Inn(P)/C$. Since $P/Z$ is radical in $\F/Z$, we obtain 
\[O_p(\Aut_\F(P))/C=O_p(\Aut_\F(P)/C)\cong O_p(\Aut_{\F/Z}(P/Z))=\Inn(P/Z)\cong \Inn(P)/C.\] 
As $\Inn(P)\leq O_p(\Aut_\F(P))$, this implies $\Inn(P)=O_p(\Aut_\F(P))$ and $P$ is radical in $\F$. Since $C_S(P)\leq C_S(P/Z)\leq P$ and $P$ is fully normalized in $\F$, $P$ is also centric in $\F$. This shows (c).
\end{proof}

If $(\L,\Delta,S)$ is a locality then every central subgroup $Z\leq Z(\L)$ is a partial normal subgroup of $\L$. Suppose now $(\L,\Delta,S)$ is a linking locality. Then $Z(\L)\leq C_\L(S)\leq S$, so every central subgroup of $\L$ is contained in $S$ and then normal in $\F_S(\L)$. On the other hand, if $Z\leq Z(\F_S(\L))$ then $Z\leq Z(\L)$ by Proposition~\ref{NormLF}. We will now consider quotients modulo subgroups $Z\leq Z(\L)\cap S$. 

\begin{prop}\label{LocalityQuotientModCentral}
Let $(\L,\Delta,S)$ be a locality over $\F$ and $Z\leq Z(\L)\cap S$. Set $\ov{\L}=\L/Z$, $\ov{S}=S/Z$ and $\ov{\Delta}=\{\ov{P}\colon P\in\Delta\}$. 
\begin{itemize}
\item [(a)] The triple $(\ov{\L},\ov{\Delta},\ov{S})$ is a locality over $\F/Z$. 
\item [(b)] The locality $(\ov{\L},\ov{\Delta},\ov{S})$ is of objective characteristic $p$ if and only if $(\L,\Delta,S)$ is of objective characteristic $p$. 
\item [(c)] We have $\F^{cr}\subseteq \Delta$ if and only if $(\F/Z)^{cr}\subseteq \ov{\Delta}$, we have $\Delta=\F^s$ if and only $\ov{\Delta}=(\F/Z)^s$, and we have $\Delta=\F^q$ if and only if $\ov{\Delta}=\F^q$. 
\item [(d)] The locality $(\ov{\L},\ov{\Delta},\ov{S})$ is a linking locality if and only if $(\L,\Delta,S)$ is a linking locality. Similarly, $(\ov{\L},\ov{\Delta},\ov{S})$ is a subcentric linking locality if and only if $(\L,\Delta,S)$ is a subcentric linking locality, and $(\ov{\L},\ov{\Delta},\ov{S})$ is a quasicentric linking locality if and only if $(\L,\Delta,S)$ is a quasicentric linking locality.
\end{itemize}
\end{prop}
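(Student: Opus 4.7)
The plan is to leverage the quotient construction of Corollary~\ref{LocalityQuotient} and then deduce the refined statements from lemmas assembled earlier. Since $Z \leq Z(\L)$ is a subgroup of $\L$ contained in $S$ and every element of $\L$ centralizes (hence normalizes) $Z$, the subgroup $Z$ is a partial normal subgroup of $\L$; Corollary~\ref{LocalityQuotient} then immediately yields part~(a), together with the identifications $\F_{\ov S}(\ov\L) = \F/Z$ and $\ov S = S/Z$. I would also record at the outset that closure of $\Delta$ under overgroups in $S$ forces $PZ \in \Delta$ for every $P \in \Delta$, so that the assignment $Q \mapsto Q/Z$ defines a bijection between $\{Q \in \Delta : Z \leq Q\}$ and $\ov\Delta$; this bijection governs all subsequent translations between statements about $\Delta$ and statements about $\ov\Delta$.

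For part~(b) I fix $P \in \Delta$. By Theorem~\ref{LocalityProjection}(c), the canonical projection $\L \to \ov\L$ restricts to a surjective group homomorphism $N_\L(PZ) \twoheadrightarrow N_{\ov\L}(\ov P)$ with kernel $Z$. Since $Z \leq Z(\L)$ is a $p$-group, it lies in $Z(N_\L(PZ)) \cap O_p(N_\L(PZ))$, so Lemma~\ref{CharpCentralEquiv} gives that $N_\L(PZ)$ has characteristic $p$ if and only if $N_{\ov\L}(\ov P)$ does. To bridge from $N_\L(PZ)$ back to $N_\L(P)$, I use that $Z$ being central implies any element of $\L$ that normalizes $P$ also normalizes $PZ$; hence $N_\L(P) = N_{N_\L(PZ)}(P)$, and Lemma~\ref{Charp1}(a) then shows that $N_\L(P)$ inherits characteristic $p$ from $N_\L(PZ)$. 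Running this correspondence in both directions gives the equivalence in~(b).

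Part~(c) is obtained by translating through Lemma~\ref{QuotientsCentralDeltas}. Every $P \in \F^{cr}$ satisfies $Z \leq Z(\F) \leq C_S(P) \leq P$, so by Lemma~\ref{QuotientsCentralDeltas}(c) the map $P \mapsto P/Z$ induces a bijection $\F^{cr} \to (\F/Z)^{cr}$; combining this with the bijection $\{Q \in \Delta : Z \leq Q\} \leftrightarrow \ov\Delta$ established above makes the containments $\F^{cr} \subseteq \Delta$ and $(\F/Z)^{cr} \subseteq \ov\Delta$ correspond. For the subcentric and quasicentric clauses I would apply parts~(a) and~(b) of Lemma~\ref{QuotientsCentralDeltas} in exactly the same spirit, using additionally that $\F^s$ and $\F^q$ are closed under overgroups so that the equalities $\Delta = \F^s$ and $\Delta = \F^q$ are determined by the $Z$-containing members on each side.

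Part~(d) is then a formal consequence of (b), (c), and the definitions, since each variant of linking locality is the conjunction of an objective-characteristic-$p$ condition (handled by~(b)) with a specified shape of $\Delta$ (handled by the appropriate clause of~(c)). The main technical obstacle I anticipate lies in part~(b), specifically in the passage from $N_\L(PZ)$ down to $N_\L(P)$ in the case $Z \not\leq P$, which genuinely requires the centrality of $Z$ together with Lemma~\ref{Charp1}(a); elsewhere the argument is essentially bookkeeping, translating between $\Delta$ and $\ov\Delta$ via the lemmas of Section~\ref{ConstrainedSection} and Subsection~\ref{LocalityProjectionSection}.
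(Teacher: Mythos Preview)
Your proposal is correct and follows essentially the same route as the paper: part~(a) via Corollary~\ref{LocalityQuotient}, part~(b) by combining Theorem~\ref{LocalityProjection}(c) with Lemma~\ref{CharpCentralEquiv} and the reduction $N_\L(P)=N_{N_\L(PZ)}(P)$ together with Lemma~\ref{Charp1}(a), part~(c) via Lemma~\ref{QuotientsCentralDeltas}, and part~(d) as a formal consequence. The only cosmetic difference is that the paper explicitly reduces to the case $Z\leq P$ before invoking Lemma~\ref{CharpCentralEquiv}, whereas you carry $PZ$ throughout; the content is identical.
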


\begin{proof}
 It follows from Corollary~\ref{LocalityQuotient} that $(\ov{\L},\ov{\Delta},\ov{S})$ is a locality over $\F/Z$. Let $P\in\Delta$. As $Z\leq Z(\L)$, we have $N_\L(P)\subseteq N_\L(PZ)$ and then $N_\L(P)=N_{N_\L(PZ)}(P)$. So by Lemma~\ref{Charp1}(b), $N_\L(P)$ is of characteristic $p$ if $N_\L(PZ)$ is of characteristic $p$. So for the proof of (b), we can and will assume $Z\leq P$, and we need to show that $N_\L(P)$ is of characteristic $p$ if and only if $N_\L(P)/Z$ is of characteristic $p$. The latter statement is however true by Lemma~\ref{CharpCentralEquiv} since $Z\leq Z(\L)\cap P\leq Z(N_\L(P))\cap O_p(N_\L(P))$. This proves (b). As $Z\leq Z(\F)$, (c) follows from Lemma~\ref{QuotientsCentralDeltas}. Parts (b) and (c) imply (d).
\end{proof}

\begin{prop}
Let $(\L,\Delta,S)$ and $(\L',\Delta',S')$ be localities. Let $\beta$ be a projection of localities from $(\L,\Delta,S)$ to $(\L',\Delta',S')$ with $\ker(\beta)\leq Z(\L)$. 

\smallskip

Then $(\L,\Delta,S)$ is of objective characteristic $p$ if and only if $\ker(\beta)\leq S$ and $(\L',\Delta',S')$ is of objective characteristic $p$. 

\smallskip

Similarly, $(\L,\Delta,S)$ is a linking locality if and only if $\ker(\beta)\leq S$ and $(\L',\Delta',S')$ is a linking locality; $(\L,\Delta,S)$ is a quasicentric linking locality if and only if $\ker(\beta)\leq S$ and $(\L',\Delta',S')$ is a quasicentric linking locality; and $(\L,\Delta,S)$ is a subcentric linking locality if and only if $\ker(\beta)\leq S$ and $(\L',\Delta',S')$ is a subcentric linking locality.
\end{prop}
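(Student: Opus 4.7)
Set $Z := \ker(\beta)$; by hypothesis $Z$ is a partial normal subgroup of $\L$ contained in $Z(\L)$. The plan is to reduce all four biconditionals to Proposition~\ref{LocalityQuotientModCentral} via a natural isomorphism between $\L/Z$ and $\L'$.

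The first step is to observe that $\beta$ factors as $\rho$ followed by $\bar\beta$, where $\rho\colon \L\to \L/Z$ is the canonical projection and $\bar\beta\colon \L/Z\to\L'$ is an isomorphism of partial groups that sends $S\rho$ to $S'$ and $\bar\Delta := \{P\rho\colon P\in\Delta\}$ onto $\Delta'$. This is a standard first-isomorphism computation: $\bar\beta$ is bijective because $Z = \ker\beta$, and $(\D\rho^*)\bar\beta^* = \D\beta^* = \D'$ since $\beta$ is a projection of localities. Thus, whenever $Z\leq Z(\L)\cap S$, $\bar\beta$ will be an isomorphism of localities from $(\L/Z,\bar\Delta,S\rho)$, which is a locality by Proposition~\ref{LocalityQuotientModCentral}, to $(\L',\Delta',S')$. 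Under this isomorphism the characteristic-$p$ condition on each normalizer $N_\L(P)$ transfers (using Theorem~\ref{LocalityProjection}(c) to identify $N_{\L/Z}(P\rho)$ with $N_{\L'}(P\beta)$), as do the inclusion $\F^{cr}\subseteq\Delta$ and the equalities $\Delta=\F^q$ and $\Delta=\F^s$ (because $\bar\beta$ induces an isomorphism of the associated fusion systems).

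The forward direction of each biconditional then reduces to establishing $Z\leq S$. Here I would use that $\Delta$ is nonempty and closed under overgroups in $S$, so $S\in\Delta$; consequently, each of the four hypotheses on $(\L,\Delta,S)$ forces $N_\L(S)$ to be of characteristic $p$. Since $S$ is a normal Sylow $p$-subgroup of $N_\L(S)$, we have $S=O_p(N_\L(S))$, whence
\[
Z \leq Z(\L) \leq C_\L(S) = C_{N_\L(S)}\bigl(O_p(N_\L(S))\bigr) \leq O_p(N_\L(S)) = S.
\]
With $Z\leq Z(\L)\cap S$ in hand, Proposition~\ref{LocalityQuotientModCentral} transfers each property from $(\L,\Delta,S)$ to $(\L/Z,\bar\Delta,S/Z)$, and $\bar\beta$ then transfers it to $(\L',\Delta',S')$. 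The reverse direction of each biconditional is immediate: the hypothesis $Z\leq S$ gives $Z\leq Z(\L)\cap S$, so Proposition~\ref{LocalityQuotientModCentral} and $\bar\beta$ apply in the opposite direction to pull the property back from $(\L',\Delta',S')$ to $(\L,\Delta,S)$.

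I expect the only real subtlety to be the bookkeeping around $\bar\beta$: checking that it genuinely matches Sylow subgroups, object sets, and the four properties on the nose, so that everything lines up with the hypotheses of Proposition~\ref{LocalityQuotientModCentral}. Once this is set up, the proof is a short direct application of that proposition together with the centralizer computation above.
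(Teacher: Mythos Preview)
Your proof is correct and follows essentially the same approach as the paper: both reduce to Proposition~\ref{LocalityQuotientModCentral} via the induced isomorphism $\L/Z\to\L'$, after first establishing $Z\leq S$ from the objective characteristic $p$ hypothesis. Your argument for $Z\leq S$ is slightly more explicit (working through $S=O_p(N_\L(S))$), whereas the paper simply invokes $C_\L(S)\leq S$; and the paper cites \cite[Theorem~4.7]{Chermak:2015} for the factorization you sketch directly, but these are cosmetic differences.
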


\begin{proof}
Set $Z:=\ker(\beta)$. If $(\L,\Delta,S)$ is of objective characteristic $p$ then $Z\leq C_\L(S)\leq S$. In particular, this is the case if $(\L,\Delta,S)$ is a linking locality. So assume now $Z\leq S$. Set $\ov{\L}=\L/Z$, $\ov{S}=S/Z$ and $\ov{\Delta}=\{\ov{P}\colon P\in\Delta\}$. Recall that $(\ov{\L},\ov{\Delta},\ov{S})$ is a locality. By \cite[Theorem~4.6]{Chermak:2015}, the induced map $\gamma\colon \ov{\L}\rightarrow \L', Zf\mapsto f\beta$ is an isomorphism of partial groups and a projection from $(\ov{\L},\ov{\Delta},\ov{S})$ to $(\L',\Delta',S')$. Hence, $(\ov{\L},\ov{\Delta},\ov{S})$ is of objective characteristic $p$ if and only $(\L',\Delta',S')$ is of objective characteristic $p$. By Theorem~\ref{LocalityProjection}(b), $\beta|_{\ov{S}}\colon\ov{S}\rightarrow S'$ induces an isomorphism from $\F_{\ov{S}}(\ov{\L})$ to $\F_{S'}(\L')$, and induces thus a bijection from $\F_{\ov{S}}(\ov{\L})^{cr}$ to $\F_{S'}(\L')^{cr}$, from $\F_{\ov{S}}(\ov{\L})^q$ to $\F_{S'}(\L')^q$, and from $\F_{\ov{S}}(\ov{\L})^s$ to $\F_{S'}(\L')^s$. Hence, $(\ov{\L},\ov{\Delta},\ov{S})$ is a linking locality if and only if $(\L',\Delta',S')$ is a linking locality; $(\ov{\L},\ov{\Delta},\ov{S})$ is a quasicentric linking locality if and only if $(\L',\Delta',S')$ is a quasicentric linking locality; and $(\ov{\L},\ov{\Delta},\ov{S})$ is a subcentric linking locality if and only if $(\L',\Delta',S')$ is a subcentric linking locality. So the assertion follows from Proposition~\ref{LocalityQuotientModCentral}(b),(d).
\end{proof}

\subsection{Partial subgroups leading to localities}\label{RestrictionSubsection}

Let $(\L,\Delta,S)$ be a locality. In this section, we show how one can, under certain conditions, construct from a partial subgroup of $\L$ a substructure which is a locality. We use this afterwards to give an affirmative answer to Question~\ref{question} in the case that $\F'$ is the normalizer or the centralizer in $\F$ of a subgroup of $S$, or that $\F'$ is a normal subsystem of $\F$, and that $\alpha$ is the inclusion map.

\smallskip

The partial product on $\L$ will be denoted by $\Pi\colon\D\rightarrow \L$. Let $\H$ be a partial subgroup of $\L$ and set $T:=\H\cap S$. 

\smallskip

Note that, for any $f\in\H$, $(S_f\cap T)^f\in S\cap\H=T$. By $\F_T(\H)$ we denote the fusion system on $T$ generated by the conjugation maps $c_f\colon S_f\cap T\rightarrow T,x\mapsto x^f$. 

\smallskip

Let $\Gamma$ be a non-empty set of subgroups of $T$ closed under taking $\H$-conjugates and overgroups in $T$. Fix $Q\leq S$ and assume that the following properties hold:
\begin{itemize}
 \item [(Q1)] For all $P\in\Gamma$, we have $\<P,Q\>\in\Delta$.
 \item [(Q2)] For all $P_1,P_2\in\Gamma$, we have $N_\H(P_1,P_2)\subseteq N_\L(\<P_1,Q\>,\<P_2,Q\>)$.  
\end{itemize}
Note that (Q2) holds in particular if $\H\subseteq N_\L(Q)$.
Set
\[\H|_\Gamma:=\{f\in\H\colon S_f\cap T\in\Gamma\}.\]
Let $\D_0$ be the set of words $(f_1,\dots,f_n)$ in $\H$ such that there exists $P_0,P_1,\dots,P_n\in\Gamma$ with $P_{i-1}^{f_i}=P_i$ for all $i=1,\dots,n$.\footnote{In particular, we mean here $\emptyset\in\D_0$.}

\begin{lemma}\label{D0InD}
\begin{itemize}
\item [(a)] We have $\D_0\subseteq \D$.
\item [(b)] If $(f_1,\dots,f_n)\in\D_0$ and $P_0,P_1,\dots,P_n\in\Gamma$ with $P_{i-1}^{f_i}=P_i$, then $P_0^{\Pi(f_1,\dots,f_n)}=P_n$. In particular, $\Pi(f_1,\dots,f_n)\in\H|_\Gamma$.
\end{itemize}
\end{lemma}

\begin{proof}
Let $(f_1,\dots,f_n)\in\D_0$ and $P_0,P_1,\dots,P_n\in\Gamma$ such that $P_{i-1}^{f_i}=P_i$ for $i=1,\dots,n$.  Then property (Q1) implies $R_0:=\<P_0,Q\>\in\Delta$. In particular, every $\L$-conjugate of $R_0$ in $S$ is an element of $\Delta$ since $\Delta$ is closed under taking $\L$-conjugates in $S$. So defining $R_i:=R_{i-1}^{f_i}$ for $i=1,\dots,n$,  property (Q2) yields that $R_i$ is well-defined, $R_i\leq \<P_i,Q\>$, and $R_i$ is an element of $\Delta$. Hence, $(f_1,\dots,f_n)\in\D$ via $R_0,R_1,\dots,R_n$. This shows $\D_0\subseteq\D$, so (a) holds. Moreover, by Lemma~\ref{LocalitiesProp}(c), $c_{\Pi(f_1,\dots,f_n)}=c_{f_1}\circ\dots\circ c_{f_n}$ as a map $N_\L(R_0)\rightarrow N_\L(R_n)$. In particular, $P_0\subseteq \D(\Pi(f_1,\dots,f_n))$ and $P_0^{\Pi(f_1,\dots,f_n)}=P_n$. So $P_0\leq S_{\Pi(f_1,\dots,f_n)}\cap T$ and $S_{\Pi(f_1,\dots,f_n)}\cap T\in\Gamma$ as $\Gamma$ is by assumption closed under taking overgroups in $T$. Therefore, $\Pi(f_1,\dots,f_n)\in\H|_\Gamma$. This shows (b).
\end{proof}

\begin{lemma}\label{RestrictionPartialGroup}
The set $\H|_\Gamma$ together with the restriction of the inversion on $\L$ to $\H|_\Gamma$ and with the product $\Pi_0:=\Pi|_{\D_0}\colon \D_0\rightarrow \H|_\Gamma$ forms a partial group. The inclusion map $\H|_\Gamma\rightarrow \L$ is a homomorphism of partial groups.
\end{lemma}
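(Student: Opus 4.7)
The plan is to verify the partial group axioms (as formulated in \cite[Definition~1.1]{Chermak:2015}) directly for the triple $(\H|_\Gamma,\D_0,\Pi_0)$, using Remark~\ref{D0InD} at every step; the homomorphism assertion will then be immediate from $\D_0\subseteq\D$ (Remark~\ref{D0InD}(a)) together with $\Pi_0=\Pi|_{\D_0}$.

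First I would show that $\H|_\Gamma$ is closed under the inversion map of $\L$ and contains $\One$. Since $\Gamma\neq\emptyset$ and is closed under overgroups in $T$ we have $T\in\Gamma$, so $S_{\One}\cap T=T\in\Gamma$ and $\One\in\H|_\Gamma$. Given $f\in\H|_\Gamma$, set $P:=S_f\cap T\in\Gamma$; since $\H$ is a partial subgroup of $\L$ and $P\leq T\subseteq \H\cap S$, we have $P^f\in\H\cap S=T$, so $P^f\in\Gamma$ by closure of $\Gamma$ under $\H$-conjugation. Using that $c_f,c_{f^{-1}}$ are mutually inverse bijections $\D(f)\leftrightarrow\D(f^{-1})$ (Lemma~\ref{LocalitiesProp}(e)) and $S_f^f=S_{f^{-1}}$ (Lemma~\ref{LocalitiesProp}(d)), one sees $P^f\leq S_{f^{-1}}\cap T$, so closure under overgroups in $T$ gives $S_{f^{-1}}\cap T\in\Gamma$, i.e.\ $f^{-1}\in\H|_\Gamma$. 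The same computation shows $(f)\in\D_0$ via $(P,P^f)$ for every $f\in\H|_\Gamma$, so all words of length one belong to $\D_0$.

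Next I would verify the remaining axioms for $\D_0$. Any sub-word of a word in $\D_0$ is again in $\D_0$ directly from the definition. For the substitution axiom, suppose $u\circ v\circ w\in\D_0$ via $P_0,\dots,P_n\in\Gamma$, where $v=(f_{i+1},\dots,f_j)$. By Remark~\ref{D0InD}(b), $\Pi(v)\in\H|_\Gamma$ and $P_i^{\Pi(v)}=P_j$, so $u\circ(\Pi(v))\circ w\in\D_0$ via $P_0,\dots,P_i,P_j,\dots,P_n$; moreover $\D_0\subseteq\D$, and the corresponding axiom in $\L$ gives $\Pi_0(u\circ v\circ w)=\Pi(u\circ v\circ w)=\Pi(u\circ(\Pi(v))\circ w)=\Pi_0(u\circ(\Pi(v))\circ w)$. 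For the inversion axiom, given $f\in\H|_\Gamma$ set $Q_0:=S_f\cap T$ and $Q_1:=Q_0^f$; both lie in $\Gamma$ by the previous paragraph, and since $c_{f^{-1}}$ inverts $c_f$ on $Q_0$ we have $Q_1^{f^{-1}}=Q_0$. Hence $(f^{-1},f)\in\D_0$ via $(Q_1,Q_0,Q_1)$ and $(f,f^{-1})\in\D_0$ via $(Q_0,Q_1,Q_0)$, with products $\Pi(f^{-1},f)=\One=\Pi(f,f^{-1})\in\H|_\Gamma$. Remark~\ref{D0InD}(b) also guarantees that $\Pi_0$ takes values in $\H|_\Gamma$, so $(\H|_\Gamma,\D_0,\Pi_0)$ is a partial group.

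The final sentence of the lemma is then formal: the inclusion map $\iota\co\H|_\Gamma\hookrightarrow\L$ satisfies $\D_0\iota^{*}=\D_0\subseteq\D$ and $\Pi(v\iota^{*})=\Pi_0(v)=\Pi_0(v)\iota$ for every $v\in\D_0$, which is exactly the definition of a homomorphism of partial groups in \cite[Definition~1.11]{Chermak:2015}. The one point requiring genuine attention is the substitution axiom, but this is essentially packaged into Remark~\ref{D0InD}(b); without the hypotheses (R1) and (R2) on $Q$ one could not even guarantee $\D_0\subseteq\D$, so almost all of the work has already been absorbed into that remark.
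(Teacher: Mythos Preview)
Your proof is correct and follows essentially the same route as the paper's: verify the partial group axioms for $(\H|_\Gamma,\D_0,\Pi_0)$ directly, with Remark~\ref{D0InD} doing the heavy lifting, and then read off the homomorphism statement from $\D_0\subseteq\D$.

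One small point: your treatment of the inversion axiom only handles words of length one, showing $(f^{-1},f)\in\D_0$ for $f\in\H|_\Gamma$. Chermak's axiom requires $w^{-1}\circ w\in\D_0$ for \emph{every} $w\in\D_0$. The paper states this in one line for general $w$, citing Lemma~\ref{LocalitiesProp}(e). The fix in your argument is immediate: if $w=(f_1,\dots,f_n)\in\D_0$ via $P_0,\dots,P_n$, then $P_i^{f_i^{-1}}=P_{i-1}$ for each $i$ (this is exactly the inversion of conjugation you already used), so $w^{-1}\circ w\in\D_0$ via $P_n,P_{n-1},\dots,P_0,P_1,\dots,P_n$. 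Your length-one case already contains this idea; you just need to say it for arbitrary $w$.
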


\begin{proof}
By Lemma~\ref{D0InD}, we have $\D_0\subseteq\D$ and $\Pi(v)\in\H|_\Gamma$ for every $v\in\D_0$. Hence, $\Pi_0$ is well-defined. Note that, for every element $f\in \H$, Lemma~\ref{LocalitiesProp}(d) implies $(S_f\cap T)^f\leq S_{f^{-1}}\cap (\H\cap S)=S_{f^{-1}}\cap T$. As $\Gamma$ is closed under taking $\H$-conjugates and overgroups in $T$, it follows that $S_{f^{-1}}\cap T\in\Gamma$ for every $f\in\H|_\Gamma$. Hence, $f^{-1}\in\H|_\Gamma$ for every $f\in\H|_\Gamma$ and the restriction of the inversion on $\L$ to $\H|_\Gamma$ gives us an involutory bijection $\H|_\Gamma\rightarrow \H|_\Gamma$.  

\smallskip

It is immediate from the definition of $\D_0$ that $u,v\in\D_0$ for all $u,v\in\W(\H|_\Gamma)$ with $u\circ v\in\D_0$. Moreover, if $u\circ v\circ w\in\D_0$ then it follows from the definition of $\D_0$ and from Lemma~\ref{D0InD}(b) that $u\circ (\Pi(v))\circ w\in\D_0$. If $w\in\D_0$ then Lemma~\ref{LocalitiesProp}(e) yields $w^{-1}\circ w\in\D_0$. As $\L$ satisfies the axioms of a partial group, it is now easy to observe that $\H|_\Gamma$ together with the partial product $\Pi_0$ and the restriction of the inversion map to $\H|_\Gamma$ satisfies the axioms of a partial group. Since $\D_0\subseteq \D$, it follows moreover that the inclusion map $\H|_\Gamma\rightarrow \L$ is a homomorphism of partial groups. 
\end{proof}

From now on we consider the partial group structure on $\H|_\Gamma$ as defined in the previous lemma. We call this partial group the \textit{restriction} of $\H$ to $\Gamma$. 

\begin{lemma}\label{TRemark}
\begin{itemize}
\item [(a)] We have $\D_0(f)\subseteq \D(f)$ for every $f\in\H|_\Gamma$. The conjugate of an element of $\D_0(f)$ by $f$ in the partial group $\H|_\Gamma$ coincides with the corresponding conjugate in $\L$.
\item [(b)] The subgroup $T$ of $\H$ is also a subgroup of the partial group $\H|_\Gamma$. Moreover, for every $f\in\H|_\Gamma$, we have $S_f\cap T=T_f$ where $T_f:=\{x\in T\colon x\in\D_0(f),\;x^f\in T\}$. 
\end{itemize}
\end{lemma}

\begin{proof}
Part (a) is clear as $\D_0\subseteq\D$ and $\Pi_0=\Pi|_{\D_0}$. So it remains to show (b). As $\Gamma$ is non-empty and closed under taking overgroups in $T$, we have $T\in\Gamma$. If $f\in T$ then  $S_f\cap T=S\cap T=T\in\Gamma$ and so $f\in\H|_\Gamma$. This shows $T\subseteq \H|_\Gamma$. Similarly, if $v=(f_1,\dots,f_n)\in\W(T)$ then $T^{f_i}=T$ and thus $v\in \D_0$. Note that $\Pi_0(v)=\Pi(v)\in T$ as $T$ is a subgroup of $\L$. Thus, $T$ is a subgroup of $\H_0$. 

\smallskip

Let now $f\in\H|_\Gamma$. As $\D_0(f)\subseteq\D(f)$, we have $T_f\leq S_f\cap T$. Let now $x\in S_f\cap T$. As $f\in\H|_\Gamma$, we have $P_1:=S_f\cap T\in\Gamma$. Thus also $P_0:=(S_f\cap T)^f\in\Gamma$. Moreover, $P_0^{f^{-1}}=P_1$, $P_1^x=P_1$ and $P_1^f=P_0$. Therefore, by definition of $\D_0$, $(f^{-1},x,f)\in\D_0$ and so $x\in\D_0(f)$. Moreover, $x^f\in S\cap\H=T$ and thus $x\in T_f$. 
\end{proof}

\begin{lemma}\label{RestrictionLocality}
 Suppose $T$ is a (with respect to inclusion) maximal $p$-subgroup of $\H|_\Gamma$ or of $\H$. Then $(\H|_\Gamma,\Gamma,T)$ is a locality. If $\F_T(\H)$ is saturated and $\F_T(\H)^{cr}\subseteq \Gamma$ then $(\H|_\Gamma,\Gamma,T)$ is a locality over $\F_T(\H)$.
\end{lemma}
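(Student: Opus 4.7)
The plan is to first verify the three locality axioms (L1), (L2), (L3) for $(\H|_\Gamma, \Gamma, T)$, and then to prove $\F_T(\H|_\Gamma) = \F_T(\H)$ via Alperin's fusion theorem.

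By Remark~\ref{TRemark}, $T$ is a subgroup of $\H|_\Gamma$. Any subgroup $R$ of $\H|_\Gamma$ satisfies $\W(R) \subseteq \D_0 \subseteq \D$ by Remark~\ref{D0InD}(a), and since the product on $\D_0$ is the restriction of the product on $\D$ (Lemma~\ref{RestrictionPartialGroup}), $R$ is also a subgroup of $\H$. Hence maximality of $T$ among the $p$-subgroups of $\H$ transfers to $\H|_\Gamma$, yielding (L1). For (L3), $\Gamma$ is closed under overgroups in $T$ by hypothesis; for closure under $\H|_\Gamma$-conjugation, apply Remark~\ref{TRemark} (so that conjugation in $\H|_\Gamma$ agrees with conjugation in $\L$) together with $\H|_\Gamma \subseteq \H$ and the hypothesised closure of $\Gamma$ under $\H$-conjugates in $T$.

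For (L2), one inclusion is immediate: a word in $\W(\H|_\Gamma)$ satisfying the locality condition with respect to $\Gamma$ is already in $\D_0$ by the very definition of $\D_0$. For the other direction, suppose $w = (f_1, \ldots, f_n) \in \D_0$ via $P_0, \ldots, P_n \in \Gamma$. Then $P_{i-1} \leq S_{f_i} \cap T$, so overgroup closure of $\Gamma$ forces $S_{f_i} \cap T \in \Gamma$, i.e.\ $f_i \in \H|_\Gamma$. Furthermore, for every $x \in P_{i-1}$, the word $(f_i^{-1}, x, f_i)$ lies in $\D_0$ via the chain $P_i, P_{i-1}, P_{i-1}, P_i$, so $P_{i-1} \subseteq \D_0(f_i)$. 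Hence $w$ meets the locality condition in $(\H|_\Gamma, \Gamma, T)$.

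For the final statement, assume $\F := \F_T(\H)$ is saturated and $\F^{cr} \subseteq \Gamma$. The inclusion $\F_T(\H|_\Gamma) \subseteq \F$ is immediate since every generating conjugation map of $\F_T(\H|_\Gamma)$ is also a generator of $\F$. For the reverse inclusion, Alperin's fusion theorem (\cite[Theorem~I.3.6]{Aschbacher/Kessar/Oliver:2011}) reduces the task to showing $\Aut_\F(P) \subseteq \F_T(\H|_\Gamma)$ for every $P \in \F^{cr}$. Any $\alpha \in \Aut_\F(P)$ decomposes, by the definition of $\F$, as $\alpha = (c_{g_1}|_{A_0}) \circ \cdots \circ (c_{g_n}|_{A_{n-1}})$ with $g_i \in \H$, $A_0 = A_n = P$, $A_{i-1} \leq S_{g_i} \cap T$, and $A_{i-1}^{g_i} = A_i \leq T$. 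Since $A_0 = P \in \F^{cr} \subseteq \Gamma$, an induction using closure of $\Gamma$ under $\H$-conjugates yields $A_i \in \Gamma$ for all $i$; overgroup closure then gives $S_{g_i} \cap T \in \Gamma$, i.e.\ $g_i \in \H|_\Gamma$. Thus each factor is a morphism of $\F_T(\H|_\Gamma)$, and so is $\alpha$. The main technical point is this inductive propagation of $\Gamma$-membership along an Alperin decomposition, which crucially relies on $\F^{cr} \subseteq \Gamma$ to seed the induction; the locality axioms themselves require only careful word-level bookkeeping.
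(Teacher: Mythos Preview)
Your proof is correct and follows essentially the same approach as the paper's: verify (L1)--(L3) using Remark~\ref{TRemark} and the definition of $\D_0$, then establish $\F_T(\H|_\Gamma)=\F_T(\H)$ by Alperin's fusion theorem, propagating $\Gamma$-membership along the decomposition via closure of $\Gamma$ under $\H$-conjugates. The only (trivial) omission is that you do not explicitly note $\H|_\Gamma$ is finite, which is part of the locality definition; this is immediate since $\H|_\Gamma\subseteq\L$ and $\L$ is finite.
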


\begin{proof}
As $\L$ is a finite set, the set $\H|_\Gamma$ is also finite. By Lemma~\ref{TRemark}, $T$ is a $p$-subgroup of $\H|_\Gamma$. Since $\D_0\subseteq\D$ and $\Pi_0=\Pi|_{\D_0}$,  every subgroup of $\H_0$ is also a subgroup $\H$. Hence, if $T$ is a maximal $p$-subgroup of $\H$, then $T$ is also a maximal $p$-subgroup of the partial group $\H|_\Gamma$. So in any case, our assumption gives that $T$ is a maximal $p$-subgroup of $\H|_\Gamma$. It follows from the definition of $\D_0$ and Lemma~\ref{TRemark} that property (L2) in Definition~\ref{LocalityDefinition} holds for the partial group $\H|_\Gamma$ and the set $\Gamma$ in place of the partial group $\L$ and the set $\Delta$. Using Lemma~\ref{TRemark} one sees also that $\Gamma$ is closed under taking $\H|_\Gamma$-conjugates and overgroups in $T$, as $\Gamma$ is closed under taking $\H$-conjugates and overgroups in $T$. Thus, $(\H|_\Gamma,\Gamma,T)$ is a locality.

\smallskip

Suppose now that $\F_T(\H)$ is saturated and that $\F_T(\H)^{cr}\subseteq\Gamma$. Clearly, by Lemma~\ref{TRemark}, $\F_T(\H|_\Gamma)\subseteq \F_T(\H)$. So it remains to prove that $\F_T(\H)\subseteq \F_T(\H|_\Gamma)$. Let $R\in\F_T(\H)^{cr}$ and $\alpha\in\Aut_{\F_T(\H)}(R)$. By Alperin's fusion theorem, it is sufficient to show that $\alpha$ is a morphism in $\F_T(\H|_\Gamma)$. Since $\F_T(\H)$ is generated by the conjugation maps of the form $c_f\colon S_f\cap T\rightarrow T$ with $f\in \H$, it follows that there exist $f_1,\dots,f_n\in\H$ and subgroups $R=P_0,P_1,\dots,P_n=R$ of $T$ such that $P_{i-1}\subseteq\D(f_i)$, $P_{i-1}^{f_i}=P_i$ and $\alpha=(c_{f_1}|_{P_0})\circ\dots\circ(c_{f_n}|_{P_{n-1}})$. As $R\in\F_T(\H)^{cr}\subseteq \Gamma$ and $\Gamma$ is closed under taking $\H$-conjugates in $T$, it follows that $P_i\in\Gamma$ for $i=0,\dots,n$. Hence, $f_i\in\H|_\Gamma$ for $i=1,\dots,n$. By Lemma~\ref{TRemark}, the conjugation map $c_{f_i}\colon P_{i-1}\rightarrow P_i$ is well-defined in $\H|_\Gamma$ and coincides with the corresponding conjugation map in $\L$. As $\F_T(\H|_\Gamma)$ is generated by conjugation maps between the elements of $\Gamma$, it follows that $\alpha$ is a morphism in $\F_T(\H|_\Gamma)$. 
\end{proof}

\begin{rmk}\label{RestrictionTransporterSystems}
Suppose $T$ is (with respect to inclusion) a maximal $p$-subgroup of $\H$. Consider the transporter systems attached to the localities $(\H|_\Gamma,\Gamma,T)$ and $(\L,\Delta,S)$. Then we can naturally define a functor $\T_\Gamma(\H|_\Gamma)\rightarrow \T_\Delta(\L)$ by sending an object $P\in\Gamma$ to $\<P,Q\>\in\Delta$, and a morphism $(f,P_0,P_1)\in\Hom_{\T_\Gamma(\H|_\Gamma)}(P_0,P_1)$ to $(f,\<P_0,Q\>,\<P_1,Q\>)$ for all $P_0,P_1\in\Gamma$.
\end{rmk}

\subsection{Inclusions of linking systems associated to $p$-local subsystems}\label{pLocalInclusion}

In this section we illustrate how Proposition~\ref{SubcentricProperties1}(c) (or the stronger statement Lemma~\ref{KNormSubcentric2a}) can by used. Roughly speaking, the goal of this section is to show that, for every fully $\F$-normalized subgroup $Q$ of $S$, a subcentric linking locality over $N_\F(Q)$ can be found inside of a subcentric linking locality over $\F$. A similar result holds for centralizers; indeed, we will work more generally with $K$-normalizers. Moreover, we can actually replace subcentric linking localities by centric or quasicentric linking localities in this context. 

\smallskip

We start with some very general results, which will allow us later to apply the general results proved in the previous subsection. Throughout this subsection, let $(\L,\Delta,S)$ be a locality over $\F$. The partial product on $\L$ will be denoted by $\Pi\colon\D\rightarrow \L$. For $Q\in\F$ and $K\leq \Aut(Q)$, set 
\[N_\L^K(Q):=\{f\in N_\L(Q)\colon c_f|_Q\in K\}.\]

\begin{lemma}\label{NLKQMaxpGroup}
The subset $N_\L^K(Q)$ is a partial subgroup of $\L$. If $Q$ is fully $K$-normalized, then $N_S^K(Q)$ is (with respect to inclusion) a maximal $p$-subgroup of $N_\L^K(Q)$. 
\end{lemma}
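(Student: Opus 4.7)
The plan is, for the first assertion, to exhibit $N_\L^K(Q)$ as the preimage of the subgroup $K\leq\Aut(Q)$ under a homomorphism of partial groups. The partial-subgroup structure of $N_\L(Q)$ is given by \cite[Lemma~2.19(a)]{Chermak:2013}, so I only need to verify that the restriction map $\phi_Q\colon N_\L(Q)\to\Aut(Q)$, $f\mapsto c_f|_Q$, respects the partial product. Given $w=(f_1,\dots,f_n)\in\D\cap\W(N_\L(Q))$ and any witnessing $\Delta$-chain $P_0,\dots,P_n$, the enlarged chain $\<Q,P_0\>,\dots,\<Q,P_n\>$ again lies in $\Delta$ by overgroup-closure and still witnesses $w\in\D$ (since each $f_i$ normalizes $Q$, the computation $\<Q,P_{i-1}\>^{f_i}=\<Q^{f_i},P_{i-1}^{f_i}\>=\<Q,P_i\>$ goes through). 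Lemma~\ref{LocalitiesProp}(c) then yields $c_{\Pi(w)}|_Q=c_{f_1}|_Q\circ\cdots\circ c_{f_n}|_Q$ after restricting the identity of automorphisms of $\<Q,P_0\>$ to the subgroup $Q$. Since $K$ is a subgroup of $\Aut(Q)$, the preimage $N_\L^K(Q)=\phi_Q^{-1}(K)$ is closed under partial products and inversion.

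For the maximality claim, note first that $N_S^K(Q)$ is a subgroup of $S$ (because $K$ is a subgroup), hence a $p$-subgroup of $N_\L^K(Q)$. Let $P$ be an arbitrary $p$-subgroup of $N_\L^K(Q)$ containing $N_S^K(Q)$; I will show $P=N_S^K(Q)$. Restricted to the group $P$, the map $\phi_Q$ is an ordinary group homomorphism into $K$. Because $Q$ is fully $K$-normalized, $\phi_Q(N_S^K(Q))=\Aut_S(Q)\cap K$ is a Sylow $p$-subgroup of $K$ by \cite[Proposition~I.5.2]{Aschbacher/Kessar/Oliver:2011}; since $\phi_Q(P)\leq K$ is a $p$-subgroup containing this Sylow, the two images coincide. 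The problem thus reduces to the equality $H=C_S(Q)$ for $H:=P\cap C_\L(Q)=\ker(\phi_Q|_P)$, with the inclusion $C_S(Q)\leq H$ being automatic from $C_S(Q)\leq N_S^K(Q)\leq P$ and $C_S(Q)\leq C_\L(Q)$.

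For the reverse inclusion $H\leq C_S(Q)$, I exploit that $Q$ is fully $\F$-centralized (a consequence of being fully $K$-normalized). Since $H\leq C_\L(Q)$ centralizes $Q$ elementwise, every element of $H$ commutes with every element of $Q$ inside $\L$; one checks that the resulting subset $R:=QH\subseteq\L$ forms a $p$-subgroup of $\L$ of order $|Q|\cdot|H|/|Q\cap H|$. Applying the Sylow-type theorem for localities (cf.\ \cite{Chermak:2015}), there exists $g\in\L$ with $R\leq S_g$ and $R^g\leq S$; then $Q^g$ is an $\F$-conjugate of $Q$ in $S$ and $H^g\leq C_S(Q^g)$. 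Full centralization of $Q$ gives $|H|=|H^g|\leq|C_S(Q^g)|\leq|C_S(Q)|$, which combined with $C_S(Q)\leq H$ forces $H=C_S(Q)$. The main technical point, and the one requiring most care, is verifying that $R=QH$ is a bona fide $p$-subgroup of $\L$ of the claimed order even when $Q\notin\Delta$; the elementwise commutation between $Q$ and $H$ is what makes the partial-group axioms deliver this without recourse to any Sylow argument internal to $C_\L(Q)$ itself.
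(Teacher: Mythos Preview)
Your argument for the first assertion is correct and essentially the same as the paper's, just packaged as ``preimage of a homomorphism'' rather than checked directly.

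For maximality, your image/kernel decomposition along $\phi_Q$ is a genuinely different and conceptually clean route: handle $\phi_Q(P)$ by the Sylow condition in $\Aut_\F^K(Q)$ (note: Sylow in $\Aut_\F(Q)\cap K$, not in $K$, though this does not affect the argument since $\phi_Q(P)\le\Aut_\F(Q)$), and handle $H=\ker(\phi_Q|_P)$ by comparing with $C_S(Q)$ via full centralization. The paper instead conjugates the whole of $P$ (after enlarging by an object of $\Delta$) into $S$ in one step and compares with $N_S^{K^\phi}(Q\phi)$; this requires a somewhat intricate word computation that your approach avoids.

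However, there is a real gap at the step ``one checks that $R:=QH$ forms a $p$-subgroup of $\L$''. Your justification --- that elementwise commutation ($q^h=q$) together with the partial-group axioms delivers this --- does not work: from $(h^{-1},q,h)\in\D$ one cannot in general conclude that arbitrary alternating words such as $(q_1,h_1,q_2,h_2,\dots)$ lie in $\D$, because the witnessing $\Delta$-chains for the individual $h_i$ need not be compatible. What is actually needed is a single $X\in\Delta$ with $Q\le X$ and $H\le N_\L(X)$; then $QH$ is an honest subgroup of the finite group $N_\L(X)$ and hence a $p$-subgroup of $\L$. The existence of such an $X$ is precisely \cite[Proposition~2.11(a)]{Chermak:2015} applied to $H$ (or to $P$), followed by enlarging $X$ to $\langle X,Q\rangle$. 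This is exactly the device the paper invokes at the outset of its maximality argument. So your proof can be completed, but only by importing the same structural input the paper uses; once that is done, your decomposition does give a tidier endgame than the paper's explicit word manipulation.
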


\begin{proof}
It follows from Lemma~\ref{LocalitiesProp}(e) that $N_\L^K(Q)$ is closed under inversion. Let $v=(f_1,\dots,f_n)\in\D \cap \W(N_\L^K(Q))$. Then $v\in\D$ via some $P_0,\dots,P_n\in\Delta$. Replacing $P_i$ by $\<P_{i-1},Q\>$ we may assume that $Q\leq P_i$. Then by Lemma~\ref{LocalitiesProp}(c), $c_{f_1}\circ\dots\circ c_{f_n}=c_{\Pi(f_1,\dots,f_n)}$ as a map from $N_\L(P_0)$ to $N_\L(P_n)$ and thus in particular as a map from $Q$ to $Q$. As $(c_{f_i})|_Q\in K$ for $i=1,\dots,n$, it follows $c_{\Pi(f_1,\dots,f_n)}|_Q\in K$. Therefore $\Pi(f_1,\dots,f_n)\in N_\L^K(Q)$. This shows that $N_\L^K(Q)$ is a partial subgroup of $\L$. 

\smallskip

Suppose now that $Q$ is fully $K$-normalized. Clearly, $N_S^K(Q)$ is a $p$-subgroup of $N_\L^K(Q)$. Let $R$ be a $p$-subgroup of $N_\L^K(Q)$ such that $N_S^K(Q)\leq R$. By \cite[Proposition~2.11(a)]{Chermak:2015}, every subgroup of $\L$ is contained in the normalizer of some element of $\Delta$. So in particular, $R\leq N_\L(P)$ for some $P\in\Delta$. For such $P$, we have $R\subseteq N_\L(\<P,Q\>)$ and $\<P,Q\>\in\Delta$ as $\Delta$ is closed under taking overgroups in $S$. Hence, we can fix $P\in\Delta$ such that $Q\leq P$ and $R\leq N_\L(P)$. 

\smallskip

As $R$ normalizes $P$, $RP$ is a $p$-subgroup of the finite group $N_\L(P)$. Thus, $RP$ is also a $p$-subgroup of the partial group $\L$. By \cite[Proposition~2.11(b)]{Chermak:2015}, every $p$-subgroup of $\L$ is conjugate to a subgroup of $S$. So  there exists $f\in\L$ such that $RP\subseteq\D(f)$ and $(RP)^f\leq S$. Note that $Q\leq P\leq S_f$. Let $\phi=c_f|_Q\in\Hom_\F(Q,S)$. 

\smallskip

Assume first that $R^f\subseteq N_S^{K^\phi}(Q\phi)$. By Lemma~\ref{LocalitiesProp}(e), we have $|R|=|R^f|$. Recall that $N_S^K(Q)\subseteq R$. As $Q$ is fully $K$-normalized, it follows 
\[|R|=|R^f|\leq |N_S^{K^\phi}(Q\phi)|\leq |N_S^K(Q)|\leq |R|.\]
Hence, equality holds and $N_S^K(Q)=R$. This shows that $N_S^K(Q)$ is a maximal $p$-subgroup, provided we can prove that $R^f\subseteq N_S^{K^\phi}(Q\phi)$.

\smallskip

So it remains to show only that $R^f\subseteq N_S^{K^\phi}(Q\phi)$. For the proof let $r\in R$ and $q\in Q$. We show next that $v:=(f^{-1},r^{-1},f,f^{-1},q,f,f^{-1},r,f)\in\D$. As $R\subseteq\D(f)$, we have $(f^{-1},r,f)\in\D$ via some $P_0,P_1,P_2,P_3\in\Delta$. Recall that $\Delta$ is closed under taking overgroups in $S$. Using Lemma~\ref{LocalitiesProp}(d) and $R\subseteq N_\L^K(Q)\subseteq N_\L(Q)$, we conclude that $(f^{-1},r,f)\in\D$ via $\<P_0,Q^f\>,\<P_1,Q\>,\<P_2,Q\>,\<P_3,Q^f\>$. So we may assume from now on that $Q^f\leq P_0$ and $Q\leq P_1\cap P_2$. Then $q\in P_1$ and thus $P_1^q=P_1$. As $(f^{-1},r,f)\in\D$ via $P_0,P_1,P_2,P_3$, we get the following series of conjugations:
\[P_0\xrightarrow{f^{-1}}P_1\xrightarrow{r}P_2\xrightarrow{f}P_3\mbox{ and }P_3\xrightarrow{f^{-1}}P_2\xrightarrow{r^{-1}}P_1\xrightarrow{f}P_0.\]
Hence, we get the following series of conjugations:
\[P_3\xrightarrow{f^{-1}}P_2\xrightarrow{r^{-1}}P_1\xrightarrow{f}P_0\xrightarrow{f^{-1}}P_1\xrightarrow{q}P_1\xrightarrow{f}P_0\xrightarrow{f^{-1}}P_1\xrightarrow{r}P_2\xrightarrow{f}P_3.\]
So $v\in\D$ via $P_3$. By \cite[Lemma~1.6(b)]{Chermak:2015}, $(r^{-1})^f=(r^f)^{-1}$. So by the axioms of a partial group, $((r^{f})^{-1},q^f,r^f)=((r^{-1})^f,q^f,r^f)\in\D$ and  $(q^f)c_{r^f}=(q^f)^{r^f}=\Pi(v)=(q^f)c_{f^{-1}}c_rc_f=(q^f)\phi^{-1}c_r\phi$. As $q$ is arbitrary, this shows $c_{r^f}|_{Q^f}=\phi^{-1}c_r|_Q\phi$. Since $r\in R\subseteq N_\L^K(Q)$, we have $c_r|_Q\in K$. It follows $c_{r^f}|_{Q\phi}=c_{r^f}|_{Q^f}\in K^\phi$ and thus $r^f\in N_\L^{K^\phi}(Q^f)$. Since $R^f\subseteq S$ and $r\in R$ was arbitrary, this shows $R^f\subseteq N_S^{K^\phi}(Q\phi)$, which is the claim we still had to prove.
\end{proof}

We will now consider centric, quasicentric or subcentric linking localities. More precisely, we will work under the following hypothesis.

\begin{hyp}\label{KNormHyp}
Assume $(\L,\Delta,S)$ is a linking locality over $\F$, and assume $\Delta$ is one of the sets $\F^c$, $\F^q$ or $\F^s$; i.e., $(\L,\Delta,S)$ is a centric, quasicentric or subcentric linking locality. 

\smallskip

Fix $Q\leq S$ and $K\leq \Aut(Q)$, and assume that $Q$ is fully $K$-normalized. Let $\Gamma=N_\F^K(Q)^c$ if $\Delta=\F^c$, $\Gamma=N_\F^K(Q)^q$ if $\Delta=\F^q$, and $\Gamma=N_\F^K(Q)^s$ if $\Delta=\F^s$. 
\end{hyp}

Assuming Hypothesis~\ref{KNormHyp}, recall from \cite[Theorem~I.5.5]{Aschbacher/Kessar/Oliver:2011} that $N_\F^K(Q)$ is saturated as $Q$ is fully $K$-normalized. Our goal will be to show that a locality for $N_\F^K(Q)$ with object set $\Gamma$ is contained in $\L$, and that this locality is a linking locality if $K\cap \Aut_\F(Q)$ is subnormal in $\Aut_\F(Q)$. So if $K\cap \Aut_\F(Q)$ is subnormal in $\Aut_\F(Q)$, we will show that a centric linking locality over $\F$ contains a centric linking locality over $N_\F^K(Q)$, a quasicentric linking locality over $\F$ contains a quasicentric linking locality over $N_\F^K(Q)$, and a subcentric linking locality over $\F$ contains a subcentric linking locality over $N_\F^K(Q)$. In particular, this is true if $K=\Aut_\F(Q)$ and $N_\F^K(Q)=N_\F(Q)$, or if $K=\{\id_Q\}$ and $N_\F^K(Q)=C_\F(Q)$.

\begin{lemma}\label{KNormLF}
If Hypothesis~\ref{KNormHyp} holds, then $N_\F^K(Q)=\F_{N_S^K(Q)}(N_\L^K(Q))$.
\end{lemma}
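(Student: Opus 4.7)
The plan is to establish the two inclusions $\F_{N_S^K(Q)}(N_\L^K(Q))\subseteq N_\F^K(Q)$ and $N_\F^K(Q)\subseteq\F_{N_S^K(Q)}(N_\L^K(Q))$ separately, using Alperin's fusion theorem for the saturated fusion system $N_\F^K(Q)$ for the harder direction.

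For the easy inclusion: $\F_{N_S^K(Q)}(N_\L^K(Q))$ is generated by the conjugation maps $c_f\colon S_f\cap N_S^K(Q)\to N_S^K(Q)$ for $f\in N_\L^K(Q)$, so it suffices to show each such $c_f$ (and its restrictions) lies in $N_\F^K(Q)$. Setting $A=S_f\cap N_S^K(Q)$, we have $Q\leq S_f$ because $f\in N_\L(Q)$, so $AQ\leq S_f$ and $c_f|_{AQ}\colon AQ\to A^fQ$ is a well-defined $\F$-morphism extending $c_f|_A$ whose restriction to $Q$ is $c_f|_Q\in K$. By definition of $N_\F^K(Q)$, this witnesses $c_f|_A$ as a morphism in $N_\F^K(Q)$.

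For the reverse inclusion I would argue as follows. Since $Q$ is fully $K$-normalized, $N_\F^K(Q)$ is saturated by \cite[Theorem~I.5.5]{Aschbacher/Kessar/Oliver:2011}, so Alperin's fusion theorem \cite[Theorem~I.3.6]{Aschbacher/Kessar/Oliver:2011} writes any morphism of $N_\F^K(Q)$ as a composition of restrictions of automorphisms $\alpha\in\Aut_{N_\F^K(Q)}(R)$ with $R\in N_\F^K(Q)^{cr}$. In all three cases for $\Delta$, the chain $N_\F^K(Q)^{cr}\subseteq N_\F^K(Q)^c\subseteq\Gamma$ holds, and so Lemma~\ref{KNormSubcentric2a} gives $RQ\in\Delta$. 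For each such $\alpha$, I would extend it (by the defining property of $N_\F^K(Q)$) to $\hat\alpha\in\Aut_\F(RQ)$ with $\hat\alpha|_Q\in K$, and then invoke Remark~\ref{MorphismAsConjugation}: since $RQ\in\Delta$, there exists $f\in\L$ with $RQ\leq S_f$ and $c_f|_{RQ}=\hat\alpha$. The restriction $c_f|_Q=\hat\alpha|_Q\in K$ forces $Q^f=Q$, and hence $f\in N_\L^K(Q)$. Then $\alpha=c_f|_R$, and any restriction of $\alpha$ to a subgroup of $R$ is realized by $c_f$, now as a morphism in $\F_{N_S^K(Q)}(N_\L^K(Q))$.

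The main obstacle is the implication $R\in N_\F^K(Q)^{cr}\Rightarrow RQ\in\Delta$; this is precisely where the lemma \ref{KNormSubcentric2a} on the three parallel statements (for centric, quasicentric, subcentric subgroups) is essential, since $R$ itself need not belong to $\Delta$. Once this passage is granted, the rest of the argument is straightforward bookkeeping: checking that the conjugating element $f$ produced by Remark~\ref{MorphismAsConjugation} actually lands in $N_\L^K(Q)$ (immediate from $\hat\alpha|_Q\in K$ and $Q^f=Q$), and checking that restrictions of $c_f$ respect the subgroup $N_S^K(Q)$ (immediate from $A\leq R\leq N_S^K(Q)$ and $A^f\leq R^f=R\leq N_S^K(Q)$).
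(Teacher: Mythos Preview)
Your proposal is correct and follows essentially the same approach as the paper's proof: both directions are handled exactly as you outline, with the nontrivial inclusion obtained via Alperin's fusion theorem in the saturated system $N_\F^K(Q)$, extending an automorphism of a centric-radical $R$ to $\hat\alpha\in\Aut_\F(RQ)$ with $\hat\alpha|_Q\in K$, invoking Lemma~\ref{KNormSubcentric2a} to get $RQ\in\Delta$, and then Remark~\ref{MorphismAsConjugation} to realize $\hat\alpha$ by some $f\in N_\L^K(Q)$. The paper's write-up is just terser (it dismisses the easy inclusion with ``Clearly'').
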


\begin{proof}
Assume Hypothesis~\ref{KNormHyp}. Clearly $\F_{N_S^K(Q)}(N_\L^K(Q))\subseteq N_\F^K(Q)$, so it remains to prove the converse inclusion. Let $P\in N_\F^K(Q)^{cr}$ and $\phi\in\Aut_{N_\F^K(Q)}(P)$. As $N_\F^K(Q)$ is saturated, it is by Alperin's fusion theorem \cite[Theorem~I.3.6]{Aschbacher/Kessar/Oliver:2011} sufficient to prove that $\phi$ is a morphism in $\F_{N_S^K(Q)}(N_\L^K(Q))$. By definition of $N_\F^K(Q)$, $\phi$ extends to $\hat{\phi}\in\Hom_\F(PQ)$ with $\hat{\phi}|_Q\in K$. By Lemma~\ref{KNormSubcentric2a}, $PQ\in\Delta$. So by Lemma~\ref{MorphismAsConjugation}, $\hat{\phi}=c_f|_{PQ}$ for some $f\in\L$ with $PQ\leq S_f$. Then $c_f|_Q=\hat{\phi}|_Q\in K$ and thus $f\in N_\L^K(Q)$. Hence, $\phi=\hat{\phi}|_P=c_f|_P$ is a morphism in $\F_{N_S^K(Q)}(N_\L^K(Q))$. 
\end{proof}

Assume Hypothesis~\ref{KNormHyp}. Recall from Lemma~\ref{NLKQMaxpGroup} that $N_\L^K(Q)$ is a partial subgroup of $\L$. Note that $\Gamma$ is closed under taking $\F$-conjugates and overgroups in $N_S^K(Q)$; if $\Gamma=N_\F^K(Q)^c$ or $\Gamma=N_\F^K(Q)^q$, then this is easy to check from the definitions; if $\Gamma=N_\F^K(Q)^s$, then this follows from  Proposition~\ref{subcentricProp}. By Lemma~\ref{KNormSubcentric2a}, we have 
\[PQ\in\Delta\mbox{ for every }P\in\Gamma.\]
In particular, property (Q1) from Subsection~\ref{RestrictionSubsection} holds. Moreover, clearly $N_\L^K(Q)\subseteq N_\L(Q)$ and so property (Q2) from Subsection~\ref{RestrictionSubsection} holds with $N_\L^K(Q)$ in place of $\H$. Set 
\[\L_0:=N_\L^K(Q)|_\Gamma=\{f\in N_\L^K(Q)\colon S_f\cap N_S^K(Q)\in\Gamma\}.\] 
Let $\D_0$ be the set of words $(f_1,\dots,f_n)\in\W(\L_0)$ for which there exist $P_0,P_1,\dots,P_n\in\Gamma$ with $P_{i-1}^{f_i}=P_i$ for all $i=1,\dots,n$.

\begin{lemma}
Assume Hypothesis~\ref{KNormHyp} and let $\L_0$ be as above. Then $\L_0$ together with the restriction of the inversion on $\L$ to $\L_0$ and with the product $\Pi_0:=\Pi|_{\D_0}\colon \D_0\rightarrow \L_0$ forms a partial group.

\smallskip

Regarding $\L_0$ as a partial group in this way, the inclusion map $\L_0\rightarrow \L$ is a homomorphism of partial groups and the triple $(\L_0,\Gamma,N_S^K(Q))$ is a locality over $N_\F^K(Q)$. If $K\cap \Aut_\F(Q)$ is subnormal in $\Aut_\F(Q)$ then $(\L_0,\Gamma,N_S^K(Q))$ is a linking locality. 
\end{lemma}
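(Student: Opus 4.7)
The first two assertions follow rather directly from the restriction machinery of Subsection~\ref{RestrictionSubsection}. First I would verify the axioms (R1) and (R2) of that subsection for $\H := N_\L^K(Q)$, the set $\Gamma$, and the given subgroup $Q$: property (R1), that $\langle P, Q\rangle = PQ \in \Delta$ for every $P \in \Gamma$, is Lemma~\ref{KNormSubcentric2a}, while (R2) is immediate from $N_\L^K(Q) \subseteq N_\L(Q)$. Lemma~\ref{RestrictionPartialGroup} then yields the partial group structure on $\L_0 = N_\L^K(Q)|_\Gamma$ together with the fact that the inclusion $\L_0 \hookrightarrow \L$ is a homomorphism of partial groups. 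For the locality statement, I would combine Lemma~\ref{NLKQMaxpGroup} (giving that $N_S^K(Q)$ is a maximal $p$-subgroup of $N_\L^K(Q)$), Lemma~\ref{KNormLF} (identifying $\F_{N_S^K(Q)}(N_\L^K(Q)) = N_\F^K(Q)$), and the saturation of $N_\F^K(Q)$ from \cite[Theorem~I.5.5]{Aschbacher/Kessar/Oliver:2011} in a single application of Lemma~\ref{RestrictionLocality}; the required inclusion $N_\F^K(Q)^{cr} \subseteq \Gamma$ holds in all three cases via the chain $N_\F^K(Q)^{cr} \subseteq N_\F^K(Q)^c \subseteq N_\F^K(Q)^q \subseteq N_\F^K(Q)^s$.

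The substantive part of the proof is to show that, when $K$ is subnormal in $\Aut_\F(Q)$, the locality $(\L_0, \Gamma, N_S^K(Q))$ is of objective characteristic $p$. Fix $P \in \Gamma$ and set $G := N_\L(PQ)$; since $PQ \in \Delta$ by Lemma~\ref{KNormSubcentric2a} and $(\L, \Delta, S)$ is a linking locality, $G$ is a group of characteristic $p$. The key technical step is the identification
\[
 N_{\L_0}(P) = N_G(P) \cap N_G^K(Q),
\]
where $N_G^K(Q) := \{g \in N_G(Q) : c_g|_Q \in K\}$. The inclusion $\subseteq$ is clear since any element $f$ normalizing $P$ and $Q$ in $\L$ also normalizes $PQ$; for the reverse inclusion, if $f \in N_\L^K(Q) \cap N_\L(P)$ then $P \leq S_f \cap N_S^K(Q)$, and this intersection lies in $\Gamma$ since $\Gamma$ is closed under overgroups in $N_S^K(Q)$, so $f \in \L_0$ and $P^f = P$ in the partial group $\L_0$ (taking $P_0 = P_1 = P_2 = P$ in the definition of $\D_0$).

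With this identification in hand I would study the group homomorphism $\psi \colon N_G(P) \cap N_G(Q) \to \Aut(Q)$, $g \mapsto c_g|_Q$, whose image $L$ lies in $\Aut_\F(Q)$ because $\F = \F_S(\L)$. A subnormal series $K = K_0 \unlhd K_1 \unlhd \dots \unlhd K_n = \Aut_\F(Q)$ yields, upon intersection with $L$, a subnormal series for $L \cap K$ in $L$, and pulling back through $\psi$ shows that $N_{\L_0}(P) = \psi^{-1}(L \cap K)$ is subnormal in $N_G(P) \cap N_G(Q)$. Two applications of Lemma~\ref{Charp1}(a) to the characteristic $p$ group $G$ with its $p$-subgroups $P$ and $Q$ give that $N_G(P) \cap N_G(Q)$ is itself of characteristic $p$, and Lemma~\ref{Charp1}(b) then delivers the desired conclusion. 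The main obstacle, and the point requiring some care, is the identification of $N_{\L_0}(P)$ with $N_G(P) \cap N_G^K(Q)$; once that is in place, the characteristic $p$ conclusion is a formal consequence of the subnormality hypothesis on $K$.
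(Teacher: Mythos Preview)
Your proof is correct and follows essentially the same route as the paper: both first invoke the restriction machinery (Lemmas~\ref{RestrictionPartialGroup}, \ref{NLKQMaxpGroup}, \ref{KNormLF}, \ref{RestrictionLocality}) to obtain the locality, and for the objective characteristic~$p$ statement both reduce to showing that $N_{\L_0}(P)=N_{N_G(P)}^K(Q)$ is subnormal in the characteristic~$p$ group $N_{N_G(P)}(Q)$ and then apply Lemma~\ref{Charp1}. The only cosmetic difference is that the paper phrases the subnormality by intersecting $K$ with $\Aut_G(Q)$ and forming the chain $N_{N_G(P)}^{K_i}(Q)$, whereas you pull back along the conjugation map $\psi$ with image $L\leq\Aut_\F(Q)$; these are the same argument in different notation.
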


\begin{proof}
We have argued above that (Q1) and (Q2) hold with $N_\L^K(Q)$ in place of $\H$. So by Lemma~\ref{RestrictionPartialGroup}, $\L_0=N_\L^K(Q)|_\Gamma$ forms a partial group as described above, and the inclusion map $\L_0\rightarrow\L$ is a homomorphism of partial groups. By Lemma~\ref{NLKQMaxpGroup}, $N_S^K(Q)=N_\L^K(Q)\cap S$ is a maximal $p$-subgroup of $N_\L^K(Q)$, and by Lemma~\ref{KNormLF}, $\F_{N_S^K(Q)}(N_\L^K(Q))=N_\F^K(Q)$ is saturated. Moreover, by our choice of $\Gamma$, we have $N_\F^K(Q)^{cr}\subseteq \Gamma$. Hence, by Lemma~\ref{RestrictionLocality}, $(\L_0,\Gamma,N_S^K(Q))$ is a locality over $N_\F^K(Q)$. 

\smallskip

Assume now that $K\cap \Aut_\F(Q)$ is subnormal in $\Aut_\F(Q)$. Note that $N_\F^{K\cap\Aut_\F(Q)}(Q)=N_\F^K(Q)$ and $N_\L^{K\cap\Aut_\F(Q)}(Q)=N_\L^K(Q)$. So replacing $K$ by $K\cap \Aut_\F(Q)$, we may assume that $K$ is subnormal in $\Aut_\F(Q)$. 

\smallskip

Let $P\in\Gamma$. We need to show that $N_{\L_0}(P)$ is of characteristic $p$. A priori, $N_{\L_0}(P)$ means here the normalizer of $P$ formed in the partial group $\L_0$. However, by Lemma~\ref{TRemark}, this normalizer coincides with the normalizer in $\L_0$ of $P$ formed in the partial group $\L$. Moreover,  $N_{\L_0}(P)=N_{N_\L^K(Q)}(P)$. Recall that $PQ\in\Delta$. So since $(\L,\Delta,S)$ is a linking locality, $G:=N_\L(PQ)$ is a group of characteristic $p$.  Observe that
$N_{\L_0}(P)=N_{N_\L^K(Q)}(P)=N_{N_G(P)}^K(Q)$. As $G$ is of characteristic $p$, it follows from Lemma~\ref{Charp1}(b) that $N_G(P)$ is of characteristic $p$, and thus $N_{N_G(P)}(Q)$ is of characteristic $p$. As $K$ is subnormal in $\Aut_\F(Q)$, $K_0:=\Aut_G(Q)\cap K$ is subnormal in $\Aut_G(Q)\leq \Aut_\F(Q)$. Let $K_0\unlhd K_1\unlhd\dots\unlhd K_n=\Aut_G(Q)$ be a subnormal series. Then $N_{\L_0}(P)=N_{N_G(P)}^K(Q)=N_{N_G(P)}^{K_0}(Q)\unlhd N_{N_G(P)}^{K_1}(Q)\unlhd \dots \unlhd N_{N_G(P)}^{K_n}(Q)=N_{N_G(P)}(Q)$ and $N_{\L_0}(P)$ is a subnormal subgroup of $N_{N_G(P)}(Q)$. Thus, $N_{\L_0}(P)$ is of characteristic $p$ by Lemma~\ref{Charp1}(c).     
\end{proof}

\begin{rmk}
As described more generally in Remark~\ref{RestrictionTransporterSystems}, with the set-up as above, we are naturally given a functor $\T(\L_0,\Delta_0)\rightarrow \T(\L,\Delta)$ which is injective on morphism sets. It sends an object $P\in\Gamma$ to $PQ\in\Delta$, and a morphism $(f,P_1,P_2)$ to $(f,P_1Q,P_2Q)$. So if $K\cap \Aut_\F(Q)$ is subnormal in $\Aut_\F(Q)$ then we get functors from the centric linking system of $N_\F^K(Q)$ to the centric linking system of $\F$, from the quasicentric linking system of $N_\F^K(Q)$ to the quasicentric linking system of $\F$, and from the subcentric linking system of $N_\F^K(Q)$ to the subcentric linking system of $\F$. 

\smallskip

In particular, this holds if $K=\Aut(Q)$ and $N_\F^K(Q)=N_\F(Q)$, or if $K=\{\id_K\}$ and $N_\F^K(Q)=C_\F(Q)$. So if $Q$ is fully normalized, we get a functor from the subcentric linking system associated to $N_\F(Q)$ to the subcentric linking system associated to $\F$; if $Q$ is fully centralized, we get a functor from the subcentric linking system associated to $C_\F(Q)$ to the subcentric linking system associated to $\F$.  Corresponding statements hold with subcentric replaced by centric or quasicentric.

\smallskip

If $N_\F^K(Q)=N_\F(Q)$, $\Gamma=N_\F(Q)^c$ and $\Delta=\F^c$ then note that $\T(\L_0,\Delta_0)$ is a centric linking system associated to $N_\F(Q)$. In \cite[Definition~6.1]{BLO2} a centric linking system associated to $N_\F(Q)$ was directly constructed from a centric linking system associated to $\F$. So our construction of $(\L_0,\Delta_0,S_0)$ can be seen as a locality version of this construction.
\end{rmk}

\subsection{Inclusions of linking systems associated to normal subsystems}\label{NormalInclusions}
Let $(\L,\Delta,S)$ be a subcentric linking locality over $\F$, i.e. $\Delta=\F^s$. Suppose $\m{E}$ is a normal subsystem of $\F$ over $T\leq S$. In this section, we will explain how Proposition~\ref{CENThm} and Theorem~\ref{SubcentricEF} can be used to show that a subcentric linking locality over $\m{E}$ is contained in $\L$ in a way that the inclusion map is a homomorphism of partial groups. We need to assume however that $\m{E}$ is realized by a partial normal subgroup of $\L$, an assumption which Chermak and the author of this paper \cite{Chermak/Henke:2017} show to be redundant. So we assume for the remainder of this section that there exists a partial normal subgroup $\N$ of $\L$ such that $T=\N\cap S$ and $\F_T(\N)=\m{E}$. Set $\Gamma:=\m{E}^s$ and $\N_0:=\N|_\Gamma=\{f\in\N\colon S_f\cap T\in\Delta_0\}$. 

\smallskip

By Proposition~\ref{CENThm}, $Q:=C_S(\m{E})=C_S(\N)$. So by Theorem~\ref{SubcentricEF}, we have $PQ=PC_S(\N)\in\Delta$ for all $P\in\Gamma$. Note moreover that $\N\subseteq N_\L(Q)$. Hence, properties (Q1) and (Q2) from Subsection~\ref{RestrictionSubsection} hold with $\N$ in place of $\H$. Let $\D_0$ be the set of words $(f_1,\dots,f_n)\in\W(\N_0)$ such that there exist $P_0,\dots,P_n\in\Gamma$ with $P_{i-1}^{f_i}=P_i$ for $i=1,\dots,n$.

\begin{lemma}
The set $\N_0$ together with the restriction of the inversion on $\L$ to $\N_0$ and with the product $\Pi_0:=\Pi|_{\D_0}\colon \D_0\rightarrow \N_0$ forms a partial group.

\smallskip

Regarding $\N_0$ as a partial group in this way, the inclusion map $\N_0\rightarrow \L$ is a homomorphism of partial groups and the triple $(\N_0,\Gamma,T)$ is a subcentric linking locality over $\m{E}$. 
\end{lemma}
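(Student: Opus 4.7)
The first two sentences reduce to the abstract machinery already established in Subsection~\ref{RestrictionSubsection}. First I would verify the hypotheses of Lemma~\ref{RestrictionPartialGroup} and Lemma~\ref{RestrictionLocality} for the partial subgroup $\N$ of $\L$, the set $\Gamma=\m{E}^s$, and the auxiliary subgroup $Q=C_S(\m{E})$. The set $\Gamma$ is closed under $\N$-conjugation and under overgroups in $T$: closure under overgroups in $T$ follows from Proposition~\ref{subcentricProp} (applied to $\m{E}$), and closure under $\N$-conjugation follows from Lemma~\ref{ESubphi} because every conjugation map by an element of $\N$ between subgroups of $T$ is by construction a morphism in $\F_T(\N)=\m{E}$. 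Conditions (R1) and (R2) from Subsection~\ref{RestrictionSubsection} were verified in the text preceding the lemma (using Theorem~\ref{SubcentricEF} and $\N\subseteq N_\L(Q)$). Finally, $T=\N\cap S$ is a maximal $p$-subgroup of $\N$ by \cite[Lemma~3.1(c)]{Chermak:2015}, and $\F_T(\N)=\m{E}$ is saturated by assumption with $\m{E}^{cr}\subseteq \m{E}^s=\Gamma$. Thus Lemma~\ref{RestrictionPartialGroup} gives the partial group structure on $\N_0$ and the homomorphism $\N_0\hookrightarrow\L$, and Lemma~\ref{RestrictionLocality} shows that $(\N_0,\Gamma,T)$ is a locality over $\m{E}$.

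The remaining task, and the main content of the proof, is to show that $(\N_0,\Gamma,T)$ is of objective characteristic $p$; together with $\m{E}^{cr}\subseteq\Gamma=\m{E}^s$ this will establish that it is a subcentric linking locality over $\m{E}$. Fix $P\in\Gamma$. By Theorem~\ref{SubcentricEF}, $PQ=PC_S(\m{E})\in\F^s=\Delta$, so $G:=N_\L(PQ)$ is a group of characteristic $p$ by hypothesis on $(\L,\Delta,S)$. The first key identification is
\[
N_\N(P)=\N\cap N_\L(PQ),
\]
which holds because $\N\subseteq N_\L(Q)$ forces any element of $\N$ normalizing $P$ to normalize $PQ$, while the reverse inclusion is obvious. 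The second key point is that $H:=\N\cap G$ is a normal subgroup of $G$: it is closed under the (everywhere-defined) group product of $G$ since $\N$ is a partial subgroup of $\L$, and normality follows from the fact that $\N$ is a partial normal subgroup of $\L$, since conjugation inside the group $G$ is a restriction of conjugation in $\L$. Hence $N_\N(P)=H\unlhd G$ is a normal subgroup of a group of characteristic $p$, so it is itself of characteristic $p$ by Lemma~\ref{Charp1}(b).

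It remains to identify $N_{\N_0}(P)$ (computed in the partial group $\N_0$) with $N_\N(P)$. By Remark~\ref{TRemark}, normalizers in $\N_0$ coincide with the corresponding normalizers computed in $\L$ and intersected with $\N_0$, so $N_{\N_0}(P)=N_\N(P)\cap\N_0$. For any $f\in N_\N(P)$ we have $P\le S_f\cap T\le T$, and since $P\in\Gamma=\m{E}^s$ and $\m{E}^s$ is closed under overgroups in $T$ (Proposition~\ref{subcentricProp} again), we get $S_f\cap T\in\Gamma$, hence $f\in\N_0$. Therefore $N_{\N_0}(P)=N_\N(P)$ is of characteristic $p$, completing the verification of objective characteristic $p$.

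The main obstacle I anticipate is the bookkeeping in the second paragraph: verifying cleanly that $N_\N(P)$ equals $\N\cap N_\L(PQ)$ and is a normal subgroup of $N_\L(PQ)$, so that Lemma~\ref{Charp1}(b) can be invoked. Everything else is a direct application of the already-established machinery of Section~\ref{LocalitiesSection0}, Subsection~\ref{RestrictionSubsection}, and Theorem~\ref{SubcentricEF}.
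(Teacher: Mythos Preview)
Your overall approach matches the paper's, and the first paragraph (reducing to Lemmas~\ref{RestrictionPartialGroup} and~\ref{RestrictionLocality}) as well as the third paragraph (identifying $N_{\N_0}(P)$ with $N_\N(P)$) are fine. There is, however, a genuine gap in the second paragraph: the claimed identity $N_\N(P)=\N\cap N_\L(PQ)$ is not justified. You correctly argue one inclusion (if $n\in\N$ normalises $P$, then since $\N\subseteq N_\L(Q)$ it also normalises $PQ$), but then assert that ``the reverse inclusion is obvious''. It is not. An element $n\in\N\cap N_\L(PQ)$ normalises $PQ$ and even centralises $Q$ (one can show $\N\subseteq C_\L(Q)$ from Proposition~\ref{CNWeaklyClosed}), but this only yields $P^n\le PQ\cap T=P(Q\cap T)$, and $Q\cap T\le Z(\m E)$ need not be contained in $P$ for arbitrary $P\in\m E^s$ (think of $P$ a very small subcentric subgroup in a constrained $\m E$).

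The paper sidesteps this by working inside $N_G(P)$ rather than $G$. It notes that $N_\N(P)=N_G(P)\cap\N$, which really is immediate in both directions once you use $\N\subseteq N_\L(Q)$ and that $S_n$ is a subgroup, and that this intersection is a \emph{normal} subgroup of $N_G(P)$ because $\N$ is partial normal in $\L$. Then Lemma~\ref{Charp1}(a) gives $N_G(P)$ of characteristic $p$, and Lemma~\ref{Charp1}(b) passes this to the normal subgroup $N_\N(P)$. Your argument is easily repaired this way; the only change is that the group in which you invoke normality should be $N_G(P)$, not $G$ itself.
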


\begin{proof}
By Lemma~\ref{RestrictionPartialGroup}, $\N_0$ forms a partial group in the way described, and the inclusion map $\N_0\rightarrow\L$ is a homomorphism of partial groups. 
By \cite[Lemma~3.1(c)]{Chermak:2015}, $T$ is a maximal $p$-subgroup of $\N$. So as $\F_T(\N)=\m{E}$ is saturated and $\m{E}^{cr}\subseteq \m{E}^s=\Gamma$, it follows from Lemma~\ref{RestrictionLocality} that $(\N_0,\Gamma,T)$ is a locality over $\m{E}$. Let $P\in\Gamma$. We need to show that $N_{\N_0}(P)$ is of characteristic $p$. A priori, $N_{\N_0}(P)$ denotes here the normalizer in the partial group $\N_0$, but by Lemma~\ref{TRemark} and the definition of the partial group $\N_0$, this normalizer coincides with the normalizer $N_{\N_0}(P)$ in $\L$ and $N_{\N_0}(P)=N_\N(P)$. Recall that $PQ\in\Delta$ for $Q:=C_S(\N)$. So $G:=N_\L(PQ)$ is a finite group of characteristic $p$. Note that $N_\N(P)=N_G(P)\cap \N$ is a normal subgroup of $N_G(P)$. Hence, by Lemma~\ref{Charp1}(b),(c), $N_\N(P)$ is of  characteristic $p$. This implies the assertion. 
\end{proof}

\begin{rmk}
As described more generally in Remark~\ref{RestrictionTransporterSystems} we get a functor $\T(\N_0,\Gamma)\rightarrow \T(\L,\Delta)$ which sends an object $P\in\Gamma$ to $PC_S(\N)\in\Delta$ and a morphism $(f,P_1,P_2)$ to $(f,P_1C_S(\N),P_2C_S(\N))$. So there is a functor from the subcentric linking system of $\m{E}$ to the subcentric linking system of $\F$ which is injective on the morphism sets.
\end{rmk}

\begin{rmk}
 Chermak \cite{Chermak:2017} defined a set $\delta(\F)$ of subgroups of $S$ which is closed under taking $\F$-conjugates and overgroups in $\F$. He calls a linking locality over $\F$ with object set $\delta(\F)$ a regular locality. If $(\L,\Delta,S)$ is a regular locality rather than a subcentric linking locality, and the assumption is otherwise as above, Chermak showed that $(\N,\delta(\m{E}),T)$ is a regular locality. So for regular localities, it is not necessary to consider the restriction $\N|_\Gamma$. This observation allows Chermak to introduce components of regular localities in a relatively smooth way. However, results analogous to the results in Subsection~\ref{pLocalInclusion} do not hold for regular localities. Therefore, if one wants to develop a local theory of localities which is useful for Aschbacher's program, it might still be better to work with subcentric linking localities rather than with regular localities. 
\end{rmk}

\section{Localities of objective characteristic $p$ coming from finite groups}\label{GroupLocalities}

\noindent\textbf{Throughout let $G$ be a finite group and $S\in\Syl_p(G)$.} 

\smallskip

In this section we present some ways of constructing linking localities and localities of objective characteristic $p$ from $G$. At the end, in Remark~\ref{ParabolicChar}, we speculate how these constructions can be used for a unifying approach to a classification of groups and fusion systems of parabolic characteristic $p$. In general, localities can be constructed from $G$ as follows

\begin{definition}
Let $\Gamma$ be a set of subgroups of $S$ closed under taking $G$-conjugates and overgroups in $S$. Let $\L_\Gamma(G)$ be the set of all elements $g\in G$ with $S\cap S^g\in\Gamma$. Moreover, let $\D_\Gamma$ be the set of all words $(g_1,\dots,g_n)$ such that $g_i\in G$ and there exist elements $P_0,\dots,P_n\in\Gamma$ with $P_{i-1}^{g_i}=P_i$ for $i=1,\dots,n$. Define a partial product $\Pi\colon \D\rightarrow \L_\Gamma(G)$ by mapping $(g_1,g_2,\dots,g_n)\in\D$ to the product $g_1g_2\dots g_n$ in $G$. Define an inversion map on $\L_\Gamma(G)$ by taking the restriction of the inversion map on $G$ to the set $\L_\Gamma(G)$. 
\end{definition}

By \cite[Example/Lemma~2.10]{Chermak:2013}, $(\L_\Gamma(G),\Gamma,S)$ is a locality. In this section we are mainly concerned with the case that $\Gamma$ is one of the sets $\Delta$ and $\Delta^*$ defined as follows: 
\[\Delta:=\{P\leq S\colon N_G(P)\mbox{ is of characteristic }p\}\]
and 
\[\Delta^*:=\{P\leq S\colon N_G(P)\mbox{ is almost of characteristic }p\}.\]

By Lemma~\ref{AlmostCharpNormCent}, we have 
\[\Delta=\{P\leq S\colon C_G(P)\mbox{ is of characteristic }p\}\] 
and 
\[\Delta^*:=\{P\leq S\colon C_G(P)\mbox{ is almost of characteristic }p\}.\]

\begin{lemma}
The sets $\Delta$ and $\Delta^*$ are closed under $\F_S(G)$-conjugation and taking overgroups in $S$. In particular, if $\Delta\neq \emptyset$ then $S\in\Delta$.
\end{lemma}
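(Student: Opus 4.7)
My plan is to treat the two closure properties separately. The closure under $\F_S(G)$-conjugation is essentially immediate, while the closure under overgroups is the substantive part and will require a subnormal reduction together with a careful comparison between $\Theta$ of centralizers and normalizers.

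First I would verify closure under $\F_S(G)$-conjugation: every morphism in $\F_S(G)$ between subgroups of $S$ is the restriction of a conjugation map $c_g$ for some $g \in G$, so if $P \in \Delta$ (respectively $\Delta^*$) and $Q = P^g \leq S$, then $N_G(Q) = N_G(P)^g$ is group-isomorphic to $N_G(P)$, preserving both characteristic $p$ and almost characteristic $p$. This handles both sets at once.

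For closure under overgroups in $S$, suppose $P \leq Q \leq S$ with $P \in \Delta^*$. Since $Q$ is a $p$-group, $P$ is subnormal in $Q$, so by induction along a subnormal chain it suffices to handle the case $P \unlhd Q$. Then $Q$ is a $p$-subgroup of $M := N_G(P)$, and $M$ is almost of characteristic $p$ by assumption. Applying Lemma~\ref{AlmostCharp1}(b) to the pair $(M, Q)$ gives that $C_M(Q)$ is almost of characteristic $p$. The key observation is that $C_G(Q) \leq C_G(P) \leq M$, so $C_G(Q) = C_M(Q)$; hence $C_G(Q)$ is almost of characteristic $p$, and Lemma~\ref{AlmostCharpNormCent} then yields that $N_G(Q)$ is almost of characteristic $p$, proving $Q \in \Delta^*$.

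If in addition $P \in \Delta$, then $M$ is of characteristic $p$, so $\Theta(M) = 1$. Lemma~\ref{AlmostCharp1}(a) applied to $(M, Q)$ gives $\Theta(C_G(Q)) = \Theta(C_M(Q)) = \Theta(M) \cap C_M(Q) = 1$, and Lemma~\ref{Theta}(a) transfers this to $\Theta(N_G(Q)) = \Theta(C_G(Q)) = 1$. Together with $N_G(Q)$ being almost of characteristic $p$, this upgrades to $N_G(Q)$ of characteristic $p$, so $Q \in \Delta$. The ``in particular'' assertion then follows by applying the overgroup closure to any $P \in \Delta$ with $Q = S$. The main subtlety here is that a priori $N_G(Q)$ can be strictly larger than $N_M(Q)$; what makes the argument work is that centralizers, rather than normalizers, coincide, combined with Lemma~\ref{Theta}(a)'s reduction of $\Theta(N_G(Q))$ to $\Theta(C_G(Q))$.
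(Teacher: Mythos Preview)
Your proof is correct and follows essentially the same route as the paper: closure under conjugation via $N_G(P^g)\cong N_G(P)$, and closure under overgroups by subnormal reduction to $P\unlhd Q$ together with the key identity $C_G(Q)=C_{N_G(P)}(Q)$. The only minor difference is in the $\Delta$ case: the paper invokes Lemma~\ref{Charp1}(a) directly to conclude that $C_{N_G(P)}(Q)$ is of characteristic $p$ (and then uses Lemma~\ref{AlmostCharpNormCent}), whereas you reach the same conclusion by the slightly longer detour of showing $\Theta(N_G(Q))=1$ via Lemma~\ref{AlmostCharp1}(a) and Lemma~\ref{Theta}(a), then combining with the already-established $\Delta^*$ result.
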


\begin{proof}
Clearly, $N_G(P^g)=N_G(P)^g\cong N_G(P)$ for any $g\in G$ and $P\leq S$. Hence, $\Delta$ and $\Delta^*$ are closed under $\F_S(G)$-conjugation. Let $P\leq R\leq S$. We want to show that $R\in\Delta$ if $P\in\Delta$, and $R\in\Delta^*$ if $P\in\Delta^*$. Since $R$ is a $p$-group, $P$ is subnormal in $R$. Thus, by induction on the subnormal length of $P$ in $R$, we may assume that $P\unlhd R$. Then $R\leq H:=N_G(P)$ and $C_G(R)=C_H(R)$. If $P\in\Delta$ then $H$ is of characteristic $p$, and if $P\in\Delta^*$ then $H$ is almost of characteristic $p$. So if $P\in\Delta$ then, by Lemma~\ref{Charp1}(b), $C_G(R)=C_H(R)$ is of characteristic $p$  and thus $R\in\Delta$ by Lemma~\ref{AlmostCharpNormCent}. Similarly, if $P\in\Delta^*$ then $C_G(R)=C_H(R)$ is almost of characteristic $p$ by Lemma~\ref{AlmostCharp1}(b), and thus $R\in\Delta^*$ by Lemma~\ref{AlmostCharpNormCent}.
\end{proof}

\begin{lemma}
We have $\Delta\subseteq\Delta^*\subseteq\F_S(G)^s$.
\end{lemma}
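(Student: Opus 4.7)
The first inclusion $\Delta\subseteq\Delta^*$ is immediate: if $N_G(P)$ is of characteristic $p$, then (as noted at the start of Section~\ref{ConstrainedSection}) $\Theta(N_G(P))=O_{p'}(N_G(P))=1$, so $N_G(P)/\Theta(N_G(P))=N_G(P)$ is of characteristic $p$ and $P\in\Delta^*$. So the content lies in showing $\Delta^*\subseteq \F_S(G)^s$.

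For the nontrivial inclusion, set $\F=\F_S(G)$ and let $P\in\Delta^*$. Since $\Delta^*$ is closed under $\F$-conjugation, it is enough (by the equivalence (b2)$\Leftrightarrow$(a1) in Lemma~\ref{subcentricEquiv}) to exhibit a fully $\F$-normalized $\F$-conjugate of $P$ whose normalizer in $\F$ is constrained. Replacing $P$ by such a conjugate, I may assume $P\in\F^f$. Then standard fusion theory yields $N_S(P)\in\Syl_p(N_G(P))$ and $N_\F(P)=\F_{N_S(P)}(N_G(P))$.

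The strategy is to replace $N_G(P)$ by a quotient which is a model for $N_\F(P)$. Write $H=N_G(P)$ and $\bar H=H/\Theta(H)$. Since $\Theta(H)$ is a normal $p'$-subgroup, the projection $\pi\colon H\to\bar H$ restricts to an isomorphism $N_S(P)\xrightarrow{\cong}\overline{N_S(P)}$, and $\overline{N_S(P)}\in\Syl_p(\bar H)$. I will verify that $\pi|_{N_S(P)}$ induces an isomorphism of fusion systems
\[
\F_{N_S(P)}(H)\;\xrightarrow{\cong}\;\F_{\overline{N_S(P)}}(\bar H).
\]
Surjectivity on morphisms is the only point requiring a short argument: given $A,B\leq N_S(P)$ and $\bar g\in\bar H$ with $\bar A^{\bar g}=\bar B$, any lift $g'\in H$ of $\bar g$ satisfies $A^{g'}\leq B\Theta(H)$; as $A^{g'}$ is a $p$-subgroup of $B\Theta(H)$ it is $\Theta(H)$-conjugate to $B$, so after adjusting $g'$ by an element of $\Theta(H)$ we may arrange $A^{g'}=B$, still with $\pi(g')=\bar g$.

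Because $H$ is almost of characteristic $p$, the quotient $\bar H$ is of characteristic $p$; together with $\overline{N_S(P)}\in\Syl_p(\bar H)$ this makes $\bar H$ a model for $\F_{\overline{N_S(P)}}(\bar H)$. Hence $\F_{\overline{N_S(P)}}(\bar H)$ is constrained by Theorem~\ref{Model1}(a), and constrainedness is preserved under isomorphism of fusion systems, so $N_\F(P)=\F_{N_S(P)}(H)$ is constrained. Lemma~\ref{subcentricEquiv} then gives $P\in\F^s$, completing the proof. The only genuinely technical point is the lifting argument establishing the fusion-system isomorphism $\F_{N_S(P)}(H)\cong\F_{\overline{N_S(P)}}(\bar H)$; everything else is direct assembly of results already in the paper.
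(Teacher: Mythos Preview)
Your proof is correct and follows essentially the same route as the paper's: reduce to a fully normalized $P$, identify $N_\F(P)=\F_{N_S(P)}(N_G(P))$ with the fusion system of the characteristic-$p$ quotient $N_G(P)/\Theta(N_G(P))$, and conclude via Theorem~\ref{Model1}(a) and Lemma~\ref{subcentricEquiv}. The paper simply asserts the isomorphism $\F_{N_S(P)}(N_G(P))\cong\F_{\overline{N_S(P)}}(\overline{N_G(P)})$ without justification, whereas you spell out the Frattini-type lifting argument; this is a welcome clarification rather than a departure.
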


\begin{proof}
Since every group of characteristic $p$ is almost of characteristic $p$, we have $\Delta\subseteq\Delta^*$. If $P\in\Delta^*\cap \F_S(G)^f$ then $N_{\F_S(G)}(P)=\F_{N_S(P)}(N_G(P))\cong \F_{\ov{N_S(P)}}(\ov{N_G(P)})$ for $\ov{N_G(P)}=N_G(P)/O_{p^\prime}(N_G(P))$. Hence, $N_{\F_S(G)}(P)$ is constrained by Theorem~\ref{Model1}(a) and $P\in\F_S(G)^s$ by Lemma~\ref{subcentricEquiv}. Since $\Delta^*$ and $\F_S(G)^s$ are closed under $\F_S(G)$-conjugation, it follows $\Delta^*\subseteq \F_S(G)^s$. 
\end{proof}

Clearly, the sets $\Delta$ and $\Delta^*$ can be different in general, since there are groups which are almost of characteristic $p$, but not of characteristic $p$. The next example shows that $\Delta^*$ is not equal to $\F_S(G)^s$ in general. 

\begin{ex}\label{exSubcentricGroup}
Let $P$ be a $P$-group, $G=P\times A_5$ and $S\in\Syl_p(G)$. As $P=O_p(G)$, $G$ is not of characteristic $p$. Since $O_{p^\prime}(G)=1$, it follows that $P\not\in\Delta^*$. However, $S$ is normal in $\F_S(G)=\F_S(P\times A_4)$ and thus $\F_S(G)$ is constrained. Hence, as $P$ is normal in $\F_S(G)$, we have $P\in\F_S(G)^s$ by Lemma~\ref{subcentricEquiv}.
\end{ex}

\begin{lemma}\label{Delta*}
 We have $\F_S(G)^q\subseteq \Delta^*$. In particular, $\F_S(G)^c\subseteq\Delta^*$ and $S\in\Delta^*$. 
\end{lemma}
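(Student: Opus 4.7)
The goal is to show that any $\F$-quasicentric subgroup $P\leq S$ lies in $\Delta^*$, i.e.\ that $N_G(P)$ is almost of characteristic $p$. Since $\Delta^*$ is closed under $\F$-conjugation (as established just before the statement), I may replace $P$ by a fully $\F$-normalized conjugate and so assume that $P$ is fully $\F$-normalized, hence fully $\F$-centralized. Then by \cite[Proposition~I.5.4]{Aschbacher/Kessar/Oliver:2011} we have $C_\F(P)=\F_{C_S(P)}(C_G(P))$, and the quasicentricity hypothesis gives the equality
\[
\F_{C_S(P)}(C_G(P)) \;=\; \F_{C_S(P)}(C_S(P)).
\]

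The next step is to apply the Frobenius normal $p$-complement argument used in the proof of Lemma~\ref{Charp2}, with $N_G(P)$ in the role of $G$ and $P$ as the normal $p$-subgroup. It yields $C_G(P)=C_S(P)\,O_{p'}(C_G(P))$, so the quotient $C_G(P)/O_{p'}(C_G(P))$ is a $p$-group. Set $H:=N_G(P)$ and $\overline{H}:=H/\Theta(H)$. By Remark~\ref{Theta}(a) we have $\Theta(H)=\Theta(C_G(P))=O_{p'}(C_G(P))$, and by Remark~\ref{Theta}(b) the image $\overline{C_G(P)}$ equals $C_{\overline{H}}(\overline{P})$. Thus $C_{\overline{H}}(\overline{P})\cong C_G(P)/O_{p'}(C_G(P))$ is a $p$-group.

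Since $P$ is a normal $p$-subgroup of $H$ intersecting $\Theta(H)$ trivially, $\overline{P}$ is a normal $p$-subgroup of $\overline{H}$, and its centralizer $C_{\overline{H}}(\overline{P})$ is itself a normal $p$-subgroup of $\overline{H}$, so $C_{\overline H}(\overline P)\leq O_p(\overline H)$. Since $\overline P\leq O_p(\overline H)$ it follows that
\[
C_{\overline{H}}(O_p(\overline{H})) \;\leq\; C_{\overline{H}}(\overline{P}) \;\leq\; O_p(\overline{H}),
\]
so $\overline{H}$ is of characteristic $p$. Hence $H=N_G(P)$ is almost of characteristic $p$, and $P\in\Delta^*$ as required.

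For the ``in particular'' clauses: $\F^c\subseteq\F^q$ is immediate from the definitions, giving $\F_S(G)^c\subseteq\Delta^*$; and $S$ is trivially $\F$-centric since $C_S(S)=Z(S)\leq S$, hence $S\in\F^c\subseteq\F^q\subseteq\Delta^*$. The whole argument is a short unwinding of the definitions combined with Lemma~\ref{Charp2} and Remark~\ref{Theta}, so I anticipate no real obstacle; the only point requiring care is remembering that the equality of fusion systems over $C_S(P)$ forces the Frobenius-type splitting of $C_G(P)$, which is exactly the content of Lemma~\ref{Charp2}.
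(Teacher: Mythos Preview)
Your proof is correct and follows essentially the same route as the paper's: reduce to fully normalized $P$, invoke \cite[Proposition~I.5.4]{Aschbacher/Kessar/Oliver:2011}, and apply Lemma~\ref{Charp2} with $N_G(P)$ in the role of $G$ to obtain $C_G(P)=C_S(P)\Theta(C_G(P))$. The only cosmetic difference is in the final step: the paper observes that $C_G(P)/\Theta(C_G(P))$ being a $p$-group means $C_G(P)$ is almost of characteristic $p$ and then cites Lemma~\ref{AlmostCharpNormCent} to pass to $N_G(P)$, whereas you unpack that implication directly via Remark~\ref{Theta}.
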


\begin{proof}
Since both $\F_S(G)^q$ and $\Delta^*$ are closed under $\F_S(G)$-conjugation, it is sufficient to prove that $P\in\Delta^*$ for every $P\in\F_S(G)^{fq}$. Let $P\in\F_S(G)^{fq}$. Then $N_S(P)\in\Syl_p(N_G(P))$ and $\F_{C_S(P)}(C_G(P))=C_{\F_S(G)}(P)=\F_{C_S(P)}(C_S(P))$ by \cite[Proposition~I.5.4]{Aschbacher/Kessar/Oliver:2011}. Hence, Lemma~\ref{Charp2} applied with $N_G(P)$ in place of $G$ gives $C_G(P)=C_S(P)O_{p^\prime}(C_G(P))$. So $C_G(P)/O_{p^\prime}(C_G(P))$ is a $p$-group and thus $C_G(P)$ is almost of characteristic $p$. Therefore, $P\in\Delta^*$ by Lemma~\ref{AlmostCharpNormCent}.  
\end{proof}

\begin{ex}
Let $P$ be an abelian $p$-group and $G=P\times S_4$. Then $G$ is of characteristic $p$ and thus $P\in\Delta\subseteq\Delta^*$. However, as $P\leq Z(G)$, $P$ is fully $\F_S(G)$-centralized and $C_{\F_S(G)}(P)=\F_S(G)\neq\F_S(S)$. Therefore, $P\not\in\F_S(G)^q$.
\end{ex}

\begin{lemma}\label{GetGroupLocalities}
\begin{itemize}
\item [(a)] The locality $(\L_{\Delta}(G),\Delta,S)$ is of objective characteristic $p$. 
\item [(b)] Set $\Theta:=\bigcup_{P\in\Delta^*}O_{p^\prime}(N_G(P))$. Then $\Theta$ is a partial normal subgroup of $\L_{\Delta^*}(G)$ with $S\cap\Theta=1$, the canonical map $\rho\colon \L_{\Delta^*}(G)\rightarrow \L_{\Delta^*}(\L)/\Theta$ restricts to an isomorphism $S\rightarrow S\rho$ and, upon identifying $S$ with $S\rho$,  the locality $(\L_{\Delta^*}(G)/\Theta,\Delta^*,S)$ is a linking locality over $\F_S(G)$.
\end{itemize}
\end{lemma}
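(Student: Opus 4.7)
The plan is to verify the two assertions by direct reduction to results already established, namely the description of $N_{\L_\Gamma(G)}(P)$ for $P \in \Gamma$, Lemma~\ref{Delta*}, and Proposition~\ref{GetLocalityObjectiveCharp}.

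First I would record the basic identification that for any subset $\Gamma$ of $\{P \leq S\}$ closed under $G$-conjugation and overgroups in $S$, and any $P \in \Gamma$, one has $N_{\L_\Gamma(G)}(P) = N_G(P)$. Indeed, if $g \in N_G(P)$ then $P \leq S \cap S^g$, so $S \cap S^g \in \Gamma$ (as $\Gamma$ is closed under overgroups in $S$) and hence $g \in \L_\Gamma(G)$; and conversely any element of $N_{\L_\Gamma(G)}(P)$ normalizes $P$ as a subgroup of $G$. Applying this with $\Gamma = \Delta$ and using the very definition of $\Delta$ gives that $N_{\L_\Delta(G)}(P) = N_G(P)$ is of characteristic $p$ for every $P \in \Delta$, proving the first assertion.

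For the second assertion I would apply Proposition~\ref{GetLocalityObjectiveCharp} to the locality $(\L_{\Delta^*}(G), \Delta^*, S)$. By the same identification, $N_{\L_{\Delta^*}(G)}(P) = N_G(P)$ for all $P \in \Delta^*$, which is almost of characteristic $p$ by definition of $\Delta^*$. Therefore the hypothesis of Proposition~\ref{GetLocalityObjectiveCharp} holds, and we obtain immediately that $\Theta = \bigcup_{P \in \Delta^*} \Theta(N_{\L_{\Delta^*}(G)}(P)) = \bigcup_{P \in \Delta^*} \Theta(N_G(P))$ (which matches the definition given in the paper) is a partial normal subgroup of $\L_{\Delta^*}(G)$ with $\Theta \cap S = 1$, that $\rho$ restricts to an isomorphism $S \to S\rho$, that $(\L_{\Delta^*}(G)/\Theta, \Delta^*, S)$ is a locality of objective characteristic $p$, and that $\F_S(\L_{\Delta^*}(G)/\Theta) = \F_S(\L_{\Delta^*}(G))$.

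It remains to check that this locality is a linking locality over $\F_S(G)$. The containment $\F_S(G)^{cr} \subseteq \Delta^*$ follows from Lemma~\ref{Delta*}, since $\F_S(G)^{cr} \subseteq \F_S(G)^c \subseteq \F_S(G)^q \subseteq \Delta^*$. The remaining point, and the only step that needs a brief argument, is the equality $\F_S(\L_{\Delta^*}(G)) = \F_S(G)$: the inclusion $\subseteq$ is clear, and for $\supseteq$ I would invoke Alperin's fusion theorem \cite[Theorem~I.3.6]{Aschbacher/Kessar/Oliver:2011} to write any morphism of $\F_S(G)$ as a composition of restrictions of automorphisms of fully $\F_S(G)$-normalized centric radical subgroups $Q$; each such $Q$ lies in $\Delta^*$ by Lemma~\ref{Delta*}, so $N_G(Q) \subseteq \L_{\Delta^*}(G)$, and every element of $\Aut_{\F_S(G)}(Q)$ is induced by conjugation with some $g \in N_G(Q) \subseteq \L_{\Delta^*}(G)$, showing the morphism lies in $\F_S(\L_{\Delta^*}(G))$. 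This last verification, though not difficult, is the only point beyond a mechanical invocation of the cited results, and is therefore the principal item to check carefully.
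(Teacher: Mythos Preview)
Your proposal is correct and follows essentially the same approach as the paper's proof: identify $N_{\L_\Gamma(G)}(P)=N_G(P)$, apply Proposition~\ref{GetLocalityObjectiveCharp} to $(\L_{\Delta^*}(G),\Delta^*,S)$, and then use Lemma~\ref{Delta*} together with Alperin's fusion theorem to conclude $\F_S(\L_{\Delta^*}(G))=\F_S(G)$ and $\F_S(G)^{cr}\subseteq\Delta^*$. Your version spells out a few details (the justification of $N_{\L_\Gamma(G)}(P)=N_G(P)$ and the Alperin step) that the paper leaves implicit, but the structure is identical.
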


\begin{proof}
Note that, for every $P\in\Delta$, $N_{\L_\Delta(G)}(P)=N_G(P)$ is of characteristic $p$. So the locality $(\L_{\Delta}(G),\Delta,S)$ is of objective characteristic $p$. Similarly, for every $P\in\Delta^*$, $N_G(P)=N_{\L_{\Delta^*}(G)}(P)$ is almost of characteristic $p$. Thus, it follows from Proposition~\ref{GetLocalityObjectiveCharp} that $\Theta$ is a partial normal subgroup of $\L_{\Delta^*}(G)$ with $S\cap\Theta=1$, the canonical map $\rho\colon \L_{\Delta^*}(G)\rightarrow \L_{\Delta^*}(\L)/\Theta$ restricts to an isomorphism $S\rightarrow S\rho$ and, upon identifying $S$ with $S\rho$,  $(\L_{\Delta^*}(G)/\Theta,\Delta,S)$ is a locality over $\F_S(\L_{\Delta^*}(G))$ of objective characteristic $p$. By Lemma~\ref{Delta*}, $\F_S(G)^{cr}\subseteq\Delta^*$. Hence, by Alperin's fusion theorem, $\F_S(G)=\F_S(\L_{\Delta^*}(G))$. So  $(\L_{\Delta^*}(G)/\Theta,\Delta,S)$ is a linking locality over $\F_S(G)$. 
\end{proof}

\begin{rmk}
 Let $\Gamma$ be one of the sets $\F_S(G)^c$ or $\F_S(G)^q$, or assume more generally that $\Gamma$ is a subset of $\Delta^*$ such that $\F_S(G)^{cr}\subseteq\Gamma$ and $\Gamma$ is closed under taking $\F_S(G)$-conjugates and overgroups in $S$. Set $\Theta:=\bigcup_{P\in\Gamma}O_{p^\prime}(N_G(P))$. Then a statement similar to the one in Lemma~\ref{GetGroupLocalities}(b) holds:

\smallskip

The set $\Theta$ is a partial normal subgroup of $\L_{\Gamma}(G)$ with $S\cap\Theta=1$, the canonical map $\rho\colon \L_{\Gamma}(G)\rightarrow \L_{\Gamma}(\L)/\Theta$ restricts to an isomorphism $S\rightarrow S\rho$ and, upon identifying $S$ with $S\rho$,  the locality $(\L_{\Gamma}(G)/\Theta,\Gamma,S)$ is a linking locality over $\F_S(G)$.
\end{rmk}

To our knowledge there is no good way of constructing the subcentric linking system of $\F_S(G)$ directly from the group $G$.  However, using Theorem~\ref{A1General}, one can extend $\L_{\Delta^*}(G)/\Theta$ to a subcentric linking locality over $\F_S(G)$. It seems moreover that the localities $\L_{\Delta}(G)$ and $\L_{\Delta^*}(G)$ are good enough for most purposes. As we explain in the following remark, these localities could be useful if one wants to deduce statements about groups of parabolic characteristic $p$ from similar statements about localities.

\begin{rmk}\label{ParabolicChar}
Many classification theorems in the program of Meierfrankenfeld, Stellmacher and Stroth are proved not only for groups of local characteristic $p$, but more generally for groups of \textit{parabolic characteristic $p$}. These are finite groups where every $p$-local subgroup containing a Sylow $p$-subgroup is of characteristic $p$. We say similarly that $\F$ is of parabolic characteristic $p$ if $\F$ is saturated and the normalizer of every normal subgroup of $S$ is constrained. If $G$ is of parabolic characteristic $p$, note that $\Delta$ and $\Delta^*$ contain every non-trivial normal subgroup of $S$. Similarly, if $\F$ is of parabolic characteristic $p$, then $\F^s$ contains every non-trivial normal subgroup of $S$.

\smallskip

As pointed out in the introduction, it might be possible to give a unifying approach to the classification of fusion systems of characteristic $p$-type and of groups of characteristic $p$-type whilst avoiding to use Theorem~\ref{MainThm1} and the theory of fusion systems to prove classification theorems for groups of characteristic $p$-type. Similarly, such an approach could presumably be implemented for groups and fusion systems of parabolic characteristic $p$ if one proceeds roughly as follows:  In a first step one proves a classification theorem for a locality $(\L,\Gamma,S)$ of objective characteristic $p$, where $\Gamma$ contains every non-trivial normal subgroup of $S$. Then in a second step one separately deduces from that a corresponding classification theorem for fusion systems of parabolic characteristic $p$ (using the existence of subcentric linking systems), and for groups of parabolic characteristic $p$ (working with the locality $(\L_\Delta(G),\Delta,S)$ with the set $\Delta$ as above). If this approach turns out to be problematic, one could also in the first step prove a classification theorem for a linking locality $(\L,\Gamma,S)$ (over a saturated fusion system) where $\Gamma$ includes every non-trivial normal subgroup of $S$. Then one would work with the locality $(\L_{\Delta^*}(G)/\Theta,\Delta^*,S)$ to deduce the corresponding classification theorem for groups of parabolic characteristic $p$. Working with $(\L_{\Delta^*}(G)/\Theta,\Delta^*,S)$ has here not only the advantage that $(\L_{\Delta^*}(G)/\Theta,\Delta^*,S)$ is a linking locality, but also that its fusion system is isomorphic to $\F_S(G)$.
\end{rmk}

\bibliographystyle{amsplain}
\bibliography{repcoh}

\end{document}